\DeclareMathOperator{\colim}{colim}
\DeclareMathOperator{\id}{id}
\DeclareMathOperator{\Vect}{Vect}
\DeclareMathOperator{\PGL}{PGL}
\DeclareMathOperator{\SL}{SL}
\DeclareMathOperator{\af}{af}
\DeclareMathOperator{\sh}{sh}
\DeclareMathOperator{\der}{der}
\DeclareMathOperator{\sep}{sep}
\DeclareMathOperator{\Sym}{Sym}
\DeclareMathOperator{\Hom}{Hom}
\DeclareMathOperator{\GL}{GL}
\DeclareMathOperator{\length}{length}
\DeclareMathOperator{\Spec}{Spec}
\DeclareMathOperator{\Rep}{Rep}
\DeclareMathOperator{\End}{End}
\DeclareMathOperator{\sem}{ss}
\DeclareMathOperator{\cok}{cok}
\DeclareMathOperator{\Gr}{Gr}
\DeclareMathOperator{\Ext}{Ext}
\DeclareMathOperator{\IC}{IC}
\DeclareMathOperator{\RHom}{RHom}
\DeclareMathOperator{\lfgu}{lfgu}
\let\phi\varphi
\theoremstyle{definition}
\newtheorem{defn}{Definition}[section]
\newtheorem{exmp}[defn]{Example}
\newtheorem{rmrk}[defn]{Remark}
\theoremstyle{plain}
\newtheorem{thm}[defn]{Theorem}
\newtheorem{prop}[defn]{Proposition}
\newtheorem{lem}[defn]{Lemma}
\newtheorem{cor}[defn]{Corollary}
\newcommand\restr[2]{{
  \left.\kern-\nulldelimiterspace 
  #1 
  \vphantom{\big|} 
  \right|_{#2} 
  }}
\newenvironment{myproof}[1][\proofname]{%
  \begin{myproof2}[#1]We proceed by a series of reductions.$ $\par\nobreak\ignorespaces
}{%
  \end{myproof2}
}
\newenvironment{myproof2}[1][\proofname] {\par\pushQED{\qed}\normalfont\topsep6\p@\@plus6\p@\relax\trivlist\item[\hskip\labelsep\bfseries#1\@addpunct{\textbf{.}}]\ignorespaces}{\popQED\endtrivlist\@endpefalse}
\title{Perverse $\mathbb{F}_p$-Sheaves on the Affine Grassmannian}
\author{Robert Cass}
\address{Department of Mathematics, California Institute of Technology, Pasadena, CA 91125}
\email{rcass@caltech.edu}
\subjclass[2020]{14M15; 14F10, 14F20}
\numberwithin{equation}{section}
\begin{document}

\begin{abstract}
For a reductive group over an algebraically closed field of characteristic $p > 0$ we construct the abelian category of perverse $\mathbb{F}_p$-sheaves on the affine Grassmannian that are equivariant with respect to the action of the positive loop group. We show this is a symmetric monoidal category, and then we apply a Tannakian formalism to show this category is equivalent to the category of representations of a certain affine monoid scheme. We also show that our work provides a geometrization of the inverse of the mod $p$ Satake isomorphism. Along the way we prove that affine Schubert varieties are globally $F$-regular and we apply Frobenius splitting techniques to the theory of perverse $\mathbb{F}_p$-sheaves.
\end{abstract}

\maketitle

\thispagestyle{empty}

\setcounter{tocdepth}{3}
\tableofcontents
\newpage
\section{Introduction} \label{Intro}

\subsection{Main results} \label{Intro1.1}
Let $G$ be a reductive group over an algebraically closed field $k$, and let $\Gr$ be the associated affine Grassmannian. The ind-scheme $\Gr$ is a union of affine Schubert varieties which are analogous to classical Schubert varieties in flag varieties, and it has important applications in arithmetic geometry and representation theory. For example, fix a prime $\ell$ different from the characteristic of $k$. Let $P_{L^+G}(\Gr, \overline{\mathbb{Q}}_\ell)$ be the category of $L^+G$-equivariant perverse $\overline{\mathbb{Q}}_\ell$-sheaves on $\Gr$, and let $\Rep_{\overline{\mathbb{Q}}_\ell}(\hat{G})$ be the category of representations of the dual group $\hat{G}$ on finite-dimensional $\overline{\mathbb{Q}}_\ell$-vector spaces. One version of the geometric Satake equivalence of Mirkovi\'c and Vilonen \cite{GeometricSatake} states that there is an equivalence of symmetric monoidal categories $P_{L^+G}(\Gr, \overline{\mathbb{Q}}_\ell) \xrightarrow{\sim} \Rep_{\overline{\mathbb{Q}}_\ell}(\hat{G})$. Moreover, the following diagram commutes up to natural isomorphism.
$$\xymatrix{
P_{L^+G}(\Gr, \overline{\mathbb{Q}}_\ell) \ar[rd]^{\sim} \ar[rr]^{\bigoplus_i R^i \Gamma(-)} & & \Vect_{\overline{\mathbb{Q}}_\ell} \\
& \Rep_{\overline{\mathbb{Q}}_\ell}(\hat{G}) \ar[ru]_{\text{Forget}} &
}$$

One can also get an analogous statement using coefficients in the finite field $\mathbb{F}_\ell$. The geometric Satake equivalence can be viewed as both providing a canonical construction of $\hat{G}$ from $G$, and as a geometrization of the Satake isomorphism. Recently, Vincent Lafforgue \cite{V.L.} has used the geometric Satake equivalence (among other things) to prove the automorphic to Galois direction of the Langlands correspondence for reductive groups over global function fields.

Now suppose $k$ has characteristic $p > 0$. The category of \'{e}tale $\mathbb{F}_p$-sheaves on a scheme $X$ over $k$ is markedly different from the category of \'{e}tale $\mathbb{F}_\ell$-sheaves or $\overline{\mathbb{Q}}_\ell$-sheaves for $\ell \neq p$. However, one can still define perverse $\mathbb{F}_p$-sheaves on $X$. We will define perverse $\mathbb{F}_p$-sheaves in Section \ref{PerverseFpSheaves}, but for now we note that when $X$ is smooth, perverse $\mathbb{F}_p$-sheaves on $X$ are anti-equivalent to the category of lfgu $\mathcal{O}_{F,X}$-modules introduced by Emerton-Kisin \cite{EmertonKisin}.

In this paper we define the category $P_{L^+G}(\Gr, \mathbb{F}_p)$ of $L^+G$-equivariant perverse $\mathbb{F}_p$-sheaves on $\Gr$. We then construct a convolution product $*$ making $P_{L^+G}(\Gr, \mathbb{F}_p)$ into a symmetric monoidal category. Our main theorem is the following.

\begin{thm} \label{repthm}
The functor $H: = \bigoplus_i R^i \Gamma ( - ) \colon P_{L^+G}(\Gr, \mathbb{F}_p) \rightarrow \Vect_{\mathbb{F}_p}$ is exact, faithful, and can be naturally upgraded to a symmetric monoidal functor. The functor on $\mathbb{F}_p$-algebras whose $R$-points are the tensor endomorphisms of $H( - ) \otimes_{\mathbb{F}_p} R \colon P_{L^+G}(\Gr, \mathbb{F}_p) \rightarrow \text{\normalfont Mod}_R$ is represented by an affine monoid scheme $M_G$ over $\mathbb{F}_p$. Finally, there is an equivalence of symmetric monoidal categories $P_{L^+G}(\Gr, \mathbb{F}_p) \xrightarrow{\sim} \Rep_{\mathbb{F}_p}(M_G)$ such that the following diagram commutes up to natural isomorphism.
$$\xymatrix{
P_{L^+G}(\Gr, \mathbb{F}_p) \ar[rd]^{\sim} \ar[rr]^{\bigoplus_i R^i \Gamma(-)} & & \Vect_{\mathbb{F}_p} \\
& \Rep_{\mathbb{F}_p}(M_G) \ar[ru]_{\text{\normalfont Forget}} &
}$$
\end{thm}

As in the case of $\overline{\mathbb{Q}}_\ell$-coefficients, the simple objects in $P_{L^+G}(\Gr, \mathbb{F}_p)$ are the IC sheaves supported on $L^+G$-orbit closures. After fixing a maximal torus and a Borel $T \subset B \subset G$ we may parametrize $L^+G$-orbit closures in $\Gr$ by dominant cocharacters $\mu \in X_*(T)^+$ (see Section \ref{GlobFReg1.1}). Let $\IC_\mu$ denote the IC sheaf supported on the reduced orbit closure $\Gr_{\leq \mu}$ associated to $\mu \in X_*(T)^+$. Unlike the case of $\overline{\mathbb{Q}}_\ell$-coefficients, the convolution product of simple objects remains simple.

\begin{thm} \label{convsimp}
Let $\mu_1$, $\mu_2 \in X_*(T)^+$. Then
$$\IC_{\mu_1} * \IC_{\mu_2} = \IC_{\mu_1+\mu_2}.$$ Furthermore, for every $\mu \in X_*(T)^+$,
$$\dim_{\mathbb{F}_p} H(\IC_\mu) = 1.$$
\end{thm}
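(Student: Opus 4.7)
The plan is to prove both assertions together: first establish the dimension formula, then use the monoidality of $H$ to force the convolution to be simple. The key technical inputs are the two ingredients advertised in the abstract, namely global $F$-regularity of the Schubert varieties $\Gr_{\leq \mu}$ and the Frobenius-splitting analysis of perverse $\mathbb{F}_p$-sheaves developed earlier in the paper.

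I would begin by proving $\dim_{\mathbb{F}_p} H(\IC_\mu) = 1$. Since $\Gr_{\leq \mu}$ is globally $F$-regular and projective, one has vanishing of higher coherent cohomology: $H^i(\Gr_{\leq \mu}, \mathcal{O}) = 0$ for $i > 0$. The Artin--Schreier sequence $0 \to \mathbb{F}_p \to \mathcal{O} \xrightarrow{F-1} \mathcal{O} \to 0$, combined with connectedness of $\Gr_{\leq \mu}$ and surjectivity of $F - 1$ on $k$, then yields $H^0_{\text{\'et}}(\Gr_{\leq \mu}, \mathbb{F}_p) = \mathbb{F}_p$ and $H^i_{\text{\'et}}(\Gr_{\leq \mu}, \mathbb{F}_p) = 0$ for $i > 0$. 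To transfer this to a statement about $H(\IC_\mu)$, I would use the Frobenius-splitting framework to compare the intermediate extension defining $\IC_\mu$ with the (shifted) constant sheaf $\mathbb{F}_p$ on $\Gr_{\leq \mu}$; on a globally $F$-regular variety the two should have identical total cohomology, giving $\sum_i \dim_{\mathbb{F}_p} R^i \Gamma(\Gr_{\leq \mu}, \IC_\mu) = 1$.

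I would then deduce the convolution formula. Since $*$ is constructed as a symmetric monoidal structure, $\IC_{\mu_1} * \IC_{\mu_2}$ is again perverse and $H$ is a monoidal fiber functor, so
$$H(\IC_{\mu_1} * \IC_{\mu_2}) \cong H(\IC_{\mu_1}) \otimes_{\mathbb{F}_p} H(\IC_{\mu_2})$$
is one-dimensional by the previous step. Because $H$ is exact and each simple object $\IC_\nu$ satisfies $\dim H(\IC_\nu) = 1$, the convolution can have only a single Jordan--H\"older constituent and is therefore isomorphic to some $\IC_\nu$. To identify $\nu$ I would examine the support: the twisted external product $\IC_{\mu_1} \widetilde{*} \IC_{\mu_2}$ lives on the convolution variety $\Gr_{\leq \mu_1} \widetilde{\times} \Gr_{\leq \mu_2}$, whose image under the multiplication map $m$ is exactly $\Gr_{\leq \mu_1 + \mu_2}$, with $t^{\mu_1 + \mu_2}$ having a unique preimage. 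This forces $\nu = \mu_1 + \mu_2$.

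The principal obstacle is the first step: translating coherent cohomological vanishing into information about $H(\IC_\mu)$. The category of perverse $\mathbb{F}_p$-sheaves is subtle, and identifying the IC sheaf on a singular Schubert variety is not formal. This is where the paper's Frobenius-splitting machinery, combined with global $F$-regularity of $\Gr_{\leq \mu}$, should be essential---both in analyzing the intermediate extension defining $\IC_\mu$ and in controlling its resulting \'etale cohomology.
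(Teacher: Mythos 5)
Your first step---$\dim_{\mathbb{F}_p} H(\IC_\mu) = 1$---is essentially correct and matches the paper's Theorem~\ref{cohvan}: global $F$-regularity of $\Gr_{\leq\mu}$ gives $H^i(\Gr_{\leq\mu}, \mathcal{O})=0$ for $i>0$, and the Artin--Schreier sequence converts this into the statement about $\mathbb{F}_p$-cohomology. Your phrasing ``on a globally $F$-regular variety the two should have identical total cohomology'' is slightly vague; the actual mechanism is sharper: $F$-rationality of $\Gr_{\leq\mu}$ forces $\IC_\mu$ to \emph{be} the constant sheaf $\mathbb{F}_p[\dim\Gr_{\leq\mu}]$ (Theorems~\ref{mainthm2} and~\ref{irreducibleobjects}), so $H(\IC_\mu)$ is literally $H^\bullet_{\text{\'et}}(\Gr_{\leq\mu},\mathbb{F}_p)$.

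The deduction of the convolution formula from monoidality of $H$, however, is circular. When you write ``since $*$ is constructed as a symmetric monoidal structure, $\IC_{\mu_1} * \IC_{\mu_2}$ is again perverse and $H$ is a monoidal fiber functor,'' you are invoking exactly what this theorem is needed to establish. In the paper, perversity of the convolution product (Corollary~\ref{convperv}) is \emph{deduced from} Theorem~\ref{convsimp}: the base case of the length induction is precisely $\IC_{\mu_1}*\IC_{\mu_2}=\IC_{\mu_1+\mu_2}$. The symmetric monoidal structure (Theorem~\ref{monoidal}) and the monoidality of $H$ (Theorem~\ref{symfun}) are only built on top of that. For $\overline{\mathbb{Q}}_\ell$-coefficients one could obtain perversity of the convolution independently via semi-smallness and Verdier duality, but these tools are unavailable with $\mathbb{F}_p$-coefficients; proving $\IC_{\mu_1}*\IC_{\mu_2}$ is simple \emph{is} the replacement for semi-smallness here, so it cannot be derived as a consequence of a monoidal $H$.

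The non-circular route the paper takes is a direct computation. Since $\IC_{\mu_1}$, $\IC_{\mu_2}$, and their twisted product on $\Gr_{\leq\mu_\bullet}$ are all constant sheaves (by Theorem~\ref{irreducibleobjects} and Lemma~\ref{convproperties}), the convolution equals $Rm_*(\mathbb{F}_p[\dim\Gr_{\leq\mu_\bullet}])$. One then shows $Rm_*\mathcal{O}_{\Gr_{\leq\mu_\bullet}}\cong\mathcal{O}_{\Gr_{\leq\mu}}$ using that $\Gr_{\leq\mu}$ has (pseudo-)rational singularities together with Kov\'acs' theorem (Proposition~\ref{convrat}), and pushes this through the Artin--Schreier sequence to conclude $Rm_*\mathbb{F}_p[0]\cong\mathbb{F}_p[0]$. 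No appeal to a monoidal fiber functor is required.
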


Our proofs of Theorems \ref{repthm} and \ref{convsimp} rely crucially on the nature of the singularities of the affine Schubert varieties $\Gr_{\leq \mu}$. In particular, we use Frobenius splitting techniques to determine the simple objects in $P_{L^+G}(\Gr, \mathbb{F}_p)$. This will be explained in Section \ref{Fresults} below. 

\subsection{Connections with the $p$-adic Langlands program}
Theorem \ref{repthm} is an initial result in an ongoing project to give a categorification of the representation theory of affine mod $p$ Hecke algebras. This project is motivated by the idea of applying methods from the geometric Langlands program to the $p$-adic Langlands program. As a first step in this direction, we will show that Theorem \ref{convsimp} provides a geometrization of the mod $p$ Satake isomorphism established by Herzig \cite{modpsatake} and Henniart-Vign\'{e}ras \cite{modpsatake2} for the special case of split reductive groups over local fields of equal characteristic.

To explain this result, let $F$ be a local field of equal characteristic $p$ with ring of integers $\mathcal{O}$, and let $G$ be a split reductive group defined over the residue field $\mathbb{F}_q$ of $F$. Let $w_0$ be the longest element of the Weyl group of $G$. Then one can consider the mod $p$ Hecke algebra $$\mathcal{H}_G := \{f \colon G(F) \rightarrow \mathbb{F}_p \: : \: f \text{ has compact support and is } G(\mathcal{O}) \text{ bi-invariant}\}.$$ The ring structure on $\mathcal{H}_G$ is defined by convolution. The algebra $\mathcal{H}_G$ plays a crucial role in the classification of admissible smooth mod $p$ representations of $G(F)$.

The mod $p$ Satake isomorphism is an isomorphism $\mathcal{S} \colon \mathcal{H}_G \xrightarrow{\sim} \mathbb{F}_p[X_*(T)_-]$ where $X_*(T)_-$ is the monoid of anti-dominant cocharacters. We review the definition of $\mathcal{S}$ in Section \ref{SatakeSection}. Given a perverse sheaf $\mathcal{F}^\bullet \in P_{L^+G}(\Gr, \mathbb{F}_p)$, where we view $\Gr$ as an ind-scheme over an algebraic closure $\overline{\mathbb{F}}_q$ of $\mathbb{F}_q$, we describe in Section \ref{SatakeSection} a natural way to associate a function $\mathcal{T}(\mathcal{F}^\bullet) \in \mathcal{H}_G$. This procedure induces a map of $\mathbb{F}_p$-vector spaces $$\mathcal{T} \colon K_0(P_{L^+G}(\Gr, \mathbb{F}_p)) \otimes \mathbb{F}_p \rightarrow \mathcal{H}_G,$$ where $K_0(P_{L^+G}(\Gr, \mathbb{F}_p))$ is the Grothendieck group of $P_{L^+G}(\Gr, \mathbb{F}_p)$. Note that we can equip $K_0(P_{L^+G}(\Gr, \mathbb{F}_p))$ with the structure of a ring where the multiplication is defined by convolution of perverse sheaves. 

\begin{thm} \label{Satthm}
There is a natural isomorphism of $\mathbb{F}_p$-algebras
$$\mathbb{F}_p[X_*(T)^+] \xrightarrow{\sim} K_0(P_{L^+G}(\Gr, \mathbb{F}_p)) \otimes \mathbb{F}_p.$$ If $\alpha \colon \mathbb{F}_p[X_*(T)_-] \rightarrow \mathbb{F}_p[X_*(T)^+]$ is the isomorphism which sends $\mu$ to $w_0(\mu)$, then $\mathcal{S}^{-1}$ is given by the composition
$$\mathbb{F}_p[X_*(T)_-] \xrightarrow{\alpha} \mathbb{F}_p[X_*(T)^+] \xrightarrow{\sim} K_0(P_{L^+G}(\Gr, \mathbb{F}_p)) \otimes \mathbb{F}_p \xrightarrow{\mathcal{T}} \mathcal{H}_G.$$
\end{thm}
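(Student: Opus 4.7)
My plan is to build the claimed isomorphism from Theorem \ref{convsimp} and then identify the resulting composite with the Herzig/Henniart--Vign\'eras description of $\mathcal{S}^{-1}$.

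The left-hand isomorphism $\mathbb{F}_p[X_*(T)^+] \xrightarrow{\sim} K_0(P_{L^+G}(\Gr, \mathbb{F}_p)) \otimes \mathbb{F}_p$ falls directly out of Theorem \ref{convsimp}. Since the simple objects of $P_{L^+G}(\Gr, \mathbb{F}_p)$ are exactly the $\IC_\mu$ for $\mu \in X_*(T)^+$, their classes form an $\mathbb{F}_p$-basis of $K_0 \otimes \mathbb{F}_p$. Theorem \ref{convsimp} then gives $[\IC_{\mu_1}] \cdot [\IC_{\mu_2}] = [\IC_{\mu_1 + \mu_2}]$, so the bijection $\mu \mapsto [\IC_\mu]$ is an $\mathbb{F}_p$-algebra isomorphism.

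Next, I would verify that $\mathcal{T}$ is a ring homomorphism, so that the composite $\mathcal{T} \circ (\mu \mapsto [\IC_\mu]) \circ \alpha$ is a map of $\mathbb{F}_p$-algebras $\mathbb{F}_p[X_*(T)_-] \to \mathcal{H}_G$. Multiplicativity of $\mathcal{T}$ should reduce to the compatibility between the sheaf-theoretic convolution diagram on $\Gr$ and convolution of $G(\mathcal{O})$-bi-invariant functions on $G(F)$, which is built into the definition of $\mathcal{T}$ in Section \ref{SatakeSection}. Since $\mathcal{S}^{-1}$ is an algebra isomorphism by the Herzig/Henniart--Vign\'eras theorem and $\alpha$ is visibly multiplicative, it is then enough to verify $\mathcal{T}(\IC_\mu) = \mathcal{S}^{-1}(w_0\mu)$ for every $\mu \in X_*(T)^+$.

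For this last identification, the $L^+G$-equivariance and the support condition $\Supp(\IC_\mu) \subseteq \Gr_{\leq \mu}$ force $\mathcal{T}(\IC_\mu)$ to be $G(\mathcal{O})$-bi-invariant and supported in $\bigsqcup_{\lambda \leq \mu} G(\mathcal{O}) \pi^\lambda G(\mathcal{O})$; the normalization $\dim_{\mathbb{F}_p} H(\IC_\mu) = 1$ from Theorem \ref{convsimp} pins down the value at $\pi^\mu$, while the values at $\pi^\lambda$ for $\lambda < \mu$ are read off from the stalks of $\IC_\mu$ on lower strata. Matching these against the explicit formula for $\mathcal{S}^{-1}(w_0\mu)$ is where I expect the main difficulty to lie: in characteristic zero the corresponding IC stalks are governed by Kazhdan--Lusztig polynomials, but for $\mathbb{F}_p$-coefficients one must instead leverage the global $F$-regularity of $\Gr_{\leq \mu}$ proved earlier in the paper, translating the Frobenius-splitting data into the combinatorial input that appears on the Hecke-algebra side of Herzig's theorem.
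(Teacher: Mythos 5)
Your construction of the isomorphism $\mathbb{F}_p[X_*(T)^+] \xrightarrow{\sim} K_0(P_{L^+G}(\Gr, \mathbb{F}_p)) \otimes \mathbb{F}_p$ via $\mu \mapsto [\IC_\mu]$ is correct and matches the paper. The rest of your plan, however, has a genuine gap: you never use the decisive fact, proved in Theorem~\ref{irreducibleobjects}, that the simple objects $\IC_\mu$ are \emph{constant} sheaves $\mathbb{F}_p[\dim \Gr_{\leq \mu}]$ on $\Gr_{\leq \mu}$. Once you know this, the stalk of $\IC_\mu$ at every point of $\Gr_{\leq \mu}$ is one-dimensional in a single degree, so $\mathcal{T}([\IC_\mu]) = \sum_{\lambda \leq \mu} \tau_\lambda$ is immediate --- there is no stalk computation to do on lower strata, and certainly no further ``translating the Frobenius-splitting data into the combinatorial input on the Hecke-algebra side''. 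The $F$-regularity input has already been consumed in proving Theorem~\ref{irreducibleobjects}. (Your remark that $\dim_{\mathbb{F}_p} H(\IC_\mu) = 1$ ``pins down the value at $\pi^\mu$'' confuses global hypercohomology with a stalk; the value of $\mathcal{T}(\IC_\mu)$ at $\pi^\mu$ is a stalk dimension, determined by constancy, not by global cohomology.)

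The second missing ingredient is the other side of the comparison: the paper quotes Herzig's explicit inverse Satake formula $\mathcal{S}^{-1}(\mu) = \sum_{\lambda \geq \mu} \tau_\lambda$ (sum over anti-dominant $\lambda$, from \cite{modpclass}), together with the observation $\tau_\nu = \tau_{w_0 \nu}$. With both explicit formulas in hand, the equality $\mathcal{T}([\IC_{w_0\mu}]) = \mathcal{S}^{-1}(\mu)$ is a direct term-by-term match; no separate verification that $\mathcal{T}$ is a ring homomorphism is needed (that falls out as a corollary, since $\mathcal{S}^{-1}$ is one). By contrast, your proposed route of first establishing multiplicativity of $\mathcal{T}$ from ``compatibility between the sheaf-theoretic convolution diagram and convolution of functions'' is not built into the definitions --- it is essentially the function--sheaf dictionary for convolution, which in the $\mathbb{F}_p$ setting would require its own argument and is a detour the paper deliberately avoids.
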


Building on results in this paper, in \cite{CentralCass} we construct perverse $\mathbb{F}_p$-sheaves on an affine flag variety which are central with respect to the convolution product. Our construction is an adaptation of a method of Gaitsgory \cite{Gaitsgorycentral} in the mod $p$ setting.  Moreover, we use the function-sheaf correspondence to derive an explicit formula for central elements in the Iwahori mod $p$ Hecke algebra. This allows us to give a geometric proof of certain combinatorial identities in mod $p$ Hecke algebras which are related to work of Vign\'{e}ras \cite{Vigneras2005} and Ollivier \cite{Ollivier2014}. We also explain how to use Theorem \ref{convsimp} to derive the existence of an isomorphism of $\mathbb{F}_p$-algebras $\mathbb{F}_p[X_*(T)_-] \cong \mathcal{H}_G$ without using the existence of the mod $p$ Satake isomorphism.

In future joint work with C. P\'{e}pin and T. Schmidt we aim to construct a mod $p$ version of Bezrukavnikov's work \cite{twohecke}, which is a tamely ramified local geometric Langlands correspondence. In Bezrukavnikov's work, perverse sheaves on an affine flag variety are a categorification of the automorphic side (i.e., representations of Hecke modules), and coherent sheaves on the Steinberg variety of the Langlands dual group are a categorification of the arithmetic side (i.e., tamely ramified representations of a local Galois group). That such an equivalence is possible in the mod $p$ setting is suggested by the results in \cite{PS20} and \cite{CentralCass}. Ideally, such an equivalence would give a geometric construction of some instances of Grosse-Kl\"{o}nne's functor \cite{GK2016} from supersingular mod $p$ Hecke modules to Galois representations.

\subsection{$F$-singularities and perverse $\mathbb{F}_p$-sheaves} \label{Fresults}
Let $k$ be a perfect field of characteristic $p > 0$. In order to prove Theorems \ref{repthm} and \ref{convsimp} we employ Frobenius splitting techniques. To state our results in this direction, note that for every parabolic subgroup $P \subset G$ containing $B$ there is an associated parahoric positive loop group $\mathcal{P} \subset L^+G$ and an affine flag variety $\mathcal{F}\ell_{\mathcal{P}}$
with an action of an Iwahori group $\mathcal{B}$. Given $w$ in the Iwahori-Weyl group, let $S_w \subset \mathcal{F}\ell_{\mathcal{P}}$ be the reduced $\mathcal{B}$-orbit closure associated to $w$. Affine Schubert varieties are related to local models of Shimura varieties (see \cite{GortzShimura}), and as such it is desirable to understand their singularities and cohomology. The affine Schubert varieties $\Gr_{\leq \mu}$ arise in the special case $P = G$ (see Section \ref{GlobFReg1.1}). We will prove the following theorem about the $F$-singularities of affine Schubert varieties.

\begin{thm} \label{thm1}
Suppose that $p \nmid |\pi_1(G_{\der})|$. Then the affine Schubert varieties $S_w$ are globally $F$-regular, and hence also strongly $F$-regular and $F$-rational. 
\end{thm}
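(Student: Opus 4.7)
The approach is to realize each affine Schubert variety as the image of a smooth projective Bott-Samelson-type variety under a proper birational morphism, and then descend global $F$-regularity using a theorem of Schwede-Smith. Since globally $F$-regular implies both strongly $F$-regular and $F$-rational, the theorem reduces to proving global $F$-regularity.

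For $S_w \subset \mathcal{F}\ell_{\mathcal{P}}$, fix a reduced expression $\underline{w} = (s_{i_1}, \ldots, s_{i_n})$ for $w$, and let $Z_{\underline{w}}$ be the associated affine Bott-Samelson-Demazure-Hansen variety: a smooth projective iterated $\mathbb{P}^1$-bundle equipped with a proper birational morphism $\pi\colon Z_{\underline{w}} \to S_w$. I would first show that $Z_{\underline{w}}$ is globally $F$-regular by constructing a Frobenius splitting of $Z_{\underline{w}}$ compatible with its boundary divisor $\partial Z_{\underline{w}} = \bigcup_{j=1}^n Z_{\underline{w}(\hat{j})}$, the union of the sub-Bott-Samelsons corresponding to subwords of length $n-1$. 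This is the affine analogue of Ramanathan's classical compatible splitting and is available in the Kac-Moody setting via the work of Mathieu and Kumar-Littelmann. Combined with the ample line bundles on $Z_{\underline{w}}$ coming from pullbacks of very ample bundles on appropriate partial flag varieties, Schwede-Smith's criterion---the existence of a Frobenius splitting compatible with an ample effective divisor implies global $F$-regularity---yields the claim for $Z_{\underline{w}}$.

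To descend to the Schubert varieties themselves, I use the hypothesis $p \nmid |\pi_1(G_{\der})|$: by results of Faltings and Pappas-Rapoport, $S_w$ is then normal with rational singularities, so $\pi_* \mathcal{O}_{Z_{\underline{w}}} = \mathcal{O}_{S_w}$; the inclusion $\mathcal{O}_{S_w} \hookrightarrow \pi_* \mathcal{O}_{Z_{\underline{w}}}$ is trivially split, and Schwede-Smith's descent theorem---global $F$-regularity passes to $X$ whenever $\mathcal{O}_X \hookrightarrow \pi_* \mathcal{O}_Y$ splits and $Y$ is globally $F$-regular---transfers global $F$-regularity to $S_w$. For $\Gr_{\leq \mu}$, the natural projection $\mathcal{F}\ell \to \Gr$ realizes $\Gr_{\leq \mu}$ as the image of a suitable $S_w$ (coming from a reduced expression for the translation $t_\mu$ composed with the longest element of the finite Weyl group) with matching pushforwards of structure sheaves, and the same descent applies.

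I expect the main obstacle to be the construction of the Frobenius splitting on the affine Bott-Samelson variety with the precise compatibility required by Schwede-Smith (not just along $\partial Z_{\underline{w}}$, but effectively along an ample divisor), together with verifying the required ampleness in the Kac-Moody / ind-scheme framework. A related subtlety is confirming that the mild hypothesis $p \nmid |\pi_1(G_{\der})|$ is genuinely sufficient for normality of $S_w$ and the identification $\pi_* \mathcal{O}_{Z_{\underline{w}}} = \mathcal{O}_{S_w}$ uniformly across all relevant groups $G$, especially in the non-simply-connected case, where the affine-flag-variety-specific results of Pappas-Rapoport become essential.
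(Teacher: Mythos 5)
Your proposal follows the paper's proof essentially step-for-step: reduce $\Gr_{\leq \mu}$ to $S_w$ via the $G/B$-bundle $\mathcal{F}\ell_{\mathcal{B}} \to \Gr$ (with $\pi_*\mathcal{O} = \mathcal{O}$ from projectivity of the fiber); show the Demazure/Bott--Samelson variety $D_{\tilde w}$ is globally $F$-regular via a Frobenius splitting compatible with the boundary subword divisors plus an ampleness statement and Smith's criterion (Lemma \ref{splitlem1}); then descend along the rational resolution $\pi_w\colon D_{\tilde w}\to S_w$ using $R\pi_{w,*}\mathcal{O}_{D_{\tilde w}} = \mathcal{O}_{S_w}$ and Lemma \ref{splitlem2}. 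The two points you flag as "main obstacles" are precisely where the paper puts its work: (i) it reduces explicitly to $G$ simply connected and simple via the Pappas--Rapoport identification \cite[8.e.3, 8.e.4]{PappasRapoport}, which is exactly where $p \nmid |\pi_1(G_{\der})|$ is consumed, after which the Mehta--Ramanathan criterion supplies a single splitting of $D_{\tilde w}$ compatible with all $D_{\tilde v_i}$; and (ii) the ampleness is not obtained by pulling back ample bundles (on a birational resolution of $S_w$ such pullbacks are big and nef but not ample, and in any case are not supported on the boundary where the compatible splitting lives) but by proving directly that $\sum_i m_i D_{\tilde v_i}$ is ample for suitable $m_i>0$ by induction on $\ell(w)$ via the iterated $\mathbb{P}^1$-bundle structure, following Lauritzen--Thomsen \cite[6.1]{BottSamelsonLine}. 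So the skeleton is right; the ampleness step needs the boundary-supported argument rather than the pullback heuristic you sketched.
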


We refer the reader to Section \ref{GlobFReg1.3} for the definition of global $F$-regularity. Theorem \ref{thm1} adds to results of Faltings \cite{FaltingsLoop} and Pappas-Rapoport \cite{PappasRapoport} in positive characteristic. In particular, they showed that affine Schubert varieties are normal, Cohen-Macaulay, and Frobenius split if $p \nmid |\pi_1(G_{\der})|$. While global $F$-regularity is stronger than these properties, our proof relies on their results. We also prove that Schubert subvarieties of a particular Beilinson-Drinfeld Grassmannian are $F$-rational in Theorem \ref{BDprops2}. In \cite{CentralCass} we use Theorem \ref{thm1} to prove that certain equal characteristic analogues of local models of Shimura varieties are strongly $F$-regular. 

For the rest of this section we assume that $k$ is algebraically closed. We note that the reduced closure of every $\mathcal{P}$-orbit in $\mathcal{F}\ell_{\mathcal{P}}$ can be identified with $S_w$ for some $w$. Using Theorem \ref{thm1} and the fact that the stabilizers for the action of $\mathcal{P}$ on $\mathcal{F}\ell_{\mathcal{P}}$ are connected, we prove the following result.

\begin{thm} \label{irreducibleobjects}
The simple equivariant perverse sheaves in $P_{\mathcal{P}}(\mathcal{F}\ell_{\mathcal{P}}, \mathbb{F}_p)$ are the shifted constant sheaves $\mathbb{F}_p[\dim S_w]$ supported on those $S_w$ which are $\mathcal{P}$-orbit closures. 
\end{thm}
In the special case $P=G$ we let $$\IC_\mu := \mathbb{F}_p[\dim \Gr_{\leq \mu}]$$ be the shifted constant sheaf supported on $\Gr_{\leq \mu}$.

Theorems \ref{thm1} and \ref{irreducibleobjects} generalize results of Lauritzen, Raben-Pedersen, and Thomsen \cite{SchubertFreg} in the case of classical Schubert varieties in flag varieties. While our proof of Theorem \ref{thm1} is analogous to that in \cite{SchubertFreg}, we derive Theorem \ref{irreducibleobjects} as a consequence of the following more general results which may be of independent interest. 

\begin{thm} \label{mainthm}
If $X$ is a Cohen-Macaulay $k$-scheme of finite type and equidimension $d$ then the complex of \'{e}tale sheaves $\mathbb{F}_p[d] \in D_c^b(X, \mathbb{F}_p)$ is perverse.
\end{thm}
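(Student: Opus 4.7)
The plan is to verify the two pointwise conditions that define the middle perversity $t$-structure on $D_c^b(X, \mathbb{F}_p)$. Fixing a point $x \in X$ with $e := \dim \overline{\{x\}}$ and inclusion $i_x \colon x \hookrightarrow X$, I need to check
\begin{itemize}
\item[(S)] $\mathcal{H}^i(i_x^* \mathbb{F}_p[d]) = 0$ for $i > -e$, and
\item[(C)] $\mathcal{H}^i(i_x^! \mathbb{F}_p[d]) = 0$ for $i < -e$.
\end{itemize}
Condition (S) is immediate: $i_x^* \mathbb{F}_p[d]$ is concentrated in cohomological degree $-d$, and the inequality $-d \leq -e$ follows from $e \leq d$ by equidimensionality.

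The content is in condition (C), which I rewrite as $\mathcal{H}^j(i_x^! \mathbb{F}_p) = 0$ for all $j < d - e$. The key input is the Artin–Schreier short exact sequence of étale sheaves
$$0 \to \mathbb{F}_p \to \mathcal{O}_X \xrightarrow{F-1} \mathcal{O}_X \to 0.$$
Applying $i_x^!$ produces a long exact sequence that sandwiches $\mathcal{H}^j(i_x^! \mathbb{F}_p)$ between the cokernel of $F-1$ on $\mathcal{H}^{j-1}(i_x^!\mathcal{O}_X)$ and the kernel of $F-1$ on $\mathcal{H}^{j}(i_x^!\mathcal{O}_X)$. It therefore suffices to show that $\mathcal{H}^j(i_x^! \mathcal{O}_X)$ vanishes for $j < d - e$. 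Since étale and Zariski cohomology agree for the quasicoherent sheaf $\mathcal{O}_X$, this is a question about local cohomology of the ring $\mathcal{O}_{X,x}^{sh}$ at its maximal ideal. Because $X$ is Cohen–Macaulay and equidimensional, the strict henselization $\mathcal{O}_{X,x}^{sh}$ is Cohen–Macaulay of Krull dimension $d-e$, and Grothendieck's cohomological characterization of the Cohen–Macaulay property delivers exactly the vanishing of local cohomology below $d-e$.

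The main obstacle is the careful identification of the étale costalk $i_x^! \mathbb{F}_p$ with local cohomology at the closed point of $\Spec \mathcal{O}_{X,x}^{sh}$ when $x$ is not a closed point of $X$. For closed points this identification is classical by excision and Hensel's lemma, and in general it reduces to that case via a pro-étale localization argument, using that strict henselization preserves both the Cohen–Macaulay property and Krull dimension of local rings. Once this identification is granted, the Artin–Schreier sequence combined with the Cohen–Macaulay local cohomology vanishing gives (C) at every $x$, completing the proof.
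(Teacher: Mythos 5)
Your proposal is correct and follows essentially the same route as the paper: reduce to the strict henselization $\mathcal{O}_{X,x}^{\sh}$ (which preserves the Cohen--Macaulay property and dimension, Lemma \ref{shprop}), use the Artin--Schreier sequence to convert the vanishing of $H^j(Ri_x^!\mathbb{F}_p)$ into vanishing of local cohomology of the structure sheaf, and invoke the Cohen--Macaulay vanishing $H^m_{\mathfrak{m}}(A)=0$ for $m<\dim A$. The only cosmetic difference is that you apply $Ri_x^!$ directly to the Artin--Schreier sequence, whereas the paper (Proposition \ref{locvan}) runs the Artin--Schreier sequence on $\Spec(A)$ and on the punctured spectrum and compares via the localization triangle; the two bookkeeping schemes are equivalent.
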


\begin{thm} \label{mainthm2}
If $X$ is an integral $F$-rational $k$-scheme of finite type and dimension $d$ then $\mathbb{F}_p[d]$ is simple as a perverse sheaf. 
\end{thm}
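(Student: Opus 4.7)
Since $F$-rational rings are Cohen-Macaulay (and normal), Theorem \ref{mainthm} implies that $\mathbb{F}_p[d]$ is a perverse sheaf on $X$. To prove simplicity, the plan is to identify $\mathbb{F}_p[d]$ with the intermediate extension $j_{!*}(\mathbb{F}_p[d]|_U)$, where $j \colon U \hookrightarrow X$ is a smooth dense open subscheme; such a $U$ exists because $X$ is normal and generically smooth over a perfect base field. Since $U$ is smooth and irreducible, $\mathbb{F}_p[d]|_U$ is the IC sheaf of $U$ and is simple, so it suffices to verify the support and cosupport conditions characterizing $j_{!*}$ along the closed complement $i \colon Z \hookrightarrow X$.

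The support condition---the absence of perverse quotients supported on $Z$---is immediate, because $\dim Z < d$ forces $i^*\mathbb{F}_p[d] \in {}^pD^{\leq -1}(Z)$. For the cosupport condition (absence of perverse subobjects supported on $Z$), the standard BBD reformulation requires that for every closed irreducible subvariety $Y \subsetneq X$ of codimension $c$, the étale local cohomology sheaves satisfy $\underline{\mathcal{H}}^m_Y(X, \mathbb{F}_p) = 0$ for all $m \leq c$. The Artin-Schreier short exact sequence $0 \to \mathbb{F}_p \to \mathcal{O}_X \xrightarrow{F-1} \mathcal{O}_X \to 0$ on the étale site, combined with the Cohen-Macaulay vanishing $\underline{\mathcal{H}}^m_Y(X, \mathcal{O}_X) = 0$ for $m < c$, reduces this to the single injectivity statement
\[
\ker\bigl(F - 1 \colon \underline{\mathcal{H}}^c_Y(X, \mathcal{O}_X) \to \underline{\mathcal{H}}^c_Y(X, \mathcal{O}_X)\bigr) = 0.
\]

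To verify this, I localize at the generic point $y$ of $Y$: the ring $R := \mathcal{O}_{X,y}$ is $F$-rational of dimension $c$ (since $F$-rationality localizes in the $F$-finite setting), and the relevant stalk is the top local cohomology module $M := H^c_{\mathfrak{m}_y}(R)$. By Karen Smith's characterization, $F$-rationality of $R$ is equivalent to $M$ having no proper nonzero $R$-submodule stable under the natural Frobenius action. If $0 \neq x \in \ker(F - 1)$, then the cyclic $R$-submodule $Rx \subseteq M$ is automatically Frobenius-stable (since $F(rx) = r^p x \in Rx$), so $Rx = M$ by simplicity; but $M$ is Artinian of infinite length when $c \geq 1$ and hence cannot be cyclic---a contradiction (the case $c = 0$ does not arise, since $Y \subsetneq X$). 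The main obstacle is the first step: extracting from the BBD formalism the precise local-cohomology vanishing that matches what Smith's criterion plus Artin-Schreier are tailor-made to provide; once the problem is massaged into the injectivity of $F-1$ on top local cohomology, the hypothesis of $F$-rationality plugs in directly.
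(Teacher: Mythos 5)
Your proposal follows the same overall route as the paper: show $\mathbb{F}_p[d]$ is perverse via Theorem \ref{mainthm}, reduce to showing it is the intermediate extension of its restriction to the smooth locus, and then kill the obstruction in the critical degree $c$ (the codimension) by combining the Artin--Schreier sequence with Smith's characterization of $F$-rationality (Theorem \ref{FRatDef}). Your final step---an element of $\ker(F - 1)$ in $H^c_{\mathfrak m}(R)$ generates a Frobenius-stable $R$-submodule, hence all of $H^c_{\mathfrak m}(R)$ by simplicity, which is impossible since top local cohomology of a local ring of positive dimension is never finitely generated---is exactly the paper's Lemma \ref{Finv}. One small simplification in your version is that you apply Artin--Schreier directly to local cohomology with supports, which handles all $c \geq 1$ uniformly, whereas the paper's Proposition \ref{locvan} treats $c = 1$ and $c > 1$ separately (because it goes through the open complement and has to distinguish whether $H^0(U, \mathcal O_U) = A$); your route gives the identification $H^c_{\{y\}}(\mathbb{F}_p) = \ker(F-1 \colon H^c_{\mathfrak m}(R) \to H^c_{\mathfrak m}(R))$ cleanly in all cases.

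However, there is a genuine gap in where you localize. The perverse $t$-structure on $D^b_c(X,\mathbb{F}_p)$ and the support/cosupport conditions characterizing $j_{!*}$ are formulated via \'etale stalks; as the paper records in Remark \ref{defrem}, the condition ``$H^m(Ri_y^!\,\mathcal{F}^\bullet) = 0$'' is a statement about the \emph{strict henselization} $\mathcal{O}^{\sh}_{X,y}$, not the Zariski localization $\mathcal{O}_{X,y}$. You set $R := \mathcal{O}_{X,y}$ and apply Smith's criterion there. But what is actually needed is that $H^c_{\mathfrak m}(\mathcal{O}^{\sh}_{X,y})$ is a simple $\mathcal{O}^{\sh}_{X,y}[F]$-module, i.e., that $\mathcal{O}^{\sh}_{X,y}$ is $F$-rational. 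This does not follow formally from $F$-rationality of $\mathcal{O}_{X,y}$: the strict henselization is a filtered colimit of \'etale local $A$-algebras, and one must show $F$-rationality passes along that colimit. This is precisely the content of the paper's Lemma \ref{shprop}, proved using the explicit construction of $A^{\sh}$ and the preservation of $F$-rationality under \'etale maps (V\'elez) and localization (Hochster--Huneke). Without that lemma, your injectivity-of-$(F-1)$ statement is established on the wrong ring, and the argument does not close.
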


The property of $F$-rationality enters through its connection to the Frobenius structure on local cohomology. See Theorem \ref{FRatDef} for a precise definition of $F$-rationality. We prove Theorems \ref{mainthm} and \ref{mainthm2} by explicit computations with \'{e}tale $\mathbb{F}_p$-sheaves.

\subsection{Outline}
In Sections \ref{PerverseFpSheaves} and \ref{EquivSheaves} we prove basic facts about the category $P_c^b(X, \mathbb{F}_p)$ of bounded constructible perverse $\mathbb{F}_p$-sheaves on a scheme $X$ of finite type over $k$ which are well-known for perverse $\overline{\mathbb{Q}}_\ell$-sheaves. For example, we show that for an action of a smooth connected group scheme $G$ on $X$, the $G$-equivariant perverse sheaves form a full subcategory of $P_c^b(X, \mathbb{F}_p)$ which is stable under taking subquotients. The usual proofs for $\overline{\mathbb{Q}}_\ell$-sheaves do not always work in our setting, largely because only three of the  six functors preserve constructibility. Our main  tool is the Riemann-Hilbert correspondence of Emerton-Kisin \cite{EmertonKisin}.

In Section \ref{PerverseFrat} we prove Theorems \ref{mainthm} and \ref{mainthm2}. For the proofs we use the Artin-Schreier sequence to transform the problem into deciding whether certain local cohomology modules vanish or have any nonzero elements fixed by the Frobenius endomorphism. We then give a direct argument using Frobenius splitting techniques. 

In Section \ref{GlobFReg} we introduce the affine Grassmannian and prove that affine Schubert varieties are globally $F$-regular. We first apply a result of Pappas-Rapoport \cite{PappasRapoport} to reduce to the case when $G$ is simply connected. After that our proof is analogous to that of Lauritzen, Raben-Pedersen, and Thomsen \cite{SchubertFreg} in the classical setting. 

In Section \ref{FponGr} we define the category $P_{\mathcal{P}}(\mathcal{F}\ell_{\mathcal{P}}, \mathbb{F}_p)$, and we prove Theorem \ref{irreducibleobjects}. Then we define the convolution product of two perverse sheaves in $P_{L^+G}(\Gr, \mathbb{F}_p)$. The fact that the convolution product preserves perversity follows from the fact that $\IC_{\mu_1} * \IC_{\mu_2} = \IC_{\mu_1+\mu_2}$ (Theorem \ref{convsimp}). By considering the Artin–Schreier sequence, to prove that $\IC_{\mu_1} * \IC_{\mu_2} = \IC_{\mu_1+\mu_2}$ it suffices to show that the derived pushforward along a convolution morphism preserves the structure sheaf. For this we appeal to a general result due to Kovács \cite{ratsing} on rational singularities.

In Theorem \ref{cohvan}  we show that $R \Gamma (\IC_{\mu}) = \mathbb{F}_p[\dim \Gr_{\leq \mu}]$, which completes the proof of Theorem \ref{convsimp}. By contrast, in the case of $\overline{\mathbb{Q}}_\ell$-coefficients the IC sheaves generally have nonzero hypercohomology in multiple degrees. Theorem \ref{cohvan} is an immediate consequence of the global $F$-regularity of $\Gr_{\leq \mu}$. We then apply Theorem \ref{cohvan} to show that the functor $H: P_{L^+G}(\Gr, \mathbb{F}_p) \rightarrow \Vect_{\mathbb{F}_p}$  is exact and faithful. Finally, we investigate extensions between objects in $P_{L^+G}(\Gr, \mathbb{F}_p)$ in Section \ref{extsection}.

In Section \ref{AssocSection} we construct a commutativity constraint on $P_{L^+G}(\Gr, \mathbb{F}_p)$, and we show that the global cohomology functor $H$ is a symmetric monoidal functor. Our proof follows the standard technique of interpreting the convolution product as a fusion product on the Beilinson-Drinfeld Grassmannian, and we again make use of Frobenius splitting techniques (see Lemma \ref{mainlemconv}). At this point the same Tannakian formalism as in the case of $\overline{\mathbb{Q}}_\ell$-coefficients provides us with an affine monoid scheme $M_G$ such that $P_{L^+G}(\Gr, \mathbb{F}_p) \cong \Rep_{\mathbb{F}_p}(M_G)$. We conclude Section \ref{AssocSection} with a few results about $M_G$. 

Finally, in Section \ref{SatakeSection} we deduce Theorem \ref{Satthm} from Theorem \ref{convsimp} and an explicit formula for the inverse of the mod $p$ Satake isomorphism due to Herzig \cite{modpclass}.
\\
\\
\textbf{Acknowledgments.} I would first like to thank my advisor Mark Kisin for suggesting a topic that led to this paper and for his consistent guidance and encouragement. I would also like to express my gratitude to C\'{e}dric P\'{e}pin, Timo Richarz, Tobias Schmidt, and Xinwen Zhu for their comments and insights on an earlier version of this paper. I thank the referee for their careful reading of the paper and several helpful suggestions and corrections. Finally, it is a pleasure to thank Bhargav Bhatt, Justin Campbell, Dennis Gaitsgory, Michel Gros, Michael Harris, Florian Herzig, Koji Shimizu, Karen Smith, David Yang, Zijian Yao, and Yifei Zhao for their interest and helpful conversations. This material is based upon work supported by the National Science Foundation Graduate Research Fellowship Program under Grant No. DGE-1144152.

\section{Perverse $\mathbb{F}_p$-sheaves} \label{PerverseFpSheaves} For Sections \ref{PerverseFpSheaves} - \ref{PerverseFrat} we fix an algebraically closed field $k$ of characteristic $p > 0$. By a $k$-scheme we will always mean a separated scheme of finite type over $k$ unless we specify otherwise. In this section we introduce the reader to the category of perverse $\mathbb{F}_p$-sheaves on a $k$-scheme $X$, and we prove some basic facts analogous to those for $\overline{\mathbb{Q}}_\ell$-coefficients as in \cite{BBD}.

We let $D^*(X, \mathbb{F}_p)$ for $* = \emptyset$, $+$, $-$, $b$ denote the corresponding derived category of \'{e}tale $\mathbb{F}_p$-sheaves on $X$. Let $D_c^b(X, \mathbb{F}_p)$ denote the triangulated subcategory of $D^b(X,\mathbb{F}_p)$ consisting of objects having constructible cohomology sheaves. For a morphism $f \colon X \rightarrow Y$ between $k$-schemes, we have the three functors
\begin{align*}  Rf^* & \colon D(Y, \mathbb{F}_p) \rightarrow D(X, \mathbb{F}_p) \\  Rf_! & \colon D^+(X, \mathbb{F}_p) \rightarrow D^+(Y, \mathbb{F}_p) \\ - \overset{L}{\otimes}_{\mathbb{F}_p} - & \colon  D^{-}(X, \mathbb{F}_p) \times D^{-}(X, \mathbb{F}_p) \rightarrow D^{-}(X, \mathbb{F}_p). 
\end{align*}
Each of these functors restricts to a functor between $D_c^b(X, \mathbb{F}_p)$ and $D_c^b(Y, \mathbb{F}_p)$. We also have the three functors
\begin{align*}  Rf^! & \colon D^+(Y, \mathbb{F}_p) \rightarrow D^+(X, \mathbb{F}_p) \\  Rf_* & \colon D^+(X, \mathbb{F}_p) \rightarrow D^+(Y, \mathbb{F}_p) \\ R\mathscr{H}\text{om}_{\mathbb{F}_p}(-, -) & \colon  D(X, \mathbb{F}_p) \times D^{+}(X, \mathbb{F}_p) \rightarrow D(X, \mathbb{F}_p).
\end{align*}
Contrary to the case of $\mathbb{F}_\ell$-sheaves for $\ell \neq p$, these last three functors do not in general preserve constructibility. For example, the Artin-Schreier sequence implies that the affine line $\mathbb{A}^1$ has infinite-dimensional $\mathbb{F}_p$-\'{e}tale cohomology. Hence the second cohomology of $\mathbb{P}^1$ with support in a closed point is also infinite-dimensional. Moreover, one can show that there is no Verdier duality functor, and $Rf^!$ does not in general agree with a shift of $Rf^*$ when $f$ is smooth. 

Despite these drawbacks, we can still define perverse $\mathbb{F}_p$-sheaves (for the middle perversity function) in a way similar to perverse $\overline{\mathbb{Q}}_\ell$-sheaves. Fix a point $x \in X$, and let $Y$ be the reduced closed subscheme with topological space $\overline{\{x\}}$. For a complex $\mathcal{F}^\bullet \in D^+(X, \mathbb{F}_p)$ and $i \in \mathbb{Z}$ we say that $H^i (Ri_x^* \mathcal{F}^\bullet) = 0$ if the following condition holds: 
\begin{itemize}
\item There exists a dense open subscheme $U \subset Y$ such that $H^i(Ri_U^* \mathcal{F}^\bullet) =0$, where $i_U \colon U \rightarrow X$ is the immersion.
\end{itemize}
 We can similarly define what it means to have $H^i (Ri_x^! \mathcal{F}^\bullet) = 0$. Now consider the following two subcategories of $D^+(X,\mathbb{F}_p)$:
\begin{align*}
^{p}D^{\leq 0}(X, \mathbb{F}_p) & = \{\mathcal{F}^\bullet \in D^+(X, \mathbb{F}_p) \: : \:H^i (Ri_x^* \mathcal{F}^\bullet) = 0 \text{ for all } x \in X \text{ and } i > -\dim \overline{\{x\}}  \}\\
^{p}D^{\geq 0}(X, \mathbb{F}_p) & = \{\mathcal{F}^\bullet \in D^+(X, \mathbb{F}_p) \: : \: H^i(Ri_x^!\mathcal{F}^\bullet) = 0 \text{ for all } x \in X \text{ and } i < -\dim \overline{\{x\}} \}.
\end{align*}

Gabber \cite{Gabbert} has shown that these subcategories underlie a $t$-structure on $D^+(X,\mathbb{F}_p)$. Moreover, he has shown that the truncation functors preserve the subcategory $D_c^b(X, \mathbb{F}_p)$. We denote the category of perverse sheaves in $D^+(X, \mathbb{F}_p)$ by $$P^+(X, \mathbb{F}_p): = {^{p}D}^{\leq 0}(X, \mathbb{F}_p) \cap {^{p}D}^{\geq 0}(X, \mathbb{F}_p). $$ Let $P^b(X, \mathbb{F}_p)$ (resp. $P_c^b(X, \mathbb{F}_p)$) be the full subcategory of perverse sheaves in $D^b(X, \mathbb{F}_p)$ (resp. $D_c^b(X, \mathbb{F}_p)$). 

\begin{rmrk} \label{defrem}
For $x \in X$, let $j \colon \Spec(\mathcal{O}_{X,x}^{\text{sh}}) \rightarrow X$ be the strict henselization of the local ring of $X$ at $x$, and let $i \colon \overline{x} \rightarrow \Spec(\mathcal{O}_{X,x}^{\text{sh}})$ be the inclusion of the closed point. Then the condition $H^i(Ri^*_x \mathcal{F}^\bullet) = 0$ is equivalent to $H^i(Ri^* (Rj^* \mathcal{F}^\bullet)) = 0$. Similarly, the condition $H^i(Ri^!_x \mathcal{F}^\bullet) = 0$ is equivalent to $H^i(Ri^!( Rj^* \mathcal{F}^\bullet)) = 0$ (see {\cite{Gabbert}}).
\end{rmrk}

\begin{rmrk} When $X$ is smooth, Emerton and Kisin have given an alternative description of $P_c^b(X, \mathbb{F}_p)$ as follows. For any $k$-algebra $R$ let $R[F]$ be the non-commutative polynomial algebra in one variable $F$ such that $Fa = a^p F$ for $a \in R$. Let $\mathcal{O}_{F,X}$ be the sheaf of non-commutative rings whose value on an open affine $\Spec(R)$ is $R[F]$. If $\mathcal{M}$ is an $\mathcal{O}_{F,X}$-module there is a natural map of $\mathcal{O}_X$-modules $$\phi_M \colon F^* \mathcal{M} \rightarrow \mathcal{M},$$ where $F$ is the absolute Frobenius endomorphism on $X$. The $\mathcal{O}_{F,X}$-module $\mathcal{M}$ is said to be \emph{locally finitely generated unit} (lfgu) if it is quasi-coherent as an $\mathcal{O}_X$-module, $\phi_M$ is an isomorphism and $\mathcal{M}$ is, locally on $X$, finitely generated over $\mathcal{O}_{F,X}$. Let $D_{\text{lfgu}}^b(\mathcal{O}_{F,X})$ be the bounded derived category of $\mathcal{O}_{F,X}$-modules whose cohomology sheaves are lfgu $\mathcal{O}_{F,X}$-modules. When $X$ is smooth the category $D_c^b(X, \mathbb{F}_p)$ is anti-equivalent to the category $D_{\text{lfgu}}^b(\mathcal{O}_{F,X})$ by \cite[11.3]{EmertonKisin}. Under this equivalence, the perverse $t$-structure on $D_c^b(X, \mathbb{F}_p)$ corresponds to the standard $t$-structure on $D_{\text{lfgu}}^b(\mathcal{O}_{F,X})$ by \cite[11.5.4]{EmertonKisin}. If $f \colon Y \rightarrow X$ is a morphism between smooth $k$-schemes then Emerton and Kisin have also given a description of the functors $Rf^*$, $Rf_!$, and - $\overset{L}{\otimes}_{\mathbb{F}_p} - $ in terms of functors on quasi-coherent sheaves.
\end{rmrk}

We now establish some basic properties of perverse $\mathbb{F}_p$-sheaves. To begin, we have the following result of Gabber, which is in fact valid for a larger class of noetherian schemes defined over $\mathbb{F}_p$.

\begin{thm}[{\cite[12.4]{Gabbert}}] \label{artinian} Let $X$ be a $k$-scheme. \leavevmode
\begin{enumerate}[{\normalfont (i)}]

\item Every object of $P_c^b(X, \mathbb{F}_p)$ is artinian and noetherian.
\item Every perverse subquotient of an object of $P_c^b(X, \mathbb{F}_p)$ that lies in $P^+(X, \mathbb{F}_p)$ in fact lies in $P_c^b(X, \mathbb{F}_p)$.
\end{enumerate}
\end{thm}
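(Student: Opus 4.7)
The plan is to establish both parts simultaneously by Noetherian induction on the dimension $d$ of the support of the perverse sheaf $\mathcal{F}^\bullet$. The base case $d=0$ is immediate: such perverse sheaves are finite direct sums of skyscraper sheaves attached to finite-dimensional $\mathbb{F}_p$-vector spaces, and both statements hold trivially.

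For the inductive step, first reduce to the case where the support of $\mathcal{F}^\bullet$ is a single irreducible closed subscheme $Z \subset X$. By generic smoothness and the constructibility of the cohomology sheaves of $\mathcal{F}^\bullet$, choose a dense smooth open $U \subset Z$ on which $\mathcal{F}^\bullet|_U$ is, up to the shift $[\dim Z]$, a finite-rank $\mathbb{F}_p$-local system $\mathcal{L}$. Local systems of finite-dimensional $\mathbb{F}_p$-vector spaces on a connected scheme correspond to continuous finite-dimensional representations of the étale fundamental group, so they form a Noetherian and Artinian abelian category which is closed under $\mathbb{F}_p$-sheaf subquotients inside all $\mathbb{F}_p$-sheaves on $U$.

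For part (i), given any chain of perverse subobjects of $\mathcal{F}^\bullet$, its restriction to $U$ stabilizes by the previous paragraph. The successive quotients of the original chain then have support of strictly smaller dimension; they lie in $P_c^b$ by part (ii) applied inductively, so the chain itself stabilizes by the inductive hypothesis applied to the complement. For part (ii), a subquotient $\mathcal{G}^\bullet \in P^+$ of $\mathcal{F}^\bullet$ restricts on $U$ to a subquotient of a local system, which is constructible, and the remaining part of $\mathcal{G}^\bullet$ is supported on $Z \setminus U$ of strictly smaller dimension and is again a perverse subquotient of an object of $P_c^b$, hence constructible by induction. Gluing these two pieces via the standard open-closed exact triangle and applying perverse truncation assembles $\mathcal{G}^\bullet$ as an object of $P_c^b$.

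The main obstacle is to justify rigorously that $\mathcal{F}^\bullet|_U$ really is a shifted local system on a sufficiently small smooth open $U$. In the $\overline{\mathbb{Q}}_\ell$ setting this is routine via Verdier duality and the identity $f^! \cong f^*[2d]$ for smooth $f$, but neither tool is available for $\mathbb{F}_p$-coefficients. I would instead pass through the Emerton-Kisin correspondence, which on smooth $U$ identifies $P_c^b(U, \mathbb{F}_p)$ with lfgu $\mathcal{O}_{F,U}$-modules concentrated in degree zero; generic freeness of the underlying coherent $\mathcal{O}_U$-module, together with the Artin-Schreier identification of locally free unit $F$-modules with étale-locally constant $\mathbb{F}_p$-sheaves, then yields the desired local constancy after shrinking $U$ further.
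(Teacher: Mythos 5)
The paper does not actually prove this theorem; it cites Gabber \cite[12.4]{Gabbert} verbatim and invokes it as a black box. Gabber's argument applies to arbitrary noetherian $\mathbb{F}_p$-schemes (not just finite type over a field) and is a direct \'etale-side argument, so there is no in-paper proof to compare your proposal against. Your reconstruction follows the familiar BBD template (induction on dimension of support, reduce to a shifted local system on a smooth dense open, glue with the closed complement), which is a reasonable guess at what such a proof must look like.

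However, the proposal has a genuine gap, and it sits exactly where you flag the ``main obstacle,'' though not for the reason you give. The difficulty you address---that $\mathcal{F}^\bullet|_U$ is a shifted local system on some dense smooth $U$---is actually the \emph{easy} part: $\mathcal{F}^\bullet \in P_c^b$ has constructible bounded cohomology by hypothesis, so there exists a dense open on which all $H^i(\mathcal{F}^\bullet)$ are locally constant, and the perversity conditions then force concentration in degree $-\dim Z$. One does not need the Emerton--Kisin correspondence for this. The genuine problem is the perverse subquotient $\mathcal{G}^\bullet$ in part (ii): it lies only in $P^+(X,\mathbb{F}_p)$, so a priori it has \emph{no} constructibility at all, and hence there is no dense open on which it is guaranteed to be locally constant. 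When you assert that $\mathcal{G}^\bullet|_U$ ``restricts on $U$ to a subquotient of a local system, which is constructible,'' you are in effect invoking part (ii) on $U$, which is what you are trying to prove. The supporting claim you offer---that local systems ``form a Noetherian and Artinian abelian category which is closed under $\mathbb{F}_p$-sheaf subquotients inside all $\mathbb{F}_p$-sheaves on $U$''---is also false as stated: if $j \colon V \hookrightarrow U$ is a proper dense open immersion, the extension by zero $j_!\mathbb{F}_p$ is a subsheaf of the constant sheaf $\mathbb{F}_p$ but is not locally constant. So the inductive engine does not turn over.

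Two subsidiary issues. First, your proposed appeal to ``generic freeness of the underlying coherent $\mathcal{O}_U$-module'' of an lfgu $\mathcal{O}_{F,U}$-module is imprecise: lfgu modules are only quasi-coherent, not coherent (e.g.\ $j_*\mathcal{O}_{\mathbb{G}_m}$ on $\mathbb{A}^1$ is an lfgu module that is not $\mathcal{O}$-coherent), so generic freeness does not apply directly; the correct statement (generic $\mathcal{O}$-coherence after restricting to a dense open) is true but is itself part of the Emerton--Kisin/Lyubeznik structure theory. Second, the argument for part (i) that ``the chain itself stabilizes by the inductive hypothesis applied to the complement'' needs a little more care: after the restrictions to $U$ stabilize at $\mathcal{F}_{n_0}^\bullet$, the increasing quotients $\mathcal{F}_{n_0}^\bullet/\mathcal{F}_n^\bullet$ are supported on $Z\setminus U$ but do not a priori have a common bound there; one should observe they all factor through the single object ${}^pi_* {}^pi^* \mathcal{F}_{n_0}^\bullet$, which lies in $P_c^b$ and is supported on the smaller-dimensional locus, so that the inductive hypothesis can actually bite. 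As written the argument is not wrong in spirit, but the key finiteness step in (ii)---constructibility of $\mathcal{G}^\bullet|_U$---is not established, and without it the induction does not close.
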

It follows that  $P_c^b(X, \mathbb{F}_p)$ is an abelian category. Given a complex $\mathcal{F}^\bullet \in D^+(X, \mathbb{F}_p)$, we let $^{p}H^i(\mathcal{F}^\bullet) \in P^+(X, \mathbb{F}_p)$ denote its $i$th perverse cohomology sheaf. If $\mathcal{F}^\bullet \in D_c^b(X, \mathbb{F}_p)$, then $^{p}H^i(\mathcal{F}^\bullet) \in P_c^b(X, \mathbb{F}_p)$. For a morphism between $k$-schemes $f \colon X \rightarrow Y$, define $$^pf_*\mathcal{F}^\bullet := {^pH}^0(Rf_* \mathcal{F}^\bullet) \in P^+(Y, \mathbb{F}_p).$$ By this recipe we get functors
\begin{align*}
{^pf}_*, {^pf}_! & \colon  P^+(X, \mathbb{F}_p) \rightarrow P^+(Y, \mathbb{F}_p) \\
{^pf}^*, {^pf}^! & \colon P^+(Y, \mathbb{F}_p) \rightarrow P^+(X, \mathbb{F}_p).
\end{align*}
In general, two of these restrict to functors
\begin{align*}
{^pf}_! & \colon  P_c^b(X, \mathbb{F}_p) \rightarrow P_c^b(Y, \mathbb{F}_p) \\
{^pf}^* & \colon P_c^b(Y, \mathbb{F}_p) \rightarrow P_c^b(X, \mathbb{F}_p).
\end{align*}

Let $j \colon U \rightarrow X$ be an open immersion of $k$-schemes, and let $i \colon Z \rightarrow X$ be a complementary closed immersion. In the following lemma, all functors are defined on the categories $D^+(X, \mathbb{F}_p)$, $D^+(U, \mathbb{F}_p)$, $D^+(Z, \mathbb{F}_p)$, or their corresponding hearts for the perverse $t$-structure.

\begin{lem} \label{properties} \leavevmode
\begin{enumerate}[{\normalfont (i)}]
\item $Rj^*$ and $Ri_*$ are $t$-exact, and the functors ${^pj}^*$ and ${^pi}_*$ are exact.
\item $Rj_!$ and $Ri^*$ are right $t$-exact, i.e. $$Rj_!({^pD}^{\leq 0}(U,\mathbb{F}_p)) \subset {^pD}^{\leq 0}(X, \mathbb{F}_p), \quad \quad \quad \quad  Ri^*({^pD}^{\leq 0}(X,\mathbb{F}_p)) \subset {^pD}^{\leq 0}(Z, \mathbb{F}_p),$$ and the functors ${^pj}_!$ and ${^pi}^*$ are right exact.
\item $Rj_*$ and $Ri^!$ are left t-exact, and the functors ${^pj}_*$ and ${^pi}^!$ are left exact.
\item The triples $({^pj}_!, {^pj}^*, {^pj}_*)$ and $({^pi}^*, {^pi}_*, {^pi}^!)$ are adjoint sequences.
\item We have
$${^pj}^* \circ {^pi}_* = 0, \quad \quad {^pi}^* \circ {^pj}_! = 0, \quad \quad {^pi}^! \circ {^pj}_* = 0.$$
\item We have
$${^pj}^* \circ {^pj}_* = \id, \quad \quad {^pj}^* \circ {^pj}_! = \id, \quad \quad {^pi}^* \circ {^pi}_* = \id, \quad \quad {^pi}^! \circ {^pi}_* = \id. $$
\item For $\mathcal{F}^\bullet \in P^+(X, \mathbb{F}_p)$ there are exact sequences
$$ 0 \rightarrow {^pi}_* {^pH}^{-1}(Ri^* \mathcal{F}^\bullet) \rightarrow {^pj}_! {^pj}^* \mathcal{F}^\bullet \rightarrow \mathcal{F}^\bullet \rightarrow {^pi}_* {^pi}^* \mathcal{F}^\bullet \rightarrow 0$$ and
$$0 \rightarrow {^pi}_* {^pi}^! \mathcal{F}^\bullet \rightarrow \mathcal{F}^\bullet \rightarrow {^pj}_* {^pj}^* \mathcal{F}^\bullet \rightarrow {^pi}_* {^pH}^1(Ri^!\mathcal{F}^\bullet) \rightarrow 0.$$
\item The functor ${^pi}_*$ is fully faithful. Its essential image consists of perverse sheaves $\mathcal{F}^{\bullet} \in P^+(X, \mathbb{F}_p)$ such that ${^pj}^* \mathcal{F}^\bullet = 0$.
\end{enumerate}
\end{lem}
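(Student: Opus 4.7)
I would deduce all eight items from the standard recollement formalism on $D^+$, which in this setting provides the three adjoint pairs $(Rj_!, Rj^*)$, $(Rj^*, Rj_*)$, $(Ri^*, Ri_*)$, $(Ri_*, Ri^!)$, the vanishing identities $Rj^* \circ Ri_* = 0$, $Ri^* \circ Rj_! = 0$, $Ri^! \circ Rj_* = 0$, the full faithfulness relations $Rj^* \circ Rj_{!/*} = \id$ and $Ri^{*/!} \circ Ri_* = \id$, and the two recollement distinguished triangles. These all hold for \'etale sheaves on a $k$-scheme with no coefficient restrictions, and are implicit in Gabber's construction of the perverse $t$-structure. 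Once the $t$-exactness in (i) is established, the arguments of \cite[\S 1.4]{BBD} transcribe verbatim.

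\textbf{Proof of (i).} For $Rj^*$: if $u \in U$ then by Remark \ref{defrem} the vanishing of $H^i(Ri_u^* \mathcal{F}^\bullet)$ and $H^i(Ri_u^! \mathcal{F}^\bullet)$ depends only on $\mathcal{F}^\bullet|_U$, and $\overline{\{u\}}_U$ is a dense open subscheme of $\overline{\{u\}}_X$ of the same dimension, so the perversity conditions transfer. For $Ri_*$: at $z \in Z$, the strict henselization of $X$ at $z$ agrees with that of $Z$ at $z$ with $Ri_*$ transparent on stalks, and $\overline{\{z\}}_Z = \overline{\{z\}}_X$; at $x \in U$, the strict henselization at $x$ lies in $U$, on which $Ri_* \mathcal{G}$ vanishes, so both stalk conditions hold trivially. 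Hence $Rj^*$ and $Ri_*$ are $t$-exact, and the exactness of ${^pj}^*$ and ${^pi}_*$ follows at once.

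\textbf{Formal consequences.} Items (ii) and (iii) follow from (i) by the general principle that a left adjoint to a left $t$-exact functor is right $t$-exact, and dually. Item (iv) is obtained by applying ${^pH}^0$ to the derived adjunctions, with (ii)--(iii) ensuring that the unit and counit restrict appropriately to the hearts. Items (v), (vi), and (viii) reduce to their underived analogues combined with the exactness results established: for example ${^pj}^* \circ {^pi}_* = 0$ because $Rj^* \circ Ri_* = 0$ and both outer functors are $t$-exact, while ${^pi}^* \circ {^pj}_! = 0$ follows from truncating the distinguished triangle ${^p\tau}^{\leq -1} Rj_! \to Rj_! \to {^pj}_! \to$ and applying the right $t$-exact functor $Ri^*$. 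The essential image statement in (viii) follows by applying the first recollement triangle to any $\mathcal{F}^\bullet \in P^+(X, \mathbb{F}_p)$ with $Rj^*\mathcal{F}^\bullet = 0$. For (vii), take the long exact sequences of perverse cohomology for the two recollement distinguished triangles
$$Rj_! Rj^* \mathcal{F}^\bullet \longrightarrow \mathcal{F}^\bullet \longrightarrow Ri_* Ri^* \mathcal{F}^\bullet, \qquad Ri_* Ri^! \mathcal{F}^\bullet \longrightarrow \mathcal{F}^\bullet \longrightarrow Rj_* Rj^* \mathcal{F}^\bullet,$$
and use the $t$-exactness bounds from (i)--(iii) to force all perverse cohomology outside the stated four-term ranges to vanish.

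\textbf{Main obstacle.} The only genuine work is the stalk analysis in (i) for $Ri_*$ at points of $U$, which must be handled with the paper's definition of stalk-vanishing via dense open subschemes; Remark \ref{defrem} makes this straightforward. After (i), all remaining items are formal consequences of the recollement, so the proof reduces to a transcription exercise.
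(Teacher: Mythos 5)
Your proposal is correct and takes essentially the same approach as the paper, which simply cites \cite{BBD} and says the recollement arguments transcribe once (i) is established; you have filled in the details of the stalk and costalk analysis at points of $U$ and $Z$, which is exactly what the paper leaves as ``straightforward.'' One wording caveat: at $z\in Z$ the strict henselization $\mathcal{O}^{\sh}_{X,z}$ does not literally \emph{agree} with $\mathcal{O}^{\sh}_{Z,z}$ (the latter is a quotient of the former); what you want is that $Ri_*$ commutes with strict localization and $i_z$ factors through $Z$, so that $Ri_z^*Ri_*\mathcal{G}^\bullet$ and $Ri_z^!Ri_*\mathcal{G}^\bullet$ reduce to the corresponding stalk and costalk of $\mathcal{G}^\bullet$ computed in $Z$, while $\dim\overline{\{z\}}$ is unchanged because $Z$ is closed.
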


\begin{proof}
The analogous facts are true for $\overline{\mathbb{Q}}_\ell$-coefficients \cite{BBD}. The proofs are the same, so we only provide a sketch. Parts (i) - (iii) are straightforward. The proof of parts (iv) - (vi) uses (i) - (iii) and the analogous facts for the corresponding non-truncated functors. For part (vii), apply the other parts to the exact triangles
$$Rj_! Rj^* \mathcal{F}^\bullet \rightarrow \mathcal{F}^\bullet \rightarrow Ri_* Ri^* \mathcal{F}^\bullet$$ and
$$Ri_* Ri^! \mathcal{F}^\bullet \rightarrow \mathcal{F}^\bullet \rightarrow Rj_* Rj^* \mathcal{F}^\bullet.$$ For part (viii) we note that ${^pi}_*$ is fully faithful because of part (i) and the fact that $Ri_*$ is fully faithful. We have already seen that ${^pj}^* \circ {^pi}_* = 0$, so let $\mathcal{F}^\bullet \in P^+(X, \mathbb{F}_p)$ be such that ${^pj}^* \mathcal{F}^\bullet =0$. Now apply the first exact sequence in part (vii) to see that $\mathcal{F}^\bullet$ is in the essential image of ${^pi}_*$. 
\end{proof}

When $X$ is smooth, Emerton-Kisin \cite{EmertonKisinIntro} defined the intermediate extension $j_{!*} \mathcal{F}^\bullet$ of a perverse sheaf $\mathcal{F}^\bullet \in P_c^b(U, \mathbb{F}_p)$ along an immersion $j \colon U \rightarrow X$, where $U$ is also smooth. The construction uses their theory of unit $\mathcal{O}_{F,X}$-modules. They also showed that when $X$ is smooth, a perverse sheaf $\mathcal{F}^\bullet \in P_c^b(X,\mathbb{F}_p)$ is simple if and only if it is of the form $j_{!*} \mathcal{L}[\dim U]$ where $U$ is smooth, $j \colon U \rightarrow X$ is an immersion, and $\mathcal{L}$ is a simple local system of \'{e}tale $\mathbb{F}_p$-sheaves on $U$. We now investigate the properties of the intermediate extension functor for general $k$-schemes. 

Let $X$ be a $k$-scheme and let $j \colon U \rightarrow X$ be an immersion. Suppose we have a factorization of $j$ into a closed immersion $i \colon U \rightarrow Z$ followed by an open immersion $h \colon Z \rightarrow X$. By Lemma \ref{properties} (i) we have canonical isomorphisms of functors ${^ph}_! \circ {^pi}_* \cong {^pj}_!$ and ${^ph}_* \circ {^pi_*} \cong {^pj}_*$. Thus by Lemma \ref{properties} (iv) there is a natural map ${^pj}_! \mathcal{F}^\bullet \rightarrow {^pj}_* \mathcal{F}^\bullet$ for $\mathcal{F}^\bullet \in P^+(U, \mathbb{F}_p)$.

\begin{defn} The \emph{intermediate extension} of $\mathcal{F}^\bullet \in P^+(U, \mathbb{F}_p)$ along the immersion $j \colon U \rightarrow X$ is
$$j_{!*} \mathcal{F}^\bullet := \text{im} ({^pj}_! \mathcal{F}^\bullet \rightarrow {^pj}_* \mathcal{F}^\bullet).$$  
\end{defn} This defines a functor $${j_{!*}} \colon P^+(U, \mathbb{F}_p) \rightarrow P^+(X, \mathbb{F}_p).$$ We note that $j_{!*} \mathcal{F}^\bullet \in P_c^b(X, \mathbb{F}_p)$ if $\mathcal{F}^\bullet \in P_c^b(U, \mathbb{F}_p)$ by Theorem \ref{artinian} (ii). By \cite[4.3.1]{EmertonKisinIntro}, we recover Emerton-Kisin's intermediate extension functor when $U$ and $X$ are smooth. The following lemma can be proved using the properties established in Lemma \ref{properties}.

\begin{lem} \label{compintext}
Suppose we have a factorization of $j \colon U \rightarrow X$ as a composition of immersions $j= j_1 \circ \cdots \circ j_n$ between $k$-schemes. Then we have a canonical isomorphism of functors
$${j_{!*}} \xrightarrow{\sim} j_{1, !*} \circ \cdots \circ j_{n, !*}. $$
\end{lem}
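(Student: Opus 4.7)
The plan is to reduce to the case $n=2$ by induction and then directly compute the image that defines $j_{!*}$.

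First I would reduce to the case $n = 2$. If the statement holds for all compositions of two immersions, then writing $j = j_1 \circ (j_2 \circ \cdots \circ j_n)$ and applying the case $n=2$ gives $j_{!*} \cong j_{1,!*} \circ (j_2 \circ \cdots \circ j_n)_{!*}$; then by the inductive hypothesis $(j_2 \circ \cdots \circ j_n)_{!*} \cong j_{2,!*} \circ \cdots \circ j_{n,!*}$, so it suffices to treat two immersions $j = j_1 \circ j_2$.

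Next I would verify the compatibility of the truncated direct image functors with composition. Since $R(j_1 j_2)_! = Rj_{1,!} \circ Rj_{2,!}$ and both $Rj_{i,!}$ are right $t$-exact by Lemma \ref{properties}(ii), the standard argument (applying $^pH^0$ to the triangle $F({^p\tau^{\leq -1}X}) \to F(X) \to F({^pH^0 X})$ for a right $t$-exact $F$) yields a canonical isomorphism ${^pj_!} \cong {^pj_{1,!}} \circ {^pj_{2,!}}$. Dually, using the left $t$-exactness of $Rj_{i,*}$ from Lemma \ref{properties}(iii), we obtain ${^pj_*} \cong {^pj_{1,*}} \circ {^pj_{2,*}}$. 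Moreover these isomorphisms are compatible with the adjunction map $^pj_! \to {^pj_*}$, which factors through the composite
\begin{equation*}
{^pj_{1,!}}{^pj_{2,!}}\mathcal{F}^{\bullet} \to {^pj_{1,!}}(j_{2,!*}\mathcal{F}^{\bullet}) \to {^pj_{1,*}}(j_{2,!*}\mathcal{F}^{\bullet}) \to {^pj_{1,*}}{^pj_{2,*}}\mathcal{F}^{\bullet},
\end{equation*}
where the outer arrows come from applying ${^pj_{1,!}}$ and ${^pj_{1,*}}$ to the surjection ${^pj_{2,!}}\mathcal{F}^{\bullet} \twoheadrightarrow j_{2,!*}\mathcal{F}^{\bullet}$ and the injection $j_{2,!*}\mathcal{F}^{\bullet} \hookrightarrow {^pj_{2,*}}\mathcal{F}^{\bullet}$, respectively.

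Now I would identify the image. By Lemma \ref{properties}(ii),(iii) the functor ${^pj_{1,!}}$ is right exact and ${^pj_{1,*}}$ is left exact on the perverse hearts, so the first arrow in the display above is a surjection and the last arrow is an injection. Consequently the image of the adjunction ${^pj_!}\mathcal{F}^{\bullet} \to {^pj_*}\mathcal{F}^{\bullet}$ coincides with the image of the middle map ${^pj_{1,!}}(j_{2,!*}\mathcal{F}^{\bullet}) \to {^pj_{1,*}}(j_{2,!*}\mathcal{F}^{\bullet})$, which by definition is $j_{1,!*}(j_{2,!*}\mathcal{F}^{\bullet})$. This gives the required canonical isomorphism $j_{!*} \cong j_{1,!*} \circ j_{2,!*}$, and functoriality in $\mathcal{F}^{\bullet}$ is clear from the construction.

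The main obstacle is the verification that ${^pj_!} \cong {^pj_{1,!}} \circ {^pj_{2,!}}$ and the dual statement; once these are in hand the factorization argument above is essentially formal from the exactness properties in Lemma \ref{properties}. Although this compatibility of truncations with composition is standard in the setting of $\overline{\mathbb{Q}}_\ell$-coefficients, here one must check it genuinely only requires the right/left $t$-exactness of $Rj_{i,!}$ and $Rj_{i,*}$ for immersions (which is provided), and not any finer property such as compatibility with Verdier duality (which is unavailable in the $\mathbb{F}_p$-setting).
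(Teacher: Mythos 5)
The paper gives no proof, saying only that the lemma follows from the properties established in Lemma \ref{properties}; your argument supplies exactly the intended details, and reducing to $n=2$ and then identifying the image of the factored adjunction map is the right strategy. Two small points to tighten. First, Lemma \ref{properties}(ii)--(iii) are stated for open (resp.\ closed) immersions separately, so for a general immersion $j_i$ you should first factor it as a closed immersion followed by an open one and combine parts (i)--(iii) to obtain right $t$-exactness of $Rj_{i,!}$ and left $t$-exactness of $Rj_{i,*}$. Second, the assertion that the adjunction ${^pj}_! \to {^pj}_*$ is compatible with your isomorphisms and factors through the displayed composite is really the substance of the lemma and deserves a line of justification: one may observe that both maps restrict to the identity after applying ${^pj}^*$ (using Lemma \ref{properties}(vi)), and since ${^pj}^*$ is left adjoint to ${^pj}_*$ by Lemma \ref{properties}(iv), any two maps ${^pj}_!\mathcal{F}^\bullet \to {^pj}_*\mathcal{F}^\bullet$ with the same image under ${^pj}^*$ must coincide.
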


If $\mathcal{F}^\bullet \in D(U, \mathbb{F}_p)$ we say that $\mathcal{G}^\bullet \in D(X, \mathbb{F}_p)$ is an \emph{extension} of $\mathcal{F}^\bullet$ if there is an isomorphism $Rj^* \mathcal{G}^\bullet \cong \mathcal{F}^\bullet$. As in the case of $\overline{\mathbb{Q}}_\ell$-sheaves, the next lemma shows that the intermediate extension can be characterized as a unique minimal extension. 

\begin{lem} \label{intext}
Let $j \colon U \rightarrow X$ be an open immersion of $k$-schemes and let $\mathcal{F}^\bullet \in P^+(U, \mathbb{F}_p)$. Let $i \colon Z \rightarrow X$ be a complementary closed immersion. The intermediate extension $j_{!*} \mathcal{F}^\bullet$ is characterized by either of the two equivalent conditions:

\begin{enumerate}[{\normalfont (i)}]
\item The perverse sheaf $j_{!*} \mathcal{F}^\bullet$ is the unique perverse extension $\mathcal{G}^\bullet$ of $\mathcal{F}^\bullet$ such that $$Ri^* \mathcal{G}^\bullet \in {^pD}^{\leq -1}(Z, \mathbb{F}_p)\quad \quad \text{ and } \quad \quad Ri^! \mathcal{G}^\bullet \in {^pD}^{\geq 1}(Z, \mathbb{F}_p).$$
\item The perverse sheaf $j_{!*} \mathcal{F}^\bullet$ is the unique perverse extension $\mathcal{G}^\bullet$ of $\mathcal{F}^\bullet$ such that $\mathcal{G}^\bullet$ has no nonzero subobject or quotient in the essential image of the functor ${^pi}_* \colon P^+(Z, \mathbb{F}_p) \rightarrow P^+(X, \mathbb{F}_p)$. 
\end{enumerate}
\end{lem}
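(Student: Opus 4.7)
The plan is to run the standard BBD argument using only the formal properties collected in Lemma \ref{properties}. Two preliminary remarks guide the whole proof: first, since $Ri^*$ is right t-exact and $Ri^!$ is left t-exact (Lemma \ref{properties}(ii)--(iii)), condition (i) is equivalent to the single statement ${}^pi^*\mathcal{G}^\bullet = 0 = {}^pi^!\mathcal{G}^\bullet$, because the only potentially nonzero perverse cohomology sheaf of $Ri^*\mathcal{G}^\bullet$ in nonnegative degree is ${}^pH^0 = {}^pi^*\mathcal{G}^\bullet$, and dually for $Ri^!\mathcal{G}^\bullet$; second, ${}^pi_*$ is fully faithful (Lemma \ref{properties}(viii)), so the vanishing of ${}^pi_*(-)$ forces vanishing of its argument.

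To prove (i) $\Leftrightarrow$ (ii) I would invoke the adjunctions of Lemma \ref{properties}(iv). Assuming (i), for any $\mathcal{A}^\bullet \in P^+(Z,\mathbb{F}_p)$,
\[ \Hom({}^pi_*\mathcal{A}^\bullet,\mathcal{G}^\bullet)=\Hom(\mathcal{A}^\bullet,{}^pi^!\mathcal{G}^\bullet)=0, \qquad \Hom(\mathcal{G}^\bullet,{}^pi_*\mathcal{A}^\bullet)=\Hom({}^pi^*\mathcal{G}^\bullet,\mathcal{A}^\bullet)=0, \]
so no nonzero sub- or quotient object of $\mathcal{G}^\bullet$ lies in the essential image of ${}^pi_*$, giving (ii). Conversely, if (ii) holds then the first exact sequence of Lemma \ref{properties}(vii) exhibits ${}^pi_*{}^pi^*\mathcal{G}^\bullet$ as a quotient of $\mathcal{G}^\bullet$, which (ii) forces to zero, hence ${}^pi^*\mathcal{G}^\bullet=0$ by full faithfulness; the second sequence exhibits ${}^pi_*{}^pi^!\mathcal{G}^\bullet$ as a subobject and likewise gives ${}^pi^!\mathcal{G}^\bullet=0$.

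Next I would verify that $j_{!*}\mathcal{F}^\bullet$ itself satisfies (i). By t-exactness of $Rj^*$ one has ${}^pj^*j_{!*}\mathcal{F}^\bullet=\mathcal{F}^\bullet$, and the canonical map ${}^pj_!\mathcal{F}^\bullet\to j_{!*}\mathcal{F}^\bullet$ (the epimorphism in the image factorization defining $j_{!*}$) coincides, by adjunction uniqueness, with the counit appearing in the first exact sequence of Lemma \ref{properties}(vii). Since this map is surjective, the cokernel term ${}^pi_*{}^pi^*j_{!*}\mathcal{F}^\bullet$ vanishes, so ${}^pi^*j_{!*}\mathcal{F}^\bullet=0$; the dual argument with the monomorphism $j_{!*}\mathcal{F}^\bullet\hookrightarrow{}^pj_*\mathcal{F}^\bullet$ and the second sequence of (vii) gives ${}^pi^!j_{!*}\mathcal{F}^\bullet=0$.

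For uniqueness, suppose $\mathcal{G}^\bullet$ is any perverse extension satisfying (i). Then ${}^pi^*\mathcal{G}^\bullet=0$ makes the first sequence of (vii) into a surjection ${}^pj_!\mathcal{F}^\bullet\twoheadrightarrow\mathcal{G}^\bullet$, while ${}^pi^!\mathcal{G}^\bullet=0$ makes the second into an injection $\mathcal{G}^\bullet\hookrightarrow{}^pj_*\mathcal{F}^\bullet$; again by adjunction uniqueness their composition is the canonical map ${}^pj_!\mathcal{F}^\bullet\to{}^pj_*\mathcal{F}^\bullet$, so $\mathcal{G}^\bullet$ is canonically isomorphic to its image $j_{!*}\mathcal{F}^\bullet$. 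The argument is entirely formal and transcribes directly from the $\overline{\mathbb{Q}}_\ell$ setting, so no real obstacle arises: Lemma \ref{properties} has already absorbed all of the content that would otherwise require Verdier duality or the full six-functor formalism.
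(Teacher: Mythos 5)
Your proof is correct and follows essentially the same route as the paper's: both establish that $j_{!*}\mathcal{F}^\bullet$ satisfies (i) via the vanishing ${^pi}^*\circ{^pj}_!=0={^pi}^!\circ{^pj}_*$, prove uniqueness by sandwiching any $\mathcal{G}^\bullet$ satisfying (i) between ${^pj}_!\mathcal{F}^\bullet$ and ${^pj}_*\mathcal{F}^\bullet$ via the exact sequences of Lemma \ref{properties}(vii), and deduce (ii) by identifying ${^pi}_*{^pi}^*\mathcal{G}^\bullet$ and ${^pi}_*{^pi}^!\mathcal{G}^\bullet$ as the extremal quotient and subobject in the essential image of ${^pi}_*$. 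The only cosmetic difference is that you phrase the (i) $\Leftrightarrow$ (ii) step through explicit $\Hom$-adjunction computations rather than the paper's "largest quotient/subobject" formulation, which amounts to the same thing.
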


\begin{proof}
We first show that $j_{!*} \mathcal{F}^\bullet$ satisfies the conditions in (i) except for possibly uniqueness. The complex $j_{!*} \mathcal{F}^\bullet$ is an extension of $\mathcal{F}^\bullet$ because ${^pj}^*$ is exact and ${^pj}^* \circ {^pj}_! = {^pj}^* \circ {^pj}_* = \id$. As ${^pi}^*$ is right exact we get a surjection ${^pi}^* ({^pj}_! \mathcal{F}^\bullet) \twoheadrightarrow {^pi}^*(j_{!*} \mathcal{F}^\bullet)$. Since ${^pi}^* \circ {^pj}_! = 0$ it follows that ${^pi}^*(j_{!*} \mathcal{F}^\bullet) = 0$. Similarly by applying the left exact functor ${^pi}^!$ to the injection $ j_{!*} \mathcal{F}^\bullet \hookrightarrow {^pj}_* \mathcal{F}^\bullet $ we get that ${^pi}^!( j_{!*} \mathcal{F}^\bullet )= 0$. Now because $Ri^*$ is right $t$-exact and $Ri^!$ is left $t$-exact we conclude that $Ri^* (j_{!*} \mathcal{F}^\bullet) \in {^pD}^{\leq -1}(Z, \mathbb{F}_p)$ and $Ri^! (j_{!*} \mathcal{F}^\bullet) \in {^pD}^{\geq 1}(Z, \mathbb{F}_p)$. 

We now make a general observation. Suppose $\mathcal{G}^\bullet \in P^+(X, \mathbb{F}_p)$ is an extension of $\mathcal{F}^\bullet$. Then from the natural maps ${^pj}_! {^pj}^* \mathcal{G}^\bullet \rightarrow \mathcal{G}^\bullet$ and $\mathcal{G}^\bullet \rightarrow {^pj}_* {^pj}^* \mathcal{G}^\bullet$, we get maps ${^pj}_! \mathcal{F}^\bullet \rightarrow \mathcal{G}^\bullet$ and $\mathcal{G}^\bullet \rightarrow {^pj}_* \mathcal{F}^\bullet$ such that the composition is the natural map ${^pj}_! \mathcal{F}^\bullet \rightarrow {^pj}_* \mathcal{F}^\bullet$. Now suppose $\mathcal{G}^\bullet$ satisfies the conditions in (i) except for possibly uniqueness. Then from the above considerations and the exact sequences in Lemma \ref{properties} (vii) we get maps
$${^pj}_! \mathcal{F}^\bullet \twoheadrightarrow \mathcal{G}^\bullet \hookrightarrow {^pj}_* \mathcal{F}^\bullet$$ such that the composition is the natural map ${^pj}_! \mathcal{F}^\bullet \rightarrow {^pj}_* \mathcal{F}^\bullet$. From this it follows that $\mathcal{G}^\bullet \cong j_{!*} \mathcal{F}^\bullet$. 

For (ii) we start with a couple of general observations. For $\mathcal{G}^\bullet \in P^+(X, \mathbb{F}_p)$ we have a surjection $\mathcal{G}^\bullet \twoheadrightarrow {^pi}_* {^pi}^* \mathcal{G}^\bullet$ by Lemma \ref{properties} (vii). By adjunction and Lemma \ref{properties} (viii) it follows that ${^pi}_* {^pi}^* \mathcal{G}^\bullet$ is the largest quotient of $\mathcal{G}^\bullet$ in the essential image of ${^pi}_* \colon P^+(Z, \mathbb{F}_p) \rightarrow P^+(X, \mathbb{F}_p)$. Similarly ${^pi}_* {^pi}^! \mathcal{G}^\bullet$ is the largest subobject of $\mathcal{G}^\bullet$ in the essential image of ${^pi}_*$. Now the fact that $j_{!*} \mathcal{F}^\bullet$ is uniquely characterized by (ii) follows from (i). 
\end{proof}

\begin{lem} \label{injsur}
Let $j \colon U \rightarrow X$ be an immersion of $k$-schemes. Then the intermediate extension functor $j_{!*} \colon P^+(U, \mathbb{F}_p) \rightarrow P^+(X, \mathbb{F}_p)$ preserves injections and surjections.
\end{lem}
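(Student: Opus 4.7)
The plan is to reduce to the two building-block cases of an open immersion and a closed immersion, using Lemma \ref{compintext}. Factor $j = h \circ i$ where $i \colon U \hookrightarrow Z$ is a closed immersion into a locally closed subscheme $Z \subset X$ and $h \colon Z \hookrightarrow X$ is an open immersion. Then $j_{!*} = h_{!*} \circ i_{!*}$, so it suffices to handle each factor separately. For the closed immersion $i$, the identity $i_{!*} = {^pi}_* = Ri_*$ holds on perverse sheaves because $Ri_*$ is $t$-exact by Lemma \ref{properties}(i); in particular $i_{!*}$ is exact and trivially preserves injections and surjections.

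The heart of the proof is the open immersion case, so assume from here on that $j$ is an open immersion. Let $f \colon \mathcal{F}^\bullet \to \mathcal{G}^\bullet$ be a morphism in $P^+(U, \mathbb{F}_p)$. By naturality of the canonical map ${^pj}_! \to {^pj}_*$, we have a commutative square
\[
\begin{array}{ccc} {^pj}_! \mathcal{F}^\bullet & \xrightarrow{\alpha_{\mathcal{F}}} & {^pj}_* \mathcal{F}^\bullet \\ {^pj}_! f \downarrow & & \downarrow {^pj}_* f \\ {^pj}_! \mathcal{G}^\bullet & \xrightarrow{\alpha_{\mathcal{G}}} & {^pj}_* \mathcal{G}^\bullet, \end{array}
\]
with $j_{!*} \mathcal{F}^\bullet = \im(\alpha_{\mathcal{F}})$, $j_{!*} \mathcal{G}^\bullet = \im(\alpha_{\mathcal{G}})$, and $j_{!*} f$ the induced map on images. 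In particular we have a factorization
\[
{^pj}_! \mathcal{F}^\bullet \twoheadrightarrow j_{!*} \mathcal{F}^\bullet \xrightarrow{j_{!*} f} j_{!*} \mathcal{G}^\bullet \hookrightarrow {^pj}_* \mathcal{G}^\bullet
\]
whose total composition agrees with $\alpha_{\mathcal{G}} \circ {^pj}_! f = {^pj}_* f \circ \alpha_{\mathcal{F}}$.

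Suppose $f$ is injective. By Lemma \ref{properties}(iii), ${^pj}_*$ is left exact, so ${^pj}_* f$ is injective, whence the composite $j_{!*} \mathcal{F}^\bullet \hookrightarrow {^pj}_* \mathcal{F}^\bullet \xrightarrow{{^pj}_* f} {^pj}_* \mathcal{G}^\bullet$ is injective. But this composite factors as $j_{!*} \mathcal{F}^\bullet \xrightarrow{j_{!*} f} j_{!*} \mathcal{G}^\bullet \hookrightarrow {^pj}_* \mathcal{G}^\bullet$, forcing $j_{!*} f$ to be injective. Dually, if $f$ is surjective then ${^pj}_! f$ is surjective by the right exactness of ${^pj}_!$ in Lemma \ref{properties}(ii), and the composite ${^pj}_! \mathcal{F}^\bullet \twoheadrightarrow {^pj}_! \mathcal{G}^\bullet \twoheadrightarrow j_{!*} \mathcal{G}^\bullet$ is surjective; factoring this as ${^pj}_! \mathcal{F}^\bullet \twoheadrightarrow j_{!*} \mathcal{F}^\bullet \xrightarrow{j_{!*} f} j_{!*} \mathcal{G}^\bullet$ shows $j_{!*} f$ is surjective.

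I do not anticipate a genuine obstacle: once one recognizes that $j_{!*}$ is built as an image, everything is a formal consequence of the left/right exactness properties of ${^pj}_*$ and ${^pj}_!$ recorded in Lemma \ref{properties}, together with the factorization trick from Lemma \ref{compintext} to pass from arbitrary immersions to the open and closed cases.
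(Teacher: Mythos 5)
Your proof is correct and follows essentially the same route the paper's one-line proof gestures at: the intermediate extension is an image, and images behave well against the right exactness of ${^pj}_!$ and the left exactness of ${^pj}_*$. One small remark: the preliminary reduction via Lemma \ref{compintext} to the open and closed cases is a detour you do not actually need. The paper's definition already produces the canonical map ${^pj}_!\,\mathcal{F}^\bullet \to {^pj}_*\,\mathcal{F}^\bullet$ for an arbitrary immersion $j$ (via the factorization built into the definition of $j_{!*}$), and both exactness claims persist: writing $j = h\circ i$ with $i$ closed and $h$ open, ${^pj}_! = {^ph}_! \circ {^pi}_*$ is right exact as the composite of a right exact and an exact functor, and ${^pj}_* = {^ph}_*\circ {^pi}_*$ is left exact similarly. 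So the argument you give for the open immersion case — factoring ${^pj}_!\,\mathcal{F}^\bullet \twoheadrightarrow j_{!*}\mathcal{F}^\bullet \xrightarrow{j_{!*}f} j_{!*}\mathcal{G}^\bullet \hookrightarrow {^pj}_*\,\mathcal{G}^\bullet$ and reading off injectivity from left exactness of ${^pj}_*$ and surjectivity from right exactness of ${^pj}_!$ — goes through verbatim for a general immersion, making the case split unnecessary. Either way, the argument is sound.
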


\begin{proof}
This follows from the definitions, and the fact that ${^pj}_!$ is right exact and ${^pj}_*$ is left exact. 
\end{proof}

\begin{lem} \label{simplelem}
Let $U$ be a $k$-scheme and let $\mathcal{F}^\bullet \in P^+(U, \mathbb{F}_p)$ be simple. Suppose $j \colon U \rightarrow X$ is an immersion into a $k$-scheme $X$. Then $j_{!*} \mathcal{F}^\bullet$ is simple.
\end{lem}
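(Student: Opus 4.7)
The plan is to reduce to the two fundamental cases of an open immersion and a closed immersion. Any immersion $j \colon U \to X$ factors as an open immersion $h \colon Z \to X$ composed with a closed immersion $i \colon U \to Z$, where $Z$ is the closure of the image. By Lemma \ref{compintext} we have $j_{!*} \cong h_{!*} \circ i_{!*}$, so it suffices to show that $h_{!*}$ and $i_{!*}$ each preserve simplicity.

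For the closed immersion $i$, note that ${^pi}_* = Ri_* = Ri_!$ is exact and fully faithful (Lemma \ref{properties} (i), (viii)), and by Lemma \ref{properties} (iv) it agrees with both ${^pi}_!$ and ${^pi}_*$, so $i_{!*} \mathcal{F}^\bullet = Ri_* \mathcal{F}^\bullet$. Any perverse subobject $\mathcal{G}^\bullet \subseteq Ri_* \mathcal{F}^\bullet$ satisfies ${^pj}^* \mathcal{G}^\bullet = 0$, since applying the $t$-exact functor $Rj^*$ (for the complementary open immersion $j$) to the inclusion gives an injection into $Rj^* Ri_* \mathcal{F}^\bullet = 0$. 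By Lemma \ref{properties} (viii), $\mathcal{G}^\bullet \cong {^pi}_* \mathcal{H}^\bullet$ for some $\mathcal{H}^\bullet$, and the full faithfulness of ${^pi}_*$ together with simplicity of $\mathcal{F}^\bullet$ forces $\mathcal{H}^\bullet$ to be $0$ or $\mathcal{F}^\bullet$.

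For the open immersion $h$, suppose $\mathcal{G}^\bullet \subseteq h_{!*} \mathcal{F}^\bullet$ is a nonzero perverse subobject, and let $i \colon Y \to Z$ denote the complementary closed immersion. Applying the $t$-exact functor $Rh^*$ yields a subobject $Rh^* \mathcal{G}^\bullet \subseteq \mathcal{F}^\bullet$, which by simplicity is either $0$ or $\mathcal{F}^\bullet$. In the first case, Lemma \ref{properties} (viii) shows $\mathcal{G}^\bullet$ lies in the essential image of ${^pi}_*$, contradicting the characterization in Lemma \ref{intext} (ii) unless $\mathcal{G}^\bullet = 0$. In the second case, $\mathcal{G}^\bullet$ is itself a perverse extension of $\mathcal{F}^\bullet$, and the quotient $h_{!*} \mathcal{F}^\bullet / \mathcal{G}^\bullet$ satisfies $Rh^*(h_{!*} \mathcal{F}^\bullet / \mathcal{G}^\bullet) = 0$, hence lies in the essential image of ${^pi}_*$; again by Lemma \ref{intext} (ii) this quotient must vanish, so $\mathcal{G}^\bullet = h_{!*}\mathcal{F}^\bullet$. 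This shows $h_{!*}\mathcal{F}^\bullet$ is simple.

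There is no real obstacle here, since the entire argument is a formal consequence of the properties of the six functors assembled in Lemmas \ref{properties} and \ref{intext}. The only subtlety worth stating explicitly is that when invoking Lemma \ref{intext} (ii) one needs both the no-nonzero-subobject and the no-nonzero-quotient halves, corresponding to the two cases arising from the dichotomy given by simplicity of $\mathcal{F}^\bullet$.
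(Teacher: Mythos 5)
Your proof is correct and follows essentially the same route as the paper: reduce via Lemma \ref{compintext} to the closed and open immersion cases, handle the closed case through the full faithfulness and exactness of ${^pi}_*$, and handle the open case by applying the exact functor ${^ph}^*$ and invoking the characterization of the intermediate extension in Lemma \ref{intext} (ii). One cosmetic slip: for $i$ to be a closed immersion and $h$ an open one, $Z$ must be an open subscheme of $X$ in which $j(U)$ is closed (e.g.\ the complement of $\overline{j(U)} \setminus j(U)$), not the closure of the image -- but since you prove both the open and closed cases anyway, nothing in the argument is affected.
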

\begin{proof}
It suffices to treat the case of an open immersion and a closed immersion separately. First suppose $j$ is an open immersion, and that we have an exact sequence
$$0 \rightarrow \mathcal{G}^\bullet \rightarrow {j_{!*}}\mathcal{F}^\bullet \rightarrow \mathcal{H}^\bullet \rightarrow 0$$ in $P^+(X, \mathbb{F}_p)$. Applying the exact functor ${^pj}^*$, we get that ${^pj}^* \mathcal{G}^\bullet = 0$ or ${^pj}^* \mathcal{H}^\bullet = 0$ because ${^pj}^* (j_{!*}\mathcal{F}^\bullet) = \mathcal{F}^\bullet$ is simple. By Lemma \ref{properties} (viii) and Lemma \ref{intext}, this means that $\mathcal{G}^\bullet = 0$ or $\mathcal{H}^\bullet = 0$. 

If $j$ is a closed immersion, let $h \colon Z \rightarrow X$ be a complementary open immersion. Then $j_{!*} \mathcal{F}^\bullet = {^pj}_* \mathcal{F}^\bullet$, and so ${^ph}^* (j_{!*}\mathcal{F}^\bullet)  = 0$ by Lemma \ref{properties} (v). Now starting with an exact sequence as above, it follows that ${^ph}^* \mathcal{G}^\bullet  = 0$ and ${^ph}^* \mathcal{H}^\bullet = 0$. Hence by Lemma \ref{properties} (vi) and (viii), the given exact sequence comes from applying ${^pj}_*$ to an exact sequence $$ 0 \rightarrow \mathcal{G}^{' \bullet} \rightarrow \mathcal{F}^\bullet \rightarrow \mathcal{H}^{' \bullet} \rightarrow 0$$ in $P^+(U, \mathbb{F}_p)$. Here we have that $\mathcal{G}^{' \bullet} = 0$ or $\mathcal{H}^{' \bullet} = 0$ because $\mathcal{F}^\bullet$ is simple.
\end{proof} 

\begin{lem} \label{simplelem2}
Let $j \colon U \rightarrow X$ be an open immersion of $k$-schemes, and suppose $\mathcal{F}^\bullet \in P^+(X, \mathbb{F}_p)$ is simple. Then if ${^pj}^* \mathcal{F}^\bullet \neq 0$, we have $\mathcal{F}^\bullet = j_{!*} ({^pj}^* \mathcal{F}^\bullet )$ and ${^pj}^* \mathcal{F}^\bullet$ is simple. 
\end{lem}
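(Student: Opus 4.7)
The plan is to combine the characterization of the intermediate extension in Lemma \ref{intext}(ii) with the simplicity of $\mathcal{F}^\bullet$. Let $i \colon Z \to X$ be the complementary closed immersion to $j$.

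First I would show that ${^pi}^* \mathcal{F}^\bullet = 0$ and ${^pi}^! \mathcal{F}^\bullet = 0$. The exact sequences in Lemma \ref{properties}(vii) give a surjection $\mathcal{F}^\bullet \twoheadrightarrow {^pi}_* {^pi}^* \mathcal{F}^\bullet$ and an injection ${^pi}_* {^pi}^! \mathcal{F}^\bullet \hookrightarrow \mathcal{F}^\bullet$. Because $\mathcal{F}^\bullet$ is simple, each of these maps is either zero or an isomorphism. If either were an isomorphism, $\mathcal{F}^\bullet$ would lie in the essential image of ${^pi}_*$, so by Lemma \ref{properties}(viii) we would have ${^pj}^* \mathcal{F}^\bullet = 0$, contradicting the hypothesis. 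Hence both maps vanish, and the full faithfulness of ${^pi}_*$ (Lemma \ref{properties}(viii)) gives ${^pi}^* \mathcal{F}^\bullet = 0 = {^pi}^! \mathcal{F}^\bullet$.

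Next I would identify $\mathcal{F}^\bullet$ with $j_{!*}({^pj}^* \mathcal{F}^\bullet)$. Since ${^pj}^*$ is exact and $Rj^*$ is $t$-exact (Lemma \ref{properties}(i)), $\mathcal{F}^\bullet$ is a perverse extension of ${^pj}^* \mathcal{F}^\bullet$. As observed in the proof of Lemma \ref{intext}, ${^pi}_* {^pi}^* \mathcal{F}^\bullet$ is the largest quotient of $\mathcal{F}^\bullet$ lying in the essential image of ${^pi}_*$, and ${^pi}_* {^pi}^! \mathcal{F}^\bullet$ is the largest such subobject. Both vanish by the previous step, so Lemma \ref{intext}(ii) identifies $\mathcal{F}^\bullet$ with $j_{!*}({^pj}^* \mathcal{F}^\bullet)$.

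Finally, for the simplicity of ${^pj}^* \mathcal{F}^\bullet$, suppose $\mathcal{G}^\bullet \hookrightarrow {^pj}^* \mathcal{F}^\bullet$ is a nonzero subobject. Applying the intermediate extension and invoking Lemma \ref{injsur} yields an injection $j_{!*} \mathcal{G}^\bullet \hookrightarrow j_{!*}({^pj}^* \mathcal{F}^\bullet) = \mathcal{F}^\bullet$. Applying the exact functor ${^pj}^*$ to the definition $j_{!*}\mathcal{G}^\bullet = \operatorname{im}({^pj}_! \mathcal{G}^\bullet \to {^pj}_* \mathcal{G}^\bullet)$ and using ${^pj}^*\circ {^pj}_! = \operatorname{id} = {^pj}^*\circ {^pj}_*$ (Lemma \ref{properties}(vi)) shows ${^pj}^* j_{!*} \mathcal{G}^\bullet = \mathcal{G}^\bullet \neq 0$, so $j_{!*} \mathcal{G}^\bullet \neq 0$. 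Simplicity of $\mathcal{F}^\bullet$ then forces $j_{!*} \mathcal{G}^\bullet = \mathcal{F}^\bullet$, and applying ${^pj}^*$ to this equality gives $\mathcal{G}^\bullet = {^pj}^* \mathcal{F}^\bullet$.

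There is no serious obstacle; the argument is a routine application of the formal properties in Lemma \ref{properties} together with the characterization of $j_{!*}$ in Lemma \ref{intext}. The only point requiring a little care is the systematic use of the full faithfulness of ${^pi}_*$ to pass between vanishing of ${^pi}_* {^pi}^* \mathcal{F}^\bullet$ and of ${^pi}^* \mathcal{F}^\bullet$, and the observation that ${^pj}^*$ commutes with $j_{!*}$ since ${^pj}^*$ is exact.
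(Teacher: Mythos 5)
Your proof is correct. Where the paper directly shows the two adjunction maps ${^pj}_!({^pj}^*\mathcal{F}^\bullet) \to \mathcal{F}^\bullet$ and $\mathcal{F}^\bullet \to {^pj}_*({^pj}^*\mathcal{F}^\bullet)$ are respectively surjective and injective (using simplicity of $\mathcal{F}^\bullet$ and the fact that both restrict to nonzero maps under ${^pj}^*$), and then reads off $\mathcal{F}^\bullet = j_{!*}({^pj}^*\mathcal{F}^\bullet)$ from the definition of $j_{!*}$ as an image, you instead first establish ${^pi}^*\mathcal{F}^\bullet = 0 = {^pi}^!\mathcal{F}^\bullet$ from the exact sequences in Lemma \ref{properties}(vii), and then invoke the characterization in Lemma \ref{intext}(ii). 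These are dual formulations of the same argument: the ${^pi}_*$-terms you kill are precisely the cokernel and kernel of the two adjunction maps the paper studies, so the underlying use of simplicity is identical. The second half of your proof (showing ${^pj}^*\mathcal{F}^\bullet$ is simple via Lemma \ref{injsur} and ${^pj}^*\circ j_{!*}=\id$) matches the paper's argument essentially verbatim. Your routing through Lemma \ref{intext}(ii) makes the role of the characterization explicit, at the cost of a slightly longer detour; the paper's version is marginally more direct.
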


\begin{proof}
For the first part, since $\mathcal{F}^\bullet$ is an extension of ${^pj}^* \mathcal{F}^\bullet$, then as in the proof of Lemma \ref{intext} (ii) we have natural maps ${^pj}_!( {^pj}^* \mathcal{F}^\bullet) \rightarrow \mathcal{F}^\bullet$ and $\mathcal{F}^\bullet \rightarrow {^pj}_* ({^pj}^* \mathcal{F}^\bullet)$ such that the composition is the natural map ${^pj}_!( {^pj}^* \mathcal{F}^\bullet)  \rightarrow  {^pj}_* ({^pj}^* \mathcal{F}^\bullet)$. Because $\mathcal{F}^\bullet$ is simple, and each of these two maps restricts to a nonzero map under ${^pj}^*$, it follows that ${^pj}_!( {^pj}^* \mathcal{F}^\bullet) \rightarrow \mathcal{F}^\bullet$ is surjective and  $\mathcal{F}^\bullet \rightarrow {^pj}_* ({^pj}^* \mathcal{F}^\bullet)$ is injective. Thus $\mathcal{F}^\bullet = j_{!*} ({^pj}^* \mathcal{F}^\bullet )$. To see that ${^pj}^* \mathcal{F}^\bullet$ is simple, it now suffices to note that $j_{!*}$ preserves injections and surjections (Lemma \ref{injsur}), and that ${^pj}^* \circ j_{!*} = \id$. 
\end{proof}

\begin{lem} \label{homintext}
Let $j \colon U \rightarrow X$ be an immersion of $k$-schemes, and suppose $\mathcal{F}_1^\bullet$, $\mathcal{F}_2^\bullet \in P^+(U, \mathbb{F}_p)$. Then
$$\Hom(\mathcal{F}_1^\bullet, \mathcal{F}_2^\bullet) = \Hom(j_{!*} \mathcal{F}_1^\bullet, j_{!*} \mathcal{F}_2^\bullet).$$
\end{lem}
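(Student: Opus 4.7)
By Lemma \ref{compintext}, it suffices to treat the cases when $j$ is a closed immersion and when $j$ is an open immersion separately. In the closed case $j_{!*} = {^pj}_*$, which is fully faithful by Lemma \ref{properties}(viii), so assume henceforth that $j$ is an open immersion with complementary closed immersion $i \colon Z \to X$.

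The definition of the intermediate extension provides a surjection ${^pj}_!\mathcal{F}_1^\bullet \twoheadrightarrow j_{!*}\mathcal{F}_1^\bullet$ and an injection $j_{!*}\mathcal{F}_2^\bullet \hookrightarrow {^pj}_*\mathcal{F}_2^\bullet$, which together with the adjunction $({^pj}_!, {^pj}^*)$ and the identity ${^pj}^* \circ {^pj}_* = \id$ from Lemma \ref{properties}(iv),(vi) yield
$$\Hom(j_{!*}\mathcal{F}_1^\bullet, j_{!*}\mathcal{F}_2^\bullet) \hookrightarrow \Hom({^pj}_!\mathcal{F}_1^\bullet, {^pj}_*\mathcal{F}_2^\bullet) \cong \Hom(\mathcal{F}_1^\bullet, \mathcal{F}_2^\bullet).$$
The composite agrees with the map induced by ${^pj}^*$, so the plan is to construct an inverse by showing that every $\varphi \in \Hom({^pj}_!\mathcal{F}_1^\bullet, {^pj}_*\mathcal{F}_2^\bullet)$ factors uniquely through $j_{!*}\mathcal{F}_1^\bullet \to j_{!*}\mathcal{F}_2^\bullet$.

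For this, let $K = \ker({^pj}_!\mathcal{F}_1^\bullet \twoheadrightarrow j_{!*}\mathcal{F}_1^\bullet)$ and $C = \coker(j_{!*}\mathcal{F}_2^\bullet \hookrightarrow {^pj}_*\mathcal{F}_2^\bullet)$. Applying the exact functor ${^pj}^*$ gives ${^pj}^* K = {^pj}^* C = 0$, so by Lemma \ref{properties}(viii) we may write $K \cong {^pi}_* K'$ and $C \cong {^pi}_* C'$. The adjunction $({^pi}_*, {^pi}^!)$ together with ${^pi}^! \circ {^pj}_* = 0$ from Lemma \ref{properties}(v) gives
$$\Hom(K, {^pj}_*\mathcal{F}_2^\bullet) \cong \Hom(K', {^pi}^! {^pj}_* \mathcal{F}_2^\bullet) = 0,$$
so $\varphi$ descends to a map $j_{!*}\mathcal{F}_1^\bullet \to {^pj}_*\mathcal{F}_2^\bullet$. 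Dually, the adjunction $({^pi}^*, {^pi}_*)$ combined with ${^pi}^*(j_{!*}\mathcal{F}_1^\bullet) = 0$ (the vanishing $Ri^*(j_{!*}\mathcal{F}_1^\bullet) \in {^pD}^{\leq -1}(Z,\mathbb{F}_p)$ from Lemma \ref{intext}) gives $\Hom(j_{!*}\mathcal{F}_1^\bullet, C) = 0$, so this map further factors through $j_{!*}\mathcal{F}_2^\bullet$.

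That the two constructions are mutually inverse is immediate: one composition recovers ${^pj}^*$ of the lifted map and uses ${^pj}^* \circ j_{!*} = \id$, while the other follows from the uniqueness of the factorization just established. I expect the main technical input to be the two Hom-vanishing computations; everything else is purely formal from the adjunction package in Lemma \ref{properties} and the characterization of $j_{!*}$ in Lemma \ref{intext}.
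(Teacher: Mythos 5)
Your proof is correct and is essentially the argument the paper is gesturing at: the paper's one-line proof ("This follows from the definition of $j_{!*}$ and the adjunctions in Lemma \ref{properties} (iv)") is exactly the unpacking you carry out, with the two $\Hom$-vanishing computations (using the kernel/cokernel supported on $Z$ together with ${^pi}^!{^pj}_* = 0$ and ${^pi}^* j_{!*} = 0$) being the needed technical content. The reduction via Lemma \ref{compintext} and the treatment of the closed case are fine; no gaps.
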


\begin{proof} This follows from the definition of $j_{!*}$ and the adjunctions in Lemma \ref{properties} (iv).
\end{proof}

\begin{defn}
Let $X$ be a $k$-scheme and let $\mathcal{F}^\bullet \in D^b_c(X, \mathbb{F}_p)$. The \emph{support} of $\mathcal{F}^\bullet$ is the union of the supports of each of the cohomology sheaves $H^i(\mathcal{F}^\bullet)$. 
\end{defn}

The support of $\mathcal{F}$ is a constructible subset of $X$. We now prove an analogue of the classification of simple perverse sheaves in \cite[4.3.1 (ii)]{BBD}.

\begin{thm} \label{simplechar}
Let $X$ be a $k$-scheme, and let $j \colon U \rightarrow X$ be an immersion from an irreducible smooth $k$-scheme $U$. Suppose $\mathcal{L}$ is a simple local system of \'{e}tale $\mathbb{F}_p$-sheaves on $U$. Then the complex $\mathcal{L}[\dim U] \in D^b_c(U, \mathbb{F}_p)$ is perverse, and $j_{!*} \mathcal{L}[\dim U]$ is simple. Conversely, every simple perverse sheaf in $P^b_c(X, \mathbb{F}_p)$ is of the form $j_{!*} \mathcal{L}[\dim U]$ for $j \colon U \rightarrow X$ an immersion from an irreducible smooth $k$-scheme and $\mathcal{L}$ a simple local system of \'{e}tale $\mathbb{F}_p$-sheaves on $U$.
\end{thm}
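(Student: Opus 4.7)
The plan is to verify the two directions in sequence, leaning on the smooth case of the Emerton--Kisin classification together with the technical lemmas established above.

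For the forward direction, I first note that $\mathcal{L}[\dim U]$ is perverse on the smooth irreducible $U$: after base change along an \'{e}tale cover trivializing $\mathcal{L}$, this reduces to the perversity of $\mathbb{F}_p[\dim U]$ on a smooth (hence Cohen--Macaulay) scheme, which is Theorem \ref{mainthm}. The simplicity of $\mathcal{L}[\dim U]$ as a perverse sheaf on $U$ is part of the Emerton--Kisin classification on smooth schemes recalled above, and Lemma \ref{simplelem} then yields the simplicity of $j_{!*}\mathcal{L}[\dim U]$ in $P^+(X, \mathbb{F}_p)$, with Theorem \ref{artinian}(ii) ensuring it lies in $P_c^b(X, \mathbb{F}_p)$.

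For the converse, let $\mathcal{F}^\bullet \in P_c^b(X, \mathbb{F}_p)$ be simple and let $Z$ denote its support with reduced scheme structure. The first step is to show $Z$ is irreducible. If instead $Z = Z_1 \cup Z_2$ with $Z_1, Z_2 \subsetneq Z$ proper closed subsets, put $V = X \setminus Z_2$ and let $j_V \colon V \hookrightarrow X$ be the resulting open immersion. Then $V$ contains the generic point of $Z_1$, so ${^pj}_V^* \mathcal{F}^\bullet \neq 0$, and Lemma \ref{simplelem2} gives $\mathcal{F}^\bullet = j_{V,!*}({^pj}_V^* \mathcal{F}^\bullet)$. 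Since the support of an intermediate extension equals the closure of the support downstairs, the support of $\mathcal{F}^\bullet$ then lies in the closure of $V \cap Z \subset Z_1$, contradicting $\mathrm{supp}(\mathcal{F}^\bullet) = Z$.

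With $Z$ irreducible, Lemma \ref{properties}(viii) applied to $i \colon Z \hookrightarrow X$ (noting ${^pj}^* \mathcal{F}^\bullet = 0$ for the complementary open immersion $j$) produces $\mathcal{F}^\bullet = {^pi}_* \mathcal{G}^\bullet$ with $\mathcal{G}^\bullet \in P_c^b(Z, \mathbb{F}_p)$; exactness and full faithfulness of ${^pi}_*$ imply $\mathcal{G}^\bullet$ is simple with full support on $Z$. As $k$ is perfect, the smooth locus of $Z$ is dense; shrinking further, I pick a dense open smooth immersion $\tilde{\jmath} \colon U \hookrightarrow Z$ on which every cohomology sheaf of $\mathcal{G}^\bullet$ is locally constant, using boundedness and constructibility. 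On the smooth irreducible $U$, the stalk and costalk conditions at the generic point $\eta$ coincide by Remark \ref{defrem} (since $\mathcal{O}_{U,\eta}^{\mathrm{sh}}$ is a field), and together with connectedness they force the locally constant cohomology sheaves to vanish outside degree $-\dim U$; thus ${^p\tilde{\jmath}}^* \mathcal{G}^\bullet = \mathcal{L}[\dim U]$ for a local system $\mathcal{L}$ which is simple by Lemma \ref{simplelem2}.

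Finally, applying Lemma \ref{simplelem2} again gives $\mathcal{G}^\bullet = \tilde{\jmath}_{!*}(\mathcal{L}[\dim U])$, and since $i$ is a closed immersion we have $i_{!*} = {^pi}_*$, so combining with Lemma \ref{compintext} yields
\[ \mathcal{F}^\bullet = {^pi}_*\, \tilde{\jmath}_{!*}(\mathcal{L}[\dim U]) = j_{!*}(\mathcal{L}[\dim U]) \]
for the immersion $j = i \circ \tilde{\jmath} \colon U \hookrightarrow X$. I expect the main obstacle to be the combination of the two reductions --- proving irreducibility of the support and then arranging, by shrinking, that the restriction to a smooth open is genuinely a shifted local system rather than a nontrivial intermediate extension --- neither of which is hard individually, but both require careful bookkeeping with the perverse $t$-structure.
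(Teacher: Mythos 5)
Your proof is correct, and the forward direction is exactly the paper's argument (Emerton--Kisin for simplicity of $\mathcal{L}[\dim U]$ on smooth irreducible $U$, then Lemma \ref{simplelem}). The converse follows the same skeleton as the paper --- replace $X$ by the closure of the support, pass to a smooth dense open via Lemma \ref{simplelem2} --- but where the paper then simply invokes the Emerton--Kisin classification \cite[4.3.3]{EmertonKisinIntro} on the resulting smooth scheme, you finish by hand: you prove irreducibility of the support directly, shrink to a dense smooth open on which the cohomology sheaves are locally constant, and use the coincidence of the stalk and costalk conditions at the generic point (Remark \ref{defrem}) to force concentration in degree $-\dim U$, so that the restriction is already a shifted simple local system. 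This buys a more self-contained converse that does not rely on the full classification over smooth schemes, at the cost of more bookkeeping. Two small points to tidy: the support of $\mathcal{F}^\bullet$ is only constructible, so $Z$ should be the \emph{closure} of the support with its reduced structure (as in the paper); and your appeal to Theorem \ref{mainthm} for perversity of $\mathcal{L}[\dim U]$ is a forward reference to Section \ref{PerverseFrat} --- logically harmless since that theorem does not depend on the present one, but the paper sidesteps it by taking perversity from \cite[4.3.3]{EmertonKisinIntro} as well.
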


\begin{proof}
It is shown in \cite[4.3.3]{EmertonKisinIntro} that if $U$ is an irreducible smooth $k$-scheme and $\mathcal{L}$ is a simple local system of \'{e}tale $\mathbb{F}_p$-sheaves on $U$, then $\mathcal{L}[\dim U]$ is a simple perverse sheaf. Hence if $j \colon U \rightarrow X$ is an immersion then $j_{!*} \mathcal{L}[\dim U]$ is simple by Lemma \ref{simplelem}. Conversely, suppose $\mathcal{F}^\bullet \in P_c^b(X, \mathbb{F}_p)$ is simple. If we let $Z$ be the closure of the support of $\mathcal{F}^\bullet$ (with its reduced induced structure), then we can replace $X$ by $Z$ so we can assume the support of $\mathcal{F}^\bullet$ is dense in $X$. Since $k$ is perfect, we may choose an open immersion $j \colon U \rightarrow X$ such that $U$ is a smooth $k$-scheme and $U$ is contained in the support of $\mathcal{F}^\bullet$. By Lemma \ref{simplelem2} we can replace $X$ by $U$ and so we can assume $X$ is smooth. Now the fact that $\mathcal{F}^\bullet$ is of the desired form is proved in  \cite[4.3.3]{EmertonKisinIntro}. 
\end{proof}

We now investigate the behavior of perverse sheaves under smooth pullback. 

\begin{lem} \label{etalep}
Let $f \colon Y \rightarrow X$ be an \'{e}tale morphism between $k$-schemes, and let $\mathcal{F}^\bullet \in P^+(X, \mathbb{F}_p)$. Then $Rf^* \mathcal{F}^\bullet \in P^+(Y, \mathbb{F}_p)$.
\end{lem}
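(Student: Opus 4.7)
The plan is to verify directly from the definition that $Rf^* \mathcal{F}^\bullet$ satisfies both the stalk and costalk vanishing conditions that cut out $P^+(Y, \mathbb{F}_p)$. Because $f$ is étale (i.e.\ smooth of relative dimension zero), no cohomological shift should intervene: at each $y \in Y$ with image $x := f(y)$, the stalks and costalks of $Rf^* \mathcal{F}^\bullet$ at $y$ should agree with those of $\mathcal{F}^\bullet$ at $x$, and $\dim \overline{\{y\}}$ should equal $\dim \overline{\{x\}}$. The perverse vanishing bounds at $y$ for $Rf^* \mathcal{F}^\bullet$ would then translate to the known vanishing bounds at $x$ for $\mathcal{F}^\bullet$.

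Two inputs are needed. First, since $f$ is étale, the residue field extension $\kappa(x) \hookrightarrow \kappa(y)$ is finite separable, so $\mathrm{trdeg}_k \kappa(y) = \mathrm{trdeg}_k \kappa(x)$; as both $X$ and $Y$ are of finite type over $k$, this transcendence degree equals the dimension of the closure of the corresponding point, giving $\dim \overline{\{y\}} = \dim \overline{\{x\}}$. Second, the universal property of strict henselization produces a morphism $g \colon \Spec(\mathcal{O}_{Y,y}^{\mathrm{sh}}) \rightarrow \Spec(\mathcal{O}_{X,x}^{\mathrm{sh}})$ over $f$, and a standard fact about étale morphisms identifies $g$ with an isomorphism of schemes.

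To conclude, I will combine these with Remark \ref{defrem}. Writing $\tilde{j}_Y$ and $\tilde{j}_X$ for the two strict henselization maps, the relation $f \circ \tilde{j}_Y = \tilde{j}_X \circ g$ gives $R\tilde{j}_Y^* \, Rf^* \mathcal{F}^\bullet \cong Rg^* \, R\tilde{j}_X^* \mathcal{F}^\bullet$. Since $g$ is an isomorphism, $Rg^*$ is an equivalence of derived categories, and moreover $Rg^* = Rg^!$, so $Rg^*$ commutes with both $Ri_{\bar{y}}^*$ and $Ri_{\bar{y}}^!$ once we identify $\bar{y}$ with $\bar{x}$ through $g$. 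Consequently
\[
H^i\bigl(Ri_y^*(Rf^* \mathcal{F}^\bullet)\bigr) \cong H^i(Ri_x^* \mathcal{F}^\bullet), \qquad H^i\bigl(Ri_y^!(Rf^* \mathcal{F}^\bullet)\bigr) \cong H^i(Ri_x^! \mathcal{F}^\bullet),
\]
for every $y \in Y$ and $i \in \mathbb{Z}$. Combined with $\dim \overline{\{y\}} = \dim \overline{\{x\}}$, the perverse vanishing bounds for $Rf^* \mathcal{F}^\bullet$ at $y$ reduce to those for $\mathcal{F}^\bullet$ at $x$, which hold by the hypothesis $\mathcal{F}^\bullet \in P^+(X, \mathbb{F}_p)$.

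The argument is essentially just unpacking of definitions. The only mild obstacle is being careful about the interpretation of $Ri_y^*$ and $Ri_y^!$ through the strict henselization, as specified in Remark \ref{defrem} (this replaces naive stalks, which are inadequate in the $\mathbb{F}_p$ setting), and verifying cleanly that étale pullback is compatible with this passage to the henselization. Once the factorization through $g$ is set up, all remaining manipulations are formal consequences of $g$ being an isomorphism.
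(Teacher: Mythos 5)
Your proof is correct and is essentially a careful unpacking of the paper's one-line argument ("follows from the definitions and the fact that $Rf^* = Rf^!$"): you check the stalk and costalk conditions pointwise via Remark \ref{defrem}, using that étale maps preserve $\dim\overline{\{x\}}$ and induce isomorphisms on strict henselizations. No issues.
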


\begin{proof}
This follows from the definitions and the fact that $Rf^* = Rf^!$.
\end{proof}

\begin{lem} \label{smoothp}
Let $f \colon Y \rightarrow X$ be a smooth morphism of relative dimension $d$ between $k$-schemes and let $\mathcal{F}^\bullet \in P_c^b(X, \mathbb{F}_p)$. Then $Rf^*[d] \mathcal{F}^\bullet \in P_c^b(Y, \mathbb{F}_p)$.
\end{lem}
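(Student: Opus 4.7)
First, the perversity of $Rf^*[d]\mathcal{F}^\bullet$ may be checked \'{e}tale-locally on $Y$ (Remark \ref{defrem}). Since any smooth morphism of relative dimension $d$ factors \'{e}tale-locally as an \'{e}tale morphism followed by a standard projection $\pi \colon \mathbb{A}^d \times X \to X$, and \'{e}tale pullback preserves perverse sheaves by Lemma \ref{etalep}, we reduce to the case $f = \pi$. Factoring $\pi$ further as a composition of $d$ projections of relative dimension one reduces us to $d = 1$, so we may assume $f \colon \mathbb{A}^1 \times X \to X$ is the projection.

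Next I would reduce to the case where $X$ (and hence $Y$) is smooth. Choose a closed immersion $i \colon X \hookrightarrow Z$ into a smooth $k$-scheme $Z$ (for instance an affine space), and let $\tilde{i} \colon \mathbb{A}^1 \times X \hookrightarrow \mathbb{A}^1 \times Z$ and $\tilde{f} \colon \mathbb{A}^1 \times Z \to Z$ be the base changes of $i$ and $f$. Proper base change for the resulting Cartesian square yields $\tilde{i}_* Rf^* \mathcal{F}^\bullet \cong R\tilde{f}^* Ri_* \mathcal{F}^\bullet$, and $Ri_* \mathcal{F}^\bullet$ is perverse on $Z$ by Lemma \ref{properties} (i). The functor $\tilde{i}_*$ is $t$-exact (Lemma \ref{properties} (i)) and fully faithful on perverse sheaves (Lemma \ref{properties} (viii)), hence conservative on perverse sheaves; so if $\tilde{i}_*(Rf^*[1] \mathcal{F}^\bullet)$ is perverse then so is $Rf^*[1] \mathcal{F}^\bullet$. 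Therefore it suffices to treat the case where $X$ is smooth.

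In the smooth case I would invoke the Emerton-Kisin Riemann-Hilbert correspondence recalled in the remark after the definition of $P_c^b$: the anti-equivalence $D_c^b(X, \mathbb{F}_p) \simeq D^b_{\text{lfgu}}(\mathcal{O}_{F,X})$ sends the perverse $t$-structure to the standard $t$-structure on lfgu modules, and $Rf^*$ corresponds to a shifted flat pullback of lfgu $\mathcal{O}_{F,X}$-modules. Since $f$ is smooth, this pullback is exact, so $Rf^*[1] \mathcal{F}^\bullet$ lies in the heart of the standard $t$-structure, i.e., is perverse. The main obstacle is the bookkeeping in this final step: one must trace through the explicit formulas in \cite{EmertonKisin} to verify that the shift $[d]$ on the sheaf side matches exactly the normalization under which the flat pullback of lfgu modules is $t$-exact.
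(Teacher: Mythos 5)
Your proposal is correct and follows essentially the same route as the paper's proof: reduce (Zariski/étale-)locally to the standard projection via Lemma \ref{etalep}, pass to an affine $X$, push forward along a closed immersion into affine space and apply proper base change together with the $t$-exactness and conservativity of closed pushforward to reduce to smooth $X$, and finish with the Emerton--Kisin Riemann--Hilbert correspondence, under which $Rf^*[d]$ corresponds to the (exact, since $f$ is flat) pullback of lfgu modules. The only cosmetic difference is your extra reduction to $d=1$, which the paper omits as unnecessary.
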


\begin{proof}
We know that $Rf^*[d] \mathcal{F}^\bullet \in D_c^b(Y, \mathbb{F}_p)$ so we only need to prove that $Rf^*[d] \mathcal{F}^\bullet$ is perverse. The problem is Zariski local on $Y$ so we may assume that we have a factorization $Y \xrightarrow{g} \mathbb{A}^d_X \xrightarrow{\pi} X$ where $g$ is \'{e}tale and $\pi$ is the projection. Thus by Lemma \ref{etalep} it suffices to prove the result for the projection $\pi \colon \mathbb{A}_X^d \rightarrow X$. Observe that we can further reduce to the case $X = \Spec(A)$. Now let $i \colon X \rightarrow \mathbb{A}^n$ be a closed immersion for some $n$, and consider the Cartesian diagram
$$\xymatrix{
\mathbb{A}^d_X  \ar[r]^{i'} \ar[d]^{\pi} & \mathbb{A}^{n+d} \ar[d]^{\pi'} \\ 
X \ar[r]^{i} &  \mathbb{A}^n.
}$$ The map $\pi'$ is the projection onto the first $n$ coordinates. Note that by Lemma  \ref{properties}, the complex $R \pi^*[d] \mathcal{F}^\bullet$ is perverse if and only if $R i'_*(R \pi^*[d] \mathcal{F}^\bullet)$ is perverse. Thus, by the proper base change theorem we can reduce to the case of the projection $\pi \colon  \mathbb{A}^{n+d} \rightarrow \mathbb{A}^n$ onto the first $n$ coordinates. In particular, we have reduced to the case in which $X$ and $Y$ are smooth. Now we can appeal to the Riemann-Hilbert correspondence of Emerton-Kisin \cite{EmertonKisin}. The functor $Rf^*[d] \colon D_c^b(X, \mathbb{F}_p) \rightarrow D_c^b(Y, \mathbb{F}_p)$ corresponds to the functor $$f^![-d] \colon D_{\lfgu}^b(\mathcal{O}_{F,X}) \rightarrow D_{\lfgu}^b(\mathcal{O}_{F,Y}).$$ On the level of quasi-coherent sheaves the functor $f^![-d]$ is the left derived functor of the usual pullback functor for quasi-coherent sheaves. Because $f$ is flat this functor is already exact, and so $f^![-d]$ is $t$-exact for the standard $t$-structures on $D_{\lfgu}^b(\mathcal{O}_{F,X})$ and $D_{\lfgu}^b(\mathcal{O}_{F,Y})$. These $t$-structures correspond to the perverse $t$-structures on $D_c^b(X, \mathbb{F}_p)$ and $D_c^b(Y, \mathbb{F}_p)$. 
\end{proof}

We now show that intermediate extensions commute with smooth pullbacks. St\"{a}bler \cite{StabPerv} has given a proof of this result for $k$-schemes that admit closed immersions into smooth $k$-schemes. His proof uses the category of Cartier crystals, which we will not use here. In the first part of the proof we reduce to the case of smooth $k$-schemes. The theorem then follows from St\"{a}bler's result. We also give a direct argument in the smooth case using unit $\mathcal{O}_{F,X}$-modules in Lemma \ref{smoothlem}.

\begin{thm} \label{smoothint}
Let $f \colon Y \rightarrow X$ be a smooth morphism of relative dimension $d$ between $k$-schemes, and let $j \colon U \rightarrow X$ be an immersion. Consider the Cartesian diagram
$$\xymatrix{
f^{-1}(U)  \ar[r]^{j'} \ar[d]^{f'} & Y \ar[d]^{f} \\ 
U \ar[r]^{j} &  X
}$$
There is an isomorphism of functors $P_c^b(U, \mathbb{F}_p) \rightarrow P_c^b(Y, \mathbb{F}_p)$: $$Rf^*[d] \circ j_{!*} \xrightarrow{\sim} j'_{!*} \circ Rf'^*[d].$$  
\end{thm}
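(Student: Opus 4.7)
The plan is to reduce the theorem to the case where $X$ is smooth, at which point the result follows from St\"abler's theorem or from the direct argument with unit $\mathcal{O}_{F,X}$-modules to be given in Lemma \ref{smoothlem}. Using Lemma \ref{compintext}, I would first factor $j \colon U \to X$ as an open immersion $j_1 \colon U \hookrightarrow \overline{U}$ into the closure of $U$ in $X$, followed by a closed immersion $j_2 \colon \overline{U} \hookrightarrow X$. Base changing along $f$ gives a parallel factorization $j' = j'_2 \circ j'_1$. Since intermediate extensions compose along chains of immersions, it is enough to treat the closed and open cases separately.

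The closed immersion case is direct. When $j$ is a closed immersion, $Rj_*$ is $t$-exact for the perverse $t$-structure (Lemma \ref{properties}(i)), so $j_{!*} = {^pj}_* = Rj_*$, and similarly for $j'$. Proper base change yields a natural isomorphism $Rf^* \circ Rj_* \cong Rj'_* \circ Rf'^*$, and since $Rf^*[d]$ is $t$-exact by Lemma \ref{smoothp}, applying $^{p}H^0$ produces the desired isomorphism $Rf^*[d] \circ {^pj}_* \cong {^pj}'_* \circ Rf'^*[d]$.

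For the open immersion case, the strategy is to reduce to $X$ smooth. The statement is Zariski-local on $Y$, so by covering $X$ with affine opens and pulling back via $f$ we may assume $X$ is affine. We then choose a closed immersion $\iota \colon X \hookrightarrow \tilde{X}$ into a smooth $k$-scheme (for instance some $\mathbb{A}^n$), extend the open $U \subset X$ to an open $\tilde{U} \subset \tilde{X}$ with $\tilde{U} \cap X = U$, and extend $f$ compatibly to a smooth morphism $\tilde{f} \colon \tilde{Y} \to \tilde{X}$ (taking for example $\tilde{Y} = \mathbb{A}^d_{\tilde{X}}$ together with a compatible closed embedding $Y \hookrightarrow \tilde{Y}$ over $\iota$). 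Combining Lemma \ref{compintext} with the closed immersion case already proved, the intermediate extension $j_{!*} \mathcal{F}^\bullet$ on the singular $X$ gets related to an intermediate extension along the open immersion $\tilde{U} \hookrightarrow \tilde{X}$ of a perverse sheaf pushed forward from $U$. In this smooth setting, St\"abler's theorem or Lemma \ref{smoothlem} supplies the commutation with smooth pullback, which we then transfer back to the original configuration via the closed immersion case.

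The main obstacle is the open immersion case when $X$ is singular. In contrast to the $\overline{\mathbb{Q}}_\ell$-setting, we lack Verdier duality and a well-behaved $Rf^!$, and $Rj_*$ for an open $j$ generally fails to preserve constructibility, so the defining condition $Ri'^!(-) \in {^pD}^{\geq 1}$ from Lemma \ref{intext}(i) cannot be verified by a direct application of smooth base change for $!$-pullback. The reduction to smooth $X$ circumvents this by placing us in the Emerton-Kisin Riemann-Hilbert framework, where the intermediate extension admits a clean description at the level of quasi-coherent $\mathcal{O}_{F,X}$-modules and the flatness of $Rf^![-d]$ becomes a routine statement about flat pullback of $\mathcal{O}$-modules.
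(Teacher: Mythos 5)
Your overall strategy---reduce to the closed and open immersion cases, handle the open case by embedding $X$ into a smooth ambient scheme and invoking St\"abler's result or the $\mathcal{O}_{F,X}$-module calculation---is the same one the paper follows, and your closed immersion argument is correct. But there is a genuine gap in your open immersion case.

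You propose to extend $f \colon Y \to X$ to a smooth $\tilde{f} \colon \tilde{Y} \to \tilde{X}$ over the ambient smooth $\tilde{X}$, ``taking for example $\tilde{Y} = \mathbb{A}^d_{\tilde{X}}$ together with a compatible closed embedding $Y \hookrightarrow \tilde{Y}$ over $\iota$.'' For your transfer to and from the smooth ambient scheme to work you need the square
$$\xymatrix{Y \ar[r] \ar[d]^f & \tilde{Y} \ar[d]^{\tilde{f}} \\ X \ar[r]^{\iota} & \tilde{X}}$$
to be \emph{Cartesian}: proper base change along the closed immersions $\iota$ and $Y \hookrightarrow \tilde{Y}$ is the only tool you have to rewrite $R\tilde{f}^* R\iota_*$ as $R(\text{cl. imm.})_* Rf^*$, and that identity is exactly the Cartesian-square base change theorem. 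With $\tilde{Y} = \mathbb{A}^d_{\tilde{X}}$ the Cartesian fiber over $X$ is $\mathbb{A}^d_X$, so this only applies when $f$ is itself the projection $\mathbb{A}^d_X \to X$. For a general smooth $f$ no closed embedding $Y \hookrightarrow \mathbb{A}^d_{\tilde{X}}$ over $\iota$ even exists (take $d=0$: an \'etale $f$ would have to be a closed immersion into $X$), and even for $d>0$ a non-Cartesian compatible embedding would not let you apply proper base change. The missing step is the reduction, Zariski-locally on $Y$, of $f$ to a composition $Y \xrightarrow{g} \mathbb{A}^d_X \xrightarrow{\pi} X$ with $g$ \'etale, and a separate treatment of the \'etale case. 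The \'etale case cannot be handled by the ambient-embedding device at all; there one verifies the characterization of Lemma \ref{intext}(i) directly, using $Rg^* = Rg^!$ to check the $Ri^!$-condition---which is precisely the shortcut that fails for non-\'etale smooth morphisms since there is no Poincar\'e duality for $\mathbb{F}_p$-sheaves. Only after isolating the projection $\pi \colon \mathbb{A}^d_X \to X$ does the closed embedding $X \hookrightarrow \tilde{X}$ produce a literally Cartesian square $\mathbb{A}^d_X = X \times_{\tilde{X}} \mathbb{A}^d_{\tilde{X}}$ that licenses the transfer you describe.

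Your diagnostic of \emph{why} the reduction to smooth $X$ is needed (no Verdier duality, $Rj_*$ for open $j$ fails to preserve constructibility, so one cannot verify $Ri'^!(-) \in {^pD}^{\geq 1}$ by smooth base change for $!$-pullback) is exactly right, and it in fact explains why the \'etale and projection cases must be separated: the \'etale case is the one where $!$-pullback \emph{is} accessible, and the projection case is the one where the ambient-smooth reduction substitutes for the missing base change.
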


\begin{proof}
It suffices to treat the case of a closed immersion and open immersion separately. If $j$ is a closed immersion then $j_{!*} = {^pj}_* = Rj_*$ and so the result is a special case of the proper base change theorem. Now suppose $j$ is an open immersion and let $\mathcal{F}^\bullet \in P_c^b(U, \mathbb{F}_p)$. We will show that $Rf^*[d] ( j_{!*} \mathcal{F}^\bullet) \cong j'_{!*} ( Rf'^*[d] \mathcal{F}^\bullet)$. First observe that $Rf^*[d](j_{!*} \mathcal{F}^\bullet)$ is an extension of $Rf'^*[d] \mathcal{F}^\bullet$. Moreover, from the characterization of $j'_{!*}(Rf'^*[d] \mathcal{F}^\bullet)$ in Lemma \ref{intext} the problem is Zariski local on $Y$. Thus we may assume we have a factorization $Y \xrightarrow{g} \mathbb{A}^d_X \xrightarrow{\pi} X$ where $g$ is \'{e}tale and $\pi$ is the projection. It suffices to treat the cases when $f$ is \'{e}tale and when $f$ is the projection $\mathbb{A}_X^d \rightarrow X$ separately. 

For the case when $f$ is \'{e}tale, let $i \colon Z \rightarrow X$ be a closed immersion complementary to $j$ (with the reduced induced structure on $Z$), and consider the Cartesian diagram
$$
\xymatrix{
Y   \ar[d]^{f} & f^{-1}(Z) \ar[d]^{g} \ar[l]_{i'} \\ 
X  &  Z \ar[l]^{i}
}
$$
Using the fact that $Rg^*$ is $t$-exact we verify that $$Ri'^*(Rf^* (j_{!*} \mathcal{F}^\bullet)) =  Rg^*(Ri^* (j_{!*} \mathcal{F}^\bullet)) \in {^pD}^{\leq -1}(f^{-1}(Z), \mathbb{F}_p).$$ Since $Rf^* = Rf^!$ for \'{e}tale morphisms we can similarly verify that $$Ri'^!(Rf^* (j_{!*} \mathcal{F}^\bullet)) \in {^pD}^{\geq 1}(f^{-1}(Z), \mathbb{F}_p).$$ This takes care of the case when $f$ is \'{e}tale. 

For later use we note that if we only assume $f$ is smooth then by Lemma \ref{intext} (i) and Lemma \ref{smoothp}, 
$$Ri'^*(Rf^*[d] (j_{!*} \mathcal{F}^\bullet)) =  Rg^*[d](Ri^* (j_{!*} \mathcal{F}^\bullet)) \in {^pD}^{\leq -1}(f^{-1}(Z), \mathbb{F}_p).$$ Since $Rf^*[d](j_{!*} \mathcal{F}^\bullet)$ is an extension of $Rf'^*[d] \mathcal{F}^\bullet$, then by Lemma \ref{properties} (vii) there is a surjection
$${^p}j'_!(Rf'^*[d] \mathcal{F}^\bullet) \twoheadrightarrow Rf^*[d](j_{!*} \mathcal{F}^\bullet).$$ As in the proof of Lemma \ref{intext} (i) there is also a map $Rf^*[d](j_{!*} \mathcal{F}^\bullet) \rightarrow {^p}j'_* (Rf'^*[d] \mathcal{F}^\bullet)$ whose composition with the above map gives the natural map ${^p}j'_!(Rf'^*[d] \mathcal{F}^\bullet) \rightarrow {^p}j'_* (Rf'^*[d] \mathcal{F}^\bullet)$. Thus, only assuming $f$ is smooth we always have a surjection
\begin{equation} \label{step1}
Rf^*[d](j_{!*} \mathcal{F}^\bullet) \twoheadrightarrow j'_{!*}(Rf'^*[d] \mathcal{F}^\bullet).
\end{equation}

For the case when $f$ is the projection $\mathbb{A}_X^d \rightarrow X$, first note that we can assume $d=1$ by factoring $f$ as a composition of projections of relative dimension $1$. Let $h \colon \Spec(A) \rightarrow X$ be an open affine, and consider the Cartesian diagram
$$\xymatrix{
\Spec(A) \times \mathbb{A}^1  \ar[r] \ar[d] & X \times \mathbb{A}^1 \ar[d]^{f}  \\ 
\Spec(A) \ar[r]^h &  X
}$$
Note that $Rh^*( j_{!*}\mathcal{F}^\bullet)$ is the intermediate extension of $\mathcal{F}^\bullet$ restricted to $\Spec(A) \cap U$ along the map $\Spec(A) \cap U \rightarrow \Spec(A)$. Thus we can further assume $X=\Spec(A)$ is affine. 

Let $\alpha \colon \Spec(A) \rightarrow \Spec(B)$ be a closed immersion into a smooth affine $k$-scheme $\Spec(B)$, and let $\pi \colon \Spec(B) \times \mathbb{A}^1 \rightarrow \Spec(B)$ be the projection. Using proper base change one can verify that $Rf^*[1] ( j_{!*} \mathcal{F}^\bullet) \cong j'_{!*} ( Rf'^*[1] \mathcal{F}^\bullet)$ if and only if $R\pi^*[1](R \alpha_* (j_{!*} \mathcal{F}^\bullet))$ is the intermediate extension of $Rf'^*[1] \mathcal{F}^\bullet$ along the composition $$f^{-1}(U) \rightarrow \Spec(A) \times \mathbb{A}^1 \rightarrow \Spec(B)\times \mathbb{A}^1.$$
Thus we can replace $\Spec(A)$ by $\Spec(B)$ at the cost of $j$ now being an open immersion followed by a closed immersion. However, we can refactor $j$ as a closed immersion followed by an open immersion. We already proved the result for closed immersions, and the desired result holds for a composition of immersions if it holds for each immersion separately. Thus, we have reduced to the following setup: $X = \Spec(A)$ is affine and smooth, $Y = X \times \mathbb{A}^1$, $f \colon Y \rightarrow X$ is the projection, and $j \colon U \rightarrow X$ is an open immersion. As mentioned before we can now appeal to \cite{StabPerv} to get an isomorphism $$Rf^*[1] ( j_{!*} \mathcal{F}^\bullet) \xrightarrow{\sim} j'_{!*} (Rf'^*[1] \mathcal{F}^\bullet),$$ or alternatively use Lemma \ref{smoothlem} below. From the considerations in the proof of Lemma \ref{intext} (i) these isomorphisms can be chosen so as to give an isomorphism of functors. The point is that an isomorphism as above is uniquely determined by its restriction to $f^{-1}(U)$, where it is chosen to induce the identity morphism.
\end{proof}

\begin{lem} \label{smoothlem}
Let $X = \Spec(A)$ be affine and smooth, $Y = X \times \mathbb{A}^1$, $f \colon Y \rightarrow X$ the projection, and $j \colon U \rightarrow X$ an open immersion. Let $\mathcal{F}^\bullet \in P_c^b(U, \mathbb{F}_p)$. Then there is an isomorphism 
$$Rf^*[1] ( j_{!*} \mathcal{F}^\bullet) \cong j'_{!*} ( Rf'^*[1] \mathcal{F}^\bullet).$$
\end{lem}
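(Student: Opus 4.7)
My approach is to pass through the Emerton--Kisin Riemann--Hilbert anti-equivalence (available because $X$, $Y$, $U$, and $f^{-1}(U)$ are all smooth) and verify the claim directly at the level of lfgu $\mathcal{O}_{F,X}$-modules, where flatness of $f$ makes everything transparent.

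First I would translate the problem. The EK correspondence identifies $D_c^b(-, \mathbb{F}_p)$ with $D^b_{\lfgu}(\mathcal{O}_{F,-})$, matching the perverse $t$-structure with the standard one; in particular $\mathcal{F}^\bullet$ corresponds to a single lfgu $\mathcal{O}_{F,U}$-module $M$. As noted in the proof of Lemma \ref{smoothp}, the functor $Rf^*[1]$ on perverse sheaves corresponds to $f^![-1]$ on unit modules, which on quasi-coherent sheaves is just the ordinary pullback $f^*$; this is exact because $f$ is flat. So it suffices to exhibit an isomorphism $f^*(j_{!*}M) \cong j'_{!*}(f'^*M)$ in the category of lfgu $\mathcal{O}_{F,Y}$-modules (writing $j_{!*}$ for the intermediate extension on the unit module side).

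Next I would invoke Emerton--Kisin's explicit description of $j_{!*}$ on unit modules: $j_{!*}M$ sits inside $j_*M$ as the unique lfgu $\mathcal{O}_{F,X}$-submodule extending $M$ with no nonzero lfgu sub- or quotient-module supported on $Z = X \setminus U$; equivalently, it is the $\mathcal{O}_{F,X}$-submodule of $j_*M$ generated by any coherent $\mathcal{O}_X$-extension to $X$ of a coherent generator of $M$ over $\mathcal{O}_{F,U}$. Now apply $f^*$: flat base change yields $f^*(j_*M) \cong j'_*(f'^*M)$ as quasi-coherent sheaves, $f^*$ preserves coherence, and $f^*$ commutes with the Frobenius pullback functor used in the EK formalism to generate unit modules from their coherent generators. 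Consequently the image under $f^*$ of the ``generator description'' of $j_{!*}M$ matches the corresponding description of $j'_{!*}(f'^*M)$ inside $j'_*(f'^*M)$. Translating back via the anti-equivalence yields the required isomorphism, and naturality of each construction makes it an isomorphism of functors (uniqueness after restriction to $f^{-1}(U)$, as already used in the proof of Theorem \ref{smoothint}, pins this down).

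The main obstacle I anticipate is the careful bookkeeping of Frobenius structures under $f^*$: one must verify that the unit structure induced by EK's flat pullback on $f^*(j_{!*}M)$ agrees with the intrinsic one on $j'_{!*}(f'^*M)$. A useful fallback, should that comparison become unwieldy, is to build on the surjection $Rf^*[1](j_{!*}\mathcal{F}^\bullet) \twoheadrightarrow j'_{!*}(Rf'^*[1]\mathcal{F}^\bullet)$ already constructed in the proof of Theorem \ref{smoothint} and reduce, via the criterion of Lemma \ref{intext}(ii), to showing that $Rf^*[1](j_{!*}\mathcal{F}^\bullet)$ has no nonzero perverse sub- or quotient-sheaf supported on $f^{-1}(Z)$; via the EK correspondence this again boils down to exactness of flat pullback and the vanishing of $Z$-supported sections in $j_{!*}M$.
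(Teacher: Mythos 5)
Your overall route --- pass through the Emerton--Kisin anti-equivalence and compare intermediate extensions of unit modules --- is exactly the paper's, but your main argument rests on a false description of the intermediate extension on the module side. Writing $j_{!+}M$ for the intermediate extension of a unit module $M$, the correct characterization is that $j_{!+}M$ is the \emph{smallest} lfgu $\mathcal{O}_{F,X}$-submodule of $j_*M$ restricting to $M$; it is \emph{not} the $\mathcal{O}_{F,X}$-submodule generated by an arbitrary coherent extension of a coherent generator. For instance, take $X=\mathbb{A}^1=\Spec k[t]$, $U=\mathbb{G}_m$, $M=\mathcal{O}_U$, and the coherent extension $t^{-1}k[t]\subset k[t,t^{-1}]=j_*M$ of the generator $\mathcal{O}_U$: the $\mathcal{O}_{F,X}$-submodule it generates contains $t^{-p^e}$ for every $e$, hence is all of $j_*M$, which strictly contains $j_{!+}\mathcal{O}_U=k[t]$. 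So matching ``generator descriptions'' under $f^*$ establishes nothing; the actual content of the lemma is that $f^*$ preserves \emph{minimality}, i.e.\ that $f^*(j_{!+}M)=j_{!+}M\otimes_A A[x]$ admits no strictly smaller lfgu submodule restricting to $f'^*M$, and neither flat base change nor compatibility of $f^*$ with Frobenius addresses that.

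Your fallback makes the right reduction but stops exactly where the work begins, and it conflates the two sides of the contravariant correspondence. Given the surjection (\ref{step1}), what remains is to show that $Rf^*[1](j_{!*}\mathcal{F}^\bullet)$ has no nonzero perverse \emph{sub}object supported on $f^{-1}(Z)$; under the anti-equivalence this is the assertion that $j_{!+}M\otimes_A A[x]$ has no nonzero lfgu \emph{quotient} supported on $Z\times\mathbb{A}^1$. The ``vanishing of $Z$-supported sections'' you invoke only rules out \emph{sub}modules supported there (equivalently, perverse quotients), which is the easy half and not the one needed. The missing step is the paper's specialization argument: if $V\subseteq j_{!+}M\otimes_A A[x]$ is an lfgu submodule restricting to $f'^*M$, restrict along the zero section $x=0$ to get $V\cap j_{!+}M$, an lfgu submodule of $j_{!+}M$ restricting to $M$, hence equal to $j_{!+}M$ by minimality; since $V$ is an $A[x]$-module it then equals $j_{!+}M\otimes_A A[x]$. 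Without this (or an equivalent) step your proof does not go through.
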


\begin{proof}
 
We will argue using the Riemann-Hilbert correspondence of Emerton-Kisin. The essential ideas are the same as in \cite{StabPerv}. First, we note that for a morphism $g \colon S \rightarrow T$ between smooth $k$-schemes the usual pullback functor for quasi-coherent sheaves induces a functor
$$\{ \lfgu \: \mathcal{O}_{F,T} \text{ - modules}\} \xrightarrow{g^*} \{ \lfgu \: \mathcal{O}_{F,S} \text{ - modules}\}.$$ Let $M$ be the $\lfgu$ $\mathcal{O}_{F,U}$-module corresponding to $\mathcal{F}^\bullet$. The intermediate extension $j_{!*} \mathcal{F}^\bullet$ corresponds to the smallest $\lfgu$ $\mathcal{O}_{F,X}$ submodule $N$ of $j_* M$ such that $j^*N = M$. Denote the intermediate extension by ${j_{!+}} M$. The complex $Rf^*[1] (j_{!*} \mathcal{F}^\bullet)$ corresponds to $$f^* ({j_{!+}} M) = {j_{!+}} M \otimes_A A[x],$$ where $\mathbb{A}^1 = \Spec(k[x])$. Note that $f^*({j_{!+}} M)$ is an extension of $f'^*M$. By (\ref{step1}) there is a surjection $$Rf^*[1] (j_{!*} \mathcal{F}^\bullet) \twoheadrightarrow j'_{!*}(Rf'^*[1]  \mathcal{F}^\bullet).$$ As the Riemann-Hilbert correspondence is an anti-equivalence, this corresponds to an injection
$$j_{!+}' ( f'^* M) \hookrightarrow f^* (j_{!+} M) =  {j_{!+}} M \otimes_A A[x].$$
It therefore suffices to show that if $V \subset  j_{!+} M \otimes_A A[x]$ is a sub $\lfgu$ $\mathcal{O}_{F,Y}$-module that is an extension of $f'^* M$, then $V = j_{!+} M \otimes_A A[x]$. 

To prove this, let $i \colon X \rightarrow X \times \mathbb{A}^1$ be the inclusion corresponding to the map $A[x] \rightarrow A$ which sends $x$ to $0$. Then we have a diagram with Cartesian squares:
$$\xymatrix{
\Spec(A) \ar[r]^i & \Spec(A[x]) \ar[r]^f & \Spec(A) \\
U \ar[u]^j \ar[r]^{i'} & f^{-1}(U) \ar[r]^{f'} \ar[u]_{j'} & U \ar[u]_{j}
}$$
Note that $i^*V = V \cap j_{!+} M$ is an $\lfgu$ $\mathcal{O}_{F,X}$-module which is an extension of $M$. Because $V \cap j_{!+} M \subset j_{!+} M$ and $j_{!+} M$ is the intermediate extension of $M$, it follows that $$V \cap j_{!+} M = j_{!+} M.$$ Now because $V$ is an $A[x]$-module we must have $V = j_{!+} M \otimes_A A[x]$, as desired. 
\end{proof}

We conclude this section by showing that one can glue perverse sheaves in the smooth topology.

\begin{lem} \label{dec2}
Let $\{ \phi_i \colon Y_i \rightarrow X\}_{i \in I}$ be a family of \'{e}tale morphisms from $k$-schemes $Y_i$ such that $X = \bigcup \phi_i(Y_i)$. Let $\mathcal{F}^\bullet \in D^+(X, \mathbb{F}_p)$ be such that $R \phi_i^* \mathcal{F}^\bullet \in P^+(Y_i, \mathbb{F}_p)$ for all $i$. Then $\mathcal{F}^\bullet \in P^+(X, \mathbb{F}_p)$. 
\end{lem}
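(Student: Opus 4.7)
The plan is to verify the two vanishing conditions defining $P^+(X,\mathbb{F}_p)$ at each point $x \in X$ by transferring them, via the étale local structure, to the corresponding conditions for $R\phi_i^* \mathcal{F}^\bullet$ on some $Y_i$ that contains a preimage of $x$. Given $x \in X$, the covering hypothesis lets us choose $i \in I$ and a geometric point $\overline{y} \to Y_i$ lying over a geometric point $\overline{x} \to X$ centered at $x$. Since $\phi_i$ is étale, the induced map on strict henselizations
$$\mathcal{O}_{X,\overline{x}}^{\mathrm{sh}} \longrightarrow \mathcal{O}_{Y_i,\overline{y}}^{\mathrm{sh}}$$
is an isomorphism, and moreover $\dim \overline{\{y\}} = \dim \overline{\{x\}}$ because étale morphisms preserve dimensions of points.

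Next, I would invoke Remark \ref{defrem}: the conditions $H^i(Ri_x^* \mathcal{F}^\bullet) = 0$ and $H^i(Ri_x^! \mathcal{F}^\bullet) = 0$ are computed from $Rj^* \mathcal{F}^\bullet$ on $\Spec(\mathcal{O}_{X,\overline{x}}^{\mathrm{sh}})$ followed by $Ri^*$ or $Ri^!$ along the inclusion of the closed point. Using the isomorphism of strict henselizations and the fact that $R\phi_i^! \cong R\phi_i^*$ for an étale morphism, these computations agree with the analogous computations of $H^i(Ri_y^* (R\phi_i^* \mathcal{F}^\bullet))$ and $H^i(Ri_y^!(R\phi_i^* \mathcal{F}^\bullet))$ on $Y_i$.

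Since $R\phi_i^* \mathcal{F}^\bullet \in P^+(Y_i, \mathbb{F}_p)$ by hypothesis, the stalk conditions vanish in degrees $i > -\dim \overline{\{y\}}$ and the costalk conditions vanish in degrees $i < -\dim \overline{\{y\}}$; combined with the dimension identity $\dim \overline{\{y\}} = \dim \overline{\{x\}}$, this is exactly what is needed for $\mathcal{F}^\bullet$ at $x$. As $x \in X$ was arbitrary, $\mathcal{F}^\bullet \in P^+(X, \mathbb{F}_p)$.

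The only substantive point is the compatibility of the stalk/costalk vanishing conditions with étale base change, and this reduces entirely to the invariance of the strict henselization under an étale morphism together with the identity $R\phi_i^! \cong R\phi_i^*$. I do not expect a serious obstacle beyond this step; all other ingredients are formal consequences of the definitions recalled in Section \ref{PerverseFpSheaves}.
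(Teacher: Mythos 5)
Your proof is correct and is exactly the argument the paper intends: the paper's proof is the one-liner ``This follows from the definitions,'' and what you have written is precisely the spelling-out of that --- the stalk/costalk conditions are checked on the strict henselization by Remark \ref{defrem}, which is unchanged along an \'{e}tale morphism, together with $R\phi_i^! \cong R\phi_i^*$ and the preservation of $\dim\overline{\{x\}}$ (both standard for \'{e}tale maps).
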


\begin{proof}
This follows from the definitions.
\end{proof}

\begin{lem} \label{etdec}
Let $\{ \phi_i \colon Y_i \rightarrow X\}_{i \in I}$ be a finite family of \'{e}tale morphisms from $k$-schemes $Y_i$ such that $X = \bigcup \phi_i(Y_i)$. Then the category $P^b(X, \mathbb{F}_p)$ satisfies descent with respect to the cover $\{ \phi_i \colon Y_i \rightarrow X\}$. The same is true for $P^b_c(X, \mathbb{F}_p)$
\end{lem}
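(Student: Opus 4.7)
The plan is to reduce the assertion to classical étale descent at the level of $D^+(-, \mathbb{F}_p)$ and then invoke Lemma \ref{dec2} to ensure perversity is preserved under gluing; constructibility is étale local by a separate direct argument. Since the family is finite, I would first replace it by the single surjective étale morphism $\phi \colon Y := \coprod_{i \in I} Y_i \rightarrow X$. A descent datum for the family $\{\phi_i\}$ is manifestly equivalent to a descent datum for $\phi$ along the Čech nerve $Y \times_X Y \rightrightarrows Y \times_X Y \times_X Y$, so the claim reduces to effective descent along $\phi$.

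Next, I would invoke étale descent for the bounded below derived category: given $\mathcal{G}^\bullet \in D^+(Y, \mathbb{F}_p)$ with a cocycle isomorphism $p_1^* \mathcal{G}^\bullet \xrightarrow{\sim} p_2^* \mathcal{G}^\bullet$ on $Y \times_X Y$, there is $\mathcal{F}^\bullet \in D^+(X, \mathbb{F}_p)$, unique up to canonical isomorphism, with $R\phi^* \mathcal{F}^\bullet \simeq \mathcal{G}^\bullet$ realizing the descent datum. This is the classical étale descent for bounded below complexes of sheaves of $\mathbb{F}_p$-modules; it can be established by means of $K$-injective resolutions on the small étale site of $X$ or via a Čech-to-derived-functor spectral sequence. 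Descent of morphisms between perverse sheaves on $X$ then follows from the same assertion applied to $R\mathscr{H}\text{om}_{\mathbb{F}_p}$, which yields the full faithfulness half of the descent equivalence; conservativity of $R\phi^*$ on perverse sheaves gives the faithfulness.

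Now given descent data $\{\mathcal{F}_i^\bullet \in P^b(Y_i, \mathbb{F}_p)\}$ with the cocycle condition, the preceding step produces a unique $\mathcal{F}^\bullet \in D^+(X, \mathbb{F}_p)$ whose pullbacks $R\phi_i^* \mathcal{F}^\bullet \simeq \mathcal{F}_i^\bullet$ are perverse on the $Y_i$. Lemma \ref{dec2} then immediately gives $\mathcal{F}^\bullet \in P^+(X, \mathbb{F}_p)$, and uniform boundedness is automatic because the $\mathcal{F}_i^\bullet$ are bounded and $\phi$ is surjective, so $\mathcal{F}^\bullet \in P^b(X, \mathbb{F}_p)$. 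For the $P_c^b$ version, I would additionally observe that constructibility of each cohomology sheaf $H^j(\mathcal{F}^\bullet)$ is itself an étale local property on $X$ (a subset of a $k$-scheme is constructible iff its preimage under a surjective étale map is constructible), so $\mathcal{F}^\bullet$ lies in $P_c^b(X, \mathbb{F}_p)$ whenever each $\mathcal{F}_i^\bullet$ does.

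The only real obstacle is securing étale descent at the level of $D^+(-, \mathbb{F}_p)$, for which one needs a clean treatment of derived categories on the small étale site; this is standard but not emphasized in \cite{EmertonKisin}, and can be handled either by working with $K$-injective resolutions or, alternatively, by exploiting the fact that the pullback functor $R\phi^*$ is $t$-exact for the perverse $t$-structure (Lemma \ref{etalep}) to recast descent inside the abelian category of perverse sheaves, where the standard descent theory for abelian stacks applies.
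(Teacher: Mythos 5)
The central step of your argument --- ``\'{e}tale descent for the bounded below derived category $D^+(-,\mathbb{F}_p)$'' --- is not true as stated, and this is precisely the difficulty the lemma has to confront. For a triangulated $1$-category such as $D^+(Y,\mathbb{F}_p)$, descent data along an \'{e}tale cover are not effective in general, and morphisms do not glue: the presheaf $U \mapsto \Hom_{D(U)}(\mathcal{F}^\bullet|_U, \mathcal{G}^\bullet|_U)$ fails to be a sheaf because the \v{C}ech-to-derived-functor spectral sequence receives contributions from the negative-degree local $\Hom$ sheaves on the multiple overlaps. Neither $K$-injective resolutions nor that spectral sequence remove this obstruction; they only make it visible. (Descent does hold for the derived $\infty$-category, but a descent datum in the $1$-categorical sense carries no homotopy coherence data, so that cannot be invoked here either.) This is exactly why \cite[3.2.2, 3.2.4]{BBD} restricts to families of objects for which the relevant negative local $\Hom$'s vanish: for such families morphisms form a sheaf and descent data are effective. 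The paper's proof consists of the observation that perverse sheaves satisfy this vanishing automatically --- for $\mathcal{F}^\bullet \in {^pD}^{\leq 0}$ and $\mathcal{G}^\bullet \in {^pD}^{\geq 0}$ one has $\Hom(\mathcal{F}^\bullet, \mathcal{G}^\bullet[i]) = 0$ for $i<0$ on every \'{e}tale open, by the axioms of a $t$-structure --- and then applies the BBD gluing machinery, with Lemma \ref{dec2} (as you also note) guaranteeing that the glued object is perverse.

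Your closing alternative --- recasting the problem inside the abelian category of perverse sheaves and invoking ``standard descent theory for abelian stacks'' --- points in the right direction but is not a proof as it stands: perverse sheaves are not sheaves of modules on a site, and the object to be glued a priori lives only in the derived category, so one cannot simply form it as an equalizer of pushforwards without further argument. The correct substitute for abelian descent here is the BBD gluing formalism itself, so the fix is to cite and apply \cite[3.2.2, 3.2.4]{BBD} together with the $\Ext$-vanishing supplied by the $t$-structure. Your reduction to a single surjective \'{e}tale morphism, your use of Lemma \ref{dec2} for perversity, and your remark that constructibility is \'{e}tale-local are all fine.
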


\begin{proof}
We use \cite[3.2.2, 3.2.4]{BBD} to do the gluing. Note that the condition in \cite{BBD} on vanishing $\Ext$ terms is satisfied by the definition of a $t$-structure.  Lemma \ref{dec2} guarantees that the result of gluing perverse sheaves is a perverse sheaf.
\end{proof}

\begin{lem}
Let $\{ \phi_i \colon Y_i \rightarrow X\}_{i \in I}$ be a finite family of smooth morphisms of relative dimension $d_i$ from $k$-schemes $Y_i$ such that $X = \bigcup \phi_i(Y_i)$. Let $\mathcal{F}^\bullet \in D_c^b(X, \mathbb{F}_p)$ be such that $R \phi_i^*[d_i] \mathcal{F}^\bullet \in P_c^b(Y_i, \mathbb{F}_p)$ for all $i$. Then $\mathcal{F}^\bullet \in P_c^b(X, \mathbb{F}_p)$. 
\end{lem}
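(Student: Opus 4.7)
The plan is to mimic the strategy of Lemma \ref{smoothp}'s proof (which shows smooth pullback preserves perversity), running it in reverse with the help of the Emerton-Kisin Riemann-Hilbert correspondence. Concretely, I reduce first to the case of the projection $\pi \colon \mathbb{A}^1_X \to X$ on a smooth base $X$, and then use the equivalence with lfgu $\mathcal{O}_{F,X}$-modules.

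First, I would exploit the Zariski-local structure of smooth morphisms: each $\phi_i$ factors, after shrinking $Y_i$ to a finite Zariski cover, as an \'etale morphism $g_i$ followed by the standard projection $\pi_i \colon \mathbb{A}^{d_i}_X \to X$. Since $Rg_i^*$ is $t$-exact (Lemma \ref{etalep}) and perversity descends along \'etale covers (Lemma \ref{dec2}), the hypothesis propagates to say that $R\pi_i^*[d_i]\mathcal{F}^\bullet$ is perverse on an open subset of $\mathbb{A}^{d_i}_X$ that surjects onto $\phi_i(Y_i)$. Factoring each $\mathbb{A}^{d_i}$-projection into a composition of $\mathbb{A}^1$-projections and inducting on $\max d_i$ reduces the problem to proving the result for a cover by open restrictions of a single projection $\pi \colon \mathbb{A}^1_X \to X$.

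To reduce to the case $X$ smooth, I would next mimic the argument in Lemma \ref{smoothp}: embed $X \hookrightarrow \mathbb{A}^n$ as a closed subscheme via $i$, and consider the Cartesian diagram
$$\xymatrix{
\mathbb{A}^1_X \ar[r]^{i'} \ar[d]_{\pi} & \mathbb{A}^{n+1} \ar[d]^{\pi'} \\
X \ar[r]_{i} & \mathbb{A}^n
}$$
Closed pushforwards are $t$-exact and fully faithful for the perverse $t$-structure (Lemma \ref{properties} (i), (viii)), so $\mathcal{F}^\bullet$ (resp.\ $R\pi^*[1]\mathcal{F}^\bullet$) is perverse iff $Ri_*\mathcal{F}^\bullet$ (resp.\ $Ri'_* R\pi^*[1]\mathcal{F}^\bullet$) is perverse; by proper base change $Ri'_* R\pi^*[1]\mathcal{F}^\bullet \cong R\pi'^*[1] Ri_* \mathcal{F}^\bullet$, so we may replace $X$ by $\mathbb{A}^n$ and assume $X$ is smooth.

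In the smooth case I would invoke the Emerton-Kisin correspondence: $\mathcal{F}^\bullet$ corresponds to $M^\bullet \in D^b_{\lfgu}(\mathcal{O}_{F,X})$, and the perverse $t$-structure corresponds to the standard $t$-structure. Under this anti-equivalence, $R\pi^*[1]$ corresponds to $\pi^![-1]$, which at the level of quasi-coherent sheaves is the ordinary pullback $\pi^*$; this is exact as $\pi$ is flat. Because $\pi$ is faithfully flat, $\pi^*$ reflects concentration in a single cohomological degree, so $M^\bullet$ is concentrated in degree $0$, and $M^0$ is lfgu because each of the three defining conditions (quasi-coherence, $\phi_M$ an isomorphism, and local finite generation over $\mathcal{O}_{F,X}$) descends along the faithfully flat $\pi$. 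The main obstacle I expect is this last descent statement: while the first two conditions are standard fpqc descent, local finite generation over the noncommutative sheaf $\mathcal{O}_{F,X}$ requires a more careful argument, combining flat base change of Frobenius structures with descent for Noetherian-type conditions on modules.
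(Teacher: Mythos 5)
Your outline follows the paper's strategy in broad strokes (factor the smooth morphisms étale-then-projection, reduce to a smooth base via a closed embedding and proper base change, invoke Emerton--Kisin), but there is a genuine gap in the reduction step. After the étale reduction you know only that $R\pi_i^*[d_i]\mathcal{F}^\bullet$ is perverse on the open image $g_i(Y_i) \subset \mathbb{A}^{d_i}_X$. Your ``inducting on $\max d_i$'' to arrive at ``a cover by open restrictions of a single projection $\pi\colon\mathbb{A}^1_X \to X$'' does not discharge this: those open restrictions still form a \emph{family} of smooth morphisms whose sources need not cover $\mathbb{A}^1_X$ (their images merely cover $X$), so you cannot conclude that $R\pi^*[1]\mathcal{F}^\bullet$ is perverse on $\mathbb{A}^1_X$, which is what your step-3 proper-base-change diagram tacitly assumes. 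Moreover the proposed induction has a direction problem: knowing $R\pi_d^*[d]\mathcal{F}^\bullet$ is perverse on an open subset gives you no control over $R\pi_{d-1}^*[d-1]\mathcal{F}^\bullet$, since the implication from Lemma \ref{smoothp} runs the other way. The missing idea is the paper's translation argument: $R\pi^*[d]\mathcal{F}^\bullet$ is invariant under $X$-automorphisms of $\mathbb{A}^d_X$ that commute with $\pi$ (because $\pi\circ\tau=\pi$ forces $R\tau^*R\pi^*[d]\mathcal{F}^\bullet = R\pi^*[d]\mathcal{F}^\bullet$), and since $\pi(g(Y))=X$, the fiberwise translates of the open set $g(Y)$ cover $\mathbb{A}^d_X$; this upgrades perversity on $g(Y)$ to perversity on all of $\mathbb{A}^d_X$, after which your reductions do go through.

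On the other hand, the ``main obstacle'' you flag at the end --- descent of the lfgu condition along $\pi^*$ --- is not actually an obstacle. The Emerton--Kisin anti-equivalence is between $D^b_c(X, \mathbb{F}_p)$ and $D^b_{\lfgu}(\mathcal{O}_{F,X})$, so the object $M^\bullet$ corresponding to $\mathcal{F}^\bullet$ already has lfgu cohomology sheaves; the only thing left to check is that $M^\bullet$ is concentrated in degree $0$, and that follows from exactness of $\pi^![-d]$ (flatness) plus faithfulness of $\pi^*$ on quasi-coherent sheaves (faithful flatness). No nontrivial descent of the noncommutative finite-generation condition is required.
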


\begin{proof}
As the question is local on $X$, it suffices to consider the case where $I = \{*\}$ and $\phi$ factors as $$Y \xrightarrow{g} \mathbb{A}_X^d \xrightarrow{\pi} X,$$ where $g$ is \'{e}tale, $\pi$ is the projection, and $\pi \circ g$ is surjective.

First, we claim that $R\pi^*[d] \mathcal{F}^\bullet \in P_c^b(\mathbb{A}_X^d, \mathbb{F}_p)$. By Lemma \ref{dec2}, the restriction of $R\pi^*[d] \mathcal{F}^\bullet$ to $g(Y)$ is perverse. Now the claim follows because $\mathbb{A}_X^d$ is covered by the translates of $g(Y)$ under the automorphisms of $\mathbb{A}_X^d$ which preserve the projection $\mathbb{A}_X^d \rightarrow X$. 

Thus, we have reduced to the case when $\phi$ is the projection $\pi \colon \mathbb{A}_X^d \rightarrow X$. We may also assume $X$ is affine, so that there is a closed embedding $i \colon X \rightarrow \mathbb{A}^n$ for some $n$. As $\mathcal{F}^\bullet$ is perverse if and only if $Ri_* \mathcal{F}^\bullet$  is perverse, then by the proper base change theorem we can reduce to  the case when $\phi$ is the projection $\mathbb{A}^{n+d} \rightarrow \mathbb{A}^n$ onto the first $n$ coordinates. In particular, we can assume $X$ is smooth. Now the result follows by the Riemann-Hilbert correspondence of Emerton-Kisin, as in the proof of Lemma \ref{smoothp}.
\end{proof}

Because smooth morphisms admit sections \'{e}tale locally, we can now extend our \'{e}tale descent result (Lemma \ref{etdec}) to the smooth setting. Note that we have to restrict to the constructible bounded derived category because this is the only context in which we have proved that pullback along smooth morphisms preserves perversity (Lemma \ref{smoothp}). 

\begin{cor} \label{smoothdec}
Let $\{ \phi_i \colon Y_i \rightarrow X\}_{i \in I}$ be a finite family of smooth morphisms from $k$-schemes $Y_i$ such that $X = \bigcup \phi_i(Y_i)$. Then the category $P^b_c(X, \mathbb{F}_p)$ satisfies descent with respect to the cover $\{\phi_i\}$.
\end{cor}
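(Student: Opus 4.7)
The plan is to reduce smooth descent to the étale descent already proved in Lemma \ref{etdec}, using the previous lemma as the mechanism for verifying that the glued complex is perverse. The key geometric input is that smooth surjections admit étale-local sections, so any smooth cover can be refined by an étale cover.

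First I would make the descent data for the smooth cover $\{\phi_i \colon Y_i \rightarrow X\}$ explicit as perverse sheaves $\mathcal{F}_i^\bullet \in P^b_c(Y_i, \mathbb{F}_p)$, equipped with isomorphisms of the pullbacks along the two projections from the fiber products $Y_i \times_X Y_j$ (with appropriate dimension shifts), satisfying the cocycle condition on triple fiber products. Since the question is Zariski-local on each $Y_i$, I may factor $\phi_i$ as $Y_i \xrightarrow{g_i} \mathbb{A}^{d_i}_X \xrightarrow{\pi_i} X$ with $g_i$ étale, and then the composition of the zero section $X \hookrightarrow \mathbb{A}^{d_i}_X$ with $g_i$ yields, upon passing to suitable open pieces of $Y_i$ mapping étale-locally through the zero section, an étale cover $\{\psi_\alpha \colon X_\alpha \rightarrow X\}_{\alpha \in A}$ together with étale $X$-morphisms $t_\alpha \colon X_\alpha \rightarrow Y_{i(\alpha)}$ lifting each $\psi_\alpha$.

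Given a descent datum $(\mathcal{F}_i^\bullet, \sigma_{ij})$ for $\{\phi_i\}$, I pull back along $t_\alpha$ to obtain $\mathcal{G}_\alpha^\bullet := Rt_\alpha^* \mathcal{F}_{i(\alpha)}^\bullet \in P^b_c(X_\alpha, \mathbb{F}_p)$, which is perverse by Lemma \ref{etalep} since $t_\alpha$ is étale. The isomorphisms $\sigma_{ij}$ base-change to canonical compatibility isomorphisms on the fiber products $X_\alpha \times_X X_\beta$ via the induced étale maps $X_\alpha \times_X X_\beta \to Y_{i(\alpha)} \times_X Y_{i(\beta)}$, and the cocycle condition is inherited from that of $\sigma_{ij}$. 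This gives a descent datum for the étale cover $\{\psi_\alpha\}$, and Lemma \ref{etdec} produces a complex $\mathcal{F}^\bullet \in P^b_c(X, \mathbb{F}_p)$ gluing the $\mathcal{G}_\alpha^\bullet$.

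It remains to identify $R\phi_i^*[d_i] \mathcal{F}^\bullet$ with $\mathcal{F}_i^\bullet$ and to check perversity on $X$. The objects $R\phi_i^*[d_i] \mathcal{F}^\bullet$ and $\mathcal{F}_i^\bullet$ both lie in $D^b_c(Y_i, \mathbb{F}_p)$, and by construction they agree after pullback along the étale cover of $Y_i$ obtained by base-changing the $\psi_\alpha$ to $Y_i$; étale descent in $D^b_c$ then supplies a canonical isomorphism. The immediately preceding lemma then guarantees that $\mathcal{F}^\bullet$ is perverse on $X$, since its smooth pullbacks $R\phi_i^*[d_i] \mathcal{F}^\bullet \cong \mathcal{F}_i^\bullet$ are perverse on the covering $Y_i$. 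The main obstacle I anticipate is the bookkeeping of the dimension shifts: one must verify that the shifts by $d_i$ in the descent datum for $\{\phi_i\}$ match, under $t_\alpha$, the unshifted étale descent datum on $\{\psi_\alpha\}$, and that this matching is compatible with the cocycle condition on both triple fiber products. Otherwise, no new analytic input is needed beyond the étale case.
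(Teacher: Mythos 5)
Your overall strategy---refine the smooth cover by an \'{e}tale cover using local sections of the $\phi_i$, glue with Lemma \ref{etdec}, and then use the lemma immediately preceding the corollary to see that the glued complex is perverse---is the natural (and surely the intended) route. But there is a concrete error at the pivot of the reduction: the morphisms $t_\alpha \colon X_\alpha \rightarrow Y_{i(\alpha)}$ are \emph{not} \'{e}tale. By your construction $X_\alpha$ is the preimage under $g_{i(\alpha)}$ of the zero section, so $t_\alpha$ is a base change of the closed immersion $X \hookrightarrow \mathbb{A}^{d_{i(\alpha)}}_X$, i.e.\ a regular closed immersion of codimension $d_{i(\alpha)}$; equivalently, since $\psi_\alpha = \phi_{i(\alpha)} \circ t_\alpha$ is \'{e}tale while $\phi_{i(\alpha)}$ has relative dimension $d_{i(\alpha)}$, the map $t_\alpha$ cannot be \'{e}tale unless $d_{i(\alpha)}=0$. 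Hence Lemma \ref{etalep} does not apply, the correct normalization is $\mathcal{G}_\alpha^\bullet := Rt_\alpha^*[-d_{i(\alpha)}]\mathcal{F}_{i(\alpha)}^\bullet$, and---the serious point---perversity of this object is not automatic: shifting the restriction of a perverse sheaf along a closed immersion does not in general produce a perverse sheaf. The ``dimension-shift bookkeeping'' you defer is exactly where the content sits.

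The missing idea is that perversity of $\mathcal{G}_\alpha^\bullet$ must be extracted from the descent datum itself, in the style of Lemma \ref{4.2.3}. Form $X_\alpha \times_X Y_{i(\alpha)}$ with projections $pr_1$ to $X_\alpha$ (smooth of relative dimension $d_{i(\alpha)}$ and surjective, since $\psi_\alpha$ factors through $\phi_{i(\alpha)}$) and $pr_2$ to $Y_{i(\alpha)}$ (\'{e}tale, being a base change of $\psi_\alpha$). Restricting $\sigma_{i(\alpha)i(\alpha)}$ along the closed immersion $t_\alpha \times \id$ into $Y_{i(\alpha)} \times_X Y_{i(\alpha)}$ yields $Rpr_1^*[d_{i(\alpha)}]\,\mathcal{G}_\alpha^\bullet \cong Rpr_2^*\,\mathcal{F}_{i(\alpha)}^\bullet$; the right side is perverse by Lemma \ref{etalep}, and the preceding lemma applied to the smooth surjection $pr_1$ then gives perversity of $\mathcal{G}_\alpha^\bullet$. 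The same device is needed when you identify $R\phi_i^*[d_i]\mathcal{F}^\bullet$ with $\mathcal{F}_i^\bullet$: the comparison on $Y_i \times_X X_\alpha$ must be routed through $\sigma_{i,i(\alpha)}$ restricted along $\id \times t_\alpha$, and the resulting local isomorphisms glue because both sides are perverse on $Y_i$ (so Hom satisfies \'{e}tale descent by the $t$-structure axioms as in Lemma \ref{etdec}), not by derived-categorical descent in $D_c^b$. Two smaller points: use quasi-compactness of the $Y_i$ to keep the refining \'{e}tale family finite, as Lemma \ref{etdec} requires; and descent also demands full faithfulness of $P_c^b(X,\mathbb{F}_p) \rightarrow \{\text{descent data}\}$, which your write-up does not address. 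With these repairs the argument goes through.
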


\begin{rmrk}
Later we will be concerned with extensions between perverse sheaves. We record here that  for $\mathcal{F}^\bullet$, $\mathcal{G}^\bullet \in P^+(X, \mathbb{F}_p)$ we have
$$\Ext^1_{P^+(X, \mathbb{F}_p)}(\mathcal{F}^\bullet, \mathcal{G}^\bullet) = \Hom_{D(X, \mathbb{F}_p)}(\mathcal{F}^\bullet, \mathcal{G}^\bullet[1]).$$ This is a general fact about $t$-structures. We do not know if a similar statement holds for higher $\Ext$ terms. 
\end{rmrk}

\section{Equivariant perverse $\mathbb{F}_p$-sheaves} \label{EquivSheaves}
In this section we define equivariant perverse sheaves and establish some of their basic properties. We fix a $k$-scheme $S$ and an $S$-group scheme $G$ of relative dimension $d$ such that the map $G \rightarrow S$ is smooth, affine, and has geometrically connected fibers. A $G$-scheme is an $S$-scheme of finite type equipped with an action of $G$. If $X$ is a $G$-scheme, we let $\rho \colon G \times_S X \rightarrow X$ be the action map, and $\pi \colon G \times_S X \rightarrow X$ be the projection. 

\begin{defn} \label{equivdef}
A $G$\emph{-equivariant perverse sheaf} on $X$ is a perverse sheaf $\mathcal{F}^\bullet \in P_c^b(X, \mathbb{F}_p)$ such that there exists an isomorphism $R \rho^* \mathcal{F}^\bullet \cong R \pi^* \mathcal{F}^\bullet$ in $D_c^b(G \times_S X, \mathbb{F}_p)$. 
\end{defn}

\begin{lem}\label{equivfun}
Let $f \colon Y \rightarrow X$ be a $G$-equivariant morphism between $G$-schemes. 
\begin{enumerate}[{\normalfont (i)}]
\item If $\mathcal{F}^\bullet \in P_c^b(X, \mathbb{F}_p)$ is $G$-equivariant and $Rf^*[n] \mathcal{F}^\bullet \in P_c^b(Y, \mathbb{F}_p)$ for some integer $n$, then $Rf^*[n] \mathcal{F}^\bullet$ is a $G$-equivariant perverse sheaf.
\item If $\mathcal{F}^\bullet \in P_c^b(Y, \mathbb{F}_p)$ is $G$-equivariant and $Rf_![n] \mathcal{F}^\bullet \in P_c^b(X, \mathbb{F}_p)$ for some integer $n$, then $Rf_![n] \mathcal{F}^\bullet$ is a $G$-equivariant perverse sheaf.
\end{enumerate}
\end{lem}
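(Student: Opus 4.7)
The plan is to transport the defining equivariance isomorphism for $\mathcal{F}^\bullet$ along $Rf^*$ or $Rf_!$ using the two induced squares
\[\xymatrix{G\times_S Y \ar[r]^{\id_G\times f}\ar[d]^{\rho_Y} & G\times_S X\ar[d]^{\rho_X} \\ Y\ar[r]^f & X}\qquad \xymatrix{G\times_S Y \ar[r]^{\id_G\times f}\ar[d]^{\pi_Y} & G\times_S X\ar[d]^{\pi_X} \\ Y\ar[r]^f & X}\]
The projection square trivially commutes, and the action square commutes thanks to $G$-equivariance of $f$. Since the shift $[n]$ commutes with $R\rho^*$ and $R\pi^*$ it plays no role and may be dropped throughout.

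For (i) the argument is immediate from functoriality of $R(-)^*$: commutativity of the two squares gives $R\rho_Y^*\circ Rf^* \cong R(\id_G\times f)^*\circ R\rho_X^*$ and similarly for $\pi$, so applying $R(\id_G\times f)^*$ to the equivariance isomorphism $R\rho_X^*\mathcal{F}^\bullet \cong R\pi_X^*\mathcal{F}^\bullet$ yields $R\rho_Y^*(Rf^*\mathcal{F}^\bullet)\cong R\pi_Y^*(Rf^*\mathcal{F}^\bullet)$.

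For (ii) the same outline works once one has base change for $Rf_!$. The projection square is evidently Cartesian. The more delicate point, which I expect to be the main obstacle, is to verify that the action square is \emph{also} Cartesian. I would do this by writing down the inverse $((g,x),y)\mapsto (g,\,g^{-1}\cdot y)$ to the natural map $G\times_S Y\to (G\times_S X)\times_X Y$; this is well-defined on the fiber product (where $g\cdot x = f(y)$) precisely because $G$-equivariance of $f$ gives $f(g^{-1}\cdot y) = g^{-1}\cdot f(y) = x$. Proper base change then produces $R\rho_X^*\circ Rf_! \cong R(\id_G\times f)_!\circ R\rho_Y^*$ and $R\pi_X^*\circ Rf_! \cong R(\id_G\times f)_!\circ R\pi_Y^*$, and combining these with the equivariance isomorphism $R\rho_Y^*\mathcal{F}^\bullet\cong R\pi_Y^*\mathcal{F}^\bullet$ on $Y$ yields the desired isomorphism $R\rho_X^*(Rf_!\mathcal{F}^\bullet)\cong R\pi_X^*(Rf_!\mathcal{F}^\bullet)$.

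Beyond the Cartesian-ness check in (ii), the entire argument is pure functoriality of the six-functor formalism for \'etale $\mathbb{F}_p$-sheaves.
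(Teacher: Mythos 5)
Your proposal is correct and follows the paper's own route: the paper also uses the two squares formed by $\id_G\times f$ with the action and projection maps (asserting they are Cartesian), deduces (i) by a diagram chase with $R(-)^*$, and deduces (ii) by proper base change for $Rf_!$. Your explicit verification that the action square is Cartesian via the inverse $((g,x),y)\mapsto(g,g^{-1}\cdot y)$ is exactly the point the paper leaves implicit.
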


\begin{proof}
We have Cartesian diagrams
$$\xymatrix{
G \times_S Y \ar[r]^(.6){\rho_Y} \ar[d]_{\id_G \times f} & Y \ar[d]^f  & & & G \times_S Y \ar[r]^(.6){\pi_Y} \ar[d]_{\id_G \times f} & Y \ar[d]^f \\
G \times_S X \ar[r]^(.6){\rho_X} & X & & & G \times_S X \ar[r]^(.6){\pi_X} & X 
}$$
Now (i) follows from a straightforward diagram chase, and (ii) follows similarly after applying the proper base change theorem.
\end{proof}

\begin{lem} \label{equivint}
Let $j \colon U \rightarrow X$ be a $G$-equivariant immersion of $G$-schemes, and let $\mathcal{F}^\bullet \in P_c^b(U, \mathbb{F}_p)$ be $G$-equivariant. Then $j_{!*} \mathcal{F}^\bullet$ is $G$-equivariant. 
\end{lem}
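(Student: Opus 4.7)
The plan is to deduce the $G$-equivariance of $j_{!*}\mathcal{F}^\bullet$ from that of $\mathcal{F}^\bullet$ by commuting the intermediate extension past smooth pullback along both the action map and the projection map of $G$. Everything will flow from Theorem~\ref{smoothint}, once the relevant Cartesian squares are set up correctly.

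First, I would write down the action and projection maps $\rho_X, \pi_X \colon G\times_S X \to X$ and $\rho_U, \pi_U \colon G\times_S U \to U$, and observe that because $j$ is $G$-equivariant the action of $G$ preserves $U$. Consequently $\rho_X^{-1}(U) = G\times_S U = \pi_X^{-1}(U)$, so the pullback of $j$ along $\rho_X$ and along $\pi_X$ is the \emph{same} immersion $j' := \id_G \times j \colon G\times_S U \to G\times_S X$, fitting into two Cartesian squares. Since $G\to S$ is smooth of relative dimension $d$, so are $\rho_X$ and $\pi_X$, and Theorem~\ref{smoothint} applies.

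Next I would apply Theorem~\ref{smoothint} to each of the two Cartesian squares to obtain isomorphisms
$$R\rho_X^*[d]\bigl(j_{!*}\mathcal{F}^\bullet\bigr) \;\cong\; j'_{!*}\bigl(R\rho_U^*[d]\,\mathcal{F}^\bullet\bigr), \qquad R\pi_X^*[d]\bigl(j_{!*}\mathcal{F}^\bullet\bigr) \;\cong\; j'_{!*}\bigl(R\pi_U^*[d]\,\mathcal{F}^\bullet\bigr).$$
The $G$-equivariance of $\mathcal{F}^\bullet$ supplies an isomorphism $R\rho_U^*\mathcal{F}^\bullet \cong R\pi_U^*\mathcal{F}^\bullet$; shifting by $d$ and applying the functor $j'_{!*}$ identifies the two right-hand sides. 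Composing gives $R\rho_X^*[d](j_{!*}\mathcal{F}^\bullet) \cong R\pi_X^*[d](j_{!*}\mathcal{F}^\bullet)$, and removing the common shift yields the required isomorphism in $D_c^b(G\times_S X,\mathbb{F}_p)$.

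The only real subtlety is confirming that both Cartesian squares share the same top arrow $j'$; this is where the $G$-equivariance of $j$ enters, and it is essentially automatic (inverting the group element shows the reverse containment $\rho_X^{-1}(U)\subset G\times_S U$). Because Definition~\ref{equivdef} asks only for the existence of an isomorphism with no cocycle compatibility, there is no further coherence to verify—the argument is purely a chain of existence statements, and the main obstacle is simply invoking the nontrivial smooth base change result of Theorem~\ref{smoothint} (which itself rested on \cite{StabPerv} or Lemma~\ref{smoothlem}).
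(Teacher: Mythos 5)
Your argument is correct and is essentially the one the paper has in mind: the paper's proof simply says "the proof is the same as Lemma~\ref{equivfun}~(ii), except we use that taking intermediate extensions commutes with smooth pullback (Theorem~\ref{smoothint}) instead of the proper base change theorem," and your write-up unwinds exactly that — the two Cartesian squares along $\rho_X$ and $\pi_X$, the observation that $G$-equivariance of $j$ forces both preimages $\rho_X^{-1}(U)$ and $\pi_X^{-1}(U)$ to equal $G\times_S U$ with $j'=\id_G\times j$, and then Theorem~\ref{smoothint} applied twice followed by the given isomorphism from equivariance of $\mathcal{F}^\bullet$. Your remark that Definition~\ref{equivdef} requires only existence of an isomorphism (no cocycle) is also the right thing to notice.
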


\begin{proof}
The proof is the same as Lemma \ref{equivfun} (ii), except we use that taking intermediate extensions commutes with smooth pullback (Theorem \ref{smoothint}) instead of the proper base change theorem.
\end{proof}

The following Lemma \ref{fullyflem} is the key input in this section. Our proof uses Theorem \ref{smoothint} to reduce to a statement about local systems. The proof of the analogue of  Lemma \ref{fullyflem} for $\overline{\mathbb{Q}}_{\ell}$-sheaves in \cite[III.11.2]{KW01} uses the adjunction between $R\pi_!$ and $R\pi^!$, which is useful because $R\pi^*[d] = R\pi^![-d]$ when $\pi$ is smooth of relative dimension $d$. This equality fails for $\mathbb{F}_p$-sheaves, which also complicated our proof of Theorem \ref{smoothint}. 

\begin{lem} \label{fullyflem}
Let $X$ be an $S$-scheme, and let $Y$ be a smooth $S$-scheme with geometrically connected fibers of relative dimension $d$. Then if $Y$ admits an $S$-point and $\pi \colon Y \times_S X \rightarrow X$ is the projection, the functor $$R\pi^*[d] \colon P_c^b(X, \mathbb{F}_p) \rightarrow P_c^b(Y \times_S X, \mathbb{F}_p)$$ is fully faithful. This functor also preserves simple objects.
\end{lem}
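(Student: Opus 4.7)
The essential device is the section $s \colon S \to Y$ induced by the given $S$-point, which base-changes to a section $s_X \colon X \to Y \times_S X$ of $\pi$ satisfying $\pi \circ s_X = \id_X$. This yields the functorial identity $Rs_X^*[-d] \circ R\pi^*[d] = \id$ on $D_c^b(X, \mathbb{F}_p)$, providing a one-sided inverse on Hom spaces; hence $R\pi^*[d]$ is faithful and reflects isomorphisms.

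For preservation of simple objects, I would apply Theorem \ref{simplechar}: any simple $\mathcal{F}^\bullet \in P_c^b(X, \mathbb{F}_p)$ has the form $j_{!*}(\mathcal{L}[\dim U])$ for an immersion $j \colon U \to X$ from a smooth irreducible $k$-scheme and a simple local system $\mathcal{L}$ on $U$. Pulling $j$ back along $\pi$ gives a Cartesian square with horizontal map $j' \colon Y \times_S U \to Y \times_S X$ and vertical map $\pi' \colon Y \times_S U \to U$, and Theorem \ref{smoothint} yields
\[
R\pi^*[d]\bigl(j_{!*}(\mathcal{L}[\dim U])\bigr) \cong j'_{!*}\bigl(\pi'^*\mathcal{L}[\dim(Y \times_S U)]\bigr).
\]
By Theorem \ref{simplechar} again it suffices to show $\pi'^*\mathcal{L}$ is a simple local system. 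This follows because $\pi'$ is smooth with geometrically connected fibers and admits a section, so for any connected finite étale cover $V \to U$ the base change $Y \times_S V \to V$ has nonempty geometrically connected fibers over a connected base and is therefore connected; hence the induced map on étale fundamental groups $\pi_1(Y \times_S U) \to \pi_1(U)$ is surjective, and the pullback of a simple $\mathbb{F}_p$-representation remains simple.

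For fullness I would induct on the sum $\ell(\mathcal{F}^\bullet) + \ell(\mathcal{G}^\bullet)$ of the Jordan-H\"older lengths, which are finite by Theorem \ref{artinian}. The base case is both simple: Schur's lemma and Lemma \ref{homintext} reduce to the identification $\End(\mathcal{L}) \cong \End(\pi'^*\mathcal{L})$, which follows from the $\pi_1$-surjectivity above. For the inductive step, a short exact sequence $0 \to \mathcal{A}^\bullet \to \mathcal{F}^\bullet \to \mathcal{B}^\bullet \to 0$ (in whichever variable reduces length) yields long exact sequences for $\Hom(-, \mathcal{G}^\bullet)$ and $\Ext^1(-, \mathcal{G}^\bullet)$ in $P_c^b$, using the identification $\Ext^1_{P}(-, -) = \Hom_D(-, -[1])$ from the remark at the end of Section \ref{PerverseFpSheaves}. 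The four-lemma reduces the problem to injectivity of $R\pi^*[d]$ on $\Ext^1$.

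This last injectivity holds for the same reason as faithfulness: if $0 \to R\pi^*[d]\mathcal{G}^\bullet \to R\pi^*[d]\mathcal{E}^\bullet \to R\pi^*[d]\mathcal{B}^\bullet \to 0$ admits a splitting $t$, then $Rs_X^*[-d] t \colon \mathcal{B}^\bullet \to \mathcal{E}^\bullet$ splits the original extension, since $Rs_X^*[-d] \circ R\pi^*[d] = \id$. The main technical obstacle is orchestrating the induction; the crucial input is the $\Ext^1$-injectivity, after which exactness of $R\pi^*[d]$ (Lemma \ref{smoothp}) combined with the four-lemma closes the argument.
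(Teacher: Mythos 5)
Your proof is correct and follows essentially the same route as the paper: preservation of simples via Theorem \ref{simplechar}, Theorem \ref{smoothint}, and surjectivity of $\pi_{1,\text{\'et}}(Y \times_S U) \to \pi_{1,\text{\'et}}(U)$, then induction on length with the section forcing injectivity on $\Ext^1$. The only cosmetic difference is that you phrase the key input as the explicit retraction $Rs_X^*[-d] \circ R\pi^*[d] = \id$ and use the four lemma on $\Hom$/$\Ext^1$ sequences, where the paper applies the five lemma to the $\RHom$ exact triangles.
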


\begin{proof}
We first show that $R\pi^*[d]$ is fully faithful when restricted to simple objects. Let $\mathcal{F}^\bullet \in P_c^b(X, \mathbb{F}_p)$ be simple. Then by Theorem \ref{simplechar} there is a smooth irreducible $k$-scheme $U$, an immersion $j \colon U \rightarrow X$, and a simple local system $\mathcal{L}$ of \'{e}tale $\mathbb{F}_p$-sheaves on $U$ such that $\mathcal{F}^\bullet \cong j_{!*} \mathcal{L}[\dim U]$. We have a Cartesian diagram
$$\xymatrix{
Y \times_S U \ar[r]^{j'} \ar[d]^{\pi'} & Y \times_S X \ar[d]^{\pi} \\
U \ar[r]^j & X
}$$
By Theorem \ref{smoothint} we have an isomorphism
$$R \pi^*[d] \mathcal{F}^\bullet \cong j'_{!*} (R\pi'^* \mathcal{L}[\dim U + d]).$$ We claim that $Y \times_S U$ is connected. To prove this, note that each connected component of $Y \times_S U$ surjects onto a dense open subset of $U$ since the map $\pi'$ is a smooth cover and $U$ is irreducible. Since the fibers of $\pi'$ are geometrically connected then $Y \times_S U$ is connected. 

Let $\overline{y}$ be a geometric point of $Y \times_S U$ and let $\overline{x} = \pi'(\overline{y})$. Suppose $\mathcal{L}$ corresponds to a simple finite-dimensional representation $V$ of $\pi_{1, \text{\'{e}t}}(U, \overline{x})$. Because $Y \times_S U$ is connected then the local system $\pi'^* \mathcal{L}$ corresponds to $V$ viewed as a representation of $\pi_{1, \text{\'{e}t}}(Y \times_S U, \overline{y})$ via the induced map $ \pi_{1, \text{\'{e}t}}(Y \times_S U, \overline{y}) \rightarrow \pi_{1, \text{\'{e}t}}(U, \overline{x})$. This map is surjective because $\pi'$ admits a section. By Lemma \ref{homintext},
$$\Hom(R\pi^*[d] \mathcal{F}^\bullet, R\pi^*[d] \mathcal{F}^\bullet) = \Hom(\pi'^*\mathcal{L}, \pi'^*\mathcal{L}) = \Hom_{\pi_{1, \text{\'{e}t}}(Y \times_S U, \overline{y})}(V,V).$$ Similarly, $\Hom(\mathcal{F}^\bullet, \mathcal{F}^\bullet) = \Hom_{\pi_{1, \text{\'{e}t}}( U, \overline{x})}(V,V).$  Now because $ \pi_{1, \text{\'{e}t}}(Y \times_S U, \overline{y}) \rightarrow \pi_{1, \text{\'{e}t}}(U, \overline{x})$ is surjective it follows that
$$\Hom(R\pi^*[d] \mathcal{F}^\bullet, R\pi^*[d] \mathcal{F}^\bullet) = \Hom(\mathcal{F}^\bullet, \mathcal{F}^\bullet).$$ Moreover, we see that $\pi'^* \mathcal{L}$ is also a simple local system, so $R\pi^*[d]$ preserves simple objects. Finally, if $\mathcal{F}^\bullet$ and $\mathcal{G}^\bullet$ are non-isomorphic simple objects then 
$$\Hom(R\pi^*[d] \mathcal{F}^\bullet, R\pi^*[d] \mathcal{G}^\bullet) = \Hom(\mathcal{F}^\bullet, \mathcal{G}^\bullet) = 0.$$ This follows from the fact that both pullbacks are simple, and the map $\pi$ admits a section. Thus $R\pi^*[d]$ is fully faithful when restricted to simple objects.

Now let $\mathcal{F}^\bullet \in P_c^b(X, \mathbb{F}_p)$ be simple and let $\mathcal{G}^\bullet \in P_c^b(X, \mathbb{F}_p)$ have length $n >1$. We will show by induction on $n$ that the map $$\Hom(\mathcal{F}^\bullet, \mathcal{G}^\bullet) \rightarrow \Hom(R\pi^*[d] \mathcal{F}^\bullet, R \pi^*[d] \mathcal{G}^\bullet)$$ is an isomorphism. Let
$$ 0 \rightarrow \mathcal{G}_1^\bullet \rightarrow \mathcal{G}^\bullet \rightarrow \mathcal{G}^\bullet_2 \rightarrow 0$$ be an exact sequence in $P_c^b(X, \mathbb{F}_p)$ where the outer two terms have length $< n$. Then we get a morphism between exact triangles in the derived category of abelian groups: 
$$\resizebox{15cm}{!}{\xymatrix{
\RHom(\mathcal{F}^\bullet, \mathcal{G}_1^\bullet) \ar[r] \ar[d] & \RHom(\mathcal{F}^\bullet, \mathcal{G}^\bullet) \ar[r] \ar[d] & \RHom(\mathcal{F}^\bullet, \mathcal{G}_2^\bullet) \ar[d] \\
\RHom(R \pi^*[d] \mathcal{F}^\bullet, R \pi^*[d]\mathcal{G}_1^\bullet) \ar[r]  & \RHom(R \pi^*[d] \mathcal{F}^\bullet, R \pi^*[d] \mathcal{G}^\bullet) \ar[r] \ & \RHom(R \pi^*[d] \mathcal{F}^\bullet, R \pi^*[d] \mathcal{G}_2^\bullet) }
}$$

These complexes are concentrated in non-negative degrees because we are dealing with perverse sheaves. Moreover, the induced map $$\Ext^1(\mathcal{F}^\bullet, \mathcal{G}^\bullet_1) \rightarrow \Ext^1(R\pi^*[d] \mathcal{F}^\bullet, R \pi^*[d] \mathcal{G}^\bullet_1)$$ is injective because $\pi$ admits a section. By induction and the five lemma the map  $$\Hom(\mathcal{F}^\bullet, \mathcal{G}^\bullet) \xrightarrow{\sim} \Hom(R\pi^*[d] \mathcal{F}^\bullet, R \pi^*[d] \mathcal{G}^\bullet)$$ is an isomorphism. We can complete the proof by a similar induction on the length of $\mathcal{F}^\bullet$. 
\end{proof}

\begin{lem} \label{stabsub}
The essential image of the functor in Lemma \ref{fullyflem} is an abelian subcategory of $P_c^b(Y \times_S X, \mathbb{F}_p)$ which is stable under taking subquotients of objects. 
\end{lem}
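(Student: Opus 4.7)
The plan is to exploit that, by the proof of Lemma \ref{smoothp}, the functor $R\pi^*[d]$ is $t$-exact, hence exact on the abelian category of perverse sheaves, and that by Lemma \ref{fullyflem} it is fully faithful and preserves simple objects. Granted these facts together with closure under subobjects, the rest of the statement will be automatic: a quotient of $R\pi^*[d]\mathcal{F}^\bullet$ by a subobject in the image is the cokernel of a map $R\pi^*[d]\mathcal{H}^\bullet \hookrightarrow R\pi^*[d]\mathcal{F}^\bullet$, which by exactness equals $R\pi^*[d](\mathcal{F}^\bullet/\mathcal{H}^\bullet)$; and for two objects in the image, any morphism between them lifts uniquely by full faithfulness to a morphism in $P_c^b(X, \mathbb{F}_p)$, whose kernel, image, and cokernel are carried by the exact functor $R\pi^*[d]$ to the corresponding constructions in $P_c^b(Y \times_S X, \mathbb{F}_p)$.

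The main work is therefore to establish closure under subobjects, which I would prove by induction on the length of $\mathcal{F}^\bullet \in P_c^b(X, \mathbb{F}_p)$; this length is finite by Theorem \ref{artinian}. The base case is immediate because when $\mathcal{F}^\bullet$ is simple, Lemma \ref{fullyflem} gives that $R\pi^*[d]\mathcal{F}^\bullet$ is simple. For the inductive step, I would choose a simple subobject $\mathcal{F}^\bullet_1 \subset \mathcal{F}^\bullet$ with quotient $\mathcal{F}^\bullet_2$ of strictly smaller length, producing a short exact sequence $0 \to R\pi^*[d]\mathcal{F}^\bullet_1 \to R\pi^*[d]\mathcal{F}^\bullet \to R\pi^*[d]\mathcal{F}^\bullet_2 \to 0$ whose first term is simple. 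Given a subobject $\mathcal{G}^\bullet \subset R\pi^*[d]\mathcal{F}^\bullet$, the intersection $\mathcal{G}^\bullet \cap R\pi^*[d]\mathcal{F}^\bullet_1$ is either zero or the whole simple subobject. In the first case $\mathcal{G}^\bullet$ injects into $R\pi^*[d]\mathcal{F}^\bullet_2$, and the inductive hypothesis applied to $\mathcal{F}^\bullet_2$ finishes the argument. In the second case $R\pi^*[d]\mathcal{F}^\bullet_1 \subset \mathcal{G}^\bullet$, so the inductive hypothesis applied to the quotient $\mathcal{G}^\bullet / R\pi^*[d]\mathcal{F}^\bullet_1 \subset R\pi^*[d]\mathcal{F}^\bullet_2$ yields some $\mathcal{H}^\bullet_2 \in P_c^b(X, \mathbb{F}_p)$ and an isomorphism identifying the two; I then define $\mathcal{H}^\bullet$ as the preimage in $\mathcal{F}^\bullet$ of the subobject of $\mathcal{F}^\bullet_2$ corresponding to $\mathcal{H}^\bullet_2$.

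The main obstacle is the second case, where I must show that the candidate $R\pi^*[d]\mathcal{H}^\bullet$ actually recovers $\mathcal{G}^\bullet$ as a specific subobject of $R\pi^*[d]\mathcal{F}^\bullet$, rather than only an abstractly isomorphic perverse sheaf. The device for handling this is the full faithfulness of $R\pi^*[d]$ combined with its exactness and faithfulness: these promote the abstract isomorphism delivered by the inductive hypothesis to a genuine monomorphism $\mathcal{H}^\bullet_2 \hookrightarrow \mathcal{F}^\bullet_2$ whose image under $R\pi^*[d]$ recovers the given embedding $R\pi^*[d]\mathcal{H}^\bullet_2 \hookrightarrow R\pi^*[d]\mathcal{F}^\bullet_2$. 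Once this lift is in place, both $\mathcal{G}^\bullet$ and $R\pi^*[d]\mathcal{H}^\bullet$ are subobjects of $R\pi^*[d]\mathcal{F}^\bullet$ that contain the kernel $R\pi^*[d]\mathcal{F}^\bullet_1$ and have the same image in the quotient $R\pi^*[d]\mathcal{F}^\bullet_2$, so the standard correspondence between subobjects of a quotient and subobjects of the original containing the kernel forces them to coincide, completing the induction.
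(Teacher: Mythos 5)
Your proof is correct and rests on exactly the same ingredients as the paper's: exactness, full faithfulness, and preservation of simple objects of $R\pi^*[d]$, combined with a d\'evissage argument, after which closure under quotients and general subquotients is formal. The only difference is organizational --- you induct on the length of $\mathcal{F}^\bullet$ and split into cases according to the intersection of $\mathcal{G}^\bullet$ with the image of a simple subobject, whereas the paper inducts on the colength of $\mathcal{G}^\bullet$ inside $R\pi^*[d]\mathcal{F}^\bullet$ and peels off a simple quotient, which avoids your case analysis --- but both versions go through.
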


\begin{proof}
Because $R\pi^*[d]$ is exact and preserves simple objects, then it preserves the length of objects. In fact, applying $R\pi^*[d]$ to a composition series for $\mathcal{F}^\bullet$ gives a composition series for $R\pi^*[d] \mathcal{F}^\bullet$. By the Jordan-H\"{o}lder theorem, this implies that every simple subquotient of $R\pi^*[d] \mathcal{F}^\bullet$ is in the essential image of $R\pi^*[d]$.

For the general case, suppose $\mathcal{F}^\bullet \in P_c^b(X, \mathbb{F}_p)$ has length $n > 1$. Let $\mathcal{G}^\bullet \subseteq R\pi^*[d] \mathcal{F}^\bullet$ be a subobject of length $m$. We will prove by induction on the integer $c = n-m$ that $\mathcal{G}^\bullet$ is in the essential image of $R\pi^*[d]$. The case $c=0$ is clear. If $c>0$, let $\mathcal{F}'^\bullet$ be a subobject of $R\pi^*[d] \mathcal{F}^\bullet$ of length $m+1$ containing $\mathcal{G}^\bullet$. By the induction hypothesis, $\mathcal{F}'^\bullet$ is in the essential image of $R\pi^*[d]$. We have an exact sequence 
$$0 \rightarrow \mathcal{G}^\bullet \rightarrow \mathcal{F}'^\bullet \rightarrow \mathcal{H}^\bullet \rightarrow 0$$ where $\mathcal{H}^\bullet$ is simple. We have already shown that $\mathcal{H}^\bullet$ is in the essential image of $R\pi^*[d]$. Because $R\pi^*[d]$ is exact and fully faithful, it follows that $\mathcal{G}^\bullet$ is in the essential image of $R\pi^*[d]$. This completes the induction. Now that we know the essential image of $R\pi^*[d]$ is stable under taking subobjects, the fact that it is stable under taking subquotients also follows from the fact that $R\pi^*[d]$ is exact and fully faithful. 
\end{proof}

\begin{lem} \label{4.2.3}
Let $G$ act on $G \times_S X$ by left translation on the first coordinate, and let $\pi \colon G \times_S X \rightarrow X$ be the projection. If $G$ has relative dimension $d$ over $S$ then the functor $R\pi^*[d]$ induces an equivalence of categories between $P_c^b(X, \mathbb{F}_p)$ and the full subcategory of $P_c^b(G \times_S X, \mathbb{F}_p)$ consisting of $G$-equivariant perverse sheaves. If $i \colon X \rightarrow G \times_S X$ is the inclusion $x \mapsto (1,x)$, then $Ri^*[- d]$ is an inverse to $R\pi^*[d]$. 
\end{lem}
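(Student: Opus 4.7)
The plan is to verify the three claims in order: that $R\pi^*[d]$ lands in the subcategory of $G$-equivariant perverse sheaves, that it is fully faithful, and that it is essentially surjective with quasi-inverse $Ri^*[-d]$.

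First, to check that $R\pi^*[d]\mathcal{F}^\bullet$ is a $G$-equivariant perverse sheaf for every $\mathcal{F}^\bullet \in P_c^b(X, \mathbb{F}_p)$: perversity is immediate from Lemma \ref{smoothp} since $\pi$ is smooth of relative dimension $d$. For the $G$-equivariance, let $\rho, \pi_Y \colon G \times_S G \times_S X \to G \times_S X$ denote the action and the second projection. Since both $\pi \circ \rho$ and $\pi \circ \pi_Y$ equal the projection onto the $X$-factor, there is a canonical isomorphism $R\rho^* R\pi^*[d]\mathcal{F}^\bullet \cong R\pi_Y^* R\pi^*[d]\mathcal{F}^\bullet$, which is the required datum in Definition \ref{equivdef}. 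Full faithfulness is then precisely Lemma \ref{fullyflem} applied with $Y = G$: by hypothesis $G$ is smooth over $S$ with geometrically connected fibers of relative dimension $d$, and it carries the identity $S$-point $1 \colon S \to G$.

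For essential surjectivity, let $\mathcal{G}^\bullet \in P_c^b(G \times_S X, \mathbb{F}_p)$ be $G$-equivariant, and set $\mathcal{F}^\bullet := Ri^*[-d]\mathcal{G}^\bullet \in D_c^b(X, \mathbb{F}_p)$. Consider the morphism $j \colon G \times_S X \to G \times_S G \times_S X$, $(g, x) \mapsto (g, 1, x)$. One checks directly that $\rho \circ j = \id_{G \times_S X}$ and $\pi_Y \circ j = i \circ \pi$. Applying $Rj^*$ to the equivariance isomorphism $R\rho^* \mathcal{G}^\bullet \cong R\pi_Y^* \mathcal{G}^\bullet$ therefore yields an isomorphism
\[
\mathcal{G}^\bullet \;\cong\; R\pi^* Ri^* \mathcal{G}^\bullet \;=\; R\pi^*[d]\mathcal{F}^\bullet
\]
in $D_c^b(G \times_S X, \mathbb{F}_p)$. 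It remains to verify that $\mathcal{F}^\bullet$ is perverse, and this is the step I view as the main obstacle. For it I would use that smooth pullback with the appropriate shift is $t$-exact on $D_c^b$ for the perverse $t$-structure—this can be extracted from Lemma \ref{smoothp} together with the Riemann-Hilbert description used in its proof, since under the correspondence $R\pi^*[d]$ becomes the exact (flat) pullback for quasi-coherent sheaves. Consequently $R\pi^*[d]$ commutes with perverse cohomology. Moreover, $Ri^*[-d] \circ R\pi^*[d] = \id$ because $\pi \circ i = \id_X$, so $R\pi^*[d]$ is conservative. Therefore, the vanishing of ${}^pH^k(R\pi^*[d]\mathcal{F}^\bullet) \cong {}^pH^k(\mathcal{G}^\bullet)$ for $k \neq 0$ forces $R\pi^*[d]\,{}^pH^k(\mathcal{F}^\bullet) = 0$ and then ${}^pH^k(\mathcal{F}^\bullet) = 0$, whence $\mathcal{F}^\bullet \in P_c^b(X, \mathbb{F}_p)$.

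Finally, the identity $\pi \circ i = \id_X$ gives $Ri^*[-d] \circ R\pi^*[d] = \id$ on $P_c^b(X, \mathbb{F}_p)$, while the isomorphism $\mathcal{G}^\bullet \cong R\pi^*[d] Ri^*[-d]\mathcal{G}^\bullet$ produced above shows that $R\pi^*[d] \circ Ri^*[-d] \cong \id$ on the category of $G$-equivariant perverse sheaves, exhibiting $Ri^*[-d]$ as the claimed quasi-inverse.
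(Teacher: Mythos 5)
Your proof is correct and, modulo one wrinkle, takes the same route as the argument the paper cites (Letellier, 4.2.3, with BBD~4.2.5 replaced by Lemma~\ref{fullyflem}): observe that $\pi\circ\rho=\pi\circ\pi_Y$ so $R\pi^*[d]$ lands in the equivariant subcategory, invoke Lemma~\ref{fullyflem} for full faithfulness, and recover $\mathcal{G}^\bullet\cong R\pi^*Ri^*\mathcal{G}^\bullet$ by restricting the equivariance isomorphism along the section $j\colon(g,x)\mapsto(g,1,x)$.

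The one place you have to be careful is the step you yourself flag: Lemma~\ref{smoothp} asserts only that $R\pi^*[d]$ carries the heart into the heart, not that it is $t$-exact, and your argument via ``${}^pH^k$ commutes with $R\pi^*[d]$'' needs the latter. You propose to extract $t$-exactness by re-running the Riemann--Hilbert reduction, which does work, but there are two cheaper ways to close the gap. First, $R\pi^*[d]$ is right $t$-exact purely formally (stalks pull back, and $\dim\overline{\{y\}}\le\dim\overline{\{\pi(y)\}}+d$), and a right $t$-exact triangulated functor that sends the heart to the heart is automatically $t$-exact on the bounded category, so one can upgrade Lemma~\ref{smoothp} to $t$-exactness without touching Emerton--Kisin. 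Second, and even more directly, once you have the isomorphism $\mathcal{G}^\bullet\cong R\pi^*[d]\mathcal{F}^\bullet$ with $\mathcal{G}^\bullet$ perverse, the perversity of $\mathcal{F}^\bullet$ is exactly what the paper's smooth-local criterion (the lemma preceding Corollary~\ref{smoothdec}, applied to the single smooth surjection $\pi$) gives, with no discussion of $t$-exactness at all. Either fix makes the argument self-contained; otherwise it is the intended proof.
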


\begin{proof}
The same proof as in \cite[4.2.3]{Letellier} works, except we replace the reference to \cite[4.2.5]{BBD} by Lemma \ref{fullyflem}. 
\end{proof}

\begin{lem}\label{cocycle}
Let $\mathcal{F}^\bullet$ be a $G$-equivariant perverse sheaf on $X$. Denote by $m \colon G \times_S G \rightarrow G$ the multiplication map, $p_2 \colon G \times_S G \rightarrow G$ the projection onto the second factor, and $i \colon X \rightarrow G \times_S X$ the inclusion $x \mapsto (1,x)$. Then there is a unique isomorphism $\phi \colon R\pi^*\mathcal{F}^\bullet \xrightarrow{\sim} R\rho^*\mathcal{F}^\bullet$ such that
$Ri^*(\phi) \colon \mathcal{F}^\bullet \xrightarrow{\sim} \mathcal{F}^\bullet$ is the identity. This isomorphism also satisfies
$$R(m \times \id_X)^* \phi = R(\id_G \times \rho)^*(\phi) \circ R(p_2 \times \id_X)^*(\phi).$$
\end{lem}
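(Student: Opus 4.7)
The plan is to apply Lemma \ref{4.2.3} — which says $R\pi^*[d] \colon P_c^b(X, \mathbb{F}_p) \to P_c^b(G \times_S X, \mathbb{F}_p)$ is an equivalence onto the full subcategory of $G$-equivariant perverse sheaves, with quasi-inverse $Ri^*[-d]$ — to produce the normalized $\phi$ and establish its uniqueness. Both $R\pi^*[d] \mathcal{F}^\bullet$ and $R\rho^*[d] \mathcal{F}^\bullet$ are $G$-equivariant perverse sheaves on $G \times_S X$ for the left-translation action on the first factor: the former tautologically, and the latter inherits equivariance from the isomorphism with $R\pi^*[d]\mathcal{F}^\bullet$ supplied by Definition \ref{equivdef}. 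Since $\pi \circ i = \rho \circ i = \id_X$, the equivalence translates morphisms $R\pi^* \mathcal{F}^\bullet \to R\rho^* \mathcal{F}^\bullet$ into endomorphisms of $\mathcal{F}^\bullet$ via $Ri^*$. The unique morphism corresponding to $\id_{\mathcal{F}^\bullet}$ is the desired normalized $\phi$; it is automatically an isomorphism because $\id_{\mathcal{F}^\bullet}$ is.

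For the cocycle identity, observe that both sides are morphisms $Rp_X^*[2d] \mathcal{F}^\bullet \to R\rho'^*[2d] \mathcal{F}^\bullet$ in $P_c^b(G \times_S G \times_S X, \mathbb{F}_p)$, where $p_X$ is the projection and $\rho'(g_1, g_2, x) := g_1 g_2 \cdot x$. I would then apply the analog of Lemma \ref{4.2.3} with $G$ replaced by the product group scheme $G \times_S G$, which acts on $X$ via $(g_1, g_2) \cdot x := g_1 g_2 \cdot x$; that $\mathcal{F}^\bullet$ is equivariant for this new action follows from the existence of $\phi$, applied to both sides. Writing $i' \colon X \to G \times_S G \times_S X$ for the inclusion $x \mapsto (1, 1, x)$, the identities
\[ (m \times \id_X) \circ i' \;=\; (\id_G \times \rho) \circ i' \;=\; (p_2 \times \id_X) \circ i' \;=\; i \]
show, by functoriality of $R(-)^*$ and the normalization $Ri^*\phi = \id_{\mathcal{F}^\bullet}$, that both sides restrict under $Ri'^*$ to $\id_{\mathcal{F}^\bullet}$. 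The equivalence for $G \times_S G$ then forces them to agree.

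The main obstacle is mildly technical: Lemma \ref{4.2.3} is stated only for the specific group $G$ fixed in the section, whereas the cocycle argument needs its analog for $G \times_S G$. This is not serious, since smoothness, affineness, and geometric connectedness of fibers are each preserved under fiber product over $S$, so $G \times_S G$ satisfies the section's hypotheses and the proof of Lemma \ref{4.2.3} applies verbatim. Apart from invoking this extension and performing the routine verifications above, the argument is complete.
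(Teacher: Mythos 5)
Your proof is correct and is essentially the same argument the paper invokes by citing Letellier~\cite[4.2.4]{Letellier}: use Lemma~\ref{4.2.3} to transport morphisms between $G$-equivariant objects on $G \times_S X$ back along $Ri^*$, and then repeat the same trick with $G$ replaced by $G \times_S G$ (which meets the standing hypotheses on the group scheme) to deduce the cocycle identity by checking both sides restrict to the identity under $Ri'^*$.
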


\begin{proof}
The same proof as in \cite[4.2.4]{Letellier} works.
\end{proof}

\begin{defn} \label{equivcatdef}
We define the category $P_G(X, \mathbb{F}_p)$ of $G$-equivariant perverse sheaves as follows. Its objects are $G$-equivariant perverse sheaves in $P_c^b(X, \mathbb{F}_p)$. To define morphisms, let $\mathcal{F}^\bullet$ and $\mathcal{G}^\bullet$ be $G$-equivariant perverse sheaves on $X$. Let $\phi \colon R\pi^*\mathcal{F}^\bullet \rightarrow R\rho^*\mathcal{F}^\bullet$ and $\psi \colon R\pi^*\mathcal{G}^\bullet \rightarrow R\rho^*\mathcal{G}^\bullet$ be the two unique isomorphisms satisfying the cocycle conditions as in Lemma \ref{cocycle}. Then morphisms $\mathcal{F}^\bullet \rightarrow \mathcal{G}^\bullet$ in $P_G(X, \mathbb{F}_p)$ are morphisms $\tau \colon \mathcal{F}^\bullet \rightarrow \mathcal{G}^\bullet$ between perverse sheaves such that the following diagram commutes:
$$\xymatrix{ R\pi^* \mathcal{F}^\bullet \ar[d]^{\phi} \ar[r]^{R\pi^*(\tau)} & R \pi^* \mathcal{G}^\bullet \ar[d]^{\psi} \\ 
R\rho^* \mathcal{F}^\bullet \ar[r]^{R\rho^*(\tau)} & R\rho^* \mathcal{G}^\bullet
}$$
\end{defn}

\begin{prop} \label{fullsub}
The category $P_G(X, \mathbb{F}_p)$ is a full subcategory of $P_c^b(X, \mathbb{F}_p)$. 
\end{prop}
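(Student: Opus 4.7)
The plan is to show that for any two $G$-equivariant perverse sheaves $\mathcal{F}^\bullet, \mathcal{G}^\bullet \in P_G(X, \mathbb{F}_p)$, every morphism $\tau \colon \mathcal{F}^\bullet \to \mathcal{G}^\bullet$ in $P_c^b(X, \mathbb{F}_p)$ automatically satisfies the equivariance compatibility square of Definition \ref{equivcatdef}, i.e.\ $\psi \circ R\pi^*(\tau) = R\rho^*(\tau) \circ \phi$, where $\phi$ and $\psi$ are the normalized cocycle isomorphisms attached to $\mathcal{F}^\bullet$ and $\mathcal{G}^\bullet$ via Lemma \ref{cocycle}.

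The central input is Lemma \ref{fullyflem} applied to $Y = G$, which by the standing hypotheses of this section is smooth and affine over $S$ of relative dimension $d$ with geometrically connected fibers and carries the identity $S$-section. The lemma then says that pullback along $\pi \colon G \times_S X \to X$ induces a bijection
\[
\Hom_{P_c^b(X,\mathbb{F}_p)}(\mathcal{F}^\bullet, \mathcal{G}^\bullet) \xrightarrow{\sim} \Hom_{P_c^b(G \times_S X,\mathbb{F}_p)}\bigl(R\pi^*[d]\mathcal{F}^\bullet,\, R\pi^*[d]\mathcal{G}^\bullet\bigr),
\]
with inverse given by applying $Ri^*[-d]$, where $i \colon X \to G \times_S X$ is the identity section $x \mapsto (1,x)$; indeed, $\pi \circ i = \id_X$, so $Ri^*[-d] R\pi^*[d] = \id$. (This is essentially the content of Lemma \ref{4.2.3}.)

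The argument then reduces to a diagram chase. Consider the two morphisms $R\pi^*(\tau)$ and $\psi^{-1} \circ R\rho^*(\tau) \circ \phi$, both in $\Hom(R\pi^*\mathcal{F}^\bullet, R\pi^*\mathcal{G}^\bullet)$. Apply $Ri^*$ to each. Because $\pi \circ i = \rho \circ i = \id_X$, we have $Ri^* R\pi^*(\tau) = \tau$ and $Ri^* R\rho^*(\tau) = \tau$. Combining this with the normalizations $Ri^*(\phi) = \id_{\mathcal{F}^\bullet}$ and $Ri^*(\psi) = \id_{\mathcal{G}^\bullet}$ from Lemma \ref{cocycle}, both morphisms become $\tau$ after applying $Ri^*$. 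The injectivity of the bijection displayed above then forces $R\pi^*(\tau) = \psi^{-1} \circ R\rho^*(\tau) \circ \phi$, which is the required compatibility.

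I do not expect a serious obstacle: the real work is already done in Lemma \ref{fullyflem} (full faithfulness of smooth pullback, which in turn relied on the Riemann–Hilbert correspondence of Emerton–Kisin) and in the normalization provided by Lemma \ref{cocycle}. Once these are granted, fullness is a short formal consequence and no further geometric input is needed.
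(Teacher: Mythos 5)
Your proof is correct and follows essentially the same route as the paper: both reduce fullness to verifying the compatibility square $R\rho^*(\tau) \circ \phi = \psi \circ R\pi^*(\tau)$, observe that it holds after applying $Ri^*$, and then invoke the full faithfulness of $R\pi^*[d]$ (Lemma \ref{fullyflem} / Lemma \ref{4.2.3}) together with the normalization $Ri^*(\phi)=\id$, $Ri^*(\psi)=\id$ to upgrade this to an actual equality. Your conjugation by $\psi^{-1}$, which places both morphisms in $\Hom(R\pi^*\mathcal{F}^\bullet, R\pi^*\mathcal{G}^\bullet)$ where full faithfulness applies immediately, is a small streamlining of the paper's version, which instead compares the two maps as morphisms $R\pi^*\mathcal{F}^\bullet \to R\rho^*\mathcal{G}^\bullet$ and must therefore first check, via Lemma \ref{equivfun}, that the $\rho$-pullbacks are also equivariant for left translation on the first factor so that Lemma \ref{4.2.3} applies to that Hom-set.
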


\begin{proof}
We give the same proof as in \cite[4.2.7]{Letellier}. In the notation of Definition \ref{equivcatdef}, we need to show that if $\tau \colon \mathcal{F}^\bullet \rightarrow \mathcal{G}^\bullet$ is a morphism between perverse sheaves then $$R\rho^*(\tau) \circ \phi = \psi \circ R \pi^*(\tau).$$ Equality certainly holds after applying $Ri^*$. Thus, by appealing to Lemma \ref{4.2.3} it suffices to show that each of $R\pi^*[d] \mathcal{F}^\bullet$, $R \pi^*[d] \mathcal{G}^\bullet$, $R\rho^*[d] \mathcal{F}^\bullet$, $R\rho^*[d] \mathcal{G}^\bullet$ is $G$-equivariant on $G \times X$, where $G$ only acts by left translation on the first factor. The map $\rho$ is $G$-equivariant, and the map $\pi$ is $G$-equivariant when we give $X$ the trivial action of $G$. Thus the equivariance of these perverse sheaves follows from Lemma $\ref{equivfun}$.
\end{proof}

\begin{prop} \label{subquotprop}
If $\mathcal{F}^\bullet \in P_G(X, \mathbb{F}_p)$ then any subquotient of $\mathcal{F}^\bullet$ is $G$-equivariant.
\end{prop}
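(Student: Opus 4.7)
The plan is to exploit the fully faithful functor $R\pi^*[d]$ from Lemma \ref{fullyflem} applied with $Y = G$ (which is smooth over $S$ with geometrically connected fibers of relative dimension $d$ and admits the identity $S$-point), together with the subquotient-stability of its essential image established in Lemma \ref{stabsub}. Since both $\rho$ and $\pi$ are smooth of relative dimension $d$, Lemma \ref{smoothp} makes $R\rho^*[d]$ and $R\pi^*[d]$ exact functors from $P_c^b(X, \mathbb{F}_p)$ to $P_c^b(G \times_S X, \mathbb{F}_p)$, so they send subquotients to subquotients.

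Now suppose $\mathcal{F}^\bullet \in P_G(X, \mathbb{F}_p)$, so there is some isomorphism $\alpha \colon R\rho^* \mathcal{F}^\bullet \xrightarrow{\sim} R\pi^* \mathcal{F}^\bullet$, and let $\mathcal{G}^\bullet$ be a subquotient of $\mathcal{F}^\bullet$ in $P_c^b(X, \mathbb{F}_p)$. First I would apply $R\rho^*[d]$ to realize $R\rho^*[d]\mathcal{G}^\bullet$ as a subquotient of $R\rho^*[d]\mathcal{F}^\bullet$; transporting along $\alpha[d]$ exhibits it as a subquotient of $R\pi^*[d]\mathcal{F}^\bullet$ inside $P_c^b(G \times_S X, \mathbb{F}_p)$. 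By Lemma \ref{stabsub}, such a subquotient must already lie in the essential image of $R\pi^*[d]$, so there exists $\mathcal{G}'^\bullet \in P_c^b(X, \mathbb{F}_p)$ with $R\pi^*[d]\mathcal{G}'^\bullet \cong R\rho^*[d]\mathcal{G}^\bullet$. The key identification is then to show $\mathcal{G}'^\bullet \cong \mathcal{G}^\bullet$: letting $i \colon X \to G \times_S X$ be the unit section $x \mapsto (1,x)$, we have $\pi \circ i = \rho \circ i = \id_X$, and Lemma \ref{4.2.3} tells us that $Ri^*[-d]$ is a quasi-inverse to $R\pi^*[d]$, giving
$$\mathcal{G}'^\bullet \cong Ri^*[-d] R\pi^*[d]\mathcal{G}'^\bullet \cong Ri^*[-d] R\rho^*[d]\mathcal{G}^\bullet \cong \mathcal{G}^\bullet.$$
Consequently $R\pi^*[d]\mathcal{G}^\bullet \cong R\rho^*[d]\mathcal{G}^\bullet$, and desuspending yields the required isomorphism $R\pi^*\mathcal{G}^\bullet \cong R\rho^*\mathcal{G}^\bullet$, so $\mathcal{G}^\bullet$ is $G$-equivariant.

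The main obstacle is conceptual rather than computational: taking a subquotient of $\mathcal{F}^\bullet$ and then pulling back by $R\rho^*[d]$ produces a subquotient of $R\rho^*[d]\mathcal{F}^\bullet$ that is not manifestly of the form $R\pi^*[d](-)$, and it is precisely this step where Lemma \ref{stabsub} applied to $Y = G$ is indispensable. Once the essential image is known to be closed under subquotients, the rest reduces to the formal observation that $\pi$ and $\rho$ agree on the unit section, which makes the quasi-inverse $Ri^*[-d]$ from Lemma \ref{4.2.3} identify $\mathcal{G}^\bullet$ and $\mathcal{G}'^\bullet$ without any further work.
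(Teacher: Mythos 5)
Your proof is correct and takes essentially the same approach as the paper: both arguments pull back the subquotient by $R\rho^*[d]$, use the $G$-equivariance of $\mathcal{F}^\bullet$ to view the result as a subquotient of $R\pi^*[d]\mathcal{F}^\bullet$, invoke Lemma \ref{stabsub} to place it in the essential image of $R\pi^*[d]$, and then apply $Ri^*[-d]$ along the unit section to recover $\mathcal{G}^\bullet$. One small remark: citing Lemma \ref{4.2.3} for the quasi-inverse is more than is needed here — the identifications $Ri^*[-d]R\pi^*[d] \cong \id$ and $Ri^*[-d]R\rho^*[d] \cong \id$ follow directly from $\pi\circ i = \rho\circ i = \id_X$, without invoking the equivalence with the $G$-equivariant category.
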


\begin{proof}
We give the same proof as in \cite[4.2.13]{Letellier}. If $\mathcal{G}^\bullet$ is a subquotient of $\mathcal{F}^\bullet$, then $R \rho^*[d] \mathcal{G}^\bullet$ is a subquotient of $R \rho^*[d] \mathcal{F}^\bullet$. Because $\mathcal{F}^\bullet$ is $G$-equivariant, $R \rho^*[d] \mathcal{G}^\bullet$ is a subquotient of $R \pi^*[d] \mathcal{F}^\bullet$. Hence by Lemma \ref{stabsub} there is some $\mathcal{H}^\bullet \in P_c^b(X, \mathbb{F}_p)$ such that $$R \pi^*[d] \mathcal{H}^\bullet \cong R \rho^*[d] \mathcal{G}^\bullet.$$ Apply the functor $Ri^*[-d]$ to this isomorphism, where $i \colon X \rightarrow G \times_S X$ is the inclusion $x \mapsto (1,x)$. This gives $\mathcal{H}^\bullet \cong \mathcal{G}^\bullet$. Now by the above isomorphism it follows that $\mathcal{G}^\bullet$ is $G$-equivariant. 
\end{proof}

Let $\mathcal{F}^\bullet$, $\mathcal{G}^\bullet \in P_G(X, \mathbb{F}_p)$. Then an extension of  $\mathcal{F}^\bullet$ by $\mathcal{G}^\bullet$ is not necessarily $G$-equivariant. Now let $R$ be an affine $S$-scheme, and let $f_R \colon X_R \rightarrow X$ be the base change of $R \rightarrow S$ by $X \rightarrow S$. Then the group $G(R)$ naturally acts on $\Ext^1_{D^b_c(X_R, \mathbb{F}_p)}(Rf_R^*\mathcal{F}^\bullet, Rf_R^*\mathcal{G}^\bullet)$. The next proposition gives a criterion for determining when an extension is equivariant.

\begin{prop} \label{equivextprop}
Let $\mathcal{F}^\bullet$, $\mathcal{G}^\bullet \in P_G(X, \mathbb{F}_p)$. Then an element $s \in \Ext^1_{D^b_c(X, \mathbb{F}_p)}(\mathcal{F}^\bullet, \mathcal{G}^\bullet)$ represents a $G$-equivariant extension if and only if the element
$$f_R^* (s) \in \Ext^1_{D^b_c(X_R, \mathbb{F}_p)}(Rf_R^*\mathcal{F}^\bullet, Rf_R^*\mathcal{G}^\bullet)$$ is fixed by $G(R)$ for every affine $S$-scheme $R$. 
\end{prop}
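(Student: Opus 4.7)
The plan is to translate both conditions into statements about equality of two classes in $\Ext^1$ on $G \times_S X$, and then to see that one implies the other via Yoneda (taking $R = G$ and the identity element), while the reverse implication follows by pulling back along $R$-points.

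First, I would reinterpret $G$-equivariance of the extension $\mathcal{H}^\bullet$ corresponding to $s$, where
$$0 \to \mathcal{G}^\bullet \to \mathcal{H}^\bullet \to \mathcal{F}^\bullet \to 0.$$
By Lemma \ref{cocycle}, equivariance of $\mathcal{H}^\bullet$ is equivalent to the existence of \emph{the} unique canonical isomorphism $R\pi^*\mathcal{H}^\bullet \xrightarrow{\sim} R\rho^*\mathcal{H}^\bullet$ which restricts to the identity at $(1,x)$. Applying $R\pi^*$ and $R\rho^*$ to the exact sequence above yields two exact sequences on $G \times_S X$; combined with the canonical equivariance isomorphisms of $\mathcal{F}^\bullet$ and $\mathcal{G}^\bullet$ (again supplied by Lemma \ref{cocycle}), both are extensions of $R\pi^*\mathcal{F}^\bullet$ by $R\pi^*\mathcal{G}^\bullet$. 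The compatibility of the canonical isomorphisms at the closed point $(1,x)$ shows that $\mathcal{H}^\bullet$ is $G$-equivariant if and only if these two extensions define the same class, i.e.
$$R\rho^*(s) \;=\; R\pi^*(s) \quad \text{in} \quad \Ext^1_{D^b_c(G \times_S X, \mathbb{F}_p)}(R\pi^*\mathcal{F}^\bullet, R\pi^*\mathcal{G}^\bullet).$$

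Next I would make the $G(R)$-action explicit. For an affine $S$-scheme $R$ and $g \in G(R)$, the action map gives an automorphism $\tilde g \colon X_R \to X_R$, $(x,r) \mapsto (g(r)\cdot x, r)$, which factors through $X_R \to G \times_S X$, $(x,r)\mapsto (g(r),x)$, followed by $\rho$. The equivariance isomorphisms on $\mathcal{F}^\bullet$ and $\mathcal{G}^\bullet$ pull back along this map to give canonical isomorphisms $\tilde g^*Rf_R^*\mathcal{F}^\bullet \cong Rf_R^*\mathcal{F}^\bullet$ and likewise for $\mathcal{G}^\bullet$; the $G(R)$-action on $\Ext^1_{D^b_c(X_R,\mathbb{F}_p)}(Rf_R^*\mathcal{F}^\bullet, Rf_R^*\mathcal{G}^\bullet)$ is then $s \mapsto \tilde g^* s$, with source and target identified via these isomorphisms.

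The ``only if'' direction is now almost tautological: if the two classes $R\rho^*(s)$ and $R\pi^*(s)$ agree on $G \times_S X$, then pulling back along the morphism $X_R \to G \times_S X$ attached to any $g \in G(R)$ yields exactly the identity $\tilde g^*(f_R^*(s)) = f_R^*(s)$ after the canonical identifications, which is the required $G(R)$-fixedness.

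For the ``if'' direction I take the universal case $R = G$ with $g_{\mathrm{univ}} = \id_G \in G(G)$. Here $f_G^*(s)$ is precisely $R\pi^*(s)$ on $G \times_S X$, and the endomorphism $\tilde g_{\mathrm{univ}}$ is $(x,h)\mapsto (h\cdot x, h)$, which under the canonical equivariance identifications carries $R\pi^*(s)$ to $R\rho^*(s)$. The hypothesis that $f_G^*(s)$ is fixed by $g_{\mathrm{univ}}$ therefore gives $R\rho^*(s) = R\pi^*(s)$ in $\Ext^1$ on $G \times_S X$, which by the first step means $\mathcal{H}^\bullet$ is $G$-equivariant. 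The main thing to be careful about is the bookkeeping of canonical equivariance isomorphisms so that the two sides of the identifications actually match; this is where the uniqueness statement in Lemma \ref{cocycle} does the work, guaranteeing that all choices made along the way are compatible on nose.
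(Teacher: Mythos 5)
Your proposal is correct and follows essentially the same strategy as the paper: the ``if'' direction is handled by specializing to the universal case $R = G$ with the identity element of $G(G)$, and the ``only if'' direction follows by pulling back the equality $R\rho^*(s) = R\pi^*(s)$ on $G \times_S X$ along the map attached to an arbitrary $g \in G(R)$. The paper's proof is considerably terser (it cites Proposition~\ref{fullsub} and leaves the bookkeeping implicit), while you spell out how the canonical cocycle isomorphisms from Lemma~\ref{cocycle} make the identifications unambiguous, but the underlying argument is the same.
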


\begin{proof}
Suppose $s$ represents the extension $$0 \rightarrow \mathcal{G}^\bullet \rightarrow \mathcal{H}^\bullet \rightarrow \mathcal{F}^\bullet \rightarrow 0.$$ If $\mathcal{H}^\bullet$ is $G$-equivariant then it follows from Proposition \ref{fullsub} that $f_R^* (s)$ is fixed by $G(R)$ for every $R$. To prove the converse, consider the case $R=G$ and the element $\phi \in G(R)$ corresponding to the identity map of $G$. The fact that $\phi$ fixes $s$ implies that we have an isomorphism $R\pi^* \mathcal{H}^\bullet \cong R \rho^* \mathcal{H}^\bullet$. Thus $\mathcal{H}^\bullet$ is $G$-equivariant. 
\end{proof}

\begin{lem} \label{factoreq}
Suppose $X$ has the structure of a $G$-scheme and an $H$-scheme, and suppose the action of $G$ on $X$ factors through a morphism $f \colon G \rightarrow H$. Then there is a natural functor $P_H(X, \mathbb{F}_p) \rightarrow P_G(X, \mathbb{F}_p)$. 
\end{lem}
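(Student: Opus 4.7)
The plan is straightforward: pull back the $H$-equivariance data along $f$ to obtain a $G$-equivariance structure, then invoke the fact that both equivariant categories are full subcategories of $P_c^b(X, \mathbb{F}_p)$.

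More concretely, let $\rho_G, \pi_G \colon G \times_S X \to X$ and $\rho_H, \pi_H \colon H \times_S X \to X$ denote the respective action and projection maps. Since the $G$-action on $X$ factors through $f$, we have $\rho_G = \rho_H \circ (f \times \id_X)$ and $\pi_G = \pi_H \circ (f \times \id_X)$. If $\mathcal{F}^\bullet \in P_H(X, \mathbb{F}_p)$, then Definition \ref{equivdef} provides an isomorphism $R\pi_H^* \mathcal{F}^\bullet \cong R\rho_H^* \mathcal{F}^\bullet$ in $D_c^b(H \times_S X, \mathbb{F}_p)$. Applying $R(f \times \id_X)^*$ to this isomorphism produces an isomorphism $R\pi_G^* \mathcal{F}^\bullet \cong R\rho_G^* \mathcal{F}^\bullet$ in $D_c^b(G \times_S X, \mathbb{F}_p)$, so $\mathcal{F}^\bullet$ is $G$-equivariant.

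On morphisms, the functor should simply send $\tau \colon \mathcal{F}^\bullet \to \mathcal{G}^\bullet$ in $P_H(X, \mathbb{F}_p)$ to the same underlying morphism of perverse sheaves, viewed in $P_G(X, \mathbb{F}_p)$. By Proposition \ref{fullsub}, both $P_H(X, \mathbb{F}_p)$ and $P_G(X, \mathbb{F}_p)$ are full subcategories of $P_c^b(X, \mathbb{F}_p)$, so this assignment is automatically well-defined and functorial; no compatibility with cocycle data needs to be checked by hand beyond observing that the functor $R(f \times \id_X)^*$ respects composition, which is automatic.

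There is essentially no obstacle here: the statement is a formal consequence of the behavior of pullback along $f \times \id_X$ and the fullness statement of Proposition \ref{fullsub}. The only mildly delicate point is confirming that the canonical cocycle-normalized isomorphism of Lemma \ref{cocycle} for $\mathcal{F}^\bullet$ viewed as a $G$-equivariant sheaf coincides with the pullback of the corresponding isomorphism for the $H$-structure; this follows from the uniqueness clause of Lemma \ref{cocycle}, since the pullback isomorphism still restricts to the identity along the unit section $x \mapsto (1,x)$, because $f$ is a homomorphism of group schemes and therefore sends the identity of $G$ to the identity of $H$.
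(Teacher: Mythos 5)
Your proof is correct and follows essentially the same route as the paper: the paper likewise reduces to showing that an $H$-equivariance isomorphism pulls back along $f \times \id_X$ to a $G$-equivariance isomorphism, and invokes Proposition \ref{fullsub} to handle morphisms. Your extra remark about the compatibility of the cocycle-normalized isomorphisms via the uniqueness clause of Lemma \ref{cocycle} is a correct (and slightly more careful) addition that the paper leaves implicit.
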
 

\begin{proof}
In view of Proposition \ref{fullsub}, we only need to show that if $\mathcal{F}^\bullet \in P_c^b(X, \mathbb{F}_p)$ is $H$-equivariant then it is also $G$-equivariant. We have commutative diagrams
$$\xymatrix{
G \times_S X \ar[rd]^{\rho_G} \ar[dd]_{f \times \id_X} &  & & G \times_S X \ar[rd]^{\pi_G} \ar[dd]_{f \times \id_X} &  & &\\
 & X & & &  X\\
 H \times_S X \ar[ru]^{\rho_H} & & &  H \times_S X \ar[ru]^{\pi_H}
}$$
Thus from an isomorphism $R \rho_H^* \mathcal{F}^\bullet \cong R \pi_H^* \mathcal{F}^\bullet$ we get an isomorphism $R \rho_G^* \mathcal{F}^\bullet \cong R \pi_G^* \mathcal{F}^\bullet$. 
\end{proof}

\begin{rmrk} \label{indequiv} Consider the following setup (see also \cite{RicharzGS}). Let $G_i$ be an inverse system of smooth affine $S$-schemes with geometrically connected fibers, and let $G = \varprojlim G_i$. Let $X = \varinjlim {X_i}$ be an ind-scheme, where each $X_i$ is an $S$-scheme of finite type and the transition maps are closed immersions of $S$-schemes. Suppose that $G$ acts on each $X_i$ and that furthermore the action of $G$ on each $X_i$ factors through the quotient $G_i$. We also assume that for $j \geq i$ the immersion $X_i \rightarrow X_j$ is $G$-equivariant. Define
$$P_c^b(X, \mathbb{F}_p) := \varinjlim P_c^b(X_i, \mathbb{F}_p).$$ Note that if $j \geq i$ then by Lemma \ref{equivfun} and Lemma \ref{factoreq} there is a natural fully faithful functor
$$P_{G_i}(X_i, \mathbb{F}_p) \rightarrow P_{G_j}(X_j, \mathbb{F}_p).$$ Thus we can define
$$P_{G}(X, \mathbb{F}_p) := \varinjlim P_{G_i}(X_i, \mathbb{F}_p).$$
\end{rmrk}

\begin{rmrk}
In our definition of $P_{G}(X, \mathbb{F}_p)$ in Remark \ref{indequiv} we note that it is possible that there exists an object in $P_{G}(X, \mathbb{F}_p)$ supported on $X_i$ but that is not in the essential image of  $P_{G_i}(X_i, \mathbb{F}_p) \rightarrow P_G(X, \mathbb{F}_p)$. This is because there could be a perverse sheaf $\mathcal{F}^\bullet \in P_c^b(X_i, \mathbb{F}_p)$ which is $G_j$-equivariant but not $G_i$-equivariant for some $j > i$. Thus, the subcategory of $P_{G}(X, \mathbb{F}_p)$ consisting of perverse sheaves supported on $X_i$ should be thought of as those perverse sheaves in $P_c^b(X_i, \mathbb{F}_p)$ which are $G_j$-equivariant for large enough $j \geq i$. 
\end{rmrk}

\begin{prop} \label{welldef}
Suppose in the notation of Lemma \ref{factoreq} that $f$ is surjective and the kernel $N := \ker f$ is a smooth, affine $S$-scheme with geometrically connected fibers. Then the morphism $P_H(X, \mathbb{F}_p) \rightarrow P_G(X, \mathbb{F}_p)$ is an equivalence.
\end{prop}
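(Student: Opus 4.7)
The plan is to show essential surjectivity, since full faithfulness is immediate: by Proposition~\ref{fullsub} both $P_G(X, \mathbb{F}_p)$ and $P_H(X, \mathbb{F}_p)$ embed as full subcategories of $P_c^b(X, \mathbb{F}_p)$, and the functor of Lemma~\ref{factoreq} is the identity on underlying perverse sheaves. Thus I need to show that every $\mathcal{F}^\bullet \in P_G(X, \mathbb{F}_p)$ admits an isomorphism $R\pi_H^*\mathcal{F}^\bullet \cong R\rho_H^*\mathcal{F}^\bullet$ on $H \times_S X$.

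The approach is smooth descent. Let $\phi \colon R\pi_G^*[\dim G]\mathcal{F}^\bullet \xrightarrow{\sim} R\rho_G^*[\dim G]\mathcal{F}^\bullet$ be the unique equivariance isomorphism in the perverse heart provided by Lemma~\ref{cocycle}. Set $q = f \times \id_X \colon G \times_S X \to H \times_S X$; this is smooth and surjective of relative dimension $\dim N$, with geometrically connected fibers (as $G \to H$ is an $N$-torsor). Since $q^*[\dim N] R\pi_H^*[\dim H] = R\pi_G^*[\dim G]$ and similarly for $\rho$, Corollary~\ref{smoothdec} reduces the construction of the desired isomorphism on $H \times_S X$ to verifying the cocycle condition for $\phi$ on the double fiber product. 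Using normality of $N$, the latter is identified with $N \times_S G \times_S X$, with projections $p_1(n,g,x) = (ng, x)$ and $p_2(n,g,x) = (g, x)$; the cocycle condition then reads $p_1^*[\dim N]\phi = p_2^*[\dim N]\phi$.

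The key input is that because the $G$-action on $X$ factors through $H$, the subgroup $N$ acts trivially on $X$. A direct computation therefore yields $\pi_G \circ p_1 = \pi_G \circ p_2$ and $\rho_G \circ p_1 = \rho_G \circ p_2$, so $p_1^*[\dim N]\phi$ and $p_2^*[\dim N]\phi$ are two morphisms between the same pair of perverse sheaves on $N \times_S G \times_S X$, both of whose source and target lie in the essential image of $Rp_2^*[\dim N]$. Applying Lemma~\ref{fullyflem} with $Y = N$, which is a smooth $S$-scheme with geometrically connected fibers and admits the identity section, the functor $Rp_2^*[\dim N]$ is fully faithful. Hence each of $p_1^*[\dim N]\phi$ and $p_2^*[\dim N]\phi$ is determined by its restriction along the section $s(g,x) = (1,g,x)$; since $p_1 \circ s = p_2 \circ s = \id_{G \times_S X}$, both restrictions equal $\phi$, giving the desired equality.

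The main obstacle is the cocycle verification, whose success depends critically on the triviality of the $N$-action on $X$ (so that the four relevant pullbacks actually coincide), and on tracking cohomological shifts so that the full faithfulness of Lemma~\ref{fullyflem} applies to the shifted pullback $Rp_2^*[\dim N]$ rather than to $Rp_2^*$ itself. Once the cocycle is established, the rest is formal descent.
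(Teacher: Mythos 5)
Your argument is correct and takes essentially the same route as the paper: both descend the $G$-equivariance isomorphism along $f\times_S\id_X$ via smooth descent (Corollary \ref{smoothdec}) and verify the descent datum on $N\times_S G\times_S X$ by noting that the two pullbacks of $\phi$ are morphisms between pullbacks along the projection from $N$, hence are determined by their common restriction to the unit section. The only cosmetic difference is that the paper packages this rigidity as the uniqueness clause of Lemma \ref{cocycle} applied to the $N$-equivariant perverse sheaf $R\pi_G^*\mathcal{F}^\bullet$, whereas you re-derive it directly from the full faithfulness in Lemma \ref{fullyflem}.
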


\begin{proof}
Let $\mathcal{F}^\bullet \in P_G(X, \mathbb{F}_p)$, and $$\alpha : = f \times_S \id_X \colon G \times_S X \rightarrow H \times_S X.$$ Note that $f$ is smooth because $N$ is smooth and $f$ is an $N$-torsor. Then $\alpha$ is a smooth cover of $H \times_S X$. As in the proof of Lemma \ref{factoreq} we have $R \alpha^* R \rho_H^* = R \rho_G^*$ and $R \alpha^* R \pi_H^* = R \pi_G^*$. We want to descend the isomorphism $\beta \colon R \rho_G^* \mathcal{F}^\bullet \cong R \pi_G^* \mathcal{F}^\bullet$ along $\alpha$ to an isomorphism $R \rho_H^* \mathcal{F}^\bullet \cong R \pi_H^* \mathcal{F}^\bullet$. Consider the Cartesian diagram
$$\xymatrix{
G \times_S X \ar[d]^{\alpha} & N \times_S G \times_S X \ar[l]_{p} \ar[d]^{m} \\
H \times_S X & G \times_S X \ar[l]^\alpha
}$$
Here $p$ is the projection onto $G \times_S X$ and $m(n,g,x) = (ng,x)$. 
By smooth descent (Corollary \ref{smoothdec}) we only need to check that the following diagram commutes
$$\xymatrix{
Rp^* R\alpha^* R \rho_H^* \mathcal{F}^\bullet \ar[d]^{\gamma} \ar[r]^{R p^* \beta} & Rp^* R \alpha^* R \pi_H ^* \mathcal{F}^\bullet  \ar[d]^{\delta} \\
Rm^* R\alpha^* R \rho_H^* \mathcal{F}^\bullet \ar[r]^{Rm^* \beta} & Rm^* R \alpha^* R \pi_H ^* \mathcal{F}^\bullet
}$$
Here the vertical maps are the natural isomorphisms coming from commutativity of the previous diagram. Let $N$ act on $G \times_S X$ by $n \cdot (g,x) = (ng,nx) = (ng,x)$.  The projection $\pi_G \colon G \times_S X \rightarrow X$ is $N$-equivariant, so $R \alpha^* R\pi_H^* \mathcal{F}^\bullet = R\pi_G^* \mathcal{F}^\bullet$ is an $N$-equivariant perverse sheaf on $G \times_S X$ (up to a shift). By Lemma \ref{cocycle}, the identity $$\delta = (Rm^* \beta) \circ \gamma \circ (Rp^* \beta)^{-1}$$ holds because after applying $Ri^*$, both morphisms reduce to the identity map of $R \pi_G^* \mathcal{F}^\bullet$. 
\end{proof}

For the rest of this section we assume $S=k$.

\begin{lem} \label{simpeq}
Suppose $S=k$ and that $\mathcal{F}^\bullet \in P_c^b(X, \mathbb{F}_p)$ is simple and $G$-equivariant. Then there exists a $G$-equivariant immersion $j \colon U \rightarrow X$ from a smooth irreducible $G$-scheme $U$ and a simple $G$-equivariant local system $\mathcal{L}$ on $U$ such that $\mathcal{F}^\bullet \cong j_{!*}(\mathcal{L}[\dim U])$. 
\end{lem}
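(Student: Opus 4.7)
The plan is to invoke Theorem \ref{simplechar} to write $\mathcal{F}^\bullet$ as an intermediate extension from a smooth irreducible subscheme of its support, and then to enlarge that subscheme using the $G$-action so as to make it $G$-stable. The main obstacle is the final step: recognizing the restriction of $\mathcal{F}^\bullet$ to the enlarged $G$-stable open as a shifted local system, which requires $G$-equivariance to glue together the local descriptions on each $G$-translate.

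First, let $Z$ denote the closure of the support of $\mathcal{F}^\bullet$ with its reduced induced structure. Then $Z$ is $G$-stable, since the support of a $G$-equivariant perverse sheaf is $G$-invariant, and $Z$ is irreducible, since the support of a simple perverse sheaf is irreducible by Theorem \ref{simplechar}. As $\mathcal{F}^\bullet$ is supported on $Z$, we may (as in the proof of Theorem \ref{simplechar}) replace $X$ by $Z$ and apply Theorem \ref{simplechar} to write $\mathcal{F}^\bullet \cong h_{V,!*}(\mathcal{L}_V[\dim V])$, where $h_V \colon V \hookrightarrow Z$ is an open immersion from a dense smooth open $V$ of $Z$ and $\mathcal{L}_V$ is a simple local system on $V$.

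Next, enlarge $V$ to its $G$-saturation $U := G \cdot V \subseteq Z$, namely the image of the action morphism $G \times V \to Z$. This map is smooth: the action $G \times Z \to Z$ factors as the isomorphism $(g, z) \mapsto (g, gz)$ of $G \times Z$ followed by projection, and projection is smooth because $G$ is smooth. Hence $U$ is open in $Z$; equivalently, $U = \bigcup_{g \in G(k)} g \cdot V$ is a union of opens, each the image of $V$ under an automorphism of $Z$. Therefore $U$ is an open, smooth, irreducible, $G$-stable subscheme of $Z$, and the composition $j_U \colon U \hookrightarrow Z \hookrightarrow X$ is a $G$-equivariant immersion from a smooth irreducible $G$-scheme.

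By Lemma \ref{simplelem2} applied to the open immersion $U \hookrightarrow Z$ (combined with Lemma \ref{compintext} and the fact that $\mathcal{F}^\bullet$ is supported on $Z$), the complex $\mathcal{M}^\bullet := Rj_U^* \mathcal{F}^\bullet$ is a simple perverse sheaf on $U$ and $\mathcal{F}^\bullet \cong j_{U,!*}(\mathcal{M}^\bullet)$. It remains to show $\mathcal{M}^\bullet = \mathcal{L}[\dim U]$ for a local system $\mathcal{L}$ on $U$. By construction $\mathcal{M}^\bullet|_V = \mathcal{L}_V[\dim V] = \mathcal{L}_V[\dim U]$ is a shifted local system. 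For each $g \in G(k)$, restricting the equivariance isomorphism $R\rho^* \mathcal{F}^\bullet \cong R\pi^* \mathcal{F}^\bullet$ to $\{g\} \times X$ yields an isomorphism $\rho_g^* \mathcal{F}^\bullet \cong \mathcal{F}^\bullet$, where $\rho_g \colon X \to X$ is the action of $g$; this identifies $\mathcal{M}^\bullet|_{g \cdot V}$ with the pushforward of $\mathcal{L}_V[\dim U]$ along the isomorphism $V \xrightarrow{\sim} g \cdot V$, again a shifted local system. Hence $\mathcal{M}^\bullet$ is, on the cover $\{g \cdot V\}_{g \in G(k)}$ of $U$, locally of the form a local system placed in degree $-\dim U$; this being a local property, $\mathcal{M}^\bullet = \mathcal{L}[\dim U]$ globally for a local system $\mathcal{L}$ on $U$, which is simple because $\mathcal{L}[\dim U]$ is simple perverse. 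Finally, Lemma \ref{equivfun}(i) applied to $j_U$ shows that $\mathcal{L}[\dim U]$ is $G$-equivariant, completing the proof.
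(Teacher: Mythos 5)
Your proof is correct and is essentially the argument the paper intends: the paper's own proof simply defers to \cite[12.20]{nilpotentorbits} (with Lemma \ref{simplelem2} replacing the reference there), and that argument is exactly your route — pass to the irreducible $G$-stable support, find a smooth dense open $V$ where $\mathcal{F}^\bullet$ is a shifted simple local system via Theorem \ref{simplechar}, saturate to $U = G\cdot V$, and use equivariance on the translates $g\cdot V$ to see the restriction is still a shifted local system. Your writeup is a correct fleshing-out of that cited proof, including the needed observations that $G\cdot V$ is open (smoothness of the action map) and that Lemma \ref{equivfun}(i) supplies the equivariance of $\mathcal{L}[\dim U]$.
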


\begin{proof}
As our classification of the simple objects in $P_c^b(X, \mathbb{F}_p)$ in Theorem \ref{simplechar} is analogous to the case of $\overline{\mathbb{Q}}_\ell$-coefficients, the same proof as in \cite[12.20]{nilpotentorbits} works, except we replace the reference to 12.14(5) by Lemma \ref{simplelem2}.
\end{proof}

\begin{lem}
Suppose $S=k$ and that $G$ acts on $X$ with finitely many orbits. Then if $\mathcal{F}^\bullet \in P_c^b(X, \mathbb{F}_p)$ is simple and $G$-equivariant, there is a unique orbit $j \colon \mathcal{O} \rightarrow X$ and a unique simple $G$-equivariant local system $\mathcal{L}$ on $\mathcal{O}$ such that $\mathcal{F}^\bullet \cong j_{!*}(\mathcal{L}[\dim \mathcal{O}])$. Conversely, every such pair $(\mathcal{O}, \mathcal{L})$ determines a unique simple $G$-equivariant perverse sheaf.
\end{lem}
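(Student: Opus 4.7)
The plan is to combine Lemma \ref{simpeq} with a short combinatorial argument about the orbit stratification in order to replace the abstract smooth irreducible $G$-scheme supplied by that lemma with a genuine $G$-orbit, and then invoke the intermediate extension machinery developed earlier in the section.

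First I would apply Lemma \ref{simpeq} to obtain a $G$-equivariant immersion $j\colon U \to X$ from a smooth irreducible $G$-scheme $U$ and a simple $G$-equivariant local system $\mathcal{L}$ on $U$ with $\mathcal{F}^\bullet \cong j_{!*}(\mathcal{L}[\dim U])$. The key observation is that $j(U)$ is $G$-stable, and if an orbit $\mathcal{O}$ meets $j(U)$ then $j(U) \cap \mathcal{O}$ is a nonempty $G$-stable subset of the homogeneous space $\mathcal{O}$, forcing $\mathcal{O} \subseteq j(U)$. Since there are only finitely many orbits, $j(U)$ is set-theoretically the union of finitely many orbits; since $U$ is irreducible, exactly one of these, call it $\mathcal{O}_0$, contains the generic point of $U$ and is therefore open and dense in $j(U)$, with $\dim \mathcal{O}_0 = \dim U$.

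Next I would factor the inclusion $\mathcal{O}_0 \hookrightarrow X$ as $\mathcal{O}_0 \xrightarrow{j'} U \xrightarrow{j} X$ (an open immersion followed by an immersion) and use Lemma \ref{compintext} together with Lemma \ref{simplelem2}: the latter guarantees that $j'^*\mathcal{L}$ is again simple and that $j_{!*}(\mathcal{L}[\dim U])$ agrees with $j'_{!*}$ of its restriction, yielding
\[
\mathcal{F}^\bullet \;\cong\; (j\circ j')_{!*}\bigl(j'^*\mathcal{L}[\dim \mathcal{O}_0]\bigr),
\]
and $j'^*\mathcal{L}$ remains $G$-equivariant because $j'$ is $G$-equivariant. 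Uniqueness of the pair $(\mathcal{O}_0, j'^*\mathcal{L})$ follows formally: the support of $\mathcal{F}^\bullet$ is $\overline{\mathcal{O}_0}$, so $\mathcal{O}_0$ is forced to be the unique open orbit in the support, and restricting $\mathcal{F}^\bullet$ to $\mathcal{O}_0$ recovers the local system up to the shift $[\dim \mathcal{O}_0]$.

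For the converse, given a pair $(\mathcal{O}, \mathcal{L})$ with $\mathcal{L}$ a simple $G$-equivariant local system on an orbit $\mathcal{O}$ (which is smooth and irreducible, being a homogeneous $G$-space), Theorem \ref{simplechar} gives that $\mathcal{L}[\dim \mathcal{O}]$ is a simple perverse sheaf and Lemma \ref{simplelem} implies that $j_{!*}(\mathcal{L}[\dim \mathcal{O}])$ is simple in $P_c^b(X,\mathbb{F}_p)$, while Lemma \ref{equivint} ensures its $G$-equivariance. The hardest step is really the orbit-stratification bookkeeping that reduces an arbitrary $G$-stable irreducible smooth $U$ to an actual orbit; everything else is a direct application of results proved earlier in the section.
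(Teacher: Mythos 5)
Your proof is correct and follows essentially the same route as the paper: apply Lemma \ref{simpeq}, observe that the finitely-many-orbits hypothesis forces an open dense orbit inside the $G$-stable irreducible $U$, and then restrict via Lemma \ref{simplelem2} (with Lemma \ref{compintext}), with uniqueness read off from the support and the restriction to the open orbit. Your write-up merely supplies a bit more of the orbit-stratification bookkeeping and spells out the converse, which the paper's proof leaves implicit.
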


\begin{proof}
Let $\mathcal{F}^\bullet \in P_c^b(X, \mathbb{F}_p)$ be simple and $G$-equivariant. Then in the notation of Lemma \ref{simpeq}, since $G$ acts with finitely many orbits there is an open dense orbit $\mathcal{O}$ in $U$. By Lemma \ref{simplelem2} we may replace $U$ by $\mathcal{O}$ and $\mathcal{L}$ by its restriction to $\mathcal{O}$. For the uniqueness of $\mathcal{O}$, note that $\mathcal{O}$ is uniquely determined as the orbit which is open in the support of $\mathcal{F}^\bullet$. Then $\mathcal{L}$ is determined by the restriction of $\mathcal{F}^\bullet$ to $\mathcal{O}$. 
\end{proof}

\begin{lem}
Suppose $S=k$ and that $G$ acts transitively on $X$ with connected stabilizers. Then the only $G$-equivariant local systems $\mathcal{L}$ on $X$ are constant.
\end{lem}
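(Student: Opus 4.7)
The plan is to trivialize $\mathcal{L}$ by pulling back along an orbit map, and then to recover $\mathcal{L}$ by descent along the resulting torsor. Fix a $k$-point $x \in X(k)$ and let $H := \stab_G(x)$, a closed $k$-subgroup scheme of $G$ which is connected by hypothesis. The orbit map $a \colon G \to X$, $g \mapsto g \cdot x$, is a right $H$-torsor identifying $X$ with $G/H$, and it is $G$-equivariant when $G$ acts on itself by left multiplication.

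First I would show that the pullback $a^*\mathcal{L}$ is the constant local system $\underline{V}$ on $G$, where $V := \mathcal{L}_{\overline{x}}$. The local system $a^*\mathcal{L}$ is $G$-equivariant for the simply transitive left-translation action, and the equivariance isomorphism $m^*(a^*\mathcal{L}) \cong \pi^*(a^*\mathcal{L})$ on $G \times G$, pulled back along $g \mapsto (g,1)$ and normalized at $g=1$ via Lemma~\ref{cocycle}, supplies a canonical family of isomorphisms $V \xrightarrow{\sim} (a^*\mathcal{L})_{\overline{g}}$ varying algebraically in $g$. This globalizes to an isomorphism $a^*\mathcal{L} \cong \underline{V}$; equivalently, one invokes the standard equivalence between $G$-equivariant sheaves on $G$ (left translation) and sheaves on $\Spec(k)$.

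Next, by $H$-torsor descent $\mathcal{L}$ is recovered from $\underline{V}$ equipped with an $H$-equivariance structure for the right translation action of $H$ on $G$. Because $G$ is connected, endomorphisms of the constant sheaf $\underline{V}$ on $G$ are just $\End_{\mathbb{F}_p}(V)$, so such a structure amounts to a morphism of $k$-group schemes $\rho \colon H \to \GL(V)$, the cocycle condition being precisely the group homomorphism condition. Since $V$ is a finite-dimensional $\mathbb{F}_p$-vector space, $\GL(V)$ is a finite constant, hence étale, $k$-group scheme; so any morphism from the connected group scheme $H$ to $\GL(V)$ must be trivial. Hence the descent datum on $\underline{V}$ is trivial and $\mathcal{L} \cong \underline{V}$ is constant on $X$.

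The main obstacle is the first step, namely rigorously deducing from the equivariance isomorphism that $a^*\mathcal{L}$ is constant in the étale $\mathbb{F}_p$-setting. This is handled by the equivariant descent formalism of this section: the unique trivialization of the cocycle in Lemma~\ref{cocycle} together with the fact that pullback along $G \to \Spec(k)$ induces an equivalence from $\mathbb{F}_p$-vector spaces to $G$-equivariant local systems on $G$ settles the point. The rest of the argument is then purely formal.
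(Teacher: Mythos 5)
Your overall strategy --- trivialize $\mathcal{L}$ on $G$ using the equivariance structure and a section of the action/projection diagram, then descend the trivialization along the stabilizer using its connectedness --- is exactly the strategy of the paper's proof, and those two steps are essentially correct. Your first step via $g \mapsto (g,1)$ is the same computation the paper performs with the section $g \mapsto (g^{-1},g)$, and your observation that the descent datum takes values in the finite \'etale group scheme $\GL(V)$, so that connectedness of $H$ forces it to be constant (hence trivial, by the normalization at the identity from Lemma \ref{cocycle}), is a clean version of the paper's appeal to the argument of Proposition \ref{welldef}.

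The gap is the opening assertion that the orbit map $a \colon G \to X$ is a right $H$-torsor identifying $X$ with $G/H$. In characteristic $p$ this is false in general: the scheme-theoretic stabilizer of $x$ may be non-reduced, and if $H$ denotes the \emph{reduced} stabilizer (which is what ``connected stabilizers'' refers to in this section), then $a$ factors as $G \xrightarrow{p_1} G/H \xrightarrow{p_2} X$, where $p_1$ is an $H$-torsor but $p_2$ is only a bijective purely inseparable morphism, not an isomorphism. (For instance, $\mathbb{G}_a$ acting on $\mathbb{A}^1$ by $t \cdot y = y + t^p$ is transitive with trivial reduced stabilizers, yet the orbit map is the Frobenius.) Consequently your torsor descent recovers sheaves on $G/H$, not on $X$, and the argument as written does not reach $\mathcal{L}$ itself. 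The missing step is precisely the one the paper supplies: $p_2$ induces an equivalence of \'etale sites (equivalently an isomorphism of \'etale fundamental groups, SGA1 IX 4.10), so $\mathcal{L}$ is constant if and only if $p_2^*\mathcal{L}$ is, and your trivialization-plus-descent argument then applies verbatim to $p_2^*\mathcal{L}$ on $G/H$. One could instead work with the scheme-theoretic stabilizer, for which $G \to X$ genuinely is an fppf torsor, but then the group may be non-smooth and one must justify fppf descent for \'etale local systems; inserting the purely inseparable intermediate quotient is the cleaner route and is what the paper does.
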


\begin{proof}
We follow the proof in \cite[12.7]{nilpotentorbits}. Let $x \in X(k)$ and let $H \subset G$ be the closed reduced subgroup such that $H(k)$ is the stabilizer of $x$. Then the map $G \rightarrow X$, $g \mapsto gx$ factors as a composition
$$G \xrightarrow{p_1} G/H \xrightarrow{p_2} X$$ where $p_1$ is an $H$-torsor and $p_2$ is a purely inseparable bijective morphism. Each of these maps is $G$-equivariant, and $p_2^* \mathcal{L}$ and $p_1^* p_2^* \mathcal{L}$ are $G$-equivariant. By \cite[IX 4.10]{SGA1} the map $p_2$ induces an isomorphism of fundamental groups, so $\mathcal{L}$ is constant if and only if $p_2^* \mathcal{L}$ is constant. Because $G/H$ is smooth then $p_2^* \mathcal{L}$ and constant sheaves on $G/H$ are perverse (when shifted by $\dim G/H$).  We claim that $p_1^*p_2^* \mathcal{L}$ is constant. Assuming this claim for a moment, let $\mathcal{F}$ be the constant sheaf on $G$ associated to the stalk of $p_1^*p_2^* \mathcal{L}$ at the identity element. Then we have a canonical isomorphism $\mathcal{F} \cong p_1^*p_2^* \mathcal{L}$. Since $H$ is connected one can descend this isomorphism to $G/H$ using the same ideas as in the proof of Proposition \ref{welldef},  so $p_2^*\mathcal{L}$ is constant. Thus $\mathcal{L}$ is also constant. 

To prove the claim that $p_1^* p_2^* \mathcal{L}$ is constant, we will in fact show that the only $G$-equivariant local systems $\mathcal{L}$ on $G$ for the usual left action of $G$ are constant sheaves. Consider the map $j \colon G \rightarrow G \times G$ given by $g \mapsto (g^{-1},g)$. Then $\rho \circ j$ is the constant map $G \rightarrow G$, $g \mapsto 1$, so $j^* \rho^* \mathcal{L}$ is a constant sheaf. The map $\pi \circ j$ is the identity map $\id_G$. Thus, applying $j^*$ to an isomorphism $\rho^* \mathcal{L} \cong \pi^* \mathcal{L}$ gives that $\mathcal{L}$ is constant.
\end{proof}

\begin{cor} \label{simpobjs}
Suppose $S=k$ and that $G$ acts on $X$ with finitely many orbits, each having connected stabilizers. Then the simple $G$-equivariant perverse sheaves in $P_c^b(X, \mathbb{F}_p)$ are the intermediate extensions of the constant sheaves $\mathbb{F}_p[\dim \mathcal{O}]$ along the orbits $\mathcal{O}$. 
\end{cor}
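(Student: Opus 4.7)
The plan is to simply combine the previous two lemmas. By the classification in the first of the two preceding lemmas, every simple $G$-equivariant perverse sheaf $\mathcal{F}^\bullet \in P_c^b(X,\mathbb{F}_p)$ is of the form $j_{!*}(\mathcal{L}[\dim \mathcal{O}])$, where $j \colon \mathcal{O} \to X$ is the inclusion of a (locally closed) $G$-orbit with its reduced induced structure and $\mathcal{L}$ is a simple $G$-equivariant local system on $\mathcal{O}$.

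Next I would apply the second preceding lemma to the action of $G$ on $\mathcal{O}$. Since $G$ acts transitively on $\mathcal{O}$ with connected stabilizers by hypothesis, that lemma forces $\mathcal{L}$ to be a constant local system. Being simple, its fiber must be a simple trivial representation of $\pi_{1,\text{\'et}}(\mathcal{O})$, i.e., a one-dimensional $\mathbb{F}_p$-vector space, so $\mathcal{L} \cong \mathbb{F}_p$ as constant sheaves. Hence $\mathcal{F}^\bullet \cong j_{!*}(\mathbb{F}_p[\dim \mathcal{O}])$.

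Conversely, for each orbit $\mathcal{O}$ the constant sheaf $\mathbb{F}_p[\dim \mathcal{O}]$ is a simple $G$-equivariant perverse sheaf on $\mathcal{O}$ (by Theorem~\ref{simplechar} and the fact that the constant sheaf is tautologically equivariant), so by Lemma~\ref{simplelem} together with Lemma~\ref{equivint} its intermediate extension $j_{!*}(\mathbb{F}_p[\dim \mathcal{O}])$ is a simple $G$-equivariant perverse sheaf on $X$. There is essentially no obstacle here: the entire content has already been packaged into the previous two lemmas, and the corollary is only the observation that, under the connected stabilizer hypothesis, the only simple $G$-equivariant local system on each orbit is the constant one $\mathbb{F}_p$.
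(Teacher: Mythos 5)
Your proof is correct and is exactly the argument the paper intends (the corollary is stated without proof as an immediate consequence of the two preceding lemmas): combine the classification of simple equivariant perverse sheaves as $j_{!*}(\mathcal{L}[\dim\mathcal{O}])$ with the fact that on a transitive $G$-space with connected stabilizers every equivariant local system is constant, and then observe that a simple constant local system over $\mathbb{F}_p$ has rank one.
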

\section{Perverse $\mathbb{F}_p$-sheaves on Cohen-Macaulay and $F$-rational schemes} \label{PerverseFrat}

In this section we prove Theorems \ref{mainthm} and \ref{mainthm2}. To begin, we will take the following theorem of Smith as our definition of $F$-rationality.

\begin{thm} \cite[2.6]{SmithFrat} \label{FRatDef}
Let $(A, \mathfrak{m})$ be an excellent Cohen-Macaulay local ring of dimension $d$ and characteristic $p > 0$. Then $A$ is $F$-rational if and only if the local cohomology module $H_{\mathfrak{m}}^d(A)$ is a simple left $A[F]$-module.
\end{thm}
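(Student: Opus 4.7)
The plan is to identify proper $F$-stable submodules of $H^d_{\mathfrak{m}}(A)$ with the tight closure of zero, and then translate the vanishing of that submodule into the statement that every parameter ideal is tightly closed. This follows the strategy in Smith's original paper and rests on the Hochster--Huneke theory of test elements.

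First I would fix a system of parameters $x_1, \ldots, x_d$ and use the standard presentation
$$H^d_{\mathfrak{m}}(A) = \varinjlim_t A/(x_1^t, \ldots, x_d^t),$$
together with the natural action of the Frobenius endomorphism $F$. Let $A^{\circ}$ denote the complement of the union of the minimal primes and define
$$N := \{\eta \in H^d_{\mathfrak{m}}(A) : \text{there exists } c \in A^{\circ} \text{ with } cF^e(\eta) = 0 \text{ for all } e \gg 0 \}.$$
This is visibly an $F$-stable $A$-submodule (closure under addition uses that $A^{\circ}$ is multiplicatively closed). The Cohen--Macaulay hypothesis gives the crucial translation between tight closure of parameter ideals and tight closure of zero in $H^d_{\mathfrak{m}}(A)$: since the $x_i$ form a regular sequence, each $A/(x_1^t, \ldots, x_d^t)$ embeds into the direct limit, and unwinding the definitions shows that $a \in (x_1^t, \ldots, x_d^t)^{*}$ if and only if the class of $a$ in $A/(x_1^t, \ldots, x_d^t) \hookrightarrow H^d_{\mathfrak{m}}(A)$ lies in $N$. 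Hence $A$ is $F$-rational if and only if $N = 0$.

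It remains to show that $N = 0$ is equivalent to $H^d_{\mathfrak{m}}(A)$ being simple as an $A[F]$-module. The nontrivial content is that $N$ is the maximal proper $F$-stable $A$-submodule of $H^d_{\mathfrak{m}}(A)$. That $N$ contains every proper $F$-stable submodule $M$ is where excellence is essential: by the Hochster--Huneke existence theorem one can produce a uniform test element $c \in A^{\circ}$ that witnesses $\eta \in N$ for every $\eta \in M$, so $M \subseteq N$. That $N$ itself is proper comes from the existence of a nonzero socle element of $H^d_{\mathfrak{m}}(A)$ which, because $A$ is Cohen--Macaulay and excellent, cannot be annihilated by any $cF^e$ with $c \in A^{\circ}$.

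Combining the two reductions, $H^d_{\mathfrak{m}}(A)$ is a simple $A[F]$-module if and only if $N = 0$ if and only if every parameter ideal is tightly closed, which is exactly $F$-rationality. The main obstacle is the test-element step: producing a uniform $c \in A^{\circ}$ that forces any proper $F$-stable submodule into $N$, which is where the excellence hypothesis is consumed. Once this input from tight closure theory is granted, the equivalence is a matter of unwinding the direct limit presentation of local cohomology.
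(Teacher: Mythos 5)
First, a remark on scope: the paper does not prove this statement at all --- it is quoted from Smith (\cite[2.6]{SmithFrat}) and adopted as the working definition of $F$-rationality --- so there is no internal proof to compare against. Your outline does follow the strategy of Smith's original argument: present $H^d_{\mathfrak{m}}(A)$ as $\varinjlim_t A/(x_1^t,\dots,x_d^t)$, use the Cohen--Macaulay hypothesis (injectivity of the transition maps) to identify $(x_1^t,\dots,x_d^t)^*$ with the preimage of $N = 0^*_{H^d_{\mathfrak{m}}(A)}$, and show that $N$ is the unique largest proper $F$-stable submodule. The first two steps are fine as sketched.

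The gap is in the last step, where the roles of the two key inputs have been interchanged. A test element $c$ \emph{witnesses} membership in tight closure for elements already known to lie there; it cannot be used to show that an arbitrary proper $F$-stable submodule $M$ is contained in $N$, and the sentence ``a test element that witnesses $\eta\in N$ for every $\eta\in M$'' presupposes the conclusion $\eta\in N$. The actual argument for $M\subseteq N$ goes through annihilators: one shows that a proper $F$-stable $M\subsetneq H^d_{\mathfrak{m}}(A)$ is killed by some $c\in A^{\circ}$ (Matlis-dualize over $\hat{A}$: $M^{\vee}$ is a proper quotient $\omega/J$ of the canonical module, and when $A$ is a domain --- as $F$-rational local rings are, being normal --- $\omega$ is torsion-free of rank one, so $\omega/J$ is torsion); then $F$-stability gives $cF^e(\eta)=0$ for all $e$ and all $\eta\in M$, hence $M\subseteq N$, with no test element involved. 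Conversely, the test element (and hence excellence) is consumed exactly where you wave at a socle element: to see that $N$ is proper one assumes $N=H^d_{\mathfrak{m}}(A)$, notes that a single test element $c$ would then kill all of $H^d_{\mathfrak{m}}(A)$ (take $e=0$ in $cF^e(\eta)=0$), and derives a contradiction from the fact that $\Ann(H^d_{\mathfrak{m}}(A))$ lies in every minimal prime of the unmixed (because Cohen--Macaulay) ring $A$, so it meets $A^{\circ}$ trivially. As written, neither half of your ``maximal proper $F$-stable submodule'' step is actually justified, and the non-reduced case --- where Hochster--Huneke test elements are not available but the ring is also not $F$-rational, so a proper nonzero $F$-stable submodule must be produced by hand from the nilradical --- needs a separate word.
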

The original definition of $F$-rationality is related to tight closure theory (see \cite{Frationalloc}). A $k$-scheme is $F$-rational if all of its local rings are $F$-rational. The idea of the proofs will be to show all of the hypotheses are preserved under passage to strict henselizations. We will then use the Artin-Schreier sequence to show that the necessary cohomology sheaves vanish.

\begin{lem} \label{shprop}
Let $(A,\mathfrak{m})$ be an excellent local ring of characteristic $p>0$, and let $A^{\sh}$  be a strict henselization of $A$. Then $A^{\sh}$ is an excellent  local ring of characteristic $p>0$. Moreover, if $A$ has one of the following four properties then $A^{\sh}$ has the same property:
\begin{enumerate}[{\normalfont (i)}]
\item $A$ is Cohen-Macaulay,
\item $A$ is normal,
\item $\dim(A) = d$,
\item $A$ is $F$-rational.
\end{enumerate}
\end{lem}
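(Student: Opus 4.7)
The plan is to handle the five claims separately, with (iv) being the delicate point. Localness holds by construction of $A^{\sh}$, the characteristic is preserved since $A \to A^{\sh}$ is a ring map, and excellence of $A^{\sh}$ is a classical theorem of Greco (see also \cite{EmertonKisin} and the Stacks Project for alternative references). The remaining properties will be extracted from the fact that $A \to A^{\sh}$ is a faithfully flat local homomorphism whose fiber at $\mathfrak{m}$ is the separable closure $k^{\sep}$ of the residue field $k$, together with the presentation $A^{\sh} = \varinjlim A_i$ as a filtered colimit of local essentially \'{e}tale $A$-algebras with maximal ideals $\mathfrak{m}_i$ contracting to $\mathfrak{m}$.

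For (iii), I would apply the standard dimension formula for flat local maps to get $\dim A^{\sh} = \dim A + \dim (A^{\sh}/\mathfrak{m} A^{\sh}) = d + 0 = d$. For (i), I would invoke ascent of the Cohen-Macaulay property along flat local homomorphisms with Cohen-Macaulay fiber (Matsumura, Theorem 23.3): the fiber $k^{\sep}$ is a field, hence trivially CM. For (ii), normality is preserved under each local \'{e}tale extension $A \to A_i$ by Serre's criterion combined with the fact that \'{e}tale maps preserve $(R_k)$ and $(S_k)$, and normality passes to the filtered colimit $A^{\sh} = \varinjlim A_i$ because the $(R_1) + (S_2)$ conditions are preserved under filtered colimits of flat maps.

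The heart of the lemma is (iv). By Theorem \ref{FRatDef} applied to $A^{\sh}$ (using parts (i), (iii), and excellence already established), it suffices to prove that $H^d_{\mathfrak{m}^{\sh}}(A^{\sh})$ is a simple left $A^{\sh}[F]$-module. Flat base change for local cohomology along $A \to A^{\sh}$ together with $\mathfrak{m} A^{\sh} = \mathfrak{m}^{\sh}$ yields a canonical isomorphism of $A^{\sh}[F]$-modules
$$H^d_{\mathfrak{m}^{\sh}}(A^{\sh}) \;\cong\; H^d_{\mathfrak{m}}(A) \otimes_A A^{\sh} \;\cong\; \varinjlim_i H^d_{\mathfrak{m}_i}(A_i),$$
where the Frobenius actions are compatible throughout. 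Since \'{e}tale extensions preserve $F$-rationality (this is where I would cite results of Hochster-Huneke or V\'{e}lez in tight closure theory, alternatively arguing via the compatibility of local cohomology and Frobenius under smooth base change), each $A_i$ is $F$-rational, and so each $H^d_{\mathfrak{m}_i}(A_i)$ is simple as an $A_i[F]$-module.

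To finish, I would upgrade simplicity through the colimit. Let $N \subset \varinjlim_i H^d_{\mathfrak{m}_i}(A_i)$ be a nonzero $A^{\sh}[F]$-submodule and choose $0 \neq x \in N$; then $x \in H^d_{\mathfrak{m}_{i_0}}(A_{i_0})$ for some $i_0$, so $N \cap H^d_{\mathfrak{m}_{i_0}}(A_{i_0})$ is a nonzero $A_{i_0}[F]$-submodule and equals $H^d_{\mathfrak{m}_{i_0}}(A_{i_0})$ by simplicity. For any $j \geq i_0$ the transition map is injective and $A_j[F]$-equivariant, so $N \cap H^d_{\mathfrak{m}_j}(A_j) \supset H^d_{\mathfrak{m}_{i_0}}(A_{i_0})$ is again nonzero, and simplicity forces $N \supset H^d_{\mathfrak{m}_j}(A_j)$. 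Taking the union over $j$ gives $N = H^d_{\mathfrak{m}^{\sh}}(A^{\sh})$, completing the proof. The main obstacle is the preservation of $F$-rationality under \'{e}tale maps invoked above; if one wishes to avoid citing this, one can instead argue directly that each Frobenius-stable $A_i$-submodule of $H^d_{\mathfrak{m}_i}(A_i)$ descends from one of $H^d_{\mathfrak{m}}(A)$ by faithfully flat descent, then use simplicity of the latter as an $A[F]$-module.
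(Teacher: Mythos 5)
Your proof is correct and follows essentially the same route as the paper: write $A^{\sh} = \varinjlim A_i$ as a filtered colimit of local essentially \'{e}tale $A$-algebras, use flat base change for local cohomology to identify $H^d_{\mathfrak{m}^{\sh}}(A^{\sh})$ with $\varinjlim H^d_{\mathfrak{m}_i}(A_i)$, invoke V\'{e}lez and Hochster--Huneke to propagate $F$-rationality (hence simplicity of the top local cohomology as an $A_i[F]$-module) along each $A \to A_i$, and then bootstrap simplicity through the colimit exactly as the paper does. One caution on the closing parenthetical: the claim that each $F$-stable $A_i[F]$-submodule of $H^d_{\mathfrak{m}_i}(A_i)$ ``descends'' to an $A[F]$-submodule of $H^d_{\mathfrak{m}}(A)$ is not a consequence of faithfully flat descent alone --- an arbitrary submodule of a base change carries no descent datum, and $F$-stability does not obviously supply one --- so that suggested shortcut would need a genuine argument; the route you actually take (and the paper takes) via \'{e}tale ascent and localization of $F$-rationality is the right one.
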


\begin{proof}
The ring $A^{\sh}$ is noetherian by \cite[18.8.8]{EGA4IV}. The excellence of $A^{\sh}$ is not shown in \emph{loc. cit.}, but this can be found, for example, in \cite[Ch. 1,  \S 1]{KiehlEtale}. The Cohen-Macaulay property and normality are preserved by \cite[18.8.13]{EGA4IV}. The dimension is preserved by \cite[06LK]{stacks-project}.

To see that $A^{\sh}$ is $F$-rational if $A$ is $F$-rational, we will use the construction of $A^{\sh}$ as in \cite[04GP]{stacks-project}. Fix a separable closure $k(\mathfrak{m})^{\sep}$ of the residue field $k(\mathfrak{m})$ and consider the collection of triples $(S,\mathfrak{q}, \alpha)$ where $A \rightarrow S$ is \'{e}tale, $\mathfrak{q}$ is the only prime of $S$ lying over $\mathfrak{m}$, and $\alpha \colon   k(\mathfrak{q}) \rightarrow k(\mathfrak{m})^{\sep}$ is an embedding of $k(\mathfrak{m})$-algebras. Note that $\mathfrak{q} = \mathfrak{m}S$ (see \cite[04GN]{stacks-project}). 

Then $A^{\sh}$ is identified with the filtered colimit $\colim_{(S,\mathfrak{q}, \alpha)} S$.  Note that $A \rightarrow A^{\sh}$ is flat and $\mathfrak{m} A^{\sh}$ is the maximal ideal of $A^{\sh}$. Then $$H_{\mathfrak{m}A^{\sh}}^d(A^{\sh}) = H_{\mathfrak{m}}^d(A) \otimes_A A^{\sh}.$$ As tensor products commute with colimits, $$H_{\mathfrak{m}}^d(A) \otimes_A A^{\sh} = \colim_{(S, \mathfrak{q}, \alpha)} H_{\mathfrak{m}}^d(A) \otimes_A S = \colim_{(S, \mathfrak{q}, \alpha)} H_{\mathfrak{q}  }^d(S).$$

For each $(S, \mathfrak{q}, \alpha)$ the map $S \rightarrow A^{\sh}$ factors through the localization map $S \rightarrow S_{\mathfrak{q}}$. Hence the map $H_{\mathfrak{q}}^d(S) \rightarrow H_{\mathfrak{m}A^{\sh}}^d(A^{\sh})$ factors through the map $H_{\mathfrak{q}}^d(S) \rightarrow H_{\mathfrak{q}S_{\mathfrak{q}}}^d(S_{\mathfrak{q}})$. By \cite[Thm. 3.1]{Frationalbasechange}, the ring $S$ is $F$-rational. Now we would like to apply \cite[Thm. 4.2]{Frationalloc} to conclude $S_{\mathfrak{q}}$ is also $F$-rational, but to do this we need to know that $S$ is a homomorphic image of a Cohen-Macaulay ring. Since $A$ is local and $F$-rational then it is already Cohen-Macaulay by \cite[Thm. 6.27]{Frationalloc}. Thus $S$ is also Cohen-Macaulay, so $S_{\mathfrak{q}}$ is $F$-rational and $H_{\mathfrak{q}S_{\mathfrak{q}}}^d(S_{\mathfrak{q}})$ is a simple $S_{\mathfrak{q}}[F]$-module. 

At this point we have shown $H_{\mathfrak{m}A^{\sh}}^d(A^{\sh}) = \colim_{(S, \mathfrak{q}, \alpha)} H_{\mathfrak{q} }^d(S)$ where the image of each $H_{\mathfrak{q}  }^d(S)$ generates a simple $S_{\mathfrak{q}}[F]$-submodule of $H_{\mathfrak{m}A^{\sh}}^d(A^{\sh})$. It follows that $H_{\mathfrak{m}A^{\sh}}^d(A^{\sh})$ is a simple $A^{\sh}[F]$-module, so $A^{\sh}$ is $F$-rational. 
\end{proof}

The following proposition is the main computation from which we will derive the necessary cohomological vanishing results. 

\begin{prop} \label{locvan} \leavevmode \begin{enumerate}[{\normalfont (i)}]
\item Let $(A, \mathfrak{m})$ be a Cohen-Macaulay, noetherian local ring of characteristic $p > 0$ and dimension $d$. Let $i \colon k(\mathfrak{m}) \rightarrow \Spec(A)$ be the inclusion of the closed point and let $j \colon U= \Spec(A) - k(\mathfrak{m}) \rightarrow \Spec(A)$ be the inclusion of the complement. Then

$$R^m\Gamma (Ri^! \mathbb{F}_p[0]) = 0, \quad \quad \quad \quad m < d.$$

\item If $\Spec(A)$ is furthermore irreducible and $d> 0$ then 
$$R^d \Gamma (Ri^! \mathbb{F}_p[0]) = \left\{
        \begin{array}{ll}
            \ker(H_{\textnormal{\'{e}t}}^1(\Spec(A), \mathbb{F}_p) \rightarrow H_{\textnormal{\'{e}t}}^1(U, \mathbb{F}_p))& \quad d =1 \\
            \ker(H_{\mathfrak{m}}^d(A) \xrightarrow{F-\id} H_{\mathfrak{m}}^d(A)) & \quad d > 1. 
        \end{array}
    \right.
$$
\end{enumerate}
\end{prop}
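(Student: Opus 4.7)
The strategy is to apply $R\Gamma(X,-)$ to the exact triangle $i_* Ri^! \mathbb{F}_p \to \mathbb{F}_p \to Rj_* Rj^* \mathbb{F}_p$ on $X = \Spec(A)$, yielding the long exact sequence
$$\cdots \to H^{m-1}_{\text{\'{e}t}}(U, \mathbb{F}_p) \to R^m\Gamma(Ri^! \mathbb{F}_p) \to H^m_{\text{\'{e}t}}(X, \mathbb{F}_p) \to H^m_{\text{\'{e}t}}(U, \mathbb{F}_p) \to \cdots,$$
and then to compute each flanking group via the Artin-Schreier exact sequence $0 \to \mathbb{F}_p \to \mathcal{O} \xrightarrow{F-\id} \mathcal{O} \to 0$ of \'{e}tale sheaves. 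Since $X$ is affine, $H^m(X, \mathcal{O}_X) = 0$ for $m > 0$, so Artin-Schreier immediately gives $H^0_{\text{\'{e}t}}(X, \mathbb{F}_p) = \ker(F - \id \colon A \to A)$, $H^1_{\text{\'{e}t}}(X, \mathbb{F}_p) = \coker(F - \id \colon A \to A)$, and $H^m_{\text{\'{e}t}}(X, \mathbb{F}_p) = 0$ for $m \geq 2$. For $U$, the local cohomology sequence combined with the same vanishing on $X$ produces a short exact sequence $0 \to A/H^0_{\mathfrak{m}}(A) \to \Gamma(U, \mathcal{O}_U) \to H^1_{\mathfrak{m}}(A) \to 0$ and identifications $H^m(U, \mathcal{O}_U) \cong H^{m+1}_{\mathfrak{m}}(A)$ for $m \geq 1$; the Cohen-Macaulay hypothesis then collapses this to $\Gamma(U, \mathcal{O}_U) = A$ with $H^m(U, \mathcal{O}_U)$ concentrated in degree $d-1$ equal to $H^d_{\mathfrak{m}}(A)$ when $d \geq 2$, and to an extension of $H^1_{\mathfrak{m}}(A)$ by $A$ in degree zero (all higher degrees vanishing) when $d = 1$. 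Feeding these back into Artin-Schreier on $U$ determines $H^m_{\text{\'{e}t}}(U, \mathbb{F}_p)$ in every degree.

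For (i) I would chase the above long exact sequence degree by degree. Injectivity of $A \hookrightarrow \Gamma(U, \mathcal{O}_U)$ (from $H^0_{\mathfrak{m}}(A) = 0$ when $d \geq 1$) gives $R^0\Gamma(Ri^! \mathbb{F}_p) = 0$, while for $1 \leq m \leq d-1$ the relevant flanking groups either vanish for dimensional reasons or, in the edge case $m = 1$, the restriction $H^1_{\text{\'{e}t}}(X, \mathbb{F}_p) \to H^1_{\text{\'{e}t}}(U, \mathbb{F}_p)$ is injective because Artin-Schreier functoriality identifies it with the natural inclusion of the single $F$-coinvariant term $A/(F - \id)A$. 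For (ii) with $d \geq 2$, the Artin-Schreier sequence on $U$ exhibits $H^{d-1}_{\text{\'{e}t}}(U, \mathbb{F}_p)$ as an extension of $\ker(F - \id \colon H^d_{\mathfrak{m}}(A) \to H^d_{\mathfrak{m}}(A))$ by (a quotient of) $\coker(F - \id \colon A \to A) = H^1_{\text{\'{e}t}}(X, \mathbb{F}_p)$, and the restriction from $X$ identifies this sub-object; since $H^d_{\text{\'{e}t}}(X, \mathbb{F}_p) = 0$, the long exact sequence then identifies $R^d\Gamma(Ri^! \mathbb{F}_p)$ with exactly $\ker(F - \id \colon H^d_{\mathfrak{m}}(A) \to H^d_{\mathfrak{m}}(A))$.

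For $d = 1$ in (ii), irreducibility of $\Spec(A)$ makes both $X$ and $U$ connected, so $H^0_{\text{\'{e}t}}(X, \mathbb{F}_p) = H^0_{\text{\'{e}t}}(U, \mathbb{F}_p) = \mathbb{F}_p$ with the restriction the identity; the long exact sequence then immediately gives $R^1\Gamma(Ri^! \mathbb{F}_p) = \ker(H^1_{\text{\'{e}t}}(X, \mathbb{F}_p) \to H^1_{\text{\'{e}t}}(U, \mathbb{F}_p))$. The main technical obstacle is the careful bookkeeping in the small-dimensional cases ($d = 1, 2$), where $H^{d-1}_{\text{\'{e}t}}(U, \mathbb{F}_p)$ is genuinely an Artin-Schreier extension rather than a single cohomological piece; the essential compatibility to verify is that the boundary $H^{d-1}_{\text{\'{e}t}}(U, \mathbb{F}_p) \to H^{d-1}(U, \mathcal{O}_U) = H^d_{\mathfrak{m}}(A)$ coming from Artin-Schreier lands in the $F$-fixed subspace and that the image of the restriction from $X$ is exactly the sub-object complementary to this kernel, so that the cokernel in the long exact sequence is cleanly the claimed group.
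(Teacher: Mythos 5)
Your proposal is correct and follows essentially the same route as the paper's proof: the same exact triangle $Ri_*Ri^!\mathbb{F}_p \to \mathbb{F}_p \to Rj_*\mathbb{F}_p$, the Artin--Schreier sequence on both $\Spec(A)$ and $U$, the identification $H^m(U,\mathcal{O}_U)\cong H^{m+1}_{\mathfrak{m}}(A)$ together with Cohen--Macaulay vanishing, and the same case analysis at $d=1,2$. The only cosmetic difference is your slightly loose phrasing of the sub-object of $H^{d-1}_{\text{\'et}}(U,\mathbb{F}_p)$ for $d>2$ (where it is in fact zero), which does not affect the argument.
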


\begin{proof}
If $d=0$ the result is clear, so we now assume $d \geq 1$. Note that $$R^m \Gamma (Ri^! \mathbb{F}_p[0]) = R^m \Gamma (Ri_* Ri^! \mathbb{F}_p[0]).$$ We can compute the latter group by applying $R\Gamma$ to the exact triangle
\begin{equation} \label{triangle}
Ri_*\:  Ri^! \mathbb{F}_p[0] \rightarrow \mathbb{F}_p[0] \rightarrow Rj_* \mathbb{F}_p[0]
\end{equation} in $D^b_c(\Spec(A), \mathbb{F}_p)$. From the Artin–Schreier sequence $$0 \rightarrow \mathbb{F}_p \rightarrow \mathcal{O}_{\Spec(A)} \xrightarrow{F-\id} \mathcal{O}_{\Spec(A)} \rightarrow 0$$ and the fact that $\Spec(A)$ is affine, we deduce that
$$
R^m \Gamma ( \mathbb{F}_p[0]) = \left\{
        \begin{array}{ll}
            \ker(A \xrightarrow{F-\id} A) = \mathbb{F}_p & \quad m =0 \\
            \cok(A \xrightarrow{F-\id} A) & \quad m =1 \\
            0  & \quad m > 1.
        \end{array}
    \right.
$$

There is an exact sequence
$$0 \rightarrow H_{\mathfrak{m}}^0(A)  \rightarrow A \rightarrow H^0(U, \mathcal{O}_U) \rightarrow H_{\mathfrak{m}}^1(A) \rightarrow 0.$$ Moreover, for $m \geq 1$ there are isomorphisms $$H^m(U, \mathcal{O}_U) \cong H_{\mathfrak{m}}^{m+1}(A).$$

First suppose $d=1$. The map $R^{0} \Gamma(\mathbb{F}_p[0]) \rightarrow R^{0} \Gamma(Rj_*\mathbb{F}_p[0])$ is the natural map $$H^0_{\text{\'{e}}t}(\Spec(A), \mathbb{F}_p) \rightarrow H^0_{\text{\'{e}}t}(U, \mathbb{F}_p).$$ It is possible that $U$ is disconnected, but in any case this map is always injective. By appealing to (\ref{triangle}), this observation proves (i) in the case $d=1$. For (ii) we note that if $\Spec(A)$ is irreducible then $R^{0} \Gamma(\mathbb{F}_p[0]) \rightarrow R^{0} \Gamma(Rj_*\mathbb{F}_p[0])$ is an isomorphism.

Now assume $d \geq 2$. We will denote the derived global sections functor on $U$ by $R\Gamma_U$. We have $R \Gamma_U = R \Gamma \circ Rj_*$, and $$R^m \Gamma (Rj_* \mathbb{F}_p[0]) = R^m \Gamma_U(\mathbb{F}_p[0]).$$ Because $A$ is Cohen-Macaulay, $$H_{\mathfrak{m}}^m(A) = 0, \quad \quad \quad \quad m < d.$$ As $d \geq 2$ it follows that $H^0(U, \mathcal{O}_U) = A$ and $H^m(U, \mathcal{O}_U) = 0$ for $0 < m < d-1$. Thus $H^0_{\text{\'{e}}t}(\Spec(A), \mathbb{F}_p) \rightarrow H^0_{\text{\'{e}}t}(U, \mathbb{F}_p)$ is an isomorphism.  Additionally, 
\begin{equation} \label{myeq1} R^{m} \Gamma_U (\mathbb{F}_p[0])  = 0, \quad  \quad \quad  \quad  1 < m < d-1,
\end{equation}
and if $d > 2$,
\begin{equation} \label{myeq2} 
R^{d-1} \Gamma_U (\mathbb{F}_p[0])  = \ker(H^{d-1}(U, \mathcal{O}_U) \xrightarrow{F - \id} H^{d-1}(U, \mathcal{O}_U)).
\end{equation}

Suppose $d=2$. From the Artin–Schreier sequences on $U$ and $\Spec(A)$ we get a diagram of long exact sequences:
$$\xymatrix{
0 \ar[r]&  H^0_{\text{\'{e}t}}(U, \mathbb{F}_p) \ar[r]& A \ar[r]^{F-\id} & A \ar[r] & H^1_{\text{\'{e}t}}(U, \mathbb{F}_p) \ar[r] & H^1(U, \mathcal{O}_U) \ar[r]^(.7){F-\id} &\\ 
0 \ar[r]&  H^0_{\text{\'{e}t}}(\Spec(A), \mathbb{F}_p) \ar[r] \ar[u]^{f} & A \ar[u]^{\sim} \ar[r]^{F-\id} & A \ar[r] \ar[u]^{\sim} & H^1_{\text{\'{e}t}}(\Spec(A), \mathbb{F}_p) \ar[r] \ar[u]^g & 0 \ar[u] \ar[r] &
}
$$
From this we deduce that the map $R^m \Gamma ( \mathbb{F}_p[0]) \rightarrow R^m \Gamma_U (\mathbb{F}_p[0])$ is an isomorphism for $m=0$ and an injection for $m=1$ (these correspond to the maps $f$ and $g$). This implies $R^m\Gamma (Ri^! \mathbb{F}_p[0]) = 0$ for $m < 2$. Now for (ii) we have an exact sequence
$$ 0 \rightarrow  H^1_{\text{\'{e}t}}(\Spec(A), \mathbb{F}_p) \rightarrow H^1_{\text{\'{e}t}}(U, \mathbb{F}_p) \rightarrow  R^2\Gamma(Ri^! \mathbb{F}_p[0]) \rightarrow 0.$$ Hence by considering the previous diagram and using that $H^1(U,\mathcal{O}_U) \cong H_{\mathfrak{m}}^2(A)$ we get $R^2\Gamma(Ri^! \mathbb{F}_p[0]) = \ker(H_{\mathfrak{m}}^2(A) \xrightarrow{F-\id} H_{\mathfrak{m}}^2(A))$. 

Finally, consider the case $d > 2$. As $H^1(U, \mathcal{O}_U) = 0$ the maps $R^m \Gamma ( \mathbb{F}_p[0]) \rightarrow R^m \Gamma_U (\mathbb{F}_p[0])$ are isomorphisms for $m = 0$ and $m=1$. Thus $R^m\Gamma (Ri^! \mathbb{F}_p[0]) = 0$ for $m \leq 2$, and $R^m\Gamma (Ri^! \mathbb{F}_p[0]) = R^{m-1}\Gamma_U(\mathbb{F}_p[0])$ for $m > 2$. Now (i) follows from (\ref{myeq1}) and (ii) follows from (\ref{myeq2}). 
\end{proof}

\begin{myproof2}[Proof of Theorem \ref{mainthm}]
We will check the criteria for perversity as in Remark \ref{defrem}. Fix a point $x \in X$ of codimension $c$, so $\dim \{\overline{x}\} = d-c$. Let $i \colon \overline{x} \rightarrow \Spec(\mathcal{O}_{X,x}^{\text{sh}})$ be the inclusion of the closed point in a strict henselization of the local ring of $X$ at $x$. Clearly $H^i(Ri^* \mathbb{F}_p[d]) = 0$ for $i > -d+c$. By Lemma \ref{shprop}, $\mathcal{O}_{X,x}^{\sh}$ is a Cohen-Macaulay noetherian local ring of dimension $c$. Moreover, $H^i(Ri^! \mathbb{F}_p[d]) = R^i\Gamma (Ri^! \mathbb{F}_p[d])$ for all $i$ because $\mathcal{O}^{\sh}_{X,x}$ is strictly henselian. Now it follows from Proposition \ref{locvan} that $H^i(Ri^! \mathbb{F}_p[d]) = 0$ for all $i < -d+c$.  
\end{myproof2}

\begin{rmrk}
The constant sheaf $\mathbb{F}_p[\dim X]$ is not always perverse for an arbitrary $k$-scheme $X$. For example, let $X$ be a union of two planes meeting only at a single point $p$. Then if $i \colon \Spec(k) \rightarrow X$ is the inclusion of $p$, we claim that $H^{-1}(Ri^! \mathbb{F}_p[2]) \neq 0$. To prove this, let $j \colon U \rightarrow X$ be the inclusion of the complement of $p$. Then by using the exact triangle (\ref{triangle}), there is an embedding of $H^0_{\text{\'{e}t}}(U, \mathbb{F}_p)/H^0_{\text{\'{e}t}}(X, \mathbb{F}_p)$ into $H^{-1}(Ri^! \mathbb{F}_p[2])$. As $U$ is disconnected, this proves the claim.
\end{rmrk}

The remainder of this section is devoted to the proof of Theorem \ref{mainthm2}. We will use the following two lemmas to show the vanishing of the cohomology groups in part (ii) of Proposition \ref{locvan}. 

\begin{lem} \label{ASlemma}
Let $(A, \mathfrak{m})$ be a normal local domain of characteristic $p > 0$, and let $U$ be the complement of the closed point. Then $$\ker(H_{\textnormal{\'{e}t}}^1(\Spec(A), \mathbb{F}_p) \rightarrow H_{\textnormal{\'{e}t}}^1(U, \mathbb{F}_p)) = 0.$$
\end{lem}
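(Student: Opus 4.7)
The plan is to compare the Artin--Schreier sequence on $\Spec(A)$ with the one on $U$ and use normality to show that any local section that trivializes a class on $U$ already extends to $\Spec(A)$.

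Concretely, since $\Spec(A)$ is affine, étale cohomology of the structure sheaf vanishes in positive degrees, so the Artin--Schreier sequence gives
$$H^1_{\text{\'et}}(\Spec(A),\mathbb{F}_p) = A/(F-\id)A.$$
A class $\alpha \in H^1_{\text{\'et}}(\Spec(A),\mathbb{F}_p)$ is represented by some $a\in A$, and the long exact sequence for Artin--Schreier on $U$ identifies the image of $\alpha$ in $H^1_{\text{\'et}}(U,\mathbb{F}_p)$ with the class of $a$ in $\Gamma(U,\mathcal O_U)/(F-\id)\Gamma(U,\mathcal O_U)$. Thus $\alpha$ lies in the kernel precisely when there exists $s\in \Gamma(U,\mathcal O_U)$ with $s^p - s = a$, and it suffices to show that any such $s$ already lies in $A$.

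For this I would argue as follows. Since $A$ is a domain, $\Spec(A)$ is irreducible, and removing the closed point leaves $U$ irreducible (in particular integral), so $\Gamma(U,\mathcal O_U)$ embeds into the stalk at the generic point, which is $\Frac(A)$. Hence $s \in \Frac(A)$ and satisfies the monic relation $s^p - s - a = 0$ with $a\in A$, so $s$ is integral over $A$. Because $A$ is normal, this forces $s\in A$, and therefore $a \in (F-\id)A$, i.e.\ $\alpha = 0$.

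There is no real obstacle here: the only subtlety is checking that $\Gamma(U,\mathcal O_U)\subseteq \Frac(A)$ so that integrality of $s$ can be invoked. This holds uniformly in all dimensions $\geq 1$ by irreducibility of $U$; in dimensions $\geq 2$ one in fact has $\Gamma(U,\mathcal O_U) = A$ by Hartogs, and in dimension $1$ (where $A$ is a DVR) $\Gamma(U,\mathcal O_U)=\Frac(A)$, and the integrality argument still places $s$ back in $A$.
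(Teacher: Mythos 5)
Your proof is correct and takes essentially the same approach as the paper: both compare the Artin--Schreier long exact sequences on $\Spec(A)$ and $U$, produce $s\in\Gamma(U,\mathcal O_U)$ with $s^p-s=a$, and then use normality of $A$ to conclude $s\in A$. You have simply unpacked the paper's terse ``as $A$ is normal then $x\in A$'' by observing that $\Gamma(U,\mathcal O_U)\subseteq\Frac(A)$ and that the monic relation $s^p-s-a=0$ makes $s$ integral over $A$, which is the intended reading.
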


\begin{proof}
From the Artin-Schreier sequences on $\Spec(A)$ and $U$ we get a diagram of long exact sequences:
$$\xymatrix{
0 \ar[r]&  H^0_{\text{\'{e}t}}(U, \mathbb{F}_p) \ar[r]& H^0(U, \mathcal{O}_U) \ar[r]^{F-\id} & H^0(U, \mathcal{O}_U) \ar[r] & H^1_{\text{\'{e}t}}(U, \mathbb{F}_p) \ar[r] &\\ 
0 \ar[r]&  H^0_{\text{\'{e}t}}(\Spec(A), \mathbb{F}_p) \ar[r] \ar[u] & A \ar[u] \ar[r]^{F-\id} & A \ar[r] \ar[u] & H^1_{\text{\'{e}t}}(\Spec(A), \mathbb{F}_p) \ar[r]^(.8){0} \ar[u] &
}
$$
If an element in the kernel is represented by $a \in A$ then there exists $x \in H^0(U, \mathcal{O}_U)$ such that $a = x^p-x$. As $A$ is normal then $x \in A$, so the image of $a$ in $H_{\text{\'{e}t}}^1(\Spec(A), \mathbb{F}_p)$ is already $0$.
\end{proof}

\begin{lem} \label{Finv}
Let $(A,\mathfrak{m})$ be an excellent $F$-rational local ring of characteristic $p>0$ and dimension $d > 0$. Then $\ker(H_{\mathfrak{m}}^d(A) \xrightarrow{F-\id} H_{\mathfrak{m}}^d(A)) = 0$. 
\end{lem}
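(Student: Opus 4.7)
The plan is to exploit Theorem~\ref{FRatDef}, which says that $F$-rationality of $A$ is equivalent to $H_{\mathfrak{m}}^d(A)$ being simple as a left $A[F]$-module. The crucial observation is that, while $F$ acts only $p$-semilinearly on $H_{\mathfrak{m}}^d(A)$, an $F$-fixed element nevertheless generates a genuine $A[F]$-submodule: if $F(x)=x$, then for any $a\in A$ we have $F(ax)=a^{p}F(x)=a^{p}x\in Ax$, so the cyclic $A$-submodule $Ax\subseteq H_{\mathfrak{m}}^d(A)$ is automatically stable under $F$.

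From here I would argue by contradiction. Suppose $0\neq x\in\ker(F-\mathrm{id})$. Then $Ax$ is a nonzero $A[F]$-submodule of $H_{\mathfrak{m}}^d(A)$, so by simplicity $Ax=H_{\mathfrak{m}}^d(A)$; in particular the top local cohomology is cyclic over $A$. Since every element of $H_{\mathfrak{m}}^d(A)$ is $\mathfrak{m}$-power torsion, some $\mathfrak{m}^n$ annihilates $x$, and hence annihilates all of $H_{\mathfrak{m}}^d(A)=Ax$. To contradict this, recall that an excellent $F$-rational local ring is Cohen--Macaulay (as invoked already in the proof of Lemma~\ref{shprop}), so $\mathrm{depth}(A)=d>0$ and $\mathfrak{m}$ contains a non-zero-divisor $y$. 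Applying local cohomology to $0\to A\xrightarrow{y}A\to A/yA\to 0$, combined with the vanishing $H_{\mathfrak{m}}^d(A/yA)=0$ (since $\dim A/yA=d-1$), shows that multiplication by $y$ is surjective on $H_{\mathfrak{m}}^d(A)$; iterating, multiplication by $y^n$ is surjective, whence $y^n\cdot H_{\mathfrak{m}}^d(A)=H_{\mathfrak{m}}^d(A)\neq 0$ by Grothendieck's non-vanishing theorem. This contradicts $\mathfrak{m}^n\cdot H_{\mathfrak{m}}^d(A)=0$.

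The whole argument hinges on the opening observation that an $F$-fixed element generates an honest $A[F]$-submodule, which is where the $p$-semilinearity conveniently collapses. Once that is in hand, the simplicity of $H_{\mathfrak{m}}^d(A)$ does essentially all the structural work, and the remainder is a routine non-finite-generation argument for top local cohomology in positive dimension. I do not anticipate other technical obstacles; in particular the hypothesis $d>0$ is used only to guarantee the existence of a non-zero-divisor in $\mathfrak{m}$ and the non-vanishing of $H_{\mathfrak{m}}^d(A)$, both of which are automatic under the stated assumptions.
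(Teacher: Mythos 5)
Your proof is correct and takes essentially the same approach as the paper: the pivotal observation that an $F$-fixed element $x$ generates an honest $A[F]$-submodule $Ax$ (because $F(ax)=a^p x\in Ax$), which by simplicity of $H_{\mathfrak m}^d(A)$ as an $A[F]$-module must equal all of $H_{\mathfrak m}^d(A)$, is exactly the paper's argument. The only difference is in the final step: the paper concludes by citing \cite[7.3.3]{loccohbook} for the fact that $H_{\mathfrak m}^d(A)$ is never finitely generated over $A$ when $d>0$, whereas you reprove this directly (cyclic would force annihilation by some $\mathfrak m^n$, contradicting the surjectivity of multiplication by $y^n$ for a non-zero-divisor $y\in\mathfrak m$ together with Grothendieck non-vanishing) --- a self-contained replacement of the citation, but the structure of the argument is the same.
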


\begin{proof}
An element in the kernel generates an $A[F]$-submodule of $H_{\mathfrak{m}}^d(A)$ which is finitely generated as an $A$-module. As $H_{\mathfrak{m}}^d(A)$ is a simple $A[F]$-module and $H_{\mathfrak{m}}^d(A)$ is never finitely generated as an $A$-module \cite[7.3.3]{loccohbook}, then the kernel is zero.
\end{proof}

\begin{myproof2}[Proof of Theorem \ref{mainthm2}]
By \cite[4.2]{Frationalloc}, $X$ is normal and Cohen-Macaulay, so $\mathbb{F}_p[d]$ is perverse by Theorem \ref{mainthm}. By Theorem \ref{simplechar} it suffices to show that $\mathbb{F}_p[d]$ is the intermediate extension of its own restriction to the smooth locus. Let $x \in X$ be a point of codimension $c$ contained in the singular locus. Let $i \colon \overline{x} \rightarrow \Spec(\mathcal{O}_{X,x}^{\text{sh}})$ be the inclusion of the closed point in a strict henselization of the local ring of $X$ at $x$. By appealing to Lemma \ref{intext}, we need to check that $H^{-d+c}(Ri^* \mathbb{F}_p[d]) = 0$ and $H^{-d+c}(Ri^! \mathbb{F}_p[d]) = 0$. We have $H^{-d+c}(Ri^* \mathbb{F}_p[d]) = 0$ because $c > 0$. Since $\mathcal{O}^{\sh}_{X,x}$ is strictly henselian, $H^{-d+c}(Ri^! \mathbb{F}_p[d]) = R^{-d+c}\Gamma (Ri^! \mathbb{F}_p[d])$. By Lemma \ref{shprop}, $\mathcal{O}_{X,x}^{\text{sh}}$ is Cohen-Macaulay, normal, $F$-rational and has dimension $c$. Moreover, $\Spec(\mathcal{O}_{X,x}^{\text{sh}})$ is irreducible because $X$ is normal, and in particular, geometrically unibranch. The ring $\mathcal{O}_{X,x}^{\text{sh}}$ is also reduced by \cite[18.8.12]{EGA4IV}, so $\mathcal{O}_{X,x}^{\text{sh}}$ is a domain. Thus if $c=1$ then  $H^{-d+1}(Ri^! \mathbb{F}_p[d]) = 0$ by Proposition \ref{locvan} and Lemma \ref{ASlemma}. If $c > 1$ then  $H^{-d+c}(Ri^! \mathbb{F}_p[d]) = 0$ by Proposition  \ref{locvan} and Lemma \ref{Finv}. 
\end{myproof2}

\section{Global $F$-regularity of affine Schubert varieties} \label{GlobFReg}

\subsection{Background} \label{GlobFReg1.1} There have been multiple proofs of the normality of classical Schubert varieties in flag varieties (see \cite{AndersonSchubert}, \cite{RRNormality}, \cite{Seshadri}). If the base field has positive characteristic $p > 0$, one method due to Mehta-Srinivas \cite{FrobNormal} of proving normality uses the fact that classical Schubert varieties are Frobenius split ($F$-split). A scheme $X$ over $k$ is said to be \emph{$F$-split} if the natural $p$th-power map of $\mathcal{O}_X$-modules $\mathcal{O}_X \rightarrow F_*\mathcal{O}_X$ admits a splitting. $F$-splitting also implies $H^i(L, X) = 0$ for all $i > 0$ and $L$ an ample line bundle on $X$. Mehta-Ramanathan first introduced Frobenius splitting techniques for classical Schubert varieties in \cite{MehtaRam}. 

By extending the methods of Mehta-Ramanathan to the affine case, Faltings \cite{FaltingsLoop} showed affine Schubert varieties are normal, Cohen-Macaulay, $F$-split, and have rational singularities when $G$ is simple and simply connected. Pappas-Rapoport \cite{PappasRapoport} further extended this to the case $p \nmid |\pi_1(G_{\text{der}})|$. $F$-splitting is one example of a broader family of properties related to the Frobenius endomorphism, such as strong $F$-regularity and $F$-rationality. See \cite{TightClosure}, \cite{Frationalloc} for definitions and consequences of these and other similar notions. While these properties are typically defined locally on varieties, Smith \cite{SmithGlobally} has defined the notion of global $F$-regularity of a projective variety. We will give precise definitions in Section \ref{GlobFReg1.3}.

A globally $F$-regular variety enjoys several favorable properties. For example, such a variety is  strongly $F$-regular \cite{SmithGlobally}, and hence it is also reduced, normal, Cohen-Macaulay, $F$-split  and $F$-rational.  Lauritzen, Raben-Pedersen, and Thomsen \cite{SchubertFreg} showed that classical Schubert varieties are globally $F$-regular. Their proof is remarkably short, and relies on the fact that Mehta-Ramanathan proved something more powerful than $F$-splitting: they showed that Demazure varieties are compatibly Frobenius split along certain divisors. In this section we prove that affine Schubert varieties are globally $F$-regular.

For the rest of this section we fix a connected split reductive group $G$ defined over a perfect field $k$ of characteristic $p > 0$. Our proof of global $F$-regularity is analogous to the case of classical Schubert varieties in \cite{SchubertFreg}. To begin, we use results in \cite{PappasRapoport} to reduce to the case when $G$ is simply connected. In this setting the relevant facts about Frobenius splittings of Demazure varieties are known and follow from the original techniques of Mehta-Ramanathan.

We conclude this subsection by defining the relevant affine flag varieties and affine Schubert varieties.  Let $G_{\der}$ be the derived group of $G$, and let $T \subset B \subset G$ be a maximal torus and a Borel subgroup. Let $W = N_{G(k(\!(t)\!))}T(k(\!(t)\!))/T(k[\![t]\!])$ be the Iwahori-Weyl group. Define the loop group functor $LG$ on the category of $k$-algebras
$$LG \colon R \mapsto G(R (\!( t )\!))$$
and the positive loop group functor
$$L^+G \colon R \mapsto G(R[\![t]\!]).$$

Let $P \supset B$ be a parabolic subgroup, with preimage $\mathcal{P} \subset L^+G$. If $P=B$ (resp. $P=G$) then we usually write $\mathcal{B}$ (resp. $L^+G$) instead of $\mathcal{P}$. The (partial) affine flag variety is the fpqc-quotient
$$\mathcal{F}\ell_{\mathcal{P}}:=  LG/\mathcal{P}.$$ Given $w \in W$, choose a representative of $w$ in $LG(k)$ and let $n_w \in \mathcal{F}\ell_{\mathcal{P}}(k)$ be its image in the quotient. The affine Schubert variety $S_w$ is the reduced orbit closure
$$S_w : = \overline{\mathcal{B} \cdot n_w} \subset \mathcal{F}\ell_{\mathcal{P}}.$$ This is independent of the choice of representative of $w$. The affine Schubert varieties $S_w$ are projective and integral (see \cite[1.2.2]{ZhuGra}).

\begin{thm}[\cite{FaltingsLoop}, \cite{PappasRapoport}] \label{propsofgr}
Let $w \in W$. If $p \nmid |\pi_1(G_{\der})|$ then the affine Schubert variety $S_w$ is normal, Cohen-Macaulay, $F$-split and has rational singularities.
\end{thm}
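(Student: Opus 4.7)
The plan is to reduce to the case of a simply connected derived group and then produce compatible Frobenius splittings on Bott--Samelson--Demazure resolutions of the affine Schubert varieties; all four claimed properties will cascade from such a splitting. Under the hypothesis $p \nmid |\pi_1(G_{\der})|$, the simply connected cover $\tilde{G}_{\der} \to G_{\der}$ induces a map on loop groups whose kernel is a finite diagonalizable group scheme of order prime to $p$, and the resulting morphism between affine flag varieties is \'{e}tale onto connected components. Since normality, Cohen--Macaulayness, $F$-splitting, and rational singularities all descend through finite \'{e}tale morphisms of degree prime to $p$, it suffices to treat the case of a simply connected $G$ (and, by assembling simple factors, ultimately that of a simple simply connected $G$).

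In this case, for an affine Schubert variety $S_w \subset \mathcal{F}\ell_{\mathcal{B}}$ with reduced expression $w = s_{i_1} \cdots s_{i_n}$, I would work with the Bott--Samelson--Demazure variety
$$D_{\underline{w}} = \left(\mathcal{P}_{i_1} \times^{\mathcal{B}} \cdots \times^{\mathcal{B}} \mathcal{P}_{i_n}\right)/\mathcal{B},$$
where $\mathcal{P}_i \subset LG$ is the minimal parahoric attached to $s_i$. This $D_{\underline{w}}$ is a smooth projective iterated $\mathbb{P}^1$-bundle, and multiplication provides a proper birational map $\tilde{\pi} \colon D_{\underline{w}} \to S_w$; the case of $\Gr_{\leq \mu}$ is then reduced to this one via the natural projection $\mathcal{F}\ell_{\mathcal{B}} \to \Gr$.

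The main obstacle is constructing a Frobenius splitting of $D_{\underline{w}}$ that is compatible with each boundary divisor $\partial_j D_{\underline{w}}$ obtained by collapsing the $j$th factor. Following the Mehta--Ramanathan strategy, one seeks a global section $\sigma$ of $\omega_{D_{\underline{w}}}^{-1}$ whose divisor is exactly $\sum_j \partial_j D_{\underline{w}}$, so that its $(p-1)$st power induces via the natural evaluation $F_* \omega_{D_{\underline{w}}}^{1-p} \to \mathcal{O}_{D_{\underline{w}}}$ a splitting of Frobenius that is compatible with all of the $\partial_j D_{\underline{w}}$ simultaneously. In the affine setting this $\sigma$ must be built from global sections of the determinant (or basic) line bundle on $\mathcal{F}\ell_{\mathcal{B}}$ pulled back through $\tilde{\pi}$, and producing one with the correct divisor is subtle; it is precisely here that the hypothesis $p \nmid |\pi_1(G_{\der})|$ enters, to guarantee that the required sections exist in the relevant Steinberg-type module. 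This is the heart of the arguments of Faltings and Pappas--Rapoport.

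Once the compatible splitting is in hand, the four properties follow quickly. Pushing the splitting along $\tilde{\pi}$ gives $F$-splitting of $S_w$, and compatibility inductively gives splittings on all smaller Schubert subvarieties, forcing them to be reduced and normal. Kempf's vanishing argument applied to the compatible splitting then yields $\tilde{\pi}_* \mathcal{O}_{D_{\underline{w}}} = \mathcal{O}_{S_w}$ and $R^i \tilde{\pi}_* \mathcal{O}_{D_{\underline{w}}} = 0$ for $i > 0$, which combined with the smoothness of $D_{\underline{w}}$ gives rational singularities. Cohen--Macaulayness then follows from the vanishing of $H^i(S_w, \mathcal{O}_{S_w}(-D))$ for ample $D$, which is itself a standard consequence of compatible $F$-splitting together with Serre duality.
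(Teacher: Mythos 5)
The paper cites this theorem from Faltings and Pappas--Rapoport without giving a proof; the only result in this circle it reproves is the stronger global $F$-regularity (Theorem \ref{thm1}) in Section \ref{GlobFReg1.4}. Your sketch captures the broad architecture of the cited arguments---reduce to a simple simply connected group, exploit a compatibly split Demazure resolution, and cascade normality, rational singularities, and Cohen--Macaulayness from the splitting---and your derivation of the four properties from the splitting at the end is accurate.

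Two of your intermediate steps are, however, mischaracterized. The reduction to the simply connected case is not a descent along a finite \'etale morphism of affine flag varieties: the map $L\tilde{G}_{\der}\to LG_{\der}$ is generally not surjective onto the neutral component (there is a cokernel governed by $Z$-torsors over the punctured disk, where $Z=\ker(\tilde{G}_{\der}\to G_{\der})$), and the substantive question is whether the Schubert cells in the target are even reduced. What Pappas--Rapoport [8.e.3, 8.e.4] prove, using precisely $p\nmid|\pi_1(G_{\der})|$, is that the reduced Schubert variety $S_w$ for $G$ is \emph{isomorphic} to a Schubert variety for $\tilde{G}_{\der}$; this is the same ingredient the paper invokes in its proof of Theorem \ref{thm1}, and it is an isomorphism of varieties, not an \'etale-descent argument. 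Second, you say the hypothesis $p\nmid|\pi_1(G_{\der})|$ enters a second time in producing the compatible Frobenius splitting of $D_{\tilde w}$ (``to guarantee that the required sections exist in the relevant Steinberg-type module''). That is not the case: once reduced to the simple simply connected case the hypothesis is empty, and the required section of $\omega^{-1}_{D_{\tilde w}}$ with divisor $\sum_i D_{\tilde v_i}+E$ (with $E$ avoiding $\cap_i D_{\tilde v_i}$, matching the Mehta--Ramanathan criterion as in Proposition \ref{simpprop}) is available for every $p$ by the work of Mathieu and Faltings. The hypothesis on $p$ enters the theorem only through the reduction step.
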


Starting in Section \ref{FponGr} we focus on the case $P=G$. In this case, $\mathcal{F}\ell_{\mathcal{P}}$ is called the affine Grassmannian. Because of its importance and for consistency with existing literature, we will instead use the notation $$\Gr := LG/L^+G.$$ The $L^+G$-orbit closures in $\Gr$ have the following concrete description. Let $X_*(T)^+$ be the monoid of dominant cocharacters that are determined by the choice of positive roots corresponding to $B$. A dominant cocharacter $\mu \in X_*(T)^+$ induces by functoriality a map $\mathbb{G}_m (k (\!( t )\!) ) \rightarrow T(k (\!( t )\!))$, also denoted by  $\mu$. By the inclusion $T(k (\!( t )\!) ) \subset LG(k)$ we view $\mu(t)$ as a point in $\Gr(k)$. Let $\Gr_{\mu}$ be the reduced $L^+G$-orbit of $\mu(t)$ in $\Gr$. The affine Schubert variety $\Gr_{\leq \mu}$ is the reduced orbit closure
$$\Gr_{\leq \mu} : = \overline{L^+G \cdot \mu(t)} \subset \Gr.$$

\subsection{Demazure varieties} \label{GlobFReg1.2}
We have a semi-direct product decomposition $$W = W_{\af} \rtimes \Omega$$ where $W_{\af}$ is the affine Weyl group associated to the root system for $(G,B,T)$ and $\Omega$ is the normalizer of $\mathcal{B}$ in $W$. The choice of $\mathcal{B}$ determines a Coxeter group structure on $W_{\af}$. Given $w \in W_{\af}$, let $w = s_1 \cdots s_r$ be an arbitrary expression of $w$ as a product of simple reflections. We will use the notation $\tilde{w}$ to denote a pair consisting of $w \in W_{\af}$ together with an expression for $w$ as a product of simple reflections. 

Let $\mathcal{P}_i \subset LG$ be the unique parahoric subgroup containing both $\mathcal{B}$ and a fixed representative for $s_i$ such that $\mathcal{P}_i/\mathcal{B} \cong \mathbb{P}^1$. The Demazure variety associated to $\tilde{w}$ is
$$D_{\tilde{w}} : = \mathcal{P}_1 \times \cdots \times \mathcal{P}_r /\mathcal{B}^r.$$ Here $\mathcal{B}^r$ acts by the formula $(p_1, \ldots p_r) \cdot (b_1, \ldots, b_r) = (p_1 b_1, b_1^{-1} p_2 b_2, \ldots, b_{r-1}^{-1} p_r b_r).$ There is a multiplication map
$$\pi_w \colon D_{\tilde{w}} \rightarrow S_w \subset \mathcal{F}\ell_{\mathcal{B}}, \quad \quad \quad \quad [(p_1, \ldots, p_r)] \mapsto [p_1 \cdots p_r].$$ If $v = s_1 \cdots s_{r-1}$ so that $w = v \cdot s_r$ then there is a $\mathbb{P}^1$-bundle $D_{\tilde{w}} \rightarrow D_{\tilde{v}}$ obtained by forgetting the last coordinate. As $D_{\tilde{w}}$ is an iterated $\mathbb{P}^1$-bundle it is smooth. If $\tilde{v} \leq \tilde{w}$ is obtained from $w$ by removing one or more of the $s_i$ there is a natural inclusion $D_{\tilde{v}} \rightarrow D_{\tilde{w}}$ (see \cite[6.4.1]{ZhuCoherence}). 

\subsection{Global $F$-regularity} \label{GlobFReg1.3}
Let $R$ be a $k$-algebra, and let $F$ be the absolute Frobenius endomorphism on $\Spec(R)$. For every integer $e \geq 0$ there is a map of $R$-modules $R \rightarrow F_*^e R$ which sends $a$ to $a^{p^e}$. For any $c \in R$, by applying $F_*^e$ to the map $R \rightarrow R$ given by multiplying by $c$ and precomposing with $R \rightarrow F_*^e R$, we get a map $R \rightarrow F_e^*R$ sending $1$ to $c$. 

\begin{defn} \cite{TightClosure}
A finitely generated $k$-algebra $R$ is \emph{strongly $F$-regular} if for every $c \in R$ not contained in a minimal prime of $R$ there is an integer $e > 0$ such that the map $$R \rightarrow F_*^e R, \quad \quad \quad \quad 1 \mapsto c$$ splits.
\end{defn} 

Let $X$ be a connected projective variety over $k$. An ample invertible $\mathcal{O}_X$-module $\mathcal{L}$ determines a section ring
$$S = \bigoplus_{n \in \mathbb{N}} H^0(X, \mathcal{L}^n).$$

\begin{defn} \cite{SmithGlobally}
The variety $X$ is \emph{globally $F$-regular} if it admits a section ring which is strongly $F$-regular.
\end{defn}

For our purposes we will need several alternative methods of showing global $F$-regularity. To describe the first of these methods, let $D$ be an effective Cartier divisor on $X$ defined by a section $s$. Then there is a map $\mathcal{O}_X \rightarrow \mathcal{O}_X(D)$ that sends $1$ to $s$. By applying $F^e_*$ to this map and precomposing with the natural map $\mathcal{O}_X \rightarrow F^e_* \mathcal{O}_X$, we get a map $\mathcal{O}_X \rightarrow F_*^e \mathcal{O}_X(D)$ of $\mathcal{O}_X$-modules such that $1 \mapsto s$. The variety $X$ is said to be \emph{Frobenius split along $D$} if the map $\mathcal{O}_X \rightarrow F_* \mathcal{O}_X(D)$ splits. More generally, $X$ is \emph{stably Frobenius split along $D$} if $\mathcal{O}_X \rightarrow F^e_* \mathcal{O}_X(D)$ splits for some integer $e > 0$. Note that splitting for one integer $e$ implies splitting for all integers $e' \geq e$ (see \cite[3.5]{SmithGlobally}). 

\begin{lem} \label{splitlem1}
Let $X$ be a smooth, connected, projective variety over $k$. Then $X$ is globally $F$-regular if and only if $X$ is stably Frobenius split along an ample divisor.
\end{lem}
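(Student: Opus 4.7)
The plan is to translate between the ring-theoretic condition of strong $F$-regularity of a section ring (which defines global $F$-regularity) and the sheaf-theoretic condition of stable Frobenius splitting along an ample divisor via the $\mathrm{Proj}$ construction. Choose an ample line bundle $\mathcal{L}$ on $X$, replacing it by a sufficiently positive power so that $\mathcal{L} \cong \mathcal{O}_X(D)$ for an effective ample divisor $D$ with distinguished section $s \in S_1$, where $S = \bigoplus_{n \geq 0} H^0(X, \mathcal{L}^n)$ is the section ring. Then $X = \mathrm{Proj}(S)$, and by definition $X$ is globally $F$-regular if and only if the graded domain $S$ is strongly $F$-regular. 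For the forward direction, strong $F$-regularity applied to the nonzero element $s$ yields an integer $e > 0$ and an $S$-linear splitting of $S \to F^e_* S$ sending $1 \mapsto s$; after projecting onto the appropriate graded component (possible since $s$ is homogeneous) this splitting becomes graded, and sheafifies under $\mathrm{Proj}$ to an $\mathcal{O}_X$-linear splitting of $\mathcal{O}_X \to F^e_* \mathcal{O}_X(D)$, so $X$ is stably Frobenius split along $D$.

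For the reverse direction, suppose $\mathcal{O}_X \to F^e_* \mathcal{O}_X(D)$ splits. Composing this splitting with its iterated Frobenius twists produces splittings of $\mathcal{O}_X \to F^{me}_* \mathcal{O}_X(n_m D)$ for every $m \geq 1$, where $n_m = 1 + p^e + \cdots + p^{(m-1)e}$. For any nonzero homogeneous $c \in S_k$ cutting out an effective divisor $E_c$ in the linear system of $\mathcal{L}^k$, ampleness of $D$ forces $n_m D - E_c$ to be effective once $m$ is large enough, so a section of $\mathcal{O}_X(n_m D - E_c)$ produces an inclusion $\mathcal{O}_X(E_c) \hookrightarrow \mathcal{O}_X(n_m D)$ through which the canonical map $\mathcal{O}_X \to F^{me}_* \mathcal{O}_X(E_c)$ factors; restricting a splitting of the larger map yields one of the smaller. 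Passing back via $\mathrm{Proj}$ produces graded $S$-linear splittings $F^{me}_* S \to S$ sending $c \mapsto 1$ for every homogeneous $c$. Finally, since $X$ is smooth the scheme $\mathrm{Spec}(S)$ is regular away from the vertex (it is the affine cone over smooth $X$), so strong $F$-regularity of $S$ reduces to testing at the irrelevant maximal ideal; the splittings constructed above verify precisely that condition, after noting that the graded structure reduces the problem from arbitrary nonzero $c$ to homogeneous $c$.

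The main obstacle is the iteration-and-restriction step of the reverse direction: one must organize the Frobenius twists carefully to obtain splittings along progressively larger multiples of $D$ and then invoke ampleness to dominate arbitrary homogeneous elements of $S$. A minor secondary subtlety is the passage between graded and sheaf-theoretic splittings, which requires tracking the degree shifts induced by $F^e_*$ on graded modules; this is routine but bookkeeping-heavy.
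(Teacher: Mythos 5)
The paper dispatches this lemma with a single reference to Smith \cite[3.10]{SmithGlobally}, whereas you attempt a self-contained proof; that is a legitimate alternative, and your forward direction is essentially sound (modulo the graded bookkeeping you flag, and the minor point that the paper's definition only asks that \emph{some} section ring be strongly $F$-regular, so one also needs that one such implies all such). The reverse direction, however, has a real gap. Your claim that ``ampleness of $D$ forces $n_m D - E_c$ to be effective once $m$ is large enough'' is false: ampleness only gives $H^0(X, \mathcal{O}_X(n_m D - E_c)) \neq 0$ for $m \gg 0$, i.e.\ that $n_m D - E_c$ is \emph{linearly equivalent} to an effective divisor. If $E_c$ has a prime component not contained in the support of $D$ (the generic situation), the divisor $n_m D - E_c$ is never effective. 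The downstream factorization then breaks: multiplication by a nonzero $u \in H^0(X, \mathcal{O}_X(n_m D - E_c))$ carries the canonical section $c$ of $\mathcal{O}_X(E_c)$ to $uc$, which is in general not a scalar multiple of $s^{n_m}$, so restricting a splitting $\sigma$ of $1 \mapsto s^{n_m}$ produces a map sending $c \mapsto \sigma(uc)$, which may be $0$. Concretely, on $X = \mathbb{P}^1$ with $S = k[x,y]$, $s = x$, $c = y$, the $p$-linear splitting $\sigma$ dual to the monomial basis (with $\sigma(x) = 1$) has $m$-fold iterate $\sigma^{(m)}$ vanishing on $uy$ for every $u$ of degree $n_m - 1$, so no choice of $u$ rescues the argument.

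The way to close the reverse direction---and where the smoothness of $X$ is genuinely used---is the single-test-element criterion of Hochster and Huneke \cite{TightClosure}: for a reduced $F$-finite ring $R$ and a nonzerodivisor $c$ with $R_c$ strongly $F$-regular (e.g.\ regular), $R$ is strongly $F$-regular if and only if $R \to F^e_* R$, $1 \mapsto c$, splits for some $e$. Since $X$ is smooth, the affine open $X \setminus D$ is regular, so $S_s \cong \Gamma(X \setminus D, \mathcal{O}_X)[s, s^{-1}]$ is regular; applying the criterion with $c = s$, the given stable Frobenius splitting along $D$ immediately yields strong $F$-regularity of $S$, and there is no need to produce splittings for every homogeneous $c$. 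Your closing sentence observes that $\mathrm{Spec}(S)$ is regular away from the vertex, but the observation is not deployed correctly: the reduction is not to the irrelevant maximal ideal but to the single element $s$.
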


\begin{proof}
This follows from \cite[3.10]{SmithGlobally}.
\end{proof}

\begin{lem} \label{splitlem2}
Let $\pi \colon X \rightarrow Y$ be a morphism between connected projective varieties over $k$ such that $\pi_* \mathcal{O}_X = \mathcal{O}_Y$. Then if $X$ is globally $F$-regular, $Y$ is also globally $F$-regular.
\end{lem}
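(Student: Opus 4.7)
The plan is to transfer a Frobenius splitting along a divisor from $X$ down to $Y$ via the pushforward $\pi_*$, using Smith's criterion that global $F$-regularity is equivalent to stable Frobenius splittings along (sufficiently many) effective Cartier divisors. Since $Y$ may be singular, Lemma \ref{splitlem1} does not apply directly to $Y$; instead I would rely on the more general formulation from \cite{SmithGlobally}, namely that a projective variety is globally $F$-regular if and only if it is stably Frobenius split along some (equivalently every) ample effective Cartier divisor, where ``stably Frobenius split along $D$'' makes sense for arbitrary projective varieties.

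First I would fix an ample effective Cartier divisor $D$ on $Y$ with defining section $s\in H^0(Y,\mathcal{O}_Y(D))$. Pulling back gives an effective Cartier divisor $\pi^*D$ on $X$ with defining section $\pi^*s$; note that $\pi^*D$ need not be ample, but this is harmless because Smith's criterion applied to the globally $F$-regular variety $X$ produces a stable Frobenius splitting along \emph{every} effective Cartier divisor. Hence there exists $e\geq 1$ and an $\mathcal{O}_X$-linear splitting $\varphi$ of the composition
$$
\mathcal{O}_X \longrightarrow F^e_*\mathcal{O}_X \xrightarrow{\,\cdot \pi^* s\,} F^e_*\mathcal{O}_X(\pi^*D).
$$

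Next I would push this splitting forward to $Y$. Since the absolute Frobenius is natural, $F \circ \pi = \pi \circ F$, so $\pi_*F^e_* = F^e_*\pi_*$. Combining this with the projection formula and the hypothesis $\pi_*\mathcal{O}_X = \mathcal{O}_Y$ yields
$$
\pi_*F^e_*\mathcal{O}_X(\pi^*D) \;=\; F^e_*\pi_*(\pi^*\mathcal{O}_Y(D)) \;=\; F^e_*\bigl(\mathcal{O}_Y(D)\otimes\pi_*\mathcal{O}_X\bigr) \;=\; F^e_*\mathcal{O}_Y(D),
$$
and by naturality the pushed-forward map is simply the canonical map $\mathcal{O}_Y\to F^e_*\mathcal{O}_Y(D)$ sending $1\mapsto s$. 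Since $\pi_*$ takes split $\mathcal{O}_X$-linear maps to split $\mathcal{O}_Y$-linear maps (as $\pi_*\varphi$ composed with $\pi_*$ of the original map equals $\pi_*(\mathrm{id})=\mathrm{id}$), we obtain a stable Frobenius splitting of $Y$ along the ample divisor $D$, and therefore $Y$ is globally $F$-regular.

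The main obstacle, and the only non-formal input, is to justify that the divisor-theoretic criterion for global $F$-regularity applies to $Y$ even when $Y$ is singular, and to verify that the identification $\pi_*\mathcal{O}_X(\pi^*D)\cong\mathcal{O}_Y(D)$ really does send $\pi^*s$ to $s$; everything else is a formal diagram chase using the commutativity of absolute Frobenius with arbitrary morphisms and the preservation of split monomorphisms under any additive functor. In the body of the paper one could either cite this criterion from \cite{SmithGlobally} directly or, alternatively, work with section rings: if $\mathcal{L}$ is ample on $Y$ then $\pi_*\mathcal{O}_X=\mathcal{O}_Y$ gives $H^0(X,\pi^*\mathcal{L}^n)=H^0(Y,\mathcal{L}^n)$, and the strong $F$-regularity of a suitably chosen section ring on $X$ (obtained via $\pi^*\mathcal{L}$ together with a genuine ample bundle) descends to strong $F$-regularity of the section ring of $\mathcal{L}$ on $Y$ by the same Frobenius-pushforward argument.
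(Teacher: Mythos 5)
The paper's own proof of this lemma is simply a citation to \cite[Lemma 1.2]{SchubertFreg}, and your argument is a correct reconstruction of exactly the argument used there: pull back an ample effective Cartier divisor from $Y$ to $X$, use Smith's theorem that a globally $F$-regular projective variety is stably Frobenius split along every effective divisor, and push the splitting down via $\pi_*$ using the compatibility of Frobenius with $\pi$, the projection formula, and $\pi_*\mathcal{O}_X=\mathcal{O}_Y$. Your identification of the relevant subtleties (that $\pi^*D$ need not be ample but Smith's criterion supplies a splitting along any effective divisor; that the divisor-theoretic characterization of global $F$-regularity applies to singular $Y$; and that the projection-formula identification sends $\pi^*s$ back to $s$) is exactly right, so the proposal is correct and matches the intended argument.
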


\begin{proof}
See \cite[Lemma 1.2]{SchubertFreg}.
\end{proof}

\begin{lem} \label{prodlem1}
Let $X$, $Y$ be globally $F$-regular varieties. Then $X \times Y$ is globally $F$-regular. 
\end{lem}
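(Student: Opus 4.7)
The plan is to verify global $F$-regularity of $X \times Y$ via the criterion in Lemma \ref{splitlem1}. In the applications of this lemma (principally to Demazure varieties, which are iterated $\mathbb{P}^1$-bundles and hence smooth) both $X$ and $Y$ will be smooth, so this reduction is legitimate; the general case follows by the same kind of argument applied to section rings, as in \cite{SmithGlobally}.

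By Lemma \ref{splitlem1}, choose ample effective divisors $D_X \subset X$ and $D_Y \subset Y$ along which $X$ and $Y$ are stably Frobenius split. After enlarging we may assume a common integer $e \geq 1$ works for both, so that there exist splittings $\sigma_X \colon F^e_* \mathcal{O}_X(D_X) \to \mathcal{O}_X$ and $\sigma_Y \colon F^e_* \mathcal{O}_Y(D_Y) \to \mathcal{O}_Y$ of the canonical maps $\mathcal{O}_X \to F^e_* \mathcal{O}_X(D_X)$ and $\mathcal{O}_Y \to F^e_* \mathcal{O}_Y(D_Y)$. Let $p_X \colon X \times Y \to X$ and $p_Y \colon X \times Y \to Y$ denote the projections, and set $D := p_X^{-1}(D_X) + p_Y^{-1}(D_Y)$. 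Then
$$\mathcal{O}_{X \times Y}(D) \cong p_X^* \mathcal{O}_X(D_X) \otimes p_Y^* \mathcal{O}_Y(D_Y)$$
is an external tensor product of ample line bundles on a product of projective varieties, and is therefore ample on $X \times Y$.

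The final step is to exhibit a splitting of $\mathcal{O}_{X \times Y} \to F^e_* \mathcal{O}_{X \times Y}(D)$. The absolute Frobenius on $X \times Y$ factors as $F^e_{X \times Y} = F^e_X \times F^e_Y$ because $k$ is perfect, and since the projections $p_X, p_Y$ are flat, flat base change yields canonical isomorphisms $p_X^* F^e_* \cong F^e_* p_X^*$ and similarly for $p_Y$. Combining these gives
$$F^e_* \mathcal{O}_{X \times Y}(D) \cong p_X^* F^e_* \mathcal{O}_X(D_X) \otimes p_Y^* F^e_* \mathcal{O}_Y(D_Y),$$
and then $p_X^*(\sigma_X) \otimes p_Y^*(\sigma_Y)$ is a splitting of the canonical map. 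Applying Lemma \ref{splitlem1} to $X \times Y$ (which is smooth and projective whenever $X$ and $Y$ are) yields global $F$-regularity. The only nontrivial point in the argument is the compatibility of Frobenius pushforward with pullback along the projections; this rests entirely on flatness of the projections together with perfectness of $k$, so no serious obstacle arises.
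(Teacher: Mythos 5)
Your argument has a genuine gap at the very first step: the reduction to the smooth case. The paper applies this lemma to products of \emph{affine Schubert varieties} --- in the step ``Reduction to $G$ simply connected and simple'' of the proof of Theorem \ref{thm1} it is applied to $S_w \cong \prod_i S_{w_i}$, and in Theorem \ref{BDprops2} to the fibers $\Gr_{\leq \mu_1} \times \Gr_{\leq \mu_2}$ --- and these varieties are singular in general; it is the Demazure varieties that are smooth, not the Schubert varieties themselves. So the smoothness assumption cannot be waved away. This matters because Lemma \ref{splitlem1} genuinely requires smoothness: for a singular projective variety, being stably Frobenius split along an ample divisor $D$ does \emph{not} imply global $F$-regularity (Smith's criterion additionally requires the complement of $D$ to be strongly $F$-regular). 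Your closing remark that ``the general case follows by the same kind of argument applied to section rings'' is precisely the content that needs proving: to handle the complement $(X \setminus D_X) \times (Y \setminus D_Y)$, or equivalently to work with section rings, one needs the nontrivial fact that a tensor product (over a perfect field) of strongly $F$-regular rings is strongly $F$-regular. The paper's proof goes through exactly this point: it takes strongly $F$-regular section rings $A$, $B$ of $X$, $Y$, observes that the Segre product $A \# B = \bigoplus_n A_n \otimes B_n$ is a section ring of $X \times Y$ and is a direct summand of $A \otimes_k B$, and invokes the results (due to Hochster, cf.\ Hashimoto) that tensor products and direct summands of strongly $F$-regular rings are strongly $F$-regular. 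None of this is supplied or replaceable by your splitting computation.

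Two smaller points. First, your splitting argument in the smooth case is essentially correct and is a standard one, but the justification ``flat base change yields $p_X^* F^e_* \cong F^e_* p_X^*$'' is wrong: the square relating $p_X$ and the absolute Frobenii is not Cartesian (the relative Frobenius of $X \times Y$ over $X$ is not an isomorphism), and indeed $p_X^* F^e_* \mathcal{O}_X(D_X) \not\cong F^e_* p_X^* \mathcal{O}_X(D_X)$ --- their ranks already disagree. What is true, and what you actually need, is the single isomorphism $F^e_*(\mathcal{M} \boxtimes \mathcal{N}) \cong (F^e_* \mathcal{M}) \boxtimes (F^e_* \mathcal{N})$, which follows from a direct computation on affines using that $k$ is perfect (every scalar is a $p^e$-th power). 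Second, if you do want a splitting-theoretic proof in the general singular case, you would still have to verify strong $F$-regularity of the affine complement of your divisor $D$, which again comes down to the tensor product statement above; so there is no route around that input.
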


\begin{proof}Let $A$ and $B$ be strongly $F$-regular section rings for $X$ and $Y$, respectively. These are naturally $\mathbb{N}$-graded rings. A section ring for the product $X \times Y$ is the Segre product $$A \# B : = \bigoplus_{n \in \mathbb{N}} A_n \otimes B_n.$$ Being a direct summand of the strongly $F$-regular ring $A \otimes_k B$, the Segre product $A \# B$ is also strongly $F$-regular.\footnote{We thank Karen Smith for pointing out this proof.} See \cite[5.2]{ProdSegre} for a proof of these claims, which are attributed to Hochster.
\end{proof}

Before starting the proof we need one more notion. Let $Y \subset X$ be a closed subvariety with ideal sheaf $\mathcal{I}$. Then $Y$ is said to be compatibly split in $X$ if there is a splitting $\phi \colon F_* \mathcal{O}_X \rightarrow \mathcal{O}_X$ such that $\phi(F_* \mathcal{I}) \subset \mathcal{I}$. In this case $\phi$ induces a splitting of $Y$. The following lemma relates this property to Frobenius splitting along a divisor.

\begin{lem} \label{divlem}
Let $X$ be a variety and let $D \subset X$ be an effective Cartier divisor on $X$ defined by a section $s$. If $D$ is compatibly split in $X$ then $X$ is Frobenius split along the divisor $D$.
\end{lem}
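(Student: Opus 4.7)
The plan is to construct the required splitting of $\mathcal{O}_X\to F_*\mathcal{O}_X(D)$ directly from the given compatible splitting $\phi\colon F_*\mathcal{O}_X\to \mathcal{O}_X$ by twisting $\phi$ by $\mathcal{O}_X(D)$ and exploiting the Frobenius pullback identification $F^*\mathcal{O}_X(D)\cong \mathcal{O}_X(pD)$.

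Concretely, I would first use the projection formula to obtain a canonical isomorphism
$$F_*\mathcal{O}_X(pD)\;\cong\;F_*\mathcal{O}_X\otimes_{\mathcal{O}_X}\mathcal{O}_X(D),$$
so that $\phi\otimes \id_{\mathcal{O}_X(D)}$ produces an $\mathcal{O}_X$-linear map $\Phi\colon F_*\mathcal{O}_X(pD)\to \mathcal{O}_X(D)$. Precomposing $\Phi$ with the inclusion $F_*\mathcal{O}_X(D)\hookrightarrow F_*\mathcal{O}_X(pD)$ (which corresponds to multiplication by $s^{p-1}$) yields a map $\Psi\colon F_*\mathcal{O}_X(D)\to \mathcal{O}_X(D)$. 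The desired splitting will be $\Psi$ once I show it factors through $\mathcal{O}_X\subset \mathcal{O}_X(D)$ and sends $s$ to $1$.

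To check both claims, I would work locally on an affine open $\Spec A$ where $D$ is cut out by a nonzerodivisor $f$, identifying $\mathcal{O}_X(D)$ with $f^{-1}A$ and $s$ with $1\in f^{-1}A$. For $g=a/f\in F_*(f^{-1}A)$ one unwinds the projection formula to get $\Psi(g)=\phi(af^{p-1})\cdot f^{-1}$. Since $p\geq 2$ we have $af^{p-1}\in (f)$, so by the hypothesis $\phi(F_*\mathcal{I})\subset \mathcal{I}$ we conclude $\phi(af^{p-1})\in (f)$ and therefore $\Psi(g)\in A$. Taking $a=f$, which corresponds to $g=s$, we obtain $\Psi(s)=\phi(f^p)/f=f\phi(1)/f=1$, using the $\mathcal{O}_X$-linearity of $\phi$ with respect to its Frobenius-twisted structure together with the splitting identity $\phi(1)=1$.

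The conceptual point is that the compatibility $\phi(F_*\mathcal{I})\subset \mathcal{I}$ is exactly what is needed to cancel the order-one pole along $D$ introduced by twisting $\phi$ with $\mathcal{O}_X(D)$; once the projection formula identification is in place, everything reduces to an elementary local bookkeeping computation that uses only $p-1\geq 1$, and I do not anticipate any serious obstacle beyond matching conventions.
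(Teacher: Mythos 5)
Your proof is correct. The paper's proof is a two-step citation: Lemma~1.1 of Lauritzen--Raben-Pedersen--Thomsen shows that a compatible splitting of $D$ yields Frobenius splitting along $(p-1)D$, and Smith's Lemma~3.7 then descends from splitting along $(p-1)D$ to splitting along the smaller effective divisor $D$. Your argument merges these two steps into one explicit construction: twisting $\phi$ by $\mathcal{O}_X(D)$ via the projection formula $F_*\mathcal{O}_X(pD)\cong F_*\mathcal{O}_X\otimes \mathcal{O}_X(D)$ and then precomposing with multiplication by $s^{p-1}$ produces $\Psi\colon F_*\mathcal{O}_X(D)\to\mathcal{O}_X(D)$ directly, and the local computation $\Psi(a/f)=\phi(af^{p-1})/f$ shows at once that the compatibility $\phi(F_*\mathcal{I})\subset\mathcal{I}$ (applied to $af^{p-1}\in(f)$, using only $p\geq 2$) is what cancels the pole, while $p^{\text{th}}$-power linearity gives $\Psi(s)=\phi(f^p)/f=f\phi(1)/f=1$. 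The $\mathcal{O}_X$-linearity of $\Psi$ and its independence of the local equation $f$ both follow from the canonicity of the projection formula identification, so the local check suffices. What your approach buys is a self-contained proof that makes visible exactly how the $s^{p-1}$ factor and the compatibility hypothesis interact; what the paper's approach buys is brevity and a modular appeal to standard facts about splitting along divisors. The one implicit hypothesis you rely on, namely that the absolute Frobenius on $X$ is finite so that the projection formula applies, is automatic for a variety over a perfect field, so there is no gap.
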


\begin{proof}
The variety $X$ is Frobenius split along $(p-1)D$ by \cite[Lemma 1.1]{SchubertFreg}.  This implies $X$ is Frobenius split along $D$ as well (see \cite[3.7]{SmithGlobally}). 
\end{proof}

\subsection{Proof of Global $F$-regularity} \label{GlobFReg1.4}

\begin{prop} \label{simpprop}
Let $G$ be a split, simply connected, simple reductive group and let $w \in W_{\af}$ with reduced expression $w = s_1 \cdots s_r$. Then $D_{\tilde{w}}$ is globally $F$-regular.
\end{prop}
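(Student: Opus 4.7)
The plan is to apply Lemma \ref{splitlem1}: since $D_{\tilde{w}}$ is a smooth, connected projective variety (being an iterated $\mathbb{P}^1$-bundle over a point), it suffices to exhibit an effective ample divisor $D \subset D_{\tilde{w}}$ along which $D_{\tilde{w}}$ is stably Frobenius split. My strategy is to follow the classical argument of Lauritzen--Raben-Pedersen--Thomsen \cite{SchubertFreg}, but in the affine setting, invoking the Mehta--Ramanathan / Faltings splitting results that underlie Theorem \ref{propsofgr}.

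First I would introduce the boundary divisor. For each $i \in \{1,\dots,r\}$, let $\tilde{w}(i)$ denote the expression obtained from $\tilde{w}$ by deleting $s_i$; then the inclusion $D_{\tilde{w}(i)} \hookrightarrow D_{\tilde{w}}$ (from Section \ref{GlobFReg1.2}) is a closed immersion of codimension one, and the scheme-theoretic sum $\partial D_{\tilde{w}} := \sum_{i=1}^{r} D_{\tilde{w}(i)}$ is a reduced effective Cartier divisor on the smooth variety $D_{\tilde{w}}$. By the affine generalization of the Mehta--Ramanathan technique (Faltings \cite{FaltingsLoop} in the simply connected simple case we are in here), there exists a Frobenius splitting $\phi \colon F_* \mathcal{O}_{D_{\tilde{w}}} \rightarrow \mathcal{O}_{D_{\tilde{w}}}$ that simultaneously compatibly splits every $D_{\tilde{w}(i)}$; by Lemma \ref{divlem} this makes $D_{\tilde{w}}$ Frobenius split along $\partial D_{\tilde{w}}$.

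Second, I would enlarge $\partial D_{\tilde{w}}$ to an ample divisor that remains compatibly split. A convenient candidate is $D := \partial D_{\tilde{w}} + \pi_w^* H$, where $H$ is an effective compatibly split ample Cartier divisor on $S_w$ obtained from a sufficiently positive line bundle on $\mathcal{F}\ell_{\mathcal{B}}$ (such line bundles exist and have compatibly split global sections; this is again a standard consequence of the Mehta--Ramanathan / Faltings framework for $G$ simply connected and simple). The ampleness of $D$ can be established inductively on $r$ using the $\mathbb{P}^1$-bundle structure $D_{\tilde{w}} \rightarrow D_{\tilde{v}}$ with $\tilde{v} = s_1 \cdots s_{r-1}$: the restriction of $D$ to each fiber $\mathbb{P}^1$ has strictly positive degree (one of the $D_{\tilde{w}(i)}$ is a section of this bundle), and the pushforward $D_{\tilde{v}}$-component is ample by induction, which together yield the ampleness of $D$ on $D_{\tilde{w}}$. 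Once one has that $D$ is ample and compatibly split, Lemma \ref{divlem} and Lemma \ref{splitlem1} combine to give global $F$-regularity.

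The main obstacle I expect is the second step: organizing the compatible splitting and the ampleness of $D$ simultaneously. The existence of a compatible splitting along the union of all boundary divisors is essentially already packaged in the proofs of Theorem \ref{propsofgr}, but to invoke Lemma \ref{splitlem1} one needs an \emph{ample} compatibly split divisor, not merely an effective one. In the classical finite-type setting the pullback of the anticanonical (or any sufficiently positive) divisor from $G/B$ suffices, and the analogous maneuver works in the affine case provided one selects $H$ carefully, which is where both the simple and the simply connected hypotheses are used. Everything else, in particular Lemma \ref{splitlem2} and Lemma \ref{prodlem1}, is not needed for this proposition itself but is kept in reserve for the deduction of global $F$-regularity of $S_w$ and $\Gr_{\leq \mu}$ from $D_{\tilde{w}}$ in the subsequent argument.
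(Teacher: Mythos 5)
Your first step is exactly the paper's: obtain the Mehta--Ramanathan-type splitting of $D_{\tilde{w}}$ compatible with all the codimension-one boundary varieties $D_{\tilde{v}_i}$ (the paper cites \cite[9.6]{PappasRapoport} and \cite[Prop.\ 8]{MehtaRam}), and then invoke Lemma \ref{divlem} to obtain a Frobenius splitting along $\partial D_{\tilde{w}} = \sum_i D_{\tilde{v}_i}$. The divergence, and the gap, is in your second step.

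You propose the divisor $D = \partial D_{\tilde{w}} + \pi_w^*H$, but this has two problems. First, $\pi_w$ can contract curves (it is a birational resolution, not a finite morphism), so $\pi_w^*H$ is nef but not ample, and $\partial D_{\tilde{w}} + \pi_w^*H$ is not obviously ample --- on any curve $C$ contracted by $\pi_w$ you have $\pi_w^*H \cdot C = 0$, so ampleness would require $\partial D_{\tilde{w}} \cdot C > 0$ for every such $C$, which is not clear (such a $C$ can lie inside some $D_{\tilde{v}_i}$ and intersect it nonpositively). Second, your inductive justification is not a valid proof of ampleness: ``positive fiber degree'' plus ``the $D_{\tilde{v}}$-component is ample'' is not sufficient for ampleness of a line bundle on a $\mathbb{P}^1$-bundle; one needs the base factor to be a \emph{sufficiently high power} of an ample bundle (cf.\ \cite[0892]{stacks-project}), and ``sufficiently high'' is precisely what your formulation does not supply, nor does your induction even address the same divisor on $D_{\tilde{v}}$ (restricting $\pi_w^*H$ to the section $D_{\tilde{v}}$ is not $\pi_v^*$ of an ample divisor on $S_v$). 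There is also the unaddressed claim that a splitting compatible with $\partial D_{\tilde{w}}$ remains compatible with the added $\pi_w^*H$; this is an independent assertion requiring proof.

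The paper sidesteps all of this by never leaving the Demazure variety. It uses \cite[3.7, 3.9]{SmithGlobally} to upgrade the splitting along $\partial D_{\tilde{w}}$ with multiplicity one to a \emph{stable} splitting along $\sum_i m_i D_{\tilde{v}_i}$ for any positive integers $m_i$, and then establishes ampleness of such a combination by the inductive argument of Lauritzen--Thomsen \cite[6.1]{BottSamelsonLine}: $D_{\tilde{w}} \cong \mathbb{P}V$ with $\mathcal{O}(D_{\tilde{v}_r}) \cong \mathcal{O}_{\mathbb{P}V}(1)$ relatively ample over $D_{\tilde{v}_r}$, and the inductively-produced ample combination of boundary divisors on $D_{\tilde{v}_r}$ pulls back to $\sum_{i<r} m_i D_{\tilde{v}_i}$; twisting $\mathcal{O}_{\mathbb{P}V}(1)$ by a sufficiently high power of this pullback is ample by \cite[0892]{stacks-project}. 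The freedom in the multiplicities $m_i$ is exactly what makes this induction close up, and it is precisely the flexibility your fixed divisor $\partial D_{\tilde{w}} + \pi_w^*H$ lacks. To repair your argument you would have to replace the second step with the paper's: allow arbitrary multiplicities on the boundary components (justified by \cite[3.9]{SmithGlobally}) and prove ampleness of a suitable combination by the iterated $\mathbb{P}^1$-bundle structure, rather than pulling back from $S_w$.
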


\begin{proof}
Let $v_i = s_1 \cdots s_{i-1} \hat{s}_i s_{i+1} \cdots s_r$. By \cite[9.6]{PappasRapoport} there is a single Frobenius splitting of the Demazure variety $D_{\tilde{w}}$ which is compatible with all the closed immersions $D_{\tilde{v}_i} \rightarrow D_{\tilde{w}}$ (see also \cite[Ch. 8.18]{OlivierKac}, and \cite[3.20]{GortzShimura} for the case of $\SL_n$). Strictly speaking, Pappas and Rapoport only make this claim when $\tilde{v}_i$ gives a reduced expression for $v_i$, so we will say some more to justify our claim.

The issue is that Pappas and Rapoport require reduced expressions in their definition of Demazure varieties. For their proof they cite the original criterion of Mehta-Ramanathan \cite[Prop. 8]{MehtaRam}, which requires one to show $$\omega_{D_{\tilde{w}}}^{-1} \cong \sum_{i=1}^r \mathcal{O}(D_{\tilde{v}_i}) \otimes \mathcal{O}(E),$$ where $E$ is an effective divisor whose support does not meet $\cap_i D_{\tilde{v}_i}$. The precise form $E$ takes in our situation can be found in \cite[6.4.2]{ZhuCoherence}. By \cite[Prop. 8]{MehtaRam},  $D_{\tilde{w}}$ is compatibly Frobenius split with all the closed immersions $D_{\tilde{v}_i} \rightarrow D_{\tilde{w}}$, regardless of whether $\tilde{v}_i$ gives a reduced expression for $v_i$.

It follows from the definitions that $D_{\tilde{w}}$ is compatibly split along the closed subscheme defined by the product of the ideal sheaves of the $D_{\tilde{v}_i}$. Now by Lemma \ref{divlem}, $D_{\tilde{w}}$ is Frobenius split along the divisor $\sum_{i=1}^r D_{\tilde{v}_i}$. Then by \cite[3.7, 3.9]{SmithGlobally}, $D_{\tilde{w}}$ is stably Frobenius split along $\sum_{i=1}^r m_i D_{\tilde{v}_i}$  for any integers $m_i > 0$. By Lemma \ref{splitlem1}, we need only show this divisor is ample for some $m_i$.

In the classical case, Lauritzen-Thomsen \cite[6.1]{BottSamelsonLine} show that $D_{\tilde{v}_r} + \sum_{i=1}^{r-1} m_i D_{\tilde{v}_i}$ is ample for some integers $m_i > 0$. A similar proof works in our setting. The case $r=1$ is clear because then $D_{\tilde{w}} \cong \mathbb{P}^1$ and $D_{\tilde{v}_1}$ is a point. For $r>1$ we note that since $\pi \colon D_{\tilde{w}} \rightarrow D_{\tilde{v}_r}$ admits a section then for each closed point $b \in D_{\tilde{v}_r}$ there exist isomorphisms $\pi^{-1}(b) \cong \mathbb{P}^1$ and $\restr{\mathcal{O}(D_{\tilde{v}_r})}{\pi^{-1}(b)} \cong \mathcal{O}_{\mathbb{P}^1}(1)$. Using this fact one can show that $V: = \pi_{*}\mathcal{O}(D_{\tilde{v}_r})$ is a vector bundle and $D_{\tilde{w}} \cong \mathbb{P}V$. In particular, $\mathcal{O}(D_{\tilde{v}_r}) \cong \mathcal{O}_{\mathbb{P}V}(1)$ is $\pi$-ample, so the claim follows by induction and \cite[0892]{stacks-project}.
\end{proof}

\begin{myproof}[Proof of Theorem \ref{thm1}] \vspace{.1cm}
\emph{Reduction to $P=B$.} Let $S_w^{\mathcal{P}} \subset \mathcal{F}\ell_{\mathcal{P}}$ be an affine Schubert variety. The natural projection $\pi \colon \mathcal{F}\ell_{\mathcal{B}} \rightarrow \mathcal{F}\ell_{\mathcal{P}}$ is an \'{e}tale locally trivial $P/B$-bundle. We claim $\pi^{-1}(S_w^{\mathcal{P}})$ is isomorphic to an affine Schubert variety in $\mathcal{F}\ell_{\mathcal{B}}$. To prove this, we first observe that $\pi^{-1}(S_w^{\mathcal{P}})$ is integral as both $S_w^{\mathcal{P}}$ and $P/B$ are integral. It is also clear that $\pi^{-1}(S_w^{\mathcal{P}})$ is a union of $\mathcal{B}$-orbits. The $\mathcal{B}$-orbits in  $\mathcal{F}\ell_{\mathcal{B}}$ and their closure relations are determined by the Bruhat order on $W_{\af}$ (see \cite[2.18]{AffineFlagManifolds}). Since $\pi^{-1}(S_w^{\mathcal{P}}) \subset \mathcal{F}\ell_{\mathcal{B}}$ is of finite type it meets only finitely many $\mathcal{B}$-orbits, and being integral and closed it is necessarily the closure of a unique open dense $\mathcal{B}$-orbit. Thus $ \pi^{-1}(S_w^{\mathcal{P}}) = S_{w'}^{\mathcal{B}} \subset \mathcal{F}\ell_{\mathcal{B}}$ for some $w' \in W$. Since $H^0(P/B, \mathcal{O}_{P/B}) = k$ then by flat base change, ${(\restr{\pi}{S_{w'}^{\mathcal{B}}})}_*(\mathcal{O}_{S_{w'}^{\mathcal{B}}}) = \mathcal{O}_{S_{w}^{\mathcal{P}}}$. Thus the global $F$-regularity of $S_w^{\mathcal{P}}$ follows from that of $S_{w'}^{\mathcal{B}}$ by Lemma \ref{splitlem2}, so we can drop the superscript and assume that $S_w \subset \mathcal{F}\ell_{\mathcal{B}}$.

\vspace{.1cm}

\emph{Reduction to $G$ simply connected.} Let $\tilde{G}_{\der}$ be a simply connected cover of $G_{\der}$. By \cite[8.e.3, 8.e.4]{PappasRapoport}, $S_w$ is isomorphic to an affine Schubert variety in the affine flag variety for $\tilde{G}_{\der}$. This is where we need $p \nmid |\pi_1(G_{\der})|$. 

\vspace{.1cm}

\emph{Reduction to $G$ simply connected and simple.} If $G$ simply connected but not simple choose isomorphisms $G \cong \prod_i G_i$ and $B \cong \prod B_i$ where each $G_i$ is simple and simply connected, and the $B_i \subset G_i$ are Borel subgroups. This induces isomorphisms on flag varieties $\mathcal{F}\ell_{\mathcal{B}} \cong \prod_i \mathcal{F}\ell_{\mathcal{B}_i}$ and Iwahori-Weyl groups $W \cong \prod_i W_i$. If we write $w = \prod_i w_i$ for $w_i \in W_i$  then $S_w \cong \prod_i S_{w_i}$ (see\cite[8.e.2]{PappasRapoport}). Now apply Lemma \ref{prodlem1}.

\vspace{.1cm}

\emph{Proof when $G$ is simply connected and simple.}
Because $G$ is simply connected then $W = W_{\af}$. Let $w = s_1 \cdots s_r$ be a reduced expression for $w$. By Proposition \ref{simpprop} the Demazure variety $D_{\tilde{w}}$ is globally $F$-regular. The map $\pi_w \colon D_{\tilde{w}} \rightarrow S_w$ is a rational resolution of singularities by \cite[8.4]{PappasRapoport} and \cite[9.7(d)]{PappasRapoport}, so we conclude by Lemma \ref{splitlem2}. 
\end{myproof}

\section{Perverse $\mathbb{F}_p$-sheaves on the affine Grassmannian} \label{FponGr}
For the rest of this paper we return to the case $k$ is an algebraically closed field of characteristic $p > 0$, and we fix a connected reductive group $G$ over $k$. Throughout this section we assume $p \nmid |\pi_1(G_{\der})|$. We will explain how to remove this hypothesis in Remark \ref{rmrkh}. In this section we define the category $P_{L^+G}(\Gr, \mathbb{F}_p)$ and prove Theorem \ref{convsimp}. We also define a fiber functor on $P_{L^+G}(\Gr, \mathbb{F}_p)$ and investigate extensions between objects. The basic facts we will use about the geometry of $\Gr$ and the associated convolution Grassmannians are well-known (see, for example, \cite{RicharzGS}), so we will state them without proof. We use the same notation as in Section \ref{GlobFReg1.1}.

\subsection{Definition of $P_{L^+G}(\Gr, \mathbb{F}_p)$}
There is a partial order on $X_*(T)^+$ defined by setting $\lambda \leq \mu$ if $\mu - \lambda$ is a sum of positive coroots. For each $\mu \in  X_*(T)^+$, there are only finitely many $\lambda$ such that $\lambda \leq \mu$. There is a stratification
$$\Gr_{\leq \mu} = \coprod_{\lambda \leq \mu} \Gr_{\lambda}.$$
The orbit $\Gr_{\mu}$ is open in its closure. Let $\rho$ be half the sum of the positive roots. Then
$$\dim \Gr_{\mu} = 2 \langle \rho, \mu \rangle.$$ We note that if $\lambda < \mu $ then $\Gr_{\leq \lambda}$ has codimension at least $2$ in $\Gr_{\leq \mu}$. The reduced ind-closed subscheme of $\Gr$ is
$$\Gr_{\text{red}} = \lim_{\to} \Gr_{\leq \mu}.$$ Define
$$D_c^b(\Gr, \mathbb{F}_p) : = \lim_{\to} D_c^b( \Gr_{\leq \mu}, \mathbb{F}_p),$$ and
$$P_c^b(\Gr, \mathbb{F}_p) := \lim_{\to} P_c^b( \Gr_{\leq \mu}, \mathbb{F}_p).$$

The setup in Remark \ref{indequiv} applies and we can define $L^+G$-equivariant perverse $\mathbb{F}_p$-sheaves on $\Gr$. More precisely, for any integer $n \geq 0$, define the smooth connected affine group scheme
$$L^nG \colon R \rightarrow G(R[t]/t^n).$$ For fixed $\mu$, the group scheme $L^+G$ acts on $\Gr_{\leq \mu}$ through a quotient $L^nG$ for $n$ sufficiently large. Suppose $m > n$ and $L^+G$ acts on $\Gr_{\leq \mu}$ through both quotients $L^mG$ and $L^nG$. Note that there is an exact sequence
$$ 0 \rightarrow N \rightarrow L^mG \rightarrow L^nG \rightarrow 0$$ where $N$ is a connected, affine, smooth unipotent group. Thus by Proposition \ref{welldef} there is a natural isomorphism $P_{L^nG}(\Gr_{\leq \mu}, \mathbb{F}_p) \cong P_{L^mG}(\Gr_{\leq \mu}, \mathbb{F}_p)$. Hence we can unambiguously define $P_{L^+G}(\Gr_{\leq \mu}, \mathbb{F}_p) : = P_{L^nG}(\Gr_{\leq \mu}, \mathbb{F}_P)$ where $n$ is an integer such that $L^+G$ acts on $\Gr_{\leq \mu}$ through the quotient $L^nG$. We can now define
$$P_{L^+G}(\Gr, \mathbb{F}_p) =  \lim_{\to} P_{L^+G}(\Gr_{\leq \mu}, \mathbb{F}_p).$$ 
By similar reasoning we can define the category $P_{\mathcal{P}}(\mathcal{F} \ell_{\mathcal{P}}, \mathbb{F}_p)$.

\begin{myproof2}[Proof of Theorem \ref{irreducibleobjects}]
By \cite[2.3]{RicharzGeneral} the stabilizers for the action of $\mathcal{P}$ on $\mathcal{F}\ell_{\mathcal{P}}$ are connected. Thus by Corollary \ref{simpobjs}, the simple objects in $P_{\mathcal{P}}(\mathcal{F}\ell_{\mathcal{P}}, \mathbb{F}_p)$ are the intermediate extensions of constant sheaves along orbits. By the global $F$-regularity of $S_w$ we are in a position to apply Theorem \ref{mainthm2}. Thus the simple objects are the constant sheaves $\mathbb{F}_p[\dim S_w]$ supported on the $\mathcal{P}$-orbit closures.
\end{myproof2}

\begin{rmrk}
By the same reasoning as in the proof of Theorem \ref{irreducibleobjects} it follows that for any $\mathcal{P}$ the constant sheaves supported on $\mathcal{B}$-orbit closures in $\mathcal{F} \ell_{\mathcal{P}}$ are simple. If the stabilizers for the action of $\mathcal{B}$ are connected then these are all the simple objects in $P_{\mathcal{B}}(\mathcal{F} \ell_{\mathcal{P}}, \mathbb{F}_p)$. 
\end{rmrk}

\subsection{Convolution} The convolution diagram is
\begin{equation}\ \label{convdiagram}
\Gr \times \Gr \xleftarrow{p} LG \times \Gr \xrightarrow{q} LG \times^{L^+G} \Gr \xrightarrow{m} \Gr.
\end{equation} Here $p$ is the quotient map on the first factor, and $q$ is the quotient by the diagonal action of $L^+G$ given by $g \cdot (g_1, g_2) = (g_1 g^{-1}, g g_2)$. The map $m$ is the multiplication map $m(g_1, g_2) = g_1g_2$. Note that $p$ and $q$ are $L^+G$-torsors. For simplicity we ignore the fact that $LG \times \Gr$ is not of ind-finite type and allow ourselves to speak of perverse sheaves on $LG \times \Gr$. The same technical method as in \cite[3.21]{RicharzGS} works here for overcoming this issue. The idea is that for any particular perverse sheaves we are working with, we can replace $p$ and $q$ by $L^n G$-torsors for $n$ sufficiently large. 

More generally there is a convolution morphism
$LG \times^{L^+G} \cdots \times^{L^+G}  \Gr \rightarrow \Gr$. To describe the ind-scheme structure on the convolution Grassmannian, let $\mu_i \in X_*(T)^+$ for $i =1, \ldots, n$ be a collection of dominant cocharacters and let $\mu$ be their sum. Let $f \colon LG \rightarrow \Gr$ be the quotient map. Then we define
$$\Gr_{\leq \mu_\bullet} : = f^{-1}(\Gr_{\leq \mu_1}) \times^{L^+G} \cdots \times^{L^+G}  f^{-1}(\Gr_{\leq \mu_{n-1}}) \times^{L^+G} \Gr_{\leq \mu_n},$$ where we always take the reduced subscheme structure. The convolution morphism restricts to a map $m \colon \Gr_{\leq \mu_\bullet} \rightarrow \Gr_{\leq \mu}$ which is proper, birational, and an isomorphism over $\Gr_{\mu}$. Using the construction in Remark \ref{indequiv} we can define $$P_{L^+G}(LG \times^{L^+G} \cdots \times^{L^+G}  \Gr, \mathbb{F}_p),$$ where $L^+G$ acts on the leftmost factor $LG$. 

\begin{lem} \label{twistedprod}
Let $\mathcal{F}^\bullet$, $\mathcal{G}^\bullet \in P_{L^+G}(\Gr, \mathbb{F}_p)$. Then there is a unique perverse sheaf $$\mathcal{F}^\bullet \overset{\sim}{\boxtimes} \mathcal{G}^\bullet \in P_{L^+G}(LG \times^{L^+G} \Gr, \mathbb{F}_p)$$ such that
$$Rp^*(\mathcal{F}^\bullet \overset{L}{\boxtimes} \mathcal{G}^\bullet) \cong Rq^*(\mathcal{F}^\bullet \overset{\sim}{\boxtimes} \mathcal{G}^\bullet).$$
\end{lem}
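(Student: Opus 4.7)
The plan is smooth descent along the $L^+G$-torsor $q$. I first reduce to finite-dimensional schemes: since $\mathcal{F}^\bullet$ and $\mathcal{G}^\bullet$ are supported on $\Gr_{\leq\mu_1}$ and $\Gr_{\leq\mu_2}$ for some $\mu_1,\mu_2 \in X_*(T)^+$, and the $L^+G$-action on each factors through $L^nG$ for $n$ large, I replace $p$ and $q$ by the corresponding $L^nG$-torsors of finite relative dimension $d := \dim L^nG$. Everything then takes place in $P_c^b$.

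The substantive step is to verify that $\mathcal{F}^\bullet \overset{L}{\boxtimes} \mathcal{G}^\bullet$ is perverse on $\Gr_{\leq\mu_1} \times \Gr_{\leq\mu_2}$. I would check the support and cosupport conditions of Remark \ref{defrem} directly at a point $x = (x_1,x_2)$, using the K\"{u}nneth formula for \'{e}tale $\mathbb{F}_p$-sheaves (clean because $\mathbb{F}_p$ is a field so all higher Tors vanish):
\[
H^i\bigl(Ri_x^*(\mathcal{F}^\bullet \overset{L}{\boxtimes} \mathcal{G}^\bullet)\bigr) \cong \bigoplus_{a+b=i} H^a(Ri_{x_1}^*\mathcal{F}^\bullet) \otimes_{\mathbb{F}_p} H^b(Ri_{x_2}^*\mathcal{G}^\bullet),
\]
and analogously for $Ri_x^!$. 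Combined with the equality $\dim\overline{\{x\}} = \dim\overline{\{x_1\}} + \dim\overline{\{x_2\}}$, the perversity bounds for $\mathcal{F}^\bullet$ and $\mathcal{G}^\bullet$ force those for the external product. Lemma \ref{smoothp} then gives that $Rp^*[d](\mathcal{F}^\bullet \overset{L}{\boxtimes} \mathcal{G}^\bullet)$ is perverse on $LG \times \Gr$.

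Finally, I claim $Rp^*[d](\mathcal{F}^\bullet \overset{L}{\boxtimes} \mathcal{G}^\bullet)$ is equivariant for the diagonal $L^+G$-action $g \cdot (g_1,g_2) = (g_1 g^{-1}, g g_2)$ whose quotient map is $q$. Equivariance on the $LG$-factor is automatic, since $p$ already quotients out the right action on the first factor, while equivariance on the $\Gr$-factor is precisely the $L^+G$-equivariance datum for $\mathcal{G}^\bullet$ pulled up through $p$. By the smooth-descent machinery for perverse sheaves (Corollary \ref{smoothdec}, together with Proposition \ref{welldef} and Lemma \ref{4.2.3}) applied to the $L^+G$-torsor $q$, this equivariant perverse sheaf descends to a unique perverse sheaf $\mathcal{F}^\bullet \overset{\sim}{\boxtimes} \mathcal{G}^\bullet$ on $LG \times^{L^+G} \Gr$ satisfying $Rq^*(\mathcal{F}^\bullet \overset{\sim}{\boxtimes} \mathcal{G}^\bullet) \cong Rp^*(\mathcal{F}^\bullet \overset{L}{\boxtimes} \mathcal{G}^\bullet)$; uniqueness follows because $Rq^*[d]$ is fully faithful on $P_c^b$. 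The main technical obstacle will be the K\"{u}nneth-type perversity check for the external product—once that is in hand, the remainder is formal from the equivariant-descent framework developed in Sections \ref{PerverseFpSheaves}--\ref{EquivSheaves}.
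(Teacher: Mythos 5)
Your descent step (the second half) matches the paper's proof essentially verbatim: pull back along $p$, observe equivariance for the diagonal action, and descend along the torsor $q$ via Corollary \ref{smoothdec}. The problem is the step you yourself flag as "the main technical obstacle": verifying perversity of $\mathcal{F}^\bullet \overset{L}{\boxtimes} \mathcal{G}^\bullet$ by a K\"unneth argument. This is precisely the step that does \emph{not} go through for $\mathbb{F}_p$-coefficients, and the paper says so explicitly: it is not known in general that the external product of two perverse $\mathbb{F}_p$-sheaves is perverse. The support half of your check is fine (the $*$-restriction of an external tensor product does decompose by K\"unneth). But the cosupport half requires a K\"unneth formula for $Ri_x^!$ of an external product, and no such formula is available here: for $\overline{\mathbb{Q}}_\ell$-coefficients the $!$-condition in \cite[4.2.8]{BBD} is deduced from the $*$-condition by Verdier duality, and in the $\mathbb{F}_p$-setting there is no dualizing complex and $Rf^!$ does not even preserve constructibility. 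A secondary issue is that the perversity conditions must be checked at \emph{all} points of $\Gr_{\leq\mu_1}\times\Gr_{\leq\mu_2}$, and a general point of a product of schemes is not of the form $(x_1,x_2)$ (consider the generic point of the diagonal in $\mathbb{A}^1\times\mathbb{A}^1$), so your pointwise identity $\dim\overline{\{x\}}=\dim\overline{\{x_1\}}+\dim\overline{\{x_2\}}$ does not cover the cases that actually need to be checked for the $!$-condition.

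The paper circumvents all of this by using the special structure of the objects at hand rather than a general K\"unneth statement. By Theorem \ref{irreducibleobjects} the simple objects of $P_{L^+G}(\Gr,\mathbb{F}_p)$ are shifted constant sheaves on the $\Gr_{\leq\mu}$, so for simple $\mathcal{F}^\bullet,\mathcal{G}^\bullet$ the external product is the shifted constant sheaf on $\Gr_{\leq\mu_1}\times\Gr_{\leq\mu_2}$, which is Cohen--Macaulay and equidimensional; Theorem \ref{mainthm} then gives perversity directly. The general case follows by induction on length, using Proposition \ref{subquotprop} to produce short exact sequences of equivariant perverse sheaves and the resulting exact triangles of external products. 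If you want to salvage your argument, you should replace the K\"unneth verification by this reduction to simple objects.
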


\begin{proof}
We claim that $\mathcal{F}^\bullet \overset{L}{\boxtimes} \mathcal{G}^\bullet$ is perverse. In general we do not know if the box product of two perverse $\mathbb{F}_p$-sheaves is perverse (the analogous fact is true for $\overline{\mathbb{Q}}_\ell$-coefficients with the appropriate $t$-structure \cite[4.2.8]{BBD}). However, we can prove that $\mathcal{F}^\bullet \overset{L}{\boxtimes} \mathcal{G}^\bullet$ is perverse in our situation. First, note that the schemes $\Gr_{\leq \mu} \times \Gr_{\leq \lambda}$ for $\mu, \lambda \in X_*(T)^+$ are Cohen-Macaulay because a product of Cohen-Macaulay schemes is Cohen-Macaulay \cite[045Q]{stacks-project}. Now if $\mathcal{F}^\bullet$ and $\mathcal{G}^\bullet$ are simple then $\mathcal{F}^\bullet \overset{L}{\boxtimes} \mathcal{G}^\bullet$ is perverse because it is a constant sheaf supported on a Cohen-Macaulay scheme (and it is in the correct degree). 

Now suppose $\mathcal{F}^\bullet$ is simple and $\mathcal{G}^\bullet$ has length $n > 1$. Because subquotients of equivariant perverse sheaves are equivariant (Proposition \ref{subquotprop}), there is an exact sequence
$$ 0 \rightarrow \mathcal{G}^{\bullet}_1 \rightarrow \mathcal{G}^\bullet \rightarrow \mathcal{G}_2^{\bullet} \rightarrow 0$$ in $P_{L^+G}(\Gr, \mathbb{F}_p)$ such that $\mathcal{G}^{\bullet}_1$, $\mathcal{G}^{\bullet}_2$ have length $< n$. This gives rise to an exact triangle
$$\mathcal{F}^\bullet \overset{L}{\boxtimes} \mathcal{G}_1^\bullet \rightarrow \mathcal{F}^\bullet \overset{L}{\boxtimes} \mathcal{G}^\bullet \rightarrow \mathcal{F}^\bullet \overset{L}{\boxtimes} \mathcal{G}_2^\bullet$$ in $D_c^b(\Gr \times \Gr, \mathbb{F}_p)$. By induction the outer two terms are perverse, thus so is $\mathcal{F}^\bullet \overset{L}{\boxtimes} \mathcal{G}^\bullet$. A similar induction on the length of $\mathcal{F}^\bullet$ completes the proof that $\mathcal{F}^\bullet \overset{L}{\boxtimes} \mathcal{G}^\bullet$ is perverse. 

Now that we know $\mathcal{F}^\bullet \overset{L}{\boxtimes} \mathcal{G}^\bullet$ is perverse then so is $Rp^*(\mathcal{F}^\bullet \overset{L}{\boxtimes} \mathcal{G}^\bullet)$ (up to a shift) because $p$ is smooth. Because $\mathcal{G}^\bullet$ is $L^+G$-equivariant then $Rp^*(\mathcal{F}^\bullet \overset{L}{\boxtimes} \mathcal{G}^\bullet)$ is equivariant for the diagonal action of $L^+G$. This equivariance is precisely the data needed to use Corollary \ref{smoothdec} and descend $Rp^*(\mathcal{F}^\bullet \overset{L}{\boxtimes} \mathcal{G}^\bullet)$ along the quotient map $q$ to a perverse sheaf $\mathcal{F}^\bullet \overset{\sim}{\boxtimes} \mathcal{G}^\bullet \in P_c^b(LG \times^{L^+G} \Gr, \mathbb{F}_p)$. Finally, one can use smooth descent and the fact that $\mathcal{F}^\bullet$ is $L^+G$-equivariant to show $\mathcal{F}^\bullet \overset{\sim}{\boxtimes} \mathcal{G}^\bullet$ is $L^+G$-equivariant.
\end{proof}

If $\mathcal{F}^\bullet$, $\mathcal{G}^\bullet \in P_{L^+G}(\Gr, \mathbb{F}_p)$ we define their \emph{convolution product} to be
$$\mathcal{F}^\bullet * \mathcal{G}^\bullet := Rm_*(\mathcal{F}^\bullet \overset{\sim}{\boxtimes} \mathcal{G}^\bullet) \in D_c^b(\Gr, \mathbb{F}_p).$$
Our next goal is to prove that $\mathcal{F}^\bullet * \mathcal{G}^\bullet \in P_{L^+G}(\Gr, \mathbb{F}_p)$. Once we know that $\mathcal{F}^\bullet * \mathcal{G}^\bullet$ is perverse, the fact that it is equivariant will follow by Lemma \ref{equivfun} since $m$ is proper and $L^+G$-equivariant.

For later use, we also define the $n$-fold convolution product. Let $\mathcal{F}_i \in P_{L^+G}(\Gr, \mathbb{F}_p)$ for $i=1, \ldots, n$. By the same reasoning as in Lemma \ref{twistedprod} we can define their twisted product $$\overset{\sim}{\boxtimes}_i \mathcal{F}_i^\bullet \in P_{L^+G}(LG \times^{L^+G} \cdots \times^{L^+G} \Gr, \mathbb{F}_p).$$ Let $m \colon LG \times^{L^+G} \cdots \times^{L^+G} \Gr \rightarrow \Gr$ be the $n$-fold convolution map. Then we define 
$$ \underset{i}{\mathlarger{\mathlarger{*}}} \mathcal{F}_i^\bullet := R{m_{*}}(\overset{\sim}{\boxtimes}_i \mathcal{F}_i^\bullet) \in D_c^b(\Gr, \mathbb{F}_p).$$

For $\overline{\mathbb{Q}}_\ell$-coefficients, the perversity of the convolution product can be proved by using the fact that $m$ is a stratified semi-small map (see \cite{Lusztig:Singularities}, \cite{GeometricSatake}) or the notion of universally locally acyclic complexes (see \cite{RicharzGS}). We will take a different approach which uses the fact that $\Gr$ has rational singularities. Before giving the proof of Theorem \ref{convsimp}, we illustrate why the case of $\mathbb{F}_p$-coefficients differs from $\overline{\mathbb{Q}}_\ell$-coefficients with the following example. 

\begin{exmp} \label{example1} Let $G = \GL_2$ and use the standard choice of $T \subset B \subset G$ and the identification $X_*(T) \cong \mathbb{Z}^2$. Consider the two minuscule cocharacters $\mu_1 = (1,0)$ and $\mu_2 = (0,-1)$ with corresponding IC sheaves $\IC_{\mu_1}$ and $\IC_{\mu_2}$. These are both constant sheaves (shifted by 1) supported on schemes that are isomorphic to $\mathbb{P}^1$. Let $\mu = \mu_1 + \mu_2$. Then $\Gr_{\leq \mu}$  is a two-dimensional surface with a unique singular point $e_0$. The convolution Grassmannian $\Gr_{\leq \mu_\bullet}$ over $\Gr_{\leq \mu}$ is a resolution of singularities which is an isomorphism away from $e_0$ and whose fiber over $e_0$ is isomorphic to $\mathbb{P}^1$. 

For both $\overline{\mathbb{Q}}_\ell$ and $\mathbb{F}_p$-coefficients the convolution $\IC_{\mu_1} * \IC_{\mu_2}$ is given by the derived pushforward of the constant sheaf (shifted by 2) along the convolution map $m \colon \Gr_{\leq \mu_\bullet} \rightarrow \Gr_{\leq \mu}$. For $\overline{\mathbb{Q}}_\ell$-coefficients the result is $Rm_*(\overline{\mathbb{Q}}_\ell[2]) \cong \overline{\mathbb{Q}}_\ell[2] \oplus C[0]$ where $C$ is a skyscraper sheaf supported at $e_0$. The stalk of the summand $C[0]$ at $e_0$ is isomorphic to $H^2_{\text{\'{e}t}}(\mathbb{P}^1, \overline{\mathbb{Q}}_\ell)$. When we work with \'{e}tale $\mathbb{F}_p$-sheaves, $Rm_*(\mathbb{F}_p[2]) \cong \mathbb{F}_p[2]$ because $H_{\text{\'{e}t}}^i(\mathbb{P}^1, \mathbb{F}_p) = 0$ for $i > 0$. More generally, we will prove in this section that for any group $G$ the constant sheaf $\mathbb{F}_p[0]$ is preserved under the derived pushforward along any convolution morphism.
\end{exmp} 

\begin{lem} \label{convproperties}
Let $\mu_i \in X_*(T)^+$ for $i =1, \ldots, n$. Then the scheme $\Gr_{\leq \mu_\bullet}$  is projective, integral, normal, and Cohen-Macaulay. 
\end{lem}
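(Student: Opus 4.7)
The plan is to induct on $n$, exploiting an iterated fiber bundle structure on $\Gr_{\leq \mu_\bullet}$. For $n = 1$ we have $\Gr_{\leq \mu_\bullet} = \Gr_{\leq \mu_1}$; projectivity and integrality were recalled in Section \ref{GlobFReg1.1}, and normality and Cohen-Macaulayness are given by Theorem \ref{propsofgr} (the running assumption $p \nmid |\pi_1(G_{\der})|$ is in force throughout this section).

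For the inductive step, I would identify the natural projection
$$\pi \colon \Gr_{\leq \mu_\bullet} \longrightarrow \Gr_{\leq \mu_1}, \qquad [g_1, g_2, \ldots, g_n] \mapsto g_1 L^+G$$
as the fiber bundle associated to the $L^+G$-torsor $f^{-1}(\Gr_{\leq \mu_1}) \to \Gr_{\leq \mu_1}$ with fiber the $(n-1)$-fold convolution Grassmannian $\Gr_{\leq \mu_2, \ldots, \mu_n}$. Because $L^+G$ acts on each $\Gr_{\leq \mu_i}$ through the smooth affine quotient $L^m G$ for $m$ sufficiently large, this torsor descends to an $L^m G$-torsor, which is étale locally trivial; hence $\pi$ is étale locally trivial with fiber $\Gr_{\leq \mu_2, \ldots, \mu_n}$, a scheme that enjoys the four desired properties by the inductive hypothesis.

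Given this étale local triviality, the four properties pass from the fiber and base to the total space. \emph{Projectivity:} I would embed $\Gr_{\leq \mu_\bullet}$ as a closed subscheme of the projective product $\Gr_{\leq \mu_1} \times \Gr_{\leq \mu_1 + \mu_2} \times \cdots \times \Gr_{\leq \mu_1 + \cdots + \mu_n}$ via the map $[g_1, \ldots, g_n] \mapsto (g_1 L^+G, g_1 g_2 L^+G, \ldots, g_1 \cdots g_n L^+G)$. \emph{Integrality:} an étale locally trivial fiber bundle with irreducible base and irreducible fibers has irreducible total space, and reducedness is subsumed by normality. \emph{Normality} and \emph{Cohen-Macaulayness} are smooth-local on source and target, and étale-locally $\Gr_{\leq \mu_\bullet}$ is a product of a normal (resp.\ Cohen-Macaulay) $k$-variety with $\Gr_{\leq \mu_2, \ldots, \mu_n}$, which retains these properties over the algebraically closed field $k$.

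The main obstacle will be the clean identification of $\pi$ with the associated fiber bundle, including the reduction to the $L^m G$-torsor needed to guarantee étale local triviality and the accompanying bookkeeping with the ind-scheme structure. Once this geometric setup is established, the four properties follow formally from étale descent together with standard stability results for products of $k$-varieties over an algebraically closed field.
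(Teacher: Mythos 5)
Your proposal is correct and is essentially the paper's argument: the paper also inducts on $n$, realizing $\Gr_{\leq \mu_\bullet}$ as the quotient of $\Gr_{\leq \mu_1}^k \times \Gr_{\leq \mu_2,\ldots,\mu_n}$ (with $\Gr_{\leq \mu_1}^k$ the total space of the descended $L^kG$-torsor) and descending normality, Cohen-Macaulayness, and integrality along the resulting smooth cover, which is the same descent you phrase via étale local triviality of the associated bundle. Projectivity is likewise obtained from the isomorphism $LG \times^{L^+G}\cdots\times^{L^+G}\Gr \cong \Gr^n$, whose restriction is exactly your closed embedding into $\prod_i \Gr_{\leq \mu_1+\cdots+\mu_i}$.
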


\begin{proof} There is an isomorphism of ind-schemes $LG \times^{L^+G} \cdots \times^{L^+G} \Gr \cong \Gr^n$ (see \cite[1.2.14]{ZhuGra}). This implies $\Gr_{\leq \mu_\bullet}$ is projective. For the other properties we proceed by induction on the number $n$ of dominant cocharacters. Let $\lambda_i = \mu_{i+1}$ for $i = 1, \ldots, n-1$. Let $k$ be an integer large enough so that $L^+G$ acts on $\Gr_{\leq \lambda_\bullet}$ on the left through the quotient $L^kG$. Let $$\Gr_{\leq \mu_1}^k := LG \mid_{\Gr_{\leq \mu_1}} \times^{L^+G} L^kG.$$ The map $\Gr_{\leq \mu_1}^k \rightarrow \Gr_{\leq \mu_1}$ is a right $L^kG$-torsor. Thus $\Gr_{\leq \mu_1}^k$ is normal and Cohen-Macaulay because these properties are local in the \'{e}tale topology. As $\Gr_{\leq \mu_1}$ is also integral and $L^kG$ is connected then $\Gr_{\leq \mu_1}^k$ is integral. By induction $\Gr_{\leq \mu_1}^k \times \Gr_{\leq \lambda_\bullet}$ is normal, integral, and Cohen-Macaulay. Note that $\Gr_{\leq \mu_\bullet}$ is the quotient of $\Gr_{ \leq \mu_1}^k \times \Gr_{\leq \lambda_\bullet}$ by the diagonal action of $L^kG$. Hence $\Gr_{\leq \mu_\bullet}$ is normal and Cohen-Macaulay. It is integral because it is reduced and it is the image of an irreducible scheme. 
\end{proof}

Let $Y$ be a Cohen-Macaulay $k$-scheme. We say that $Y$ has \emph{rational singularities} if there exists a smooth $k$-scheme $X$ and a proper birational morphism $f \colon X \rightarrow Y$ such that $\mathcal{O}_Y \rightarrow Rf_* \mathcal{O}_X$ is an isomorphism. Various authors have considered alternative definitions of rational singularities which do not rely on the existence of a resolution of singularities. One such definition is due to Lipman-Teissier \cite{pseudorat}, which they call \emph{pseudo-rational singularities} (see also \cite[1.2]{ratsing}). We will not need the definition here. All we will need is that rational singularities are pseudo-rational (\cite[9.6]{ratsing}).

\begin{prop} \label{convrat}
Let $m \colon  \Gr_{\leq \mu_\bullet} \rightarrow \Gr_{\leq \mu}$ be the convolution map corresponding to $\mu_i \in X_*(T)^+$. Let $\mathcal{O}_{\Gr_{\leq \mu_\bullet}}$ be the structure sheaf of $\Gr_{\leq \mu_\bullet}$. Then the natural map
$$\mathcal{O}_{\Gr_{\leq \mu}} \xrightarrow{\sim} R{m}_* (\mathcal{O}_{\Gr_{\leq \mu_\bullet}})$$ is an isomorphism. 
\end{prop}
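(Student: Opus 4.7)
The approach is to recognize the desired isomorphism as an instance of a general vanishing theorem for proper birational maps onto a pseudo-rational base, and then to verify the hypotheses of such a result in our specific setting. Concretely, the plan is to appeal to the theorem of Kov\'acs \cite{ratsing} which says that if $f\colon X \to Y$ is a proper birational morphism from a Cohen-Macaulay scheme $X$ onto a scheme $Y$ with pseudo-rational singularities, then the natural map $\mathcal{O}_Y \to Rf_*\mathcal{O}_X$ is an isomorphism.

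To apply this with $f = m$, I would verify three inputs in order. First, the target $\Gr_{\leq \mu}$ has rational singularities by Theorem \ref{propsofgr} (and is even globally $F$-regular by Theorem \ref{thm1}), hence is pseudo-rational in the sense of Lipman--Teissier by the comparison \cite[9.6]{ratsing} recalled above. Second, the source $\Gr_{\leq \mu_\bullet}$ is Cohen-Macaulay by Lemma \ref{convproperties}. Third, the convolution map $m$ is proper (since $\Gr_{\leq \mu_\bullet}$ is projective) and is birational because, as recalled in the preceding discussion, it restricts to an isomorphism over the open dense orbit $\Gr_\mu \subset \Gr_{\leq \mu}$. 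Taken together, Kov\'acs's theorem produces the required isomorphism.

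The main obstacle is conceptual rather than technical: resolution of singularities is not available in positive characteristic, so one cannot reduce to the classical formulation of rational singularities by composing with a smooth cover of $\Gr_{\leq \mu_\bullet}$. The virtue of Kov\'acs's pseudo-rationality criterion is precisely that it requires only the Cohen-Macaulay property of the source, which is accessible via the inductive torsor argument underlying Lemma \ref{convproperties}, together with pseudo-rationality of the target, which is built into the known geometry of $\Gr_{\leq \mu}$ through Theorem \ref{propsofgr}. No Frobenius-splitting input is needed at this step beyond what is already encoded in those two lemmas.
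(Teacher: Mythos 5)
Your proposal is correct and follows essentially the same route as the paper: both deduce the isomorphism from Kov\'acs's theorem \cite[1.4]{ratsing} for proper birational morphisms onto a pseudo-rational target, using Theorem \ref{propsofgr} (rational, hence pseudo-rational by \cite[9.6]{ratsing}, singularities of $\Gr_{\leq \mu}$), Lemma \ref{convproperties} (Cohen-Macaulayness of $\Gr_{\leq \mu_\bullet}$), and projectivity plus birationality of $m$. No substantive difference.
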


\begin{proof}
Since $\Gr_{\leq \mu_\bullet}$ and $\Gr_{\leq \mu}$ are projective, $m$ is projective. By Theorem \ref{propsofgr}, $\Gr_{\leq \mu}$ is Cohen-Macaulay and has rational singularities. As rational singularities are pseudo-rational then the result follows from \cite[1.4]{ratsing}. 
\end{proof}

We can now prove Theorem \ref{convsimp}.

\begin{myproof2}[Proof of Theorem \ref{convsimp}]
Let $\mu = \mu_1+\mu_2$ and let $m \colon \Gr_{\leq \mu_\bullet}  \rightarrow \Gr_{\leq \mu}$ be the convolution map. By Lemma \ref{convproperties} and Theorem \ref{mainthm} the constant sheaf $\mathbb{F}_p[\dim \Gr_{\leq \mu_\bullet}]$ supported on $\Gr_{\leq \mu_\bullet}$ is perverse. From the definition of $\IC_{\mu_1} * \IC_{\mu_2}$ and the fact that the $\IC$ sheaves are constant we see that
$$\IC_{\mu_1} * \IC_{\mu_2} = Rm_*(\mathbb{F}_p[\dim \Gr_{\leq \mu_\bullet}]).$$ Thus, to show that $\IC_{\mu_1} * \IC_{\mu_2} = \IC_\mu$ it suffices to show that $Rm_* \mathbb{F}_p[0] \cong \mathbb{F}_p[0]$. This fact follows immediately from Proposition \ref{convrat} by applying $Rm_*$ to the Artin–Schreier sequence on $ \Gr_{\leq \mu_\bullet}$. Finally, for general $\mu \in X_*(T)^+$, Theorem \ref{cohvan} in the next section implies that $\dim_{\mathbb{F}_p} H(\IC_\mu) = 1$.
\end{myproof2}

\begin{cor} \label{convperv}
Let $\mathcal{F}^\bullet$, $\mathcal{G}^\bullet \in P_{L^+G}(\Gr, \mathbb{F}_p)$. Then $\mathcal{F}^\bullet * \mathcal{G}^\bullet \in P_{L^+G}(\Gr, \mathbb{F}_p)$.
\end{cor}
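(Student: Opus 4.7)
The equivariance of $\mathcal{F}^\bullet * \mathcal{G}^\bullet$ will follow immediately from its perversity together with Lemma \ref{equivfun}(ii), since the convolution morphism $m$ is proper and is $L^+G$-equivariant for the action on the leftmost factor of $LG \times^{L^+G} \Gr$. Thus the entire task reduces to showing that $\mathcal{F}^\bullet * \mathcal{G}^\bullet$ is perverse, and the plan is to argue this by induction on the sum of the lengths of $\mathcal{F}^\bullet$ and $\mathcal{G}^\bullet$ in $P_{L^+G}(\Gr, \mathbb{F}_p)$, along the same d\'evissage pattern already used in the proof of Lemma \ref{twistedprod}.

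For the base case, both $\mathcal{F}^\bullet$ and $\mathcal{G}^\bullet$ are simple, so by Theorem \ref{irreducibleobjects} they are of the form $\IC_{\mu_1}$ and $\IC_{\mu_2}$ for some $\mu_1, \mu_2 \in X_*(T)^+$. Theorem \ref{convsimp} then identifies $\IC_{\mu_1} * \IC_{\mu_2} = \IC_{\mu_1+\mu_2}$, which is perverse.

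For the inductive step, suppose without loss of generality that $\mathcal{F}^\bullet$ has length at least two, and choose a short exact sequence $0 \to \mathcal{F}_1^\bullet \to \mathcal{F}^\bullet \to \mathcal{F}_2^\bullet \to 0$ in $P_c^b(\Gr, \mathbb{F}_p)$ with both outer terms of strictly smaller length. By Proposition \ref{subquotprop} both $\mathcal{F}_i^\bullet$ in fact lie in $P_{L^+G}(\Gr, \mathbb{F}_p)$, so the induction hypothesis applies to $\mathcal{F}_i^\bullet * \mathcal{G}^\bullet$. Applying the triangulated functor $- \overset{L}{\boxtimes} \mathcal{G}^\bullet$ to the associated exact triangle yields an exact triangle in $D_c^b(\Gr \times \Gr, \mathbb{F}_p)$ whose three terms are perverse by Lemma \ref{twistedprod}. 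Functoriality of the descent constructed in that lemma (which is uniquely characterized by its pullback along the smooth cover $p$) then promotes this to an exact triangle
$$\mathcal{F}_1^\bullet \overset{\sim}{\boxtimes} \mathcal{G}^\bullet \to \mathcal{F}^\bullet \overset{\sim}{\boxtimes} \mathcal{G}^\bullet \to \mathcal{F}_2^\bullet \overset{\sim}{\boxtimes} \mathcal{G}^\bullet$$
in $D_c^b(LG \times^{L^+G} \Gr, \mathbb{F}_p)$. Since $m$ is proper, applying the triangulated functor $Rm_*$ produces an exact triangle $\mathcal{F}_1^\bullet * \mathcal{G}^\bullet \to \mathcal{F}^\bullet * \mathcal{G}^\bullet \to \mathcal{F}_2^\bullet * \mathcal{G}^\bullet$ in $D_c^b(\Gr, \mathbb{F}_p)$; by the induction hypothesis the outer two terms are perverse, and since the perverse sheaves form the heart of a $t$-structure the middle term is perverse as well.

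The only nontrivial point is confirming that the triangulated structure passes cleanly through the smooth descent that defines $\overset{\sim}{\boxtimes}$, but this is essentially formal: the descent of Corollary \ref{smoothdec} is functorial, and when applied to objects whose pullbacks under $p$ already fit into an exact triangle, it yields an exact triangle downstairs. Beyond this bookkeeping, the argument is purely a reduction to the base case, which is handled in one stroke by Theorem \ref{convsimp}.
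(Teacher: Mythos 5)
Your proposal is correct and follows essentially the same route as the paper: the base case of two simple objects is handled by Theorem \ref{convsimp} (itself proved via Proposition \ref{convrat}), the inductive step descends a short exact sequence of perverse sheaves through the twisted product and applies $Rm_*$, and equivariance comes from Lemma \ref{equivfun} since $m$ is proper. The only cosmetic difference is that you induct on the sum of the lengths while the paper runs two nested inductions (first on the length of $\mathcal{G}^\bullet$ with $\mathcal{F}^\bullet$ simple, then on the length of $\mathcal{F}^\bullet$), which amounts to the same d\'evissage.
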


\begin{proof}
We first suppose $\mathcal{F}^\bullet$ is simple and induct on the length $n$ of $\mathcal{G}^\bullet$. Pick an exact sequence
$$ 0 \rightarrow \mathcal{G}^{\bullet}_1 \rightarrow \mathcal{G}^\bullet \rightarrow \mathcal{G}_2^{\bullet} \rightarrow 0$$ in $P_{L^+G}(\Gr, \mathbb{F}_p)$ such that $\mathcal{G}^{\bullet}_1$, $\mathcal{G}^{\bullet}_2$ have length $< n$. We claim that the sequence
$$ 0 \rightarrow \mathcal{F}^\bullet \overset{\sim}{\boxtimes} \mathcal{G}^{\bullet}_1 \rightarrow \mathcal{F}^\bullet \overset{\sim}{\boxtimes} \mathcal{G}^\bullet \rightarrow \mathcal{F}^\bullet \overset{\sim}{\boxtimes}  \mathcal{G}_2^{\bullet} \rightarrow 0$$ in $P_{L^+G}(LG \times^{L^+G} \Gr, \mathbb{F}_p)$ is exact. Indeed, exactness follows because this sequence is constructed by descending an exact sequence of perverse sheaves on $LG \times \Gr$ (again, we are suppressing the ind-finite type issue with $LG \times \Gr$). Now the fact that $Rm_*(\mathcal{F}^\bullet \overset{\sim}{\boxtimes} \mathcal{G}^\bullet)$ is perverse follows by induction. A similar induction on the length of $\mathcal{F}^\bullet$ completes the proof that  $\mathcal{F}^\bullet * \mathcal{G}^\bullet$ is perverse. Finally, $\mathcal{F}^\bullet * \mathcal{G}^\bullet$ is equivariant by Lemma \ref{equivfun} since $m$ is proper. 
\end{proof} 

From the proof of Corollary \ref{convperv}, we see that the convolution product is exact:

\begin{cor}
Let $\mathcal{F}^\bullet \in P_{L^+G}(\Gr, \mathbb{F}_p)$. Then the functor 
$$P_{L^+G}(\Gr, \mathbb{F}_p) \rightarrow P_{L^+G}(\Gr, \mathbb{F}_p), \quad \quad \quad \quad \mathcal{G}^\bullet \mapsto \mathcal{F}^\bullet * \mathcal{G}^\bullet$$ is exact. The same is true when we convolve on the right by $\mathcal{F}^\bullet $. 
\end{cor}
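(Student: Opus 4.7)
The plan is to reduce exactness of convolution to exactness of the twisted product $\mathcal{F}^\bullet \overset{\sim}{\boxtimes} (-)$ combined with the fact that an exact triangle whose three vertices are perverse is automatically a short exact sequence in the perverse heart. Concretely, start with a short exact sequence
$$0 \rightarrow \mathcal{G}_1^\bullet \rightarrow \mathcal{G}^\bullet \rightarrow \mathcal{G}_2^\bullet \rightarrow 0$$
in $P_{L^+G}(\Gr, \mathbb{F}_p)$. Pulling back along the smooth quotient $p$ in (\ref{convdiagram}) is $t$-exact (up to shift), and the resulting sequence of $L^+G$-equivariant perverse sheaves on $LG \times \Gr$ descends along the $L^+G$-torsor $q$ to a short exact sequence
$$0 \rightarrow \mathcal{F}^\bullet \overset{\sim}{\boxtimes} \mathcal{G}_1^\bullet \rightarrow \mathcal{F}^\bullet \overset{\sim}{\boxtimes} \mathcal{G}^\bullet \rightarrow \mathcal{F}^\bullet \overset{\sim}{\boxtimes} \mathcal{G}_2^\bullet \rightarrow 0$$
in $P_{L^+G}(LG \times^{L^+G} \Gr, \mathbb{F}_p)$, exactly as in the proof of Corollary~\ref{convperv}.

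Applying $Rm_*$ to this short exact sequence yields an exact triangle
$$\mathcal{F}^\bullet * \mathcal{G}_1^\bullet \rightarrow \mathcal{F}^\bullet * \mathcal{G}^\bullet \rightarrow \mathcal{F}^\bullet * \mathcal{G}_2^\bullet \xrightarrow{+1}$$
in $D_c^b(\Gr, \mathbb{F}_p)$. By Corollary~\ref{convperv}, each of the three vertices is already a perverse sheaf. Taking the long exact sequence of perverse cohomology and using that $^{p}H^i$ vanishes outside degree $0$ on each term forces the connecting morphisms $\mathcal{F}^\bullet * \mathcal{G}_2^\bullet \rightarrow \mathcal{F}^\bullet * \mathcal{G}_1^\bullet[1]$ to land between perverse sheaves shifted by $\pm 1$, hence the triangle collapses to a short exact sequence in $P_{L^+G}(\Gr, \mathbb{F}_p)$. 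This gives exactness of convolution on the left by $\mathcal{F}^\bullet$.

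For the right factor, the same argument applies with the roles reversed: given a short exact sequence in the first factor, pullback along $p$ preserves exactness (the second factor is just pulled back), descent along $q$ produces an analogous short exact sequence of twisted products, and proper pushforward along $m$ combined with Corollary~\ref{convperv} again turns the resulting exact triangle into a short exact sequence. The only potentially subtle point is the descent step along $q$, but this has already been carried out in Lemma~\ref{twistedprod} and Corollary~\ref{convperv}, so there is no new obstacle; the corollary is essentially a repackaging of the constructions used to define convolution together with the perversity statement of Corollary~\ref{convperv}.
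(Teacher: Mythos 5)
Your proposal is correct and follows essentially the same route as the paper, which simply observes that exactness falls out of the proof of Corollary \ref{convperv}: the twisted products of a short exact sequence form a short exact sequence by descent, and applying $Rm_*$ yields an exact triangle whose three vertices are perverse, hence a short exact sequence in the perverse heart. Your elaboration of the right-factor case and the collapse of the triangle is exactly the intended argument.
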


We conclude this section by giving $(P_{L^+G}(\Gr, \mathbb{F}_p), *)$ the structure of a monoidal category.

\begin{thm} \label{monoidal}
There is a natural associativity constraint so that $(P_{L^+G}(\Gr, \mathbb{F}_p), *)$ is a monoidal category. 
\end{thm}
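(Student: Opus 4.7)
The plan is to construct the associativity constraint via proper base change on triple convolution Grassmannians, following the template established for $\overline{\mathbb{Q}}_\ell$-sheaves.

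First, I would extend the construction in Lemma \ref{twistedprod} to an $n$-fold twisted product
$$\overset{\sim}{\boxtimes}_i \mathcal{F}_i^\bullet \in P_{L^+G}(LG \times^{L^+G} \cdots \times^{L^+G} \Gr, \mathbb{F}_p),$$
as already alluded to in the paragraph defining the $n$-fold convolution. Perversity of the naive $n$-fold external box product on $\Gr \times \cdots \times \Gr$ follows by the same length induction used in Lemma \ref{twistedprod}, with the base case of simple objects handled by Theorem \ref{mainthm} applied to products of affine Schubert varieties (which are Cohen-Macaulay). The twisted product is then obtained via smooth descent along the iterated quotient (Corollary \ref{smoothdec}), and it inherits $L^+G$-equivariance by the same argument as in Lemma \ref{twistedprod}.

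Second, the $n$-fold convolution map $m_n$ factors through every iterated pair of double convolution maps. For $n=3$ there are two such factorizations, and combining them with proper base change applied to the torsor diagrams defining the twisted product yields canonical isomorphisms
$$(\mathcal{F}_1^\bullet * \mathcal{F}_2^\bullet) * \mathcal{F}_3^\bullet \;\xleftarrow{\;\sim\;}\; \underset{i}{\mathlarger{\mathlarger{*}}}\, \mathcal{F}_i^\bullet \;\xrightarrow{\;\sim\;}\; \mathcal{F}_1^\bullet * (\mathcal{F}_2^\bullet * \mathcal{F}_3^\bullet),$$
whose composition I take as the associator. Naturality is immediate from functoriality of the derived pushforward. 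The unit object will be $\IC_0 = \mathbb{F}_p$, the skyscraper at the base point $e_0 \in \Gr$: since the restrictions of the convolution morphism to $\{e_0\} \times^{L^+G} \Gr$ and its counterpart on the right are canonically identified with the identity on $\Gr$, one reads off canonical unit isomorphisms $\IC_0 * \mathcal{F}^\bullet \cong \mathcal{F}^\bullet \cong \mathcal{F}^\bullet * \IC_0$.

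The main obstacle is verifying the pentagon axiom (together with the triangle axiom for the unit). The natural approach is to pass to the 4-fold convolution Grassmannian and exhibit both sides of the pentagon as $Rm_{4,*}$ applied to the 4-fold twisted product, via the five distinct bracketings of $m_4$. Coherence then reduces to the uniqueness of the smooth-descent and proper-base-change isomorphisms, which in turn rests on the fact that a twisted product on $LG \times^{L^+G} \Gr$ is uniquely characterized by its pullback along the torsor $p$ in diagram (\ref{convdiagram}), together with the cocycle condition on iterated torsors as in Lemma \ref{cocycle}. This step is routine but notationally heavy, and it is where the bulk of the work lies.
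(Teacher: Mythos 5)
Your proposal follows essentially the same approach as the paper: construct $n$-fold twisted products by length induction and smooth descent, obtain the associator by factoring the triple convolution map two ways and applying proper base change, take the unit to be $\IC_0 = \mathbb{F}_p[0]$ on $\Gr_{\leq 0}$, and verify the pentagon/triangle axioms by passing to higher-fold twisted products and using the coherence of $-\overset{L}{\boxtimes}-$. The paper's proof is just a terser version of what you wrote.
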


\begin{proof}
This works the same as in the case of $\overline{\mathbb{Q}}_{\ell}$-coefficients. First, we note that the argument in the proof of Corollary \ref{convperv} shows that $\underset{i}{\mathlarger{\mathlarger{*}}} \mathcal{F}_i^\bullet \in P_{L^+G}(\Gr, \mathbb{F}_p)$ for any $\mathcal{F}_i^\bullet \in P_{L^+G}(\Gr, \mathbb{F}_p)$. By using the natural isomorphisms
$$(\mathcal{F}_1^\bullet \overset{L}{\boxtimes} \mathcal{F}^\bullet_2) \overset{L}{\boxtimes}  \mathcal{F}^\bullet_3 \cong \mathcal{F}_1^\bullet \overset{L}{\boxtimes} \mathcal{F}^\bullet_2 \overset{L}{\boxtimes}  \mathcal{F}^\bullet_3 \cong \mathcal{F}_1^\bullet \overset{L}{\boxtimes} (\mathcal{F}^\bullet_2 \overset{L}{\boxtimes}  \mathcal{F}^\bullet_2)$$ and the proper base change theorem, one can construct isomorphisms
$$(\mathcal{F}_1^\bullet * \mathcal{F}^\bullet_2) * \mathcal{F}^\bullet_3 \cong \mathcal{F}_1^\bullet * \mathcal{F}^\bullet_2 * \mathcal{F}^\bullet_3 \cong \mathcal{F}_1^\bullet * (\mathcal{F}^\bullet_2 *  \mathcal{F}^\bullet_3).$$ The unit object is the constant sheaf $IC_0 = \mathbb{F}_p[0]$ supported on the point $\Gr_{\leq 0}$. From the definitions it is clear how to construct isomorphisms $$\mathcal{F}^\bullet * IC_0 \cong \mathcal{F}^\bullet \cong IC_0 * \mathcal{F}^\bullet.$$ The coherence conditions can be checked using the corresponding properties for $- \overset{L}{\boxtimes} -$. 
\end{proof}

\subsection{Fiber functor}
In this section we study the cohomology of objects in $P_{L^+G}(\Gr, \mathbb{F}_p)$. The most important result is:

\begin{thm} \label{cohvan}
Let $\mu \in X_*(T)^+$ be a dominant cocharacter. Then
$$H^0_{\textnormal{\'{e}t}}(\Gr_{\leq \mu}, \mathbb{F}_p)= \mathbb{F}_p, \quad \quad \quad \quad  H^i_{\textnormal{\'{e}t}}(\Gr_{\leq \mu}, \mathbb{F}_p) = 0  \text{ for } i > 0.$$
\end{thm}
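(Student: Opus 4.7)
The plan is to apply the Artin--Schreier exact sequence and reduce the problem to a coherent cohomology computation on $\Gr_{\leq \mu}$, which I will then propagate through a Bott--Samelson/Demazure resolution where the answer is obvious. Concretely, the sequence
$$0 \to \mathbb{F}_p \to \mathcal{O}_{\Gr_{\leq\mu}} \xrightarrow{F-\id} \mathcal{O}_{\Gr_{\leq\mu}} \to 0$$
on the \'etale site yields a long exact sequence in \'etale cohomology, and since coherent cohomology agrees on the \'etale and Zariski sites, the theorem reduces to two claims: (a) $F-\id$ is surjective on $H^0(\Gr_{\leq\mu},\mathcal{O}_{\Gr_{\leq\mu}})$ with kernel $\mathbb{F}_p$, and (b) $H^i(\Gr_{\leq\mu},\mathcal{O}_{\Gr_{\leq\mu}})=0$ for $i>0$.

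Claim (a) is immediate from $\Gr_{\leq \mu}$ being projective and integral (so $H^0(\Gr_{\leq\mu},\mathcal{O})=k$) together with $k$ being algebraically closed of characteristic $p$, which forces $F-\id\colon k\to k$ to be surjective with kernel $\mathbb{F}_p$.

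For claim (b) I would chain together the two morphisms already appearing in the proof of Theorem \ref{thm1}. First, the \'etale-locally trivial $G/B$-bundle $\pi\colon S_w\to \Gr_{\leq\mu}$ satisfies $\pi_*\mathcal{O}_{S_w}=\mathcal{O}_{\Gr_{\leq\mu}}$, and by flat base change together with the classical vanishing $H^i(G/B,\mathcal{O}_{G/B})=0$ for $i>0$, one also gets $R^i\pi_*\mathcal{O}_{S_w}=0$ for $i>0$; the Leray spectral sequence then identifies $H^i(\Gr_{\leq\mu},\mathcal{O})\cong H^i(S_w,\mathcal{O})$. Next, pick a reduced expression $\tilde{w}$ for $w$ and apply the Bott--Samelson resolution $\pi_w\colon D_{\tilde w}\to S_w$. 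By Theorem \ref{propsofgr} and \cite[9.7(d)]{PappasRapoport}, $\pi_w$ is a rational resolution, so $R\pi_{w,*}\mathcal{O}_{D_{\tilde w}}=\mathcal{O}_{S_w}$ and $H^i(S_w,\mathcal{O})\cong H^i(D_{\tilde w},\mathcal{O})$. Finally, $D_{\tilde w}$ is an iterated $\mathbb{P}^1$-bundle starting from a point, so iterating $R\pi_*\mathcal{O}_{\mathbb{P}(E)}=\mathcal{O}_Y$ for the structure morphisms yields $H^i(D_{\tilde w},\mathcal{O})=0$ for $i>0$.

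The main obstacle is really just keeping track of the cohomological comparisons at each stage of the chain $D_{\tilde w}\to S_w\to \Gr_{\leq\mu}$; in particular, one must verify $R^i\pi_*\mathcal{O}_{S_w}=0$ for the $G/B$-bundle $\pi$, which is the only step that goes slightly beyond what is cited in Theorem \ref{thm1}. But this is a routine flat base change from $G/B$, and the remaining ingredients (Leray and the $\mathbb{P}^1$-bundle calculation) are entirely standard. As an alternative self-contained route one could invoke the global $F$-regularity of $\Gr_{\leq\mu}$ (Theorem \ref{thm1}) directly, since for projective globally $F$-regular varieties higher cohomology of the structure sheaf vanishes; I would however prefer the Demazure argument above because it is elementary and uses only facts already developed in the paper.
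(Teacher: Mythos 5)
Your proof is correct, and its skeleton --- the Artin--Schreier reduction together with $H^0(\Gr_{\leq\mu},\mathcal{O}_{\Gr_{\leq\mu}})=k$ from projectivity and integrality --- is exactly the paper's. The only divergence is in how you get $H^i(\Gr_{\leq\mu},\mathcal{O}_{\Gr_{\leq\mu}})=0$ for $i>0$: the paper simply quotes Smith's theorem that a projective globally $F$-regular variety has vanishing higher cohomology of its structure sheaf (the route you mention at the end and set aside), whereas you push the computation up the chain $D_{\tilde w}\to S_w\to\Gr_{\leq\mu}$. Your chain does work: $R^i\pi_*\mathcal{O}_{S_w}=0$ for $i>0$ follows from \'etale-local triviality, flat base change, and the classical vanishing $H^i(G/B,\mathcal{O}_{G/B})=0$; the Demazure map is a rational resolution by Pappas--Rapoport; and the iterated $\mathbb{P}^1$-bundle computation is standard. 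Two caveats. First, for general $G$ with $p\nmid|\pi_1(G_{\der})|$ the element $w$ need not lie in $W_{\af}$, and the citation of \cite[9.7(d)]{PappasRapoport} is only available for the simply connected simple case, so ``pick a reduced expression $\tilde w$ for $w$'' implicitly requires first running the same reductions (to $\tilde G_{\der}$ and to simple factors) as in the proof of Theorem \ref{thm1}; this is routine but should be said. Second, the alternative is not really more elementary than the paper's: it leans on the Pappas--Rapoport rational-resolution theorem, which is of comparable depth to global $F$-regularity, and is longer. Its genuine advantage is that it makes the vanishing visible on an explicit smooth model ($D_{\tilde w}$ is an iterated $\mathbb{P}^1$-bundle) rather than routing through Smith's vanishing theorem.
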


\begin{proof}
As $\Gr_{\leq \mu}$ is projective, $H^0(\Gr_{\leq \mu}, \mathcal{O}_{\Gr_{\leq \mu}}) = k$. Since $\Gr_{\leq \mu}$ is globally $F$-regular, $H^i(\Gr_{\leq \mu}, \mathcal{O}_{\Gr_{\leq \mu}}) = 0$ for $i > 0$ by \cite[4.3]{SmithGlobally}. The theorem now follows by considering the Artin-Schreier sequence. 
\end{proof}

\begin{cor} \label{evenodd}
Suppose $\mathcal{F}^\bullet \in P_{L^+G}(\Gr, \mathbb{F}_p)$ is supported on a connected component of $\Gr$ having strata whose dimensions have parity $n \in \mathbb{Z}/2$. Then $$R^i \Gamma(\mathcal{F}^\bullet) = 0, \quad \quad \quad \quad i \not\equiv n \pmod{2}.$$
\end{cor}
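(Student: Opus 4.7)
The plan is to induct on the length of $\mathcal{F}^\bullet$, which is finite by Theorem \ref{artinian}. The cohomology functor $R\Gamma$ is cohomological on $D_c^b(\Gr, \mathbb{F}_p)$, so given any short exact sequence $0 \to \mathcal{G}^\bullet \to \mathcal{F}^\bullet \to \mathcal{H}^\bullet \to 0$ in $P_{L^+G}(\Gr, \mathbb{F}_p)$ we obtain a long exact sequence linking $R^i\Gamma(\mathcal{G}^\bullet)$, $R^i\Gamma(\mathcal{F}^\bullet)$, $R^i\Gamma(\mathcal{H}^\bullet)$. Provided the outer two terms vanish in degrees of parity $\not\equiv p \pmod 2$, so will the middle one. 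Thus it suffices to verify the claim for simple objects.

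For the base case, suppose $\mathcal{F}^\bullet$ is simple and supported on the given component. By Theorem \ref{irreducibleobjects} we have $\mathcal{F}^\bullet \cong \IC_\mu = \mathbb{F}_p[\dim \Gr_{\leq \mu}]$ for some $\mu \in X_*(T)^+$, and $\mu$ necessarily lies in the given connected component of $\Gr$. Then
\[
R\Gamma(\IC_\mu) \;=\; R\Gamma(\Gr_{\leq \mu}, \mathbb{F}_p)[\dim \Gr_{\leq \mu}],
\]
which by Theorem \ref{cohvan} is isomorphic to $\mathbb{F}_p$ concentrated in cohomological degree $-\dim \Gr_{\leq \mu}$. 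Using $\dim \Gr_{\leq \mu} = \dim \Gr_\mu = 2\langle \rho, \mu \rangle$, this single nonzero degree has parity $\equiv p \pmod 2$ by assumption on the component.

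For the inductive step, if $\mathcal{F}^\bullet$ has length $n > 1$, choose any short exact sequence $0 \to \mathcal{G}^\bullet \to \mathcal{F}^\bullet \to \mathcal{H}^\bullet \to 0$ in $P_{L^+G}(\Gr, \mathbb{F}_p)$ with both outer terms of length $< n$; the existence of such a sequence follows from Proposition \ref{subquotprop} and Theorem \ref{artinian}. The supports of $\mathcal{G}^\bullet$ and $\mathcal{H}^\bullet$ are contained in the support of $\mathcal{F}^\bullet$, hence they too are supported on the given component, and the inductive hypothesis applies to each. Running the long exact sequence described above then gives the vanishing for $\mathcal{F}^\bullet$.

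There is no substantial obstacle here: once Theorem \ref{cohvan} is in hand and the simple objects are known to be concentrated in a single cohomological degree, the corollary reduces to the observation that the parity of $\dim \Gr_{\leq \mu} = 2\langle \rho, \mu \rangle$ is locally constant on the components of $\Gr$. The latter is because the components are indexed by cosets of the coroot lattice $Q^\vee$ in $X_*(T)$, and $\langle 2\rho, \alpha^\vee\rangle \in 2\mathbb{Z}$ for any $\alpha^\vee \in Q^\vee$ (as $\langle 2\rho, \alpha_i^\vee\rangle = 2$ for each simple coroot $\alpha_i^\vee$).
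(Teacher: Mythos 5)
Your proof is correct and follows the same route as the paper: the paper's proof is exactly "Theorem \ref{cohvan} plus induction on length," and your write-up simply fills in the details (simple objects are shifted constant sheaves concentrated in degree $-\dim\Gr_{\leq\mu}$, the long exact sequence handles the inductive step, and the parity claim for components is as you observe). No gaps.
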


\begin{proof}
This follows by Theorem \ref{cohvan} and induction on the length of $\mathcal{F}^\bullet$.
\end{proof}

\begin{thm} \label{exactfaith}
The functor  $$H: = \bigoplus_i R^i \Gamma ( - ) \colon P_{L^+G}(\Gr, \mathbb{F}_p) \rightarrow \Vect_{\mathbb{F}_p}$$ is exact and faithful. Furthermore
$$\dim_{\mathbb{F}_p} H(\mathcal{F}^\bullet) = \length \mathcal{F}^\bullet.$$
\end{thm}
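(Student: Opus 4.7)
The plan is to combine the cohomological vanishing in Theorem \ref{cohvan} with the parity concentration result in Corollary \ref{evenodd} to convert every long exact sequence in hypercohomology into a family of short exact sequences, and then bootstrap via induction on length.

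First I would reduce to a single connected component of $\Gr$. The connected components are indexed by $\pi_1(G) = X_*(T)/Q^\vee$, and a perverse sheaf (and any short exact sequence of perverse sheaves) decomposes as a direct sum of its restrictions to the components, so both exactness and the length formula may be checked one component at a time. Next I would observe that within a single component all Schubert strata have dimensions of the same parity: if $\mu - \lambda \in Q^\vee$ then $\dim \Gr_\mu - \dim \Gr_\lambda = \langle 2\rho, \mu - \lambda\rangle$ is even. By Corollary \ref{evenodd}, every $\mathcal{F}^\bullet \in P_{L^+G}(\Gr, \mathbb{F}_p)$ supported on a fixed component then has $R^i\Gamma(\mathcal{F}^\bullet)$ concentrated in a single parity $p \in \mathbb{Z}/2$.

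Given a short exact sequence $0 \to \mathcal{F}_1^\bullet \to \mathcal{F}^\bullet \to \mathcal{F}_2^\bullet \to 0$ of such sheaves on a fixed component, the associated long exact sequence
\[ \cdots \to R^i\Gamma(\mathcal{F}_1^\bullet) \to R^i\Gamma(\mathcal{F}^\bullet) \to R^i\Gamma(\mathcal{F}_2^\bullet) \to R^{i+1}\Gamma(\mathcal{F}_1^\bullet) \to \cdots \]
has connecting maps that land in $R^{i+1}\Gamma(\mathcal{F}_1^\bullet)$, which vanishes by parity. Thus every such long exact sequence splits into short exact sequences in each degree, which sum to a short exact sequence after applying $H = \bigoplus_i R^i\Gamma(-)$. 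This gives exactness of $H$.

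For the length formula I would induct on $\length \mathcal{F}^\bullet$. The simple objects are the $\IC_\mu = \mathbb{F}_p[\dim \Gr_{\leq \mu}]$ by Theorem \ref{irreducibleobjects}, and Theorem \ref{cohvan} immediately yields $\dim_{\mathbb{F}_p} H(\IC_\mu) = 1$. Given $\mathcal{F}^\bullet$ of length $n > 1$, choose any subobject $\mathcal{F}_1^\bullet \subset \mathcal{F}^\bullet$ with simple quotient; by the exactness just established and the induction hypothesis we get
\[ \dim_{\mathbb{F}_p} H(\mathcal{F}^\bullet) = \dim_{\mathbb{F}_p} H(\mathcal{F}_1^\bullet) + \dim_{\mathbb{F}_p} H(\mathcal{F}^\bullet/\mathcal{F}_1^\bullet) = (n-1) + 1 = n. \]
Finally, faithfulness is formal: if $f \colon \mathcal{F}^\bullet \to \mathcal{G}^\bullet$ is a nonzero morphism in the abelian category $P_{L^+G}(\Gr, \mathbb{F}_p)$, then $\im f$ is a nonzero perverse sheaf, hence has positive length, hence $H(\im f) \neq 0$; since $H$ is exact, $H(\im f) = \im H(f)$, so $H(f) \neq 0$.

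The main subtlety is the parity argument: one must be careful that the parity depends only on the connected component, not on the particular Schubert stratum, and this is exactly the role played by the calculation $\langle 2\rho, Q^\vee\rangle \subset 2\mathbb{Z}$. Once this is in place the rest of the proof is a routine assembly of the preceding results.
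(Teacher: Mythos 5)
Your proposal is correct and follows essentially the same route as the paper's proof, which likewise deduces exactness from the parity vanishing in Corollary \ref{evenodd} combined with the cohomological calculation in Theorem \ref{cohvan}, establishes the length formula by induction on length, and derives faithfulness formally from exactness. You have merely made explicit the details the paper leaves implicit, in particular the observation that the parity of $\dim\Gr_\mu = \langle 2\rho,\mu\rangle$ is constant on each connected component because $\langle 2\rho, Q^\vee\rangle \subset 2\mathbb{Z}$, and the mechanism by which parity vanishing forces the long exact cohomology sequence to split into short exact sequences.
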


\begin{proof}
Exactness follows by Corollary \ref{evenodd}. The statement $\dim_{\mathbb{F}_p} H(\mathcal{F}^\bullet) = \length \mathcal{F}^\bullet$ holds for $\mathcal{F}^\bullet$ simple by Theorem \ref{cohvan}. For general $\mathcal{F}^\bullet$ this statement holds by exactness and induction on the length of $\mathcal{F}^\bullet$. Finally, faithfulness follows from exactness and the fact that $H(\mathcal{F}^\bullet) \neq 0$ if $\mathcal{F}^\bullet \neq 0$. 
\end{proof}

\subsection{Extensions} \label{extsection}

We now investigate extensions between perverse sheaves in $P_{L^+G}(\Gr, \mathbb{F}_p)$. 

\begin{prop} \label{extlem}
Let $\mu$, $\lambda \in X_*(T)^+$. Then $\Ext^1(\IC_\mu, \IC_\lambda) = 0$ in the category $P_c^b(\Gr, \mathbb{F}_p)$ if any one of the following holds:
\begin{enumerate}[{\normalfont (i)}]
\item $\mu > \lambda $,
\item $\mu = \lambda$,
\item $\mu\not\geq \lambda$ and $\mu \not\leq \lambda.$
\end{enumerate}

\end{prop}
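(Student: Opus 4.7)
My plan is to identify $\Ext^1(\IC_\mu, \IC_\lambda)$ with $\Hom_{D(\Gr, \mathbb{F}_p)}(\IC_\mu, \IC_\lambda[1])$ (from the remark at the end of Section \ref{PerverseFpSheaves}) and then treat each case separately with a different adjunction argument, appealing to Theorem \ref{cohvan} and Lemma \ref{intext}.

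For case (ii) the computation is immediate on $\Gr_{\leq \mu}$:
$$\Ext^1(\IC_\mu, \IC_\mu) = \Hom_{D(\Gr_{\leq \mu})}(\mathbb{F}_p, \mathbb{F}_p[1]) = H^1_{\text{ét}}(\Gr_{\leq \mu}, \mathbb{F}_p),$$
which vanishes by Theorem \ref{cohvan}. For case (i), let $i \colon \Gr_{\leq \lambda} \hookrightarrow \Gr_{\leq \mu}$ be the closed inclusion; since $i^* \IC_\mu$ is the constant sheaf $\mathbb{F}_p[\dim \Gr_{\leq \mu}]$ on $\Gr_{\leq \lambda}$, the adjunction $i^* \dashv i_*$ gives
$$\Ext^1(\IC_\mu, \IC_\lambda) = H^{\dim \Gr_{\leq \lambda} - \dim \Gr_{\leq \mu} + 1}_{\text{ét}}(\Gr_{\leq \lambda}, \mathbb{F}_p),$$
which vanishes because $\Gr_{\leq \lambda}$ has codimension at least $2$ in $\Gr_{\leq \mu}$.

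For case (iii), choose $\nu \in X_*(T)^+$ with $\Gr_{\leq \mu} \cup \Gr_{\leq \lambda} \subseteq \Gr_{\leq \nu}$, set $Z := \Gr_{\leq \mu} \cap \Gr_{\leq \lambda}$, and denote the inclusions $i \colon \Gr_{\leq \lambda} \hookrightarrow \Gr_{\leq \nu}$, $k \colon Z \hookrightarrow \Gr_{\leq \lambda}$, and $h \colon W := \Gr_{\leq \lambda} \setminus Z \hookrightarrow \Gr_{\leq \lambda}$. Since $\lambda \not\leq \mu$, the open orbit $\Gr_\lambda$ is disjoint from $\Gr_{\leq \mu}$, so $\Gr_\lambda \subseteq W$; together with Lemma \ref{compintext} this gives $\IC_\lambda = h_{!*}(\IC_\lambda|_W)$, so by Lemma \ref{intext}(i), $Rk^! \IC_\lambda \in {}^pD^{\geq 1}(Z)$. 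Proper base change yields $i^* \IC_\mu = k_* \mathbb{F}_p[\dim \Gr_{\leq \mu}]$, and combining the adjunctions $i^* \dashv i_*$ and $k_* \dashv Rk^!$ gives
$$\Ext^1(\IC_\mu, \IC_\lambda) = \Hom_{D(Z)}(\mathbb{F}_p[\dim \Gr_{\leq \mu}], Rk^! \IC_\lambda[1]).$$
Since $\mu \not\leq \lambda$, $Z$ is a proper closed subset of the irreducible scheme $\Gr_{\leq \mu}$, so $\dim Z < \dim \Gr_{\leq \mu}$, whence $\mathbb{F}_p[\dim \Gr_{\leq \mu}] \in {}^pD^{\leq -1}(Z)$. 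Combined with $Rk^! \IC_\lambda[1] \in {}^pD^{\geq 0}(Z)$, the orthogonality of the perverse $t$-structure forces this $\Hom$ to vanish.

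The hardest part is case (iii), where the two incomparability conditions enter in completely different ways: $\lambda \not\leq \mu$ puts $\Gr_\lambda$ into $W$ (so that the intermediate-extension description of $\IC_\lambda$ controls $Rk^!\IC_\lambda$), whereas $\mu \not\leq \lambda$ forces $\dim Z < \dim \Gr_{\leq \mu}$ (so that the shifted constant sheaf on $Z$ lies strictly below perverse degree $0$). If either incomparability were dropped, one of the two perverse-degree bounds would fail and the orthogonality argument would collapse.
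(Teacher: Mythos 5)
Your proposal is correct and follows essentially the same route as the paper: the same adjunction computations in cases (i) and (ii), and in case (iii) the same choice of $\nu$, the same intersection $Z$, the same proper-base-change-plus-adjunction reduction to a $\Hom$ on $Z$, and the same perverse-degree orthogonality (the paper deduces the bound $Ri^*\IC_\mu \in {}^pD^{\leq -1}(Z)$ from the intermediate-extension characterization rather than your direct dimension count, but these are interchangeable). The only cosmetic difference is that the paper cites Lemma \ref{simplelem2} where you cite Lemma \ref{compintext} to see that $\IC_\lambda$ is the intermediate extension from the complement of $Z$; both are valid.
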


\begin{proof}
The same proof as in \cite[4.1]{RicharzGS} works. We reproduce it here for completeness.

(i): Let $i \colon \Gr_{\leq \lambda} \rightarrow \Gr_{\leq \mu}$ be the inclusion. Then by adjunction $\Ext^1(\IC_\mu, \IC_\lambda)$ is 
$$\Hom_{D_c^b(\Gr_{\leq \mu}, \mathbb{F}_p)}(\IC_\mu, Ri_* \IC_\lambda[1]) = \Hom_{D_c^b(\Gr_{\leq \lambda}, \mathbb{F}_p)}(\mathbb{F}_p[\dim \Gr_{\leq \mu}], \mathbb{F}_p[\dim \Gr_{\leq \lambda}+1]).$$ This is zero because $\dim \Gr_{\leq \lambda} + 1 < \dim \Gr_{\leq \mu}$.

(ii): If $\mu = \lambda$ then
$\Ext^1(\IC_\mu, \IC_\mu)$ is $$\Hom_{D_c^b(\Gr_{\leq \mu}, \mathbb{F}_p)}(\mathbb{F}_p[\dim \Gr_{\leq \mu}], \mathbb{F}_p[\dim \Gr_{\leq \mu}+1]) = H^1_{\text{\'{e}t}}(\Gr_{\leq \mu}, \mathbb{F}_p).$$ This group is trivial by Theorem \ref{cohvan}. 

(iii): We can assume that $\Gr_{\leq \mu}$ and $\Gr_{\leq \lambda}$ lie in the same connected component of $\Gr$. Thus we can pick a dominant cocharacter $\nu > \mu, \lambda$. This gives rise to a Cartesian diagram
$$
\xymatrix{
Z \ar[r]^{i_1'} \ar[d]^{i_2'} & \Gr_{\leq \lambda} \ar[d]^{i_2} \\
\Gr_{\leq \mu} \ar[r]^{i_1} & \Gr_{\leq \nu}.
}$$ By proper base change and the adjunction between $Ri_2^*$, $Ri_{2,*}$ the group
$\Ext^1(\IC_\mu, \IC_\lambda)$ is$$\Hom_{D_c^b(\Gr_{\leq \nu}, \mathbb{F}_p)}(Ri_{1,*} \IC_\mu, Ri_{2,*} \IC_\lambda[1]) = \Hom_{D_c^b(\Gr_{\leq \lambda}, \mathbb{F}_p)}(Ri'_{1,*} Ri_2'^* \IC_\mu, \IC_\lambda[1]). $$ By the adjunction between $Ri'_{1,*}, Ri_1'^!$ this is the same as 
$$ \Hom_{D_c^b(Z, \mathbb{F}_p)}(Ri_2'^* \IC_\mu, Ri_1'^! \IC_\lambda[1]).$$ Note that $\dim Z < \dim \Gr_{\leq \mu}$. Let $U$ be the complement of $Z$ in $\Gr_{\leq \mu}$. The restriction of $\IC_\mu$ to $U$ is nonzero because $\IC_\mu$ is a constant sheaf on $\Gr_{\leq \mu}$. Thus by Lemma \ref{simplelem2}  the perverse sheaf $\IC_\mu$ is the intermediate extension of its own restriction to $U$. Now by Lemma \ref{intext}, $$Ri_2'^* \IC_\mu \in {^pD}^{\leq -1}(Z, \mathbb{F}_p).$$ By similar reasoning $Ri_1'^! \IC_\lambda[1] \in {^pD}^{\geq 0}(Z, \mathbb{F}_p)$. Hence $\Ext^1(\IC_\mu, \IC_\lambda) = 0$. 
\end{proof}

If $G=T$ is a torus $P_{L^+T}(\Gr, \mathbb{F}_p)$ is semi-simple because then $\Gr$ is a disjoint union of points.  We are not sure when $P_{L^+G}(\Gr, \mathbb{F}_p)$ is semi-simple in general, but we can show that $P_{L^+G}(\Gr, \mathbb{F}_p)$ is not semi-simple when $G = \GL_2$. To begin we have the following observation, valid for any group $G$.

\begin{lem} \label{extcomp}
Let $\mu$, $\lambda \in X_*(T)^+$ be such that $\lambda < \mu$ and $\Gr_{\leq \lambda}$ has codimension $c$ in $\Gr_{\leq \mu}$. Then
$$\Ext_{D_c^b(\Gr, \mathbb{F}_p)}^1(\IC_{\lambda}, \IC_{\mu}) = H_{\textnormal{\'{e}t}}^c(\Gr_{\leq \mu} - \Gr_{\leq \lambda}, \mathbb{F}_p).$$
\end{lem}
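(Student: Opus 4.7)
The plan is to apply adjunction to rewrite the $\Ext^1$ group as a cohomology group on $\Gr_{\leq \lambda}$, and then extract it from the long exact sequence associated with the excision triangle, using Theorem \ref{cohvan} to kill the neighbouring terms.

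Let $d_\lambda = \dim \Gr_{\leq \lambda}$ and $d_\mu = \dim \Gr_{\leq \mu}$, so that $d_\mu - d_\lambda = c$. Let $i \colon \Gr_{\leq \lambda} \to \Gr_{\leq \mu}$ be the closed immersion and $j \colon U := \Gr_{\leq \mu} - \Gr_{\leq \lambda} \to \Gr_{\leq \mu}$ the complementary open immersion. Since $\IC_\lambda = Ri_* \mathbb{F}_p[d_\lambda]$ and $\IC_\mu = \mathbb{F}_p[d_\mu]$, the $(Ri_*, Ri^!)$-adjunction identifies
$$\Ext^1_{D_c^b(\Gr, \mathbb{F}_p)}(\IC_\lambda, \IC_\mu) = \Hom_{D_c^b(\Gr_{\leq \lambda}, \mathbb{F}_p)}(\mathbb{F}_p, Ri^! \mathbb{F}_p[c+1]) = H^{c+1}(\Gr_{\leq \lambda}, Ri^! \mathbb{F}_p).$$

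Next, apply $R\Gamma$ to the excision triangle
$$Ri_* Ri^! \mathbb{F}_p \to \mathbb{F}_p \to Rj_* Rj^* \mathbb{F}_p \xrightarrow{+1}$$
on $\Gr_{\leq \mu}$ to get a long exact sequence
$$\cdots \to H^c(\Gr_{\leq \mu}, \mathbb{F}_p) \to H^c(U, \mathbb{F}_p) \to H^{c+1}(\Gr_{\leq \lambda}, Ri^! \mathbb{F}_p) \to H^{c+1}(\Gr_{\leq \mu}, \mathbb{F}_p) \to \cdots$$
As noted before Theorem \ref{propsofgr} (and used in the proof of Proposition \ref{extlem}), if $\lambda < \mu$ then $c \geq 2 > 0$. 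Hence Theorem \ref{cohvan} forces both flanking terms $H^c(\Gr_{\leq \mu}, \mathbb{F}_p)$ and $H^{c+1}(\Gr_{\leq \mu}, \mathbb{F}_p)$ to vanish, yielding an isomorphism $H^c(U, \mathbb{F}_p) \xrightarrow{\sim} H^{c+1}(\Gr_{\leq \lambda}, Ri^! \mathbb{F}_p)$, which combined with the adjunction above gives the claim.

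There is no real obstacle here: the argument is a two-line application of adjunction and excision, and the only non-formal input is the cohomology vanishing on $\Gr_{\leq \mu}$, which we have already proved as a consequence of global $F$-regularity (Theorem \ref{cohvan}). The mild check that the codimension assumption ensures $c \geq 1$ so that both neighbouring cohomology groups lie in positive degree is essentially automatic from the structure of the Bruhat order on affine Schubert varieties.
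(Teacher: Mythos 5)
Your proof is correct and takes essentially the same approach as the paper: adjunction to reduce to $R^{c+1}\Gamma(Ri^!\mathbb{F}_p)$, the excision triangle, and the vanishing of positive-degree cohomology on $\Gr_{\leq\mu}$ from Theorem \ref{cohvan}. The only difference is that you spell out the long exact sequence and the $c\geq 2$ check more explicitly, which the paper leaves implicit.
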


\begin{proof}
Let $i \colon \Gr_{\leq \lambda} \rightarrow \Gr_{\leq \mu}$ and $j \colon \Gr_{\leq \mu} - \Gr_{\leq \lambda} \rightarrow \Gr_{\leq \mu}$ be the inclusions . By adjunction
$$\Ext^1(\IC_\lambda, \IC_\mu) =  R^0\Gamma(Ri^! \mathbb{F}_p[c+1]).$$ As $H^i(\Gr_{\leq \mu}, \mathbb{F}_p) = 0$ for $i > 0$ by Theorem \ref{cohvan}, then by considering the exact triangle $$Ri_* Ri^! \mathbb{F}_p[0] \rightarrow \mathbb{F}_p[0] \rightarrow Rj_* \mathbb{F}_p[0]$$ we get
$$R^0\Gamma(Ri^! \mathbb{F}_p[c+1]) = H^{c}_{\text{\'{e}t}}(\Gr_{\leq \mu} - \Gr_{\leq \lambda}, \mathbb{F}_p).$$ 
\end{proof}

Now let $R$ be a $k$-algebra and let $f \colon \Gr_{\leq \mu} \times \Spec(R) \rightarrow \Gr_{\leq \mu}$ be the projection. As $c>1$ a similar computation as in the proof of Lemma \ref{extcomp} shows
\begin{equation} \label{extcompeq} \Ext^1_{D_c^b(\Gr_{\leq \mu} \times \Spec(R), \mathbb{F}_p)}(Rf^* \IC_\lambda, Rf^* \IC_{\mu}) = H^c_{\text{\'{e}t}}((\Gr_{\leq \mu} - \Gr_{\leq \lambda}) \times \Spec(R), \mathbb{F}_p).
\end{equation}

\begin{prop} \label{ssexample}
Let $G = \GL_2$ and $\mu = (1,-1)$ as in Example \ref{example1}. Then $$\Ext^1_{P_{L^+G}(\Gr, \mathbb{F}_p)}(\IC_0, \IC_\mu) \neq 0.$$
\end{prop}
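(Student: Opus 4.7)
The plan is to combine Lemma~\ref{extcomp} with an explicit Artin--Schreier computation and with an equivariant pushout argument. Applying Lemma~\ref{extcomp} with $\lambda = 0 < \mu$, and noting that the codimension of $\Gr_{\leq 0} = \{e_0\}$ in $\Gr_{\leq \mu}$ is $c = \dim \Gr_{\leq \mu} = 2$, identifies $\Ext^1_{D_c^b(\Gr, \mathbb{F}_p)}(\IC_0, \IC_\mu) \cong H^2_{\text{\'{e}t}}(\Gr_\mu, \mathbb{F}_p)$. This identification is functorial in the coefficient ring by Equation~(\ref{extcompeq}), so by Proposition~\ref{equivextprop} the problem reduces to exhibiting a nonzero class on the right that is fixed by the natural $L^+G(R)$-action for every $k$-algebra $R$.

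I would first show the target is nonzero. Since $H^i_{\text{\'{e}t}}(\Gr_{\leq \mu}, \mathbb{F}_p) = 0$ for $i > 0$ by Theorem~\ref{cohvan}, the local cohomology long exact sequence gives $H^2_{\text{\'{e}t}}(\Gr_\mu, \mathbb{F}_p) \cong H^3_{e_0, \text{\'{e}t}}(\Gr_{\leq \mu}, \mathbb{F}_p)$. Applying the Artin--Schreier sequence on $\Spec A$ for $A = \mathcal{O}_{\Gr_{\leq \mu}, e_0}^{\text{sh}}$, and using the Cohen--Macaulay vanishing $H^i_{\mathfrak{m}}(A) = 0$ for $i \neq 2$ (cf.~Lemma~\ref{shprop}), identifies this with $\coker(F - 1 \colon H^2_{\mathfrak{m}}(A) \to H^2_{\mathfrak{m}}(A))$. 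For $G = \GL_2$ and $\mu = (1,-1)$, Example~\ref{example1} shows $\Gr_{\leq \mu}$ is an affine cone over a smooth conic, so $A$ has the form $k\{u,v,w\}/(uv - w^2)$, and a \v{C}ech computation with the cover $\{D(u), D(v)\}$ presents $H^2_{\mathfrak{m}}(A) = A_{uv}/(A_u + A_v)$ with $k$-basis $\{w^\epsilon/(u^iv^j) : i,j \geq 1,\ \epsilon \in \{0,1\}\}$. The class of $1/(uv)$ then lies outside the image of $F-1$: iteratively matching coefficients of $1/(u^{p^n}v^{p^n})$ in any equation $(F-1)\xi = 1/(uv)$ would force $\xi$ to include the infinite series $-\sum_{n\geq 0} 1/(u^{p^n}v^{p^n})$, which does not lie in the finite-support module $H^2_{\mathfrak{m}}(A)$.

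For the equivariance, I would apply Lemma~\ref{properties}(vii) to $\IC_\mu$: combining Proposition~\ref{locvan}(ii) with Lemma~\ref{Finv} gives ${}^pi^!\IC_\mu = 0$, while ${}^pH^1(Ri^!\IC_\mu)$ is identified with $V := H^2_{\text{\'{e}t}}(\Gr_\mu, \mathbb{F}_p)$, yielding a short exact sequence $0 \to \IC_\mu \to {}^pj_*\mathbb{F}_p[2] \to i_*V \to 0$ in $P^+(\Gr_{\leq \mu}, \mathbb{F}_p)$ that is naturally $L^+G$-equivariant (since $j$ is equivariant and the constant sheaf $\mathbb{F}_p[2]$ on the orbit $\Gr_\mu$ carries its canonical equivariance). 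Pushing out along any nonzero $L^+G$-equivariant functional $V \twoheadrightarrow \mathbb{F}_p$ then produces the desired nontrivial equivariant extension in $P_{L^+G}(\Gr, \mathbb{F}_p)$. The main obstacle is the existence of such a functional, which is not automatic because for $\mathbb{F}_p$-coefficients the action of a smooth connected group on \'{e}tale cohomology need not be trivial; the input here is Corollary~\ref{simpobjs}, which shows that the only simple $L^+G$-equivariant perverse sheaf on $\{e_0\}$ is $\IC_0$, so every finite-length $L^+G$-equivariant subquotient of $V$ has only trivial composition factors, and a suitable filtered argument should extract a nonzero trivial quotient of $V$ pairing nontrivially with the class $[1/(uv)]$.
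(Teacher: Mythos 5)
Your first step and your Artin--Schreier/local cohomology computation of the non-vanishing are correct, and this is a genuinely different route from the paper: the paper works with the graded coherent cohomology $H^1(\Gr_\mu,\mathcal{O}_{\Gr_\mu}) = \bigoplus_{m\geq 1}H^1(\Gr_{\mu_1},\mathcal{L}^m)$ directly on $\Gr_\mu$, while you work with $H^2_{\mathfrak{m}}$ of the strictly henselized local ring at $e_0$ via a \v{C}ech computation. Both prove $H^2_{\text{\'et}}(\Gr_\mu,\mathbb{F}_p)\neq 0$.

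The equivariance step, however, has a genuine gap. First, a point you seem not to notice: $V=H^2_{\text{\'et}}(\Gr_\mu,\mathbb{F}_p)$ is \emph{infinite}-dimensional. (From Artin--Schreier it is $\coker(F-\id)$ on $H^1(\Gr_\mu,\mathcal{O})=\bigoplus_{m\geq 1}W_m$ with $\dim W_m = 2m-1$ and $F\colon W_m\hookrightarrow W_{pm}$; decomposing by the $p$-adic orbits $\{p^km_0\}_k$ exhibits the cokernel as $\bigoplus_{p\nmid m_0}\varinjlim_k W_{p^km_0}$, which has unbounded dimension.) Consequently neither ${}^pj_*\mathbb{F}_p[2]$ nor $i_*V$ lies in $P_c^b$: they lie only in $P^+(\Gr_{\leq\mu},\mathbb{F}_p)$. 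But the entire equivariance apparatus of Section~\ref{EquivSheaves} — Lemma~\ref{cocycle}, Lemma~\ref{4.2.3}, Proposition~\ref{fullsub}, Proposition~\ref{subquotprop}, and crucially Corollary~\ref{simpobjs}, which you invoke — is developed for $P_c^b$, using the classification of simple objects in $D_c^b$ and the Emerton--Kisin Riemann--Hilbert correspondence. None of it is established for $P^+$, so the phrase ``naturally $L^+G$-equivariant'' applied to your short exact sequence in $P^+$ is not something the paper's framework licenses. Compounding this, the last sentence of your proposal (``a suitable filtered argument should extract a nonzero trivial quotient of $V$'') leans on Corollary~\ref{simpobjs} as if $V$ were a finite-length object with a composition series, but $V$ is an infinite-dimensional $\mathbb{F}_p$-space carrying a group action whose algebraicity you have not established; there is no Jordan--H\"older filtration to appeal to. (Two smaller issues: ``pushing out along $V\twoheadrightarrow\mathbb{F}_p$'' does not yield an extension of $\IC_0$ by $\IC_\mu$ — you want to \emph{pull back} along an embedding $\IC_0\hookrightarrow i_*V$; and the specific class $[1/(uv)]$, which sits in the $3$-dimensional degree $-2$ piece of $H^2_{\mathfrak{m}}$, is not obviously the one fixed by $L^nG(R)$.)

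The paper avoids all of this. It never forms ${}^pj_*\mathbb{F}_p[2]$. Instead it applies Proposition~\ref{equivextprop} directly: a class $s\in\Ext^1_{D_c^b}(\IC_0,\IC_\mu)=H^2_{\text{\'et}}(\Gr_\mu,\mathbb{F}_p)$ defines an equivariant extension iff its pullback to $\Gr_\mu\times\Spec(R)$ is $L^nG(R)$-fixed for all $R$ (Equation~(\ref{extcompeq})). This is checked by lifting to coherent cohomology: the graded piece $H^1(\Gr_{\mu_1},\mathcal{L})\subset H^1(\Gr_\mu,\mathcal{O}_{\Gr_\mu})$ \emph{is} a genuine finite-dimensional algebraic $L^nG$-representation, is one-dimensional, and has trivial $L^n\PGL_2$-action since $L^n\PGL_2$ has no nontrivial one-dimensional representations. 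The Frobenius/grading argument then shows this one-dimensional space injects into the cokernel of $F-\id$. That is the missing ingredient: an explicit finite-dimensional trivial sub\emph{representation} upstream of the infinite-dimensional \'etale cohomology, which is the step your proposal does not supply.
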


\begin{proof}
Let $n \geq 0$ be an integer so that $L^+G$ acts on $\Gr_{\leq \mu}$ through the quotient $L^nG$. By Proposition \ref{equivextprop} and (\ref{extcompeq}), it suffices to exhibit a nonzero class $s \in H^2_{\text{\'{e}t}}(\Gr_{\mu}, \mathbb{F}_p)$ whose image in $H^2_{\text{\'{e}t}}(\Gr_\mu \times \Spec(R), \mathbb{F}_p)$ is fixed by $L^nG(R)$ for every $k$-algebra $R$. We first exhibit a subspace of $H^1(\Gr_\mu, \mathcal{O}_{\Gr_\mu})$ whose image in $$H^1(\Gr_\mu \times \Spec(R), \mathcal{O}_{\Gr_\mu \times \Spec(R)}) = H^1(\Gr_\mu, \mathcal{O}_{\Gr_\mu}) \otimes_k R$$ is fixed pointwise by $L^nG(R)$ for every $R$.  As $H^1(\Gr_\mu, \mathcal{O}_{\Gr_\mu})$ is an algebraic representation of $L^nG$, it suffices to exhibit a trivial subrepresentation of $H^1(\Gr_\mu, \mathcal{O}_{\Gr_\mu})$.

Let $\mu_1 = (1,0)$ and $\mu_2 = (0,-1)$. Then the convolution map $m \colon \Gr_{\leq \mu_\bullet} \ \rightarrow \Gr_{\leq \mu}$ is an isomorphism over $\Gr_\mu$. Let $f \colon \Gr_\mu \rightarrow \Gr_{\mu_{1}}$ be the restriction of the projection map $\Gr_{\leq \mu_\bullet}  \rightarrow \Gr_{\mu_1}$ to $\Gr_\mu$. The space $\Gr_\mu$ is naturally isomorphic to $\underline{\Spec}_{\Gr_{\mu_1}}(\Sym \mathcal{L})$ for an invertible sheaf $\mathcal{L}$ on $\Gr_{\mu_1}$ via the map $f$. Thus as vector spaces
$$H^1(\Gr_\mu, \mathcal{O}_{\Gr_\mu}) \cong \bigoplus_{m \geq 0} H^1(\Gr_{\mu_1}, \mathcal{L}^{m}).$$

We claim that $H^1(\Gr_{\mu_1}, \mathcal{L})$ is a representation of $L^nG$. To prove this, let $g \in L^nG(k)$, which induces an automorphism $\phi_g \colon \Gr_{\mu} \rightarrow \Gr_{\mu}$. The action of $g$ on $H^1(\Gr_\mu, \mathcal{O}_{\Gr_\mu})$ is induced by an automorphism of $f_* \mathcal{O}_{\Gr_\mu} = \bigoplus_{m \geq 0} \mathcal{L}^{m}.$ There exist abstract isomorphisms $\Gr_{\mu_1} \cong \mathbb{P}^1$ and $\mathcal{L} \cong \mathcal{O}(-2)$ (see \cite[7.2]{ngores}). Thus $\Hom(\mathcal{L}, \mathcal{L}^{m}) = 0$ for $m \geq 2$. As $H^1(\Gr_{\mu_1}, \mathcal{O}_{\Gr_{\mu_1}}) = 0$, the claim follows.  

It also follows that $H^1(\Gr_{\mu_1}, \mathcal{L})$ is one-dimensional over $k$. To prove it is the trivial representation of $L^nG$, we first note that the action of $L^nG$ on $\Gr_{\mu}$ factors through $L^nG^{\text{ad}} = L^n\PGL_2$. This is an extension of $\PGL_2$ by a connected unipotent group, so every one-dimensional representation of $L^n \PGL_2$ is trivial.

Since $\Gr_{\mu}$ has an open cover by two affines then $H^2(\Gr_{\mu}, \mathcal{O}_{\Gr_\mu}) = 0$. Thus by the Artin-Schreier sequence
$$H^2_{\text{\'{e}t}}(\Gr_{\mu}, \mathbb{F}_p) = \cok(H^1(\Gr_\mu, \mathcal{O}_{\Gr_\mu})  \xrightarrow{F-\id} H^1(\Gr_\mu, \mathcal{O}_{\Gr_\mu}) ).$$ By construction the image of $H^1(\Gr_{\mu_1}, \mathcal{L}) \subset H^1(\Gr_\mu, \mathcal{O}_{\Gr_\mu})$ in $H^2_{\text{\'{e}t}}(\Gr_{\mu}, \mathbb{F}_p)$ consists of equivariant extensions. It remains to show the image is nonzero.

We will prove the stronger statement that the composition $$H^1(\Gr_{\mu_1}, \mathcal{L}) \rightarrow H^1(\Gr_\mu, \mathcal{O}_{\Gr_\mu}) \rightarrow H^2_{\text{\'{e}t}}(\Gr_{\mu}, \mathbb{F}_p)$$ is injective. First, note that $F$ acts injectively on $H^1(\Gr_\mu, \mathcal{O}_{\Gr_\mu})$, as can be computed directly from our explicit description of $f_* \mathcal{O}_{\Gr_\mu} = \bigoplus_{m \geq 0} \mathcal{L}^{m}.$ Note also that $H^1(\Gr_\mu, \mathcal{O}_{\Gr_\mu})$ is graded in positive degrees, and $F$ sends elements of degree $m$ to elements of degree $mp$. It follows that the elements of degree $1$ have trivial intersection with the image of $F-\id$. 
\end{proof}

\section{Commutativity constraint} \label{AssocSection} 
In this section we use the Beilinson-Drinfeld Grassmannians to construct a commutativity constraint on $P_{L^+G}(\Gr, \mathbb{F}_p)$. We assume $p \nmid |\pi_1(G_{\der})|$ until Remark \ref{rmrkh}. Fix a smooth curve $X$ over $k$, and let $I = \{1, \ldots, n\}$ be an ordered finite index set. For a $k$-algebra $R$, let $X_R : = X \times \Spec(R)$. If $x \colon \Spec(R) \rightarrow X$ is a morphism, let $\Gamma_x \subset X_R$ be the graph of $x$. Fix a trivial $G$-torsor $\mathcal{F}_0$ on $X$. We will also denote by $\mathcal{F}_0$ the base change of $\mathcal{F}_0$ to $X_R$ for any $k$-algebra $R$. The Beilinson-Drinfeld Grassmannian $\Gr_{X^n}$ over $X^n$ is defined by the functor
$$\Gr_{X^n} \colon R \mapsto (x, \mathcal{F}, \beta) $$ where
$$
   \left\{
     \begin{array}{lr}
       x=(x_1, \ldots, x_n) \in X^n(R), \\
       \mathcal{F} \text{ is a } G\text{-torsor on } X_R, \\
       \beta \colon \restr{ \mathcal{F}}{X_R \setminus \cup_i \Gamma_{x_i}} \xrightarrow{\sim} \restr{ \mathcal{F}_0}{X_R \setminus \cup_i \Gamma_{x_i}}.
     \end{array}
   \right.
$$
The functor $\Gr_{X^n}$ is represented by an ind-scheme of ind-finite type over $X^n$. The fiber of $\Gr_{X^n}$ over $(x_1, \ldots, x_{n}) \in X^I(k)$ is $\prod_{i=1}^m \Gr$ where $\{y_1, \ldots, y_m\} = \{x_1, \ldots, x_{n}\}$ with the $y_i$ distinct (see \cite[3.1.13]{ZhuGra}). This relationship between $\Gr_{X^n}$ and our definition of $\Gr$ is a consequence of a theorem of Beauville and Laszlo  \cite{BeauvilleLaszlo}. 

For $I = \{1,2\}$ there is a global version of the convolution diagram  (\ref{convdiagram}) whose morphisms are defined over $X^2$. 

\begin{equation} \label{globalconv}
\Gr_{X} \times\Gr_X \xleftarrow{p_I} \tilde{L}G_{X^2} \xrightarrow{q_I} \tilde{\Gr}_{X^2} \xrightarrow{m_I} \Gr_{X^2}.
\end{equation}

Before we give the functor of points of $\tilde{L}G_{X^2}$, we need some notation. If $x \colon \Spec(R) \rightarrow X$ is a morphism, let $\hat{\mathcal{O}}_{X,x}$ be the algebra underlying the formal completion of $X_R$ along $\Gamma_{x}$, and let $\hat{D}_x = \Spec(\hat{\mathcal{O}}_{X,x})$. Let $\hat{D}^{\circ}_x = \hat{D}_x \setminus \Gamma_x$. Then $\tilde{L}G_{X^2}$ is the functor
$$\tilde{L}G_{X^2} \colon R \mapsto (x, \mathcal{F}_i, \beta_i, \sigma)$$ where
$$
   \left\{
     \begin{array}{lr}
       x=(x_1, x_2) \in X^2(R), \\
       \mathcal{F}_i \text{ is a } G\text{-torsor on } X_R \text{ for } i=1, 2\\
       \beta_i \colon \restr{ \mathcal{F}_i}{X_R \setminus  \Gamma_{x_i}} \xrightarrow{\sim} \restr{ \mathcal{F}_0}{X_R \setminus \Gamma_{x_i}} \text{ for } i=1, 2 \\
       \sigma \colon \restr{\mathcal{F}_0}{\hat{D}_{x_2}} \xrightarrow{\sim} \restr{\mathcal{F}_{1}}{\hat{D}_{x_{2}}}. \\
       
     \end{array}
   \right.
$$
The map $p_I$ forgets $\sigma$. The convolution Grassmannian $\tilde{\Gr}_{X^2}$ is  the functor
$$\tilde{\Gr}_{X^2} \colon R \mapsto (x, \mathcal{F}_i, \beta_i)$$
where
$$
   \left\{
     \begin{array}{lr}
       x=(x_1, x_2) \in X^2(R), \\
       \mathcal{F}_i \text{ is a } G\text{-torsor on } X_R \text{ for } i=1, 2\\
       \beta_i \colon \restr{ \mathcal{F}_i}{X_R \setminus  \Gamma_{x_i}} \xrightarrow{\sim} \restr{ \mathcal{F}_{i-1}}{X_R \setminus \Gamma_{x_i}} \text{ for } i=1, 2.\\
     \end{array}
   \right.
$$ 
The map $q_I$ sends $(x, \mathcal{F}_i, \beta_i, \sigma)$ to $(x, \mathcal{F}_i', \beta_i')$ where $\mathcal{F}_1' = \mathcal{F}_1$ and the torsor $\mathcal{F}_2'$ is obtained by gluing $\restr{\mathcal{F}_2}{X_R \setminus \Gamma_{x_2}}$ to $\restr{\mathcal{F}_{1}'}{\hat{D}_{x_2}}$ along $\restr{\sigma}{\hat{D}^{\circ}_{x_2}} \circ \restr{\beta_2}{\hat{D}^{\circ}_{x_2}}$.
The convolution Grassmannian $\tilde{\Gr}_{X^2}$ is an ind-scheme of ind-finite type over $X^2$. The ind-scheme $\tilde{L}G_{X^2}$ is not of ind-finite type, but we will ignore this issue as we did for $LG$. See \cite[3.21]{RicharzGS} for how to overcome this technical issue. The convolution morphism $m_I$ sends $(x, \mathcal{F}_i, \beta_i)$ to $(x, \mathcal{F}_2, \restr{\beta_1}{X_R \setminus \cup_i \Gamma_{x_i}} \circ  \restr{\beta_2}{X_R \setminus \cup_i \Gamma_{x_i}})$.  

There is also a global version $L^+G_{X^n}$ of the positive loop group functor, which is represented by an ind-group scheme of ind-finite type over $X^n$ (in \cite{RicharzGS} this functor is denoted by $\mathcal{L}^+G_I$). The group $L^+G_{X^2}$ acts on the left on $\tilde{L}G_{X^2}, \tilde{\Gr}_{X^2}$, and $\Gr_{X^2}$. It also acts on $\Gr_X \times \Gr_X$ via a map $L^+G_{X^2} \rightarrow (L^+G_X)^2$. The morphisms in the convolution diagram (\ref{globalconv}) are $L^+G_{X^2}$-equivariant. We refer the reader to \cite{RicharzGS} for more details on all of these functors.

For the rest of this section we specialize to the case $X=\mathbb{A}^1$, and we set $I=\{1,2\}$. Then there are natural isomorphisms $\Gr_X = X \times \Gr$ and $L^+G_X = X \times L^+G$. Let $$p \colon \Gr_X \rightarrow \Gr$$ be the projection.  The ind-scheme structures on $\tilde{\Gr}_{X^2}$ and $\Gr_{X^2}$ are both parametrized by pairs $\mu_1$, $\mu_2 \in X_*(T)^+$. We denote the corresponding reduced closed subschemes of $\tilde{\Gr}_{X^2}$ and $\Gr_{X^2}$ by $\tilde{\Gr}_{X^2}^{\mu_\bullet}$ and $\Gr_{X^2}^{\mu_\bullet}$, respectively. The scheme $\tilde{\Gr}_{X^2}^{\mu_\bullet}$ is obtained by taking the reduced subscheme structure on 
$$q_I (p_I^{-1} (p^{-1}(\Gr_{\leq \mu_1}) \times p^{-1}(\Gr_{\leq \mu_2})) ) \subset \tilde{\Gr}_{X^2}.$$ The scheme ${\Gr}_{X^2}^{\mu_\bullet}$ is the image of $\tilde{\Gr}_{X^2}^{\mu_\bullet}$ under $m_I$ with the reduced subscheme structure. The map $m_I \colon \tilde{\Gr}_{X^2}^{\mu_\bullet} \rightarrow \Gr_{X^2}^{\mu_\bullet}$ is proper and birational. Our discussion in Remark \ref{indequiv} applies, and it makes sense to consider the categories
$$P_{L^+G_{X^2}}(\Gr_{X^2}, \mathbb{F}_p), \quad \quad \quad \quad P_{L^+G_{X^2}}(\tilde{\Gr}_{X^2}, \mathbb{F}_p).$$

\begin{rmrk} \label{fiberrmrk}

Let $U \subset X^2$ be the locus where the coordinates are pairwise distinct, and let $\Delta \colon X \rightarrow X^2$ be the diagonal embedding. Then we have a diagram with Cartesian squares
$$ \xymatrix{
\Gr_X \ar[r]^{i_I} \ar[d]  & \Gr_{X^2} \ar[d] & \restr{\Gr_X^2}{U} \ar[l]_{j_I} \ar[d] \\
X \ar[r]^{\Delta} & X^2 & U \ar[l]
} $$
By the methods introduced by Zhu in \cite[1.2.4]{ZhuAffine}, one can show that for $\mu_1$, $\mu_2 \in X_*(T)^+$ the fiber of $ \Gr_{X^2}^{\mu_\bullet} \rightarrow X^2$ over $\Delta$ is $X \times \Gr_{\leq \mu_1 + \mu_2}$. In particular this fiber is reduced, and hence the Cohen-Macaulayness of $\Gr_{X^2}^{\mu_\bullet} $ follows from that of $\Gr_{\leq \mu_1}$, $\Gr_{\leq \mu_2}$, and $\Gr_{\leq \mu_1 + \mu_2}$. See also \cite[3.1.14]{ZhuGra}.
\end{rmrk}
 
Finally, we observe that the group $\mathbb{G}_a = X$ also acts on $\Gr_X = X \times \Gr$. More generally, the group $\mathbb{G}_a$ acts on $\tilde{L}G_{X^2}$, $\tilde{\Gr}_{X^2}$, and $\Gr_{X^2}$ by diagonal translation on the coordinates. The action of $\mathbb{G}_a $ on these objects is relative to the ground field $k$ instead of $X^2$, and this action also preserves the schemes $\tilde{\Gr}_{X^2}^{\mu_\bullet}$ and $\Gr_{X^2}^{\mu_\bullet}$. The morphisms $p_I$,  $q_I$, and $m_I$ in the convolution diagram (\ref{globalconv}) are equivariant for this action of $\mathbb{G}_a$. With these preliminary facts in mind we are now ready to start constructing the commutativity constraint.

\begin{lem}[cf. {\cite[{3.27 (i)}]{RicharzGS}}] \label{essentialimage}
The functor $Rp^*[1]$ induces a fully faithful functor
$$Rp^*[1] \colon P_{L^+G}(\Gr, \mathbb{F}_p) \rightarrow P_{L^+G_X}(\Gr_X, \mathbb{F}_p)$$ whose essential image consists of those $\mathbb{G}_a$-equivariant objects in $P_{L^+G_X}(\Gr_X, \mathbb{F}_p)$. 
\end{lem}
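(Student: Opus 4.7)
My plan is to prove the three claims (well-definedness, full faithfulness, and identification of the essential image) in turn, each by invoking a lemma already established in the paper.

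For $\mathcal{F}^\bullet \in P_{L^+G}(\Gr, \mathbb{F}_p)$, the complex $Rp^*[1]\mathcal{F}^\bullet$ is perverse by Lemma \ref{smoothp}; it is $L^+G_X$-equivariant by Lemma \ref{equivfun}, since $p$ is $L^+G_X$-equivariant once $L^+G_X$ is made to act on $\Gr$ through its projection $L^+G_X \to L^+G$; and it is $\mathbb{G}_a$-equivariant by another application of Lemma \ref{equivfun}, since $p$ is invariant under the $\mathbb{G}_a$-translation on $X$. Full faithfulness of the underlying functor $P_c^b(\Gr, \mathbb{F}_p) \to P_c^b(\Gr_X, \mathbb{F}_p)$ is Lemma \ref{fullyflem} applied with $Y = \mathbb{A}^1$; since the equivariant categories are full subcategories by Proposition \ref{fullsub}, this passes to the restricted functor.

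For the essential image, one direction is handled above. Conversely, given a $\mathbb{G}_a$-equivariant $\mathcal{G}^\bullet \in P_{L^+G_X}(\Gr_X, \mathbb{F}_p)$, I will identify $X = \mathbb{A}^1$ with $\mathbb{G}_a$ to put $\Gr_X = \mathbb{G}_a \times \Gr$ in the setting of Lemma \ref{4.2.3}. Applied levelwise on each $\Gr_{\leq \mu}$, that lemma produces a unique $\mathcal{F}^\bullet = Ri_0^*[-1]\mathcal{G}^\bullet \in P_c^b(\Gr, \mathbb{F}_p)$ with $Rp^*[1]\mathcal{F}^\bullet \cong \mathcal{G}^\bullet$, where $i_0 \colon \Gr \to \Gr_X$ is the inclusion of the fiber over $0 \in X$.

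The main obstacle I anticipate is then descending the $L^+G_X$-equivariance of $\mathcal{G}^\bullet$ to the $L^+G$-equivariance of $\mathcal{F}^\bullet$. Base change along $i_0$ fits into a Cartesian square
$$
\xymatrix{
L^+G \times \Gr \ar[r]^{j_0} \ar[d] & L^+G_X \times_X \Gr_X \ar[d] \\
\Gr \ar[r]^{i_0} & \Gr_X
}
$$
in which the vertical arrows may be chosen to be either the action or the projection. Applying $Rj_0^*[-1]$ (the inverse of $Rp'^*[1]$ for $p' \colon L^+G_X \times_X \Gr_X \to L^+G \times \Gr$ the top-right projection, in the sense of Lemma \ref{4.2.3}) to the equivariance isomorphism $R\rho_X^*\mathcal{G}^\bullet \cong R\pi_X^*\mathcal{G}^\bullet$, and using the compatibility of pullbacks along Cartesian squares, yields the required isomorphism $R\rho^*\mathcal{F}^\bullet \cong R\pi^*\mathcal{F}^\bullet$. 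I will carry out all these constructions on the finite-dimensional quotients $L^nG$ and $L^nG_X = X \times L^nG$ at each level $\Gr_{\leq \mu}$, so that all lemmas apply on schemes of finite type; this poses no additional difficulty.
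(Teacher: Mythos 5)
Your proposal is correct and follows essentially the same route as the paper: both rest on Lemma \ref{4.2.3} to recognize the image of $Rp^*[1]$ inside $P_c^b(\Gr_X,\mathbb{F}_p)$ as the $\mathbb{G}_a$-equivariant objects, and both transfer the $L^+G$- versus $L^+G_X$-equivariance by chasing the commutative square relating $(\rho_X,\pi_X)$ on $L^+G_X\times_X\Gr_X$ to $(\rho,\pi)$ on $L^+G\times\Gr$. One small imprecision: you phrase the $L^+G_X$-equivariance of $Rp^*[1]\mathcal{F}^\bullet$ as an application of Lemma \ref{equivfun} to a map "$L^+G_X$-equivariant once $L^+G_X$ acts on $\Gr$"; but $L^+G_X$ is a group over $X$ and $\Gr$ is not an $X$-scheme, so $p$ is not an $L^+G_X$-equivariant morphism in the sense that lemma requires. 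The correct formulation is to let $L^+G$ (as a $k$-group) act on $\Gr_X$ via the second factor, observe that this $L^+G$-equivariance over $k$ agrees with $L^+G_X$-equivariance over $X$ since $L^+G\times_k\Gr_X = L^+G_X\times_X\Gr_X$, and then apply Lemma \ref{equivfun} with $G=L^+G$ — or simply perform the diagram chase directly, as in the text. Your treatment of the inverse direction, applying $Rj_0^*[-1]$ to the cocycle isomorphism and using commutativity of the square with $i_0$, matches the paper's "similar diagram" and is correct.
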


\begin{proof}
By Lemma \ref{4.2.3} the functor $Rp^*[1] \colon P_c^b(\Gr, \mathbb{F}_p) \rightarrow P_c^b(\Gr_X, \mathbb{F}_p)$ is fully faithful and has essential image consisting of $\mathbb{G}_a$-equivariant objects in $P_c^b(\Gr_X, \mathbb{F}_p)$. To see that the image of an $L^+G$-equivariant perverse sheaf is $L^+G_X$-equivariant, note that we have a diagram
$$\xymatrix{
L^+G_X \times_X \Gr_X \ar[r] \ar[d]^{\rho_X} & L^+G \times \Gr \ar[d]^{\rho} \\
\Gr_X \ar[r]^p & \Gr
}$$
Here the horizontal maps are the projections and the vertical maps are the action maps. We can get another commutative diagram by replacing the vertical maps with the projections onto $\Gr_X$ and $\Gr$. Now it is a simple diagram chase to show that the image of an $L^+G$-equivariant perverse sheaf is $L^+G_X$-equivariant. A similar diagram allows one to show that the inverse to $Rp^*[1]$ as defined in Lemma \ref{4.2.3} sends $L^+G_X$-equivariant perverse sheaves to $L^+G$-equivariant perverse sheaves.
\end{proof}

We denote the essential image of the functor $Rp^*[1]$ in Lemma \ref{essentialimage} by $P_{L^+G_X}(\Gr_X, \mathbb{F}_p)^{\mathbb{G}_a}.$ 
 
\begin{lem} \label{BDprops}
The scheme $\tilde{\Gr}_{X^2}^{\mu_\bullet}$ is integral, normal, and Cohen-Macaulay. 
\end{lem}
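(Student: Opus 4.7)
The plan is to adapt the strategy of Lemma \ref{convproperties} to the Beilinson--Drinfeld setting, exploiting the fact that, étale-locally, $\tilde{\Gr}_{X^2}^{\mu_\bullet}$ looks like a product of Schubert varieties with $X^2$, and using the parallel torsor structures on $p_I$ and $q_I$.

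First I would choose an integer $k$ large enough that the actions of $L^+ G_{X^2}$ on the Schubert varieties in question factor through a smooth connected affine group scheme $L^k G_{X, x_2}$ of finite type over $X$, whose geometric fibers are isomorphic to $L^k G$. Following the technical convention of \cite{RicharzGS}, replace $\tilde{L}G_{X^2}$ by its corresponding truncation $\tilde{L}G_{X^2}^{\mu_\bullet, k}$, so that the restriction of $p_I$ becomes an $L^k G_{X, x_2}$-torsor $\tilde{L}G_{X^2}^{\mu_\bullet, k} \to \Gr_X^{\mu_1} \times \Gr_X^{\mu_2}$, and similarly $q_I$ becomes an $L^k G_{X, x_2}$-torsor $\tilde{L}G_{X^2}^{\mu_\bullet, k} \to \tilde{\Gr}_{X^2}^{\mu_\bullet}$.

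Next I would verify the three properties for $\tilde{L}G_{X^2}^{\mu_\bullet, k}$. Since $X = \mathbb{A}^1$ we have $\Gr_X^{\mu_i} \cong X \times \Gr_{\leq \mu_i}$, and since $\Gr_{\leq \mu_i}$ is integral, normal, and Cohen--Macaulay by Theorem \ref{propsofgr}, the same holds for the product $\Gr_X^{\mu_1} \times \Gr_X^{\mu_2}$ over the algebraically closed field $k$. Because the $L^k G_{X, x_2}$-torsor $p_I$ is smooth with geometrically connected fibers, Cohen--Macaulayness and normality ascend to $\tilde{L}G_{X^2}^{\mu_\bullet, k}$ as étale-local properties, while irreducibility ascends via connectedness of the group fibers; combined with reducedness of the torsor total space over a reduced base, this gives integrality.

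Finally I would descend along the second torsor $q_I \colon \tilde{L}G_{X^2}^{\mu_\bullet, k} \to \tilde{\Gr}_{X^2}^{\mu_\bullet}$. Normality and Cohen--Macaulayness are étale-local and hence descend along the smooth cover $q_I$. Integrality follows since $\tilde{\Gr}_{X^2}^{\mu_\bullet}$ is reduced by definition and is the image of the irreducible scheme $\tilde{L}G_{X^2}^{\mu_\bullet, k}$.

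The main technical point is to carry out the truncation setup faithfully, since $\tilde{L}G_{X^2}$ is not of ind-finite type; once one adopts the conventions in \cite{RicharzGS} so that $p_I$ and $q_I$ become actual torsors under the same finite-type group scheme, the rest of the argument is a direct transcription of the proof of Lemma \ref{convproperties} to the global setting.
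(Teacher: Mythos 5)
Your proposal is correct and follows essentially the same route as the paper: normality and Cohen--Macaulayness are transferred through the global convolution diagram because they are smooth-local (ascend along the torsor $p_I$ from the product of Schubert varieties, descend along $q_I$), and integrality follows because $p_I^{-1}(p^{-1}(\Gr_{\leq\mu_1})\times p^{-1}(\Gr_{\leq\mu_2}))$ is irreducible, being a torsor with geometrically connected fibers over an irreducible base, while $\tilde{\Gr}_{X^2}^{\mu_\bullet}$ is reduced by definition. The only difference is that you spell out the finite-type truncation explicitly, which the paper relegates to a parenthetical remark.
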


\begin{proof}
The $\Gr_{\leq \mu_i}$ satisfy these properties by Theorem \ref{propsofgr}. Thus the scheme $\tilde{\Gr}_{X^2}^{\mu_\bullet}$ is normal and Cohen-Macaulay because these properties are local in the smooth topology (see the global convolution diagram (\ref{globalconv})). To show that $\tilde{\Gr}_{X^2}^{\mu_\bullet}$ is integral, it suffices to show that $p_I^{-1} (p^{-1}(\Gr_{\leq \mu_1}) \times p^{-1}(\Gr_{\leq \mu_2}))$ is irreducible. This fact follows because $p^{-1}(\Gr_{\leq \mu_1}) \times p^{-1}(\Gr_{\leq \mu_2})$ is irreducible and $p_I$ is a torsor for a smooth group scheme having geometrically connected fibers over $X^2$ (we again need to make a technical modification so that everything is of finite type). 
\end{proof}

\begin{thm} \label{BDprops2}
The scheme ${\Gr}_{X^2}^{\mu_\bullet}$ is integral, $F$-rational, and has pseudo-rational singularities. 
\end{thm}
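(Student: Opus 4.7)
The plan is to handle the three claims in increasing order of difficulty. First, integrality follows directly from what is already in hand: since $\tilde{\Gr}_{X^2}^{\mu_\bullet}$ is integral by Lemma \ref{BDprops} and $m_I$ is proper and surjective onto the reduced scheme $\Gr_{X^2}^{\mu_\bullet}$ (by definition $\Gr_{X^2}^{\mu_\bullet}$ is the image of $m_I$ equipped with its reduced structure), the image is irreducible, and combined with reducedness this gives integrality. Cohen-Macaulayness is already recorded in Remark \ref{fiberrmrk}, and pseudo-rationality will be obtained at the end as a formal consequence of $F$-rationality in characteristic $p>0$ via a theorem of Smith. The substantive content is therefore the $F$-rationality of $\Gr_{X^2}^{\mu_\bullet}$.

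For $F$-rationality I would first verify that the projection $\pi\colon \Gr_{X^2}^{\mu_\bullet} \to X^2$ is flat. Using Remark \ref{fiberrmrk} together with the dimension formula $\dim \Gr_{\leq \nu} = 2\langle \rho,\nu\rangle$, the fiber over a point on the diagonal (which is $\Gr_{\leq \mu_1+\mu_2}$) and the fiber over a point in the open locus $U$ of pairwise distinct coordinates (which is étale-locally $\Gr_{\leq \mu_1} \times \Gr_{\leq \mu_2}$) have the same dimension $2\langle \rho,\mu_1+\mu_2\rangle$. Since the total space is Cohen-Macaulay and the base $X^2$ is regular, miracle flatness then yields flatness of $\pi$. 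I would then observe that each closed fiber is globally $F$-regular: over the diagonal by Theorem \ref{thm1} directly, and over $U$ by Theorem \ref{thm1} combined with Lemma \ref{prodlem1}. In particular, every closed fiber is $F$-rational, so it remains to propagate this fiberwise property to the total space.

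The main obstacle will be passing from fiberwise $F$-rationality to $F$-rationality of the total space. The plan is to invoke a suitable ascent result for $F$-rationality in flat families over a regular base (in the spirit of work of Aberbach--Enescu or Hashimoto), or alternatively to argue directly using the characterization of $F$-rationality via local cohomology as a simple $A[F]$-module from Theorem \ref{FRatDef}: at each closed point of $\Gr_{X^2}^{\mu_\bullet}$ one reduces to the completed local ring, and the fiberwise information together with flatness over the two-dimensional regular base $X^2$ should propagate the $A[F]$-simplicity of top local cohomology from the special fiber to the total space. Once $F$-rationality is established, the Cohen-Macaulay excellent local rings of $\Gr_{X^2}^{\mu_\bullet}$ are pseudo-rational by Smith's theorem, which completes the proof.
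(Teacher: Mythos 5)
Your overall strategy coincides with the paper's: integrality as the reduced image of the integral scheme $\tilde{\Gr}_{X^2}^{\mu_\bullet}$, $F$-rationality deduced from the $F$-rationality of the closed fibers over $X^2$ (which are globally $F$-regular by Theorem \ref{thm1} and Lemma \ref{prodlem1}), and pseudo-rationality from Smith's theorem \cite[3.1]{SmithFrat}. The fiber-dimension count and the reduction to closed points are also as in the paper.

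The one step you leave unresolved is exactly the step that carries the weight, and your framing of it points in a slightly wrong direction. What is needed is not an ``ascent result for $F$-rationality in flat families over a regular base'' (results of that flavor typically concern generic fibers, openness of the $F$-rational locus, or smooth/\'etale base change, and do not give what you want here); it is the \emph{deformation} property of $F$-rationality, namely Hochster--Huneke \cite[Thm.\ 4.2]{Frationalloc}: for an excellent local ring $(A,\mathfrak m)$ that is a homomorphic image of a Cohen--Macaulay ring, if $x_1,\dots,x_r$ is a regular sequence and $A/(x_1,\dots,x_r)$ is $F$-rational, then $A$ is $F$-rational. This is the same reference the paper invokes both to localize at closed points and to pass from the fiber to the total space. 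Once you have it, your miracle-flatness detour is unnecessary: Remark \ref{fiberrmrk} already shows each closed fiber is reduced and cut out locally by a regular sequence on the Cohen--Macaulay scheme $\Gr_{X^2}^{\mu_\bullet}$ (first the equation of the diagonal, resp.\ of $U$'s complement's equations, then a parameter along the fiber direction of the base), and the deformation theorem applied to that regular sequence finishes the proof. Your alternative ``direct argument'' via simplicity of $H^d_{\mathfrak m}(A)$ as an $A[F]$-module is morally how such deformation statements are proved, but as written (``should propagate'') it is not a proof; I would replace it with the citation.
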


\begin{proof}
$\Gr_{X^2}^{\mu_\bullet}$ is irreducible because it is the image of the irreducible scheme $\tilde{\Gr}_{X^2}^{\mu_\bullet}$. As we are always working with reduced schemes, then ${\Gr}_{X^2}^{\mu_\bullet}$ is integral. To prove ${\Gr}_{X^2}^{\mu_\bullet}$ is $F$-rational, we first note that it suffices to show the local rings at every closed point are $F$-rational \cite[4.2]{Frationalloc}. By \emph{loc. cit.}, $F$-rationality may be proved after passing to the quotient by a regular sequence. The fiber of ${\Gr}_{X^2}^{\mu_\bullet}$ over a closed point in $X^2$ is either isomorphic to  $\Gr_{\leq \mu_1} \times \Gr_{\leq \mu_2}$ or $\Gr_{\leq \mu_1 + \mu_2}$ (see Remark \ref{fiberrmrk}). These fibers are globally $F$-regular by Theorem \ref{thm1} and Lemma \ref{prodlem1}, and in particular, $F$-rational. The same remark also implies that each of these fibers is cut out locally by a regular element on $\Gr_{X^2}^{\mu_\bullet}$, so this proves ${\Gr}_{X^2}^{\mu_\bullet}$ is $F$-rational. Finally, $F$-rational singularities are pseudo-rational by \cite[3.1]{SmithFrat}.
\end{proof}

\begin{prop} \label{globconvrat}
The global convolution morphism $m_I \colon \tilde{{\Gr}}_{X^2}^{\mu_\bullet} \rightarrow {\Gr}_{X^2}^{\mu_\bullet}$ satisfies $$Rm_{I,*}(\mathbb{F}_p[0]) \cong \mathbb{F}_p[0].$$
\end{prop}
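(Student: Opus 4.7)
The plan is to mirror the argument for Proposition \ref{convrat} in the global setting, replacing the fiberwise input of Theorem \ref{propsofgr} by the global input of Theorem \ref{BDprops2}. First I would reduce the $\mathbb{F}_p$-statement to a coherent one. Applying $Rm_{I,*}$ to the Artin--Schreier short exact sequence on $\tilde{\Gr}_{X^2}^{\mu_\bullet}$,
\[ 0 \to \mathbb{F}_p \to \mathcal{O}_{\tilde{\Gr}_{X^2}^{\mu_\bullet}} \xrightarrow{F-\mathrm{id}} \mathcal{O}_{\tilde{\Gr}_{X^2}^{\mu_\bullet}} \to 0, \]
and comparing with the corresponding Artin--Schreier sequence on $\Gr_{X^2}^{\mu_\bullet}$, it suffices to verify that the natural adjunction map
\[ \mathcal{O}_{\Gr_{X^2}^{\mu_\bullet}} \longrightarrow Rm_{I,*}\, \mathcal{O}_{\tilde{\Gr}_{X^2}^{\mu_\bullet}} \]
is an isomorphism.

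To establish this coherent isomorphism I would invoke Kovács's theorem \cite[1.4]{ratsing}, exactly as in the proof of Proposition \ref{convrat}. The hypotheses are already in place from the preceding results. The morphism $m_I$ is proper: étale-locally on $X^2$ the global convolution Grassmannian $\tilde{\Gr}_{X^2}^{\mu_\bullet}$ pulls back from the classical convolution diagram, and the classical convolution morphism $\Gr_{\leq \mu_\bullet} \to \Gr_{\leq \mu_1+\mu_2}$ is projective. It is birational because over the open locus $U \subset X^2$ where the two coordinates are distinct, $m_I$ restricts to an isomorphism, and both source and target are integral by Lemma \ref{BDprops} and Theorem \ref{BDprops2}. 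The source $\tilde{\Gr}_{X^2}^{\mu_\bullet}$ is Cohen--Macaulay by Lemma \ref{BDprops}, and the target $\Gr_{X^2}^{\mu_\bullet}$ has pseudo-rational singularities by Theorem \ref{BDprops2}. These are precisely the hypotheses of \cite[1.4]{ratsing}, which then delivers the required isomorphism.

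The substantive obstacle has in fact been absorbed into the preparatory Theorem \ref{BDprops2}: the nontrivial point is that pseudo-rationality holds for the \emph{global} model $\Gr_{X^2}^{\mu_\bullet}$, not just fiberwise, and this was handled there by deducing $F$-rationality from fiberwise global $F$-regularity (using that the special fibers $\Gr_{\leq \mu_1} \times \Gr_{\leq \mu_2}$ and $\Gr_{\leq \mu_1+\mu_2}$ are cut out by a regular element and are themselves globally $F$-regular by Theorem \ref{thm1} and Lemma \ref{prodlem1}). Once that global pseudo-rationality is granted, the Kovács argument proceeds verbatim and the Artin--Schreier comparison finishes the proof.
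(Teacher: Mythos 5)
Your proof is correct but takes a genuinely different route from the paper's. You argue globally: after the Artin--Schreier reduction you invoke Kovács \cite[1.4]{ratsing} directly on $m_I\colon \tilde{\Gr}_{X^2}^{\mu_\bullet}\to \Gr_{X^2}^{\mu_\bullet}$, using the Cohen--Macaulayness of the source from Lemma \ref{BDprops} and, crucially, the pseudo-rationality of the \emph{global} target established in Theorem \ref{BDprops2}. The paper instead avoids using Theorem \ref{BDprops2} here at all. Its proof is fiberwise: since $m_I$ is proper, $Rm_{I,*}(\mathbb{F}_p[0])$ has constructible cohomology, so the adjunction map $\phi\colon \mathbb{F}_p[0]\to Rm_{I,*}(\mathbb{F}_p[0])$ can be checked over closed points of $X^2$; over $U$ the map $m_I$ is an isomorphism, and over the diagonal the fiber of $m_I$ is the classical convolution morphism, which preserves the constant sheaf by Proposition \ref{convrat}. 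So the paper only needs the classical (already-proved) pseudo-rationality input, packaged as Proposition \ref{convrat}, and saves Theorem \ref{BDprops2} for the later, harder Lemma \ref{mainlemconv} (where a global intermediate-extension statement really does require the global $F$-rationality). Your route is shorter and more uniform, at the cost of front-loading the global geometric input; the paper's route is more economical with hypotheses at this point in the development. Both are valid given the order in which \ref{BDprops}, \ref{BDprops2}, and \ref{globconvrat} appear, so there is no circularity in your argument.

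One small imprecision worth flagging: your justification that $m_I$ is proper (``étale-locally on $X^2$ it pulls back from the classical convolution diagram'') is not quite how the paper presents it, and over $U$ the morphism is simply an isomorphism rather than a pullback of a classical convolution. Properness of $m_I$ is a standing fact the paper asserts without proof, and it can be seen, e.g., from the explicit presentation of $\tilde{\Gr}_{X^2}^{\mu_\bullet}$ as a fibration with proper fibers over $\Gr_{X^2}^{\mu_\bullet}$. This does not affect the correctness of your argument, but the phrasing should be tightened if written up.
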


\begin{proof}
    By adjunction there is a natural map $\phi \colon \mathbb{F}_p[0] \rightarrow Rm_{I,*}(\mathbb{F}_p[0])$. Since $m_I$ is proper then $Rm_{I,*}(\mathbb{F}_p[0])$ has constructible cohomology sheaves. In particular, it suffices to check $\phi$ is an isomorphism over closed points in $X^2$. The fiber of the convolution morphism $m_I$ over a closed point in the diagonal in $X^2$ is a convolution morphism for $\Gr$. By Proposition \ref{convrat} and the Artin-Schreier sequence, the convolution morphisms for $\Gr$ preserve the constant sheaf. As $m_I$ is an isomorphism over $U$ then we are done. 
\end{proof}

We now construct a convolution product on $P_{L^+G_X}(\Gr_X, \mathbb{F}_p)^{\mathbb{G}_a}.$ Recall the notation from the global convolution diagram (\ref{globalconv}).  

\begin{lem} \label{convlemma2}
Let $\mathcal{F}^\bullet_1$, $\mathcal{F}^\bullet_2 \in P_{L^+G_X}(\Gr_X, \mathbb{F}_p)^{\mathbb{G}_a}$. Then there is a unique perverse sheaf $$\mathcal{F}^\bullet_1 \overset{\sim}{\boxtimes}\mathcal{F}^\bullet_2 \in P_{L^+G_{X^2}}(\tilde{\Gr}_{X^2}, \mathbb{F}_p)$$ such that
$$p_I^* (\mathcal{F}^\bullet_1 {\boxtimes}\mathcal{F}^\bullet_2) \cong q_I^*(\mathcal{F}^\bullet_1 \overset{\sim}{\boxtimes}\mathcal{F}^\bullet_2 ).$$
The perverse sheaf $\mathcal{F}^\bullet_1 \overset{\sim}{\boxtimes}\mathcal{F}^\bullet_2$ is $\mathbb{G}_a$-equivariant.
\end{lem}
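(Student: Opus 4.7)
The plan is to mimic the strategy of Lemma \ref{twistedprod}, with the added bookkeeping of the $X^2$-base and the $\mathbb{G}_a$-action. First, I would verify that $\mathcal{F}^\bullet_1 \overset{L}{\boxtimes} \mathcal{F}^\bullet_2$ is a perverse sheaf on $\Gr_X \times \Gr_X$. By Theorem \ref{irreducibleobjects} and Lemma \ref{essentialimage}, the simple objects of $P_{L^+G_X}(\Gr_X, \mathbb{F}_p)^{\mathbb{G}_a}$ are pullbacks under $Rp^*[1]$ of the $\IC_\mu$, so they are constant sheaves (placed in the correct degree) on $X \times \Gr_{\leq \mu}$. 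Since $X$ is smooth and the $\Gr_{\leq \mu}$ are Cohen-Macaulay, these schemes are Cohen-Macaulay and equidimensional, and so is any product of two of them. Hence Theorem \ref{mainthm} shows that the box product of two simple objects is perverse, and an induction on the length of $\mathcal{F}^\bullet_1$ and $\mathcal{F}^\bullet_2$ exactly as in Lemma \ref{twistedprod}, using exactness of box products with a fixed factor on one side (applied to short exact sequences coming from Proposition \ref{subquotprop}), gives perversity of $\mathcal{F}^\bullet_1 \overset{L}{\boxtimes} \mathcal{F}^\bullet_2$ in general.

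Next, since $p_I$ is smooth of relative dimension equal to that of $L^+G_{X^2}$ (suitably interpreted after replacing $\tilde{L}G_{X^2}$ by a finite-type substitute as in the discussion following diagram (\ref{globalconv})), Lemma \ref{smoothp} implies that $p_I^*(\mathcal{F}^\bullet_1 \overset{L}{\boxtimes} \mathcal{F}^\bullet_2)$ is perverse up to the appropriate shift. The map $q_I$ is a torsor under (a finite-type quotient of) $L^+G_{X^2}$, which is smooth with geometrically connected fibers, and by construction both $\mathcal{F}^\bullet_1$ and $\mathcal{F}^\bullet_2$ are $L^+G_X$-equivariant. Using that the two pullbacks by the $L^+G_{X^2}$-action and projection on the target of $p_I$ agree on $\mathcal{F}^\bullet_1 \overset{L}{\boxtimes} \mathcal{F}^\bullet_2$ (this is where $L^+G_{X^2}$-equivariance, as opposed to just $L^+G_X$-equivariance on each factor, is needed and is where the structure of the twisted product sits), one then applies the smooth descent statement of Corollary \ref{smoothdec} to produce a unique perverse sheaf $\mathcal{F}^\bullet_1 \overset{\sim}{\boxtimes} \mathcal{F}^\bullet_2$ on $\tilde{\Gr}_{X^2}$ with the required compatibility, and a standard descent argument shows it is $L^+G_{X^2}$-equivariant.

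Finally, to obtain $\mathbb{G}_a$-equivariance, I would use that $\mathbb{G}_a$ acts compatibly on every object in the convolution diagram (\ref{globalconv}) and that $p_I$, $q_I$ are $\mathbb{G}_a$-equivariant. Since $\mathcal{F}^\bullet_1 \overset{L}{\boxtimes} \mathcal{F}^\bullet_2$ is $\mathbb{G}_a \times \mathbb{G}_a$-equivariant (each $\mathcal{F}^\bullet_i$ lies in $P_{L^+G_X}(\Gr_X, \mathbb{F}_p)^{\mathbb{G}_a}$), it is in particular equivariant for the diagonal $\mathbb{G}_a$-action, and so is its pullback to $\tilde{L}G_{X^2}$. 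The uniqueness of the descent along $q_I$ then transfers this $\mathbb{G}_a$-structure to $\mathcal{F}^\bullet_1 \overset{\sim}{\boxtimes} \mathcal{F}^\bullet_2$.

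The main obstacle is the perversity of the external tensor product, which is the step that uses the geometry of the $\Gr_{\leq \mu}$ (their Cohen-Macaulayness, via Theorem \ref{propsofgr} and Theorem \ref{mainthm}) in an essential way; once this is in hand, the rest is an exercise in equivariant smooth descent along the $L^+G_{X^2}$-torsor $q_I$ and diagram-chasing for $\mathbb{G}_a$.
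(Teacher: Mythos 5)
Your proposal is correct and follows essentially the same route as the paper: reduce to simple objects, use Cohen--Macaulayness of products of affine Schubert varieties (via Theorem \ref{mainthm}) to get perversity of the external tensor product, induct on length, and then apply smooth pullback and smooth descent (Lemma \ref{smoothp} and Corollary \ref{smoothdec}) along $p_I$ and $q_I$, closing with the observation that the diagonal $\mathbb{G}_a$-action is compatible with the whole diagram. The paper itself gives only a one-line proof that points to Lemma \ref{twistedprod}, Lemma \ref{essentialimage}, Lemma \ref{BDprops}, and \cite[3.20]{RicharzGS}; you have spelled out the same argument in detail, and your use of Lemma \ref{essentialimage} and of Cohen--Macaulayness plays the role of the paper's citations of Lemmas \ref{essentialimage} and \ref{BDprops}.
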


\begin{proof}
The proof is analogous to that of Lemma \ref{twistedprod}, and makes use of Lemmas \ref{essentialimage} and \ref{BDprops}. The descent steps and the proof that $\mathcal{F}^\bullet_1 \overset{\sim}{\boxtimes}\mathcal{F}^\bullet_2$ is $L^+G_{X^2}$-equivariant are the same as in the case of $\overline{\mathbb{Q}}_{\ell}$-coefficients (see \cite[3.20]{RicharzGS}). 
\end{proof}

\begin{lem} \label{convpervBD}
Let $\mathcal{F}^\bullet_1$, $\mathcal{F}^\bullet_2 \in P_{L^+G_X}(\Gr_X, \mathbb{F}_p)^{\mathbb{G}_a}$. Then
$$Rm_{I,*} (\mathcal{F}^\bullet_1 \overset{\sim}{\boxtimes}\mathcal{F}^\bullet_2 )  \in P_{L^+G_{X^2}}(\Gr_{X^2}, \mathbb{F}_p)^{\mathbb{G}_a}.$$  
\end{lem}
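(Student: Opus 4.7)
The plan is to mimic the proof of Corollary \ref{convperv}, but working globally on the Beilinson--Drinfeld Grassmannian. We will prove perversity by induction on lengths, reducing to the case where $\mathcal{F}_1^\bullet$ and $\mathcal{F}_2^\bullet$ are simple, and then handle equivariance by appealing to Lemma \ref{equivfun}(ii) applied to the proper map $m_I$.

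First, I would handle the base case. By Theorem \ref{irreducibleobjects} combined with Lemma \ref{essentialimage}, a simple object of $P_{L^+G_X}(\Gr_X, \mathbb{F}_p)^{\mathbb{G}_a}$ is of the form $Rp^*[1]\IC_{\mu} = \mathbb{F}_p[1 + \dim \Gr_{\leq \mu}]$ supported on $X \times \Gr_{\leq \mu}$. For such simple inputs corresponding to $\mu_1$, $\mu_2 \in X_*(T)^+$, the twisted product $\mathcal{F}_1^\bullet \overset{\sim}{\boxtimes} \mathcal{F}_2^\bullet$ produced by Lemma \ref{convlemma2} is the constant sheaf $\mathbb{F}_p[2 + \dim \Gr_{\leq \mu_1} + \dim \Gr_{\leq \mu_2}]$ on $\tilde{\Gr}_{X^2}^{\mu_\bullet}$, which is Cohen--Macaulay and equidimensional by Lemma \ref{BDprops}, and therefore perverse by Theorem \ref{mainthm}. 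Applying $Rm_{I,*}$ to the Artin--Schreier sequence on $\tilde{\Gr}_{X^2}^{\mu_\bullet}$ and invoking Proposition \ref{globconvrat}, we conclude that $Rm_{I,*}(\mathcal{F}_1^\bullet \overset{\sim}{\boxtimes} \mathcal{F}_2^\bullet)$ is the shifted constant sheaf on $\Gr_{X^2}^{\mu_\bullet}$. Since $\Gr_{X^2}^{\mu_\bullet}$ is integral and $F$-rational (hence Cohen--Macaulay) by Theorem \ref{BDprops2}, and has the same dimension as $\tilde{\Gr}_{X^2}^{\mu_\bullet}$ because $m_I$ is birational, Theorem \ref{mainthm} again gives perversity.

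For the induction step, I would pick an exact sequence $0 \to \mathcal{G}_1^\bullet \to \mathcal{F}_2^\bullet \to \mathcal{G}_2^\bullet \to 0$ in $P_{L^+G_X}(\Gr_X, \mathbb{F}_p)^{\mathbb{G}_a}$, which exists because this subcategory is stable under subquotients (Lemma \ref{essentialimage} combined with Proposition \ref{subquotprop}). Exactness of smooth pullback, smooth descent via Corollary \ref{smoothdec}, and the construction of $-\overset{\sim}{\boxtimes}-$ in Lemma \ref{convlemma2} then produce an exact sequence
\[
0 \to \mathcal{F}_1^\bullet \overset{\sim}{\boxtimes} \mathcal{G}_1^\bullet \to \mathcal{F}_1^\bullet \overset{\sim}{\boxtimes} \mathcal{F}_2^\bullet \to \mathcal{F}_1^\bullet \overset{\sim}{\boxtimes} \mathcal{G}_2^\bullet \to 0
\]
of perverse sheaves on $\tilde{\Gr}_{X^2}$. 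Applying $Rm_{I,*}$ yields an exact triangle; by the induction hypothesis the outer two terms are perverse, hence so is the middle one. A symmetric induction on the length of $\mathcal{F}_1^\bullet$ finishes the perversity claim.

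Finally, equivariance is essentially formal. The convolution morphism $m_I$ is proper and equivariant with respect to both the $L^+G_{X^2}$-action (used to construct $\overset{\sim}{\boxtimes}$) and the diagonal $\mathbb{G}_a$-action on $X^2$, so Lemma \ref{equivfun}(ii) applied once with $G = L^+G_{X^2}$ and once with $G = \mathbb{G}_a$ transfers equivariance of $\mathcal{F}_1^\bullet \overset{\sim}{\boxtimes} \mathcal{F}_2^\bullet$ (which is $L^+G_{X^2}$-equivariant by Lemma \ref{convlemma2} and $\mathbb{G}_a$-equivariant by the $\mathbb{G}_a$-equivariance of each $\mathcal{F}_i^\bullet$ together with smooth descent) to the pushforward. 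The main technical obstacle is checking that the constant sheaves land in the correct perverse degree, which amounts to the dimension/equidimensionality bookkeeping for $\tilde{\Gr}_{X^2}^{\mu_\bullet}$ and $\Gr_{X^2}^{\mu_\bullet}$; this is supplied by Lemma \ref{BDprops}, Theorem \ref{BDprops2}, and the birationality of $m_I$.
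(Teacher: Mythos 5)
Your proof is correct and follows essentially the same approach as the paper: induction on length reduces to simple objects, where the twisted product is a shifted constant sheaf on the Cohen--Macaulay scheme $\tilde{\Gr}_{X^2}^{\mu_\bullet}$, Proposition \ref{globconvrat} identifies the pushforward as the shifted constant sheaf on the integral Cohen--Macaulay $\Gr_{X^2}^{\mu_\bullet}$, and equivariance is transferred formally via equivariance of the proper map $m_I$. One minor note: Proposition \ref{globconvrat} already asserts $Rm_{I,*}(\mathbb{F}_p[0]) \cong \mathbb{F}_p[0]$ for the \'{e}tale constant sheaf directly, so your extra invocation of the Artin--Schreier sequence at that step is redundant (it is already internal to the proof of that proposition via Proposition \ref{convrat}).
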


\begin{proof}
We first suppose each $\mathcal{F}_i^\bullet$ is simple and corresponds to the dominant cocharacter $\mu_i$. Then because $\tilde{\Gr}_{X^2}^{\mu_\bullet}$ is Cohen-Macaulay it follows from the definitions that $ \mathcal{F}^\bullet_1 \overset{\sim}{\boxtimes}\mathcal{F}^\bullet_2$ is the constant sheaf supported on $\tilde{\Gr}_{X^2}^{\mu_\bullet}$ (shifted by the dimension). As $\Gr_{X^2}^{\mu_\bullet}$ is integral and Cohen-Macaulay (see Remark \ref{fiberrmrk}), and $Rm_{I,*}$ preserves the constant sheaf, then we are done in this case. 

The fact that $Rm_{I,*} ( \mathcal{F}^\bullet_1 \overset{\sim}{\boxtimes}\mathcal{F}^\bullet_2 )$ is perverse for general $\mathcal{F}_i^\bullet$ now follows from an induction on the lengths of these objects as in the proof of Corollary \ref{convperv}. Finally, the perverse sheaf $Rm_{I,*} (\mathcal{F}^\bullet_1 \overset{\sim}{\boxtimes}\mathcal{F}^\bullet_2 )$ is $L^+G_{X^2}$-equivariant and $\mathbb{G}_a$-equivariant because $m_I$ is $L^+G_{X^2}$-equivariant and $\mathbb{G}_a$-equivariant.
\end{proof}

The next lemma is the key result which will allow us to construct the commutativity constraint (cf. \cite[5.10]{GeometricSatake} and \cite[3.22]{RicharzGS}).  Recall the notation in Remark \ref{fiberrmrk}. 
 
\begin{lem} \label{mainlemconv}
Let $\mathcal{F}^\bullet_1$, $\mathcal{F}^\bullet_2 \in P_{L^+G_X}(\Gr_X, \mathbb{F}_p)^{\mathbb{G}_a}$. Then
$$Rm_{I,*} ( \mathcal{F}^\bullet_1 \overset{\sim}{\boxtimes}\mathcal{F}^\bullet_2 ) \cong j_{I, !*} ( \mathcal{F}^\bullet_1 \boxtimes \restr{\mathcal{F}^\bullet_2}{U} ).$$ 
\end{lem}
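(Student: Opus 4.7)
The plan is to characterize $Rm_{I,*}(\mathcal{F}_1^\bullet \overset{\sim}{\boxtimes} \mathcal{F}_2^\bullet)$ via the uniqueness criterion of Lemma \ref{intext}(ii): it suffices to show that it has no nonzero perverse subobject or quotient supported on $X^2 \setminus U$, and that its restriction to $\restr{\Gr_X^2}{U}$ agrees with $\mathcal{F}_1^\bullet \boxtimes \restr{\mathcal{F}_2^\bullet}{U}$. The identification of the restriction is automatic since $m_I$ is an isomorphism over $U$ and $q_I$ identifies the twisted product with the external product there, so the entire difficulty lies in ruling out subobjects and quotients on the diagonal locus. I will first handle the case where $\mathcal{F}_1^\bullet$ and $\mathcal{F}_2^\bullet$ are simple, then induct on the total length.

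For the base case, suppose $\mathcal{F}_i^\bullet = Rp^*[1]\IC_{\mu_i}$ for $\mu_i \in X_*(T)^+$. By the smooth descent construction of Lemma \ref{convlemma2} together with the integrality and Cohen-Macaulayness of $\tilde{\Gr}_{X^2}^{\mu_\bullet}$ from Lemma \ref{BDprops}, the twisted product $\mathcal{F}_1^\bullet \overset{\sim}{\boxtimes} \mathcal{F}_2^\bullet$ is the shifted constant sheaf $\mathbb{F}_p[\dim \tilde{\Gr}_{X^2}^{\mu_\bullet}]$ supported on $\tilde{\Gr}_{X^2}^{\mu_\bullet}$. Proposition \ref{globconvrat} together with the fact that $m_I$ is birational then yields $Rm_{I,*}(\mathcal{F}_1^\bullet \overset{\sim}{\boxtimes} \mathcal{F}_2^\bullet) = \mathbb{F}_p[\dim \Gr_{X^2}^{\mu_\bullet}]$. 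By Theorem \ref{BDprops2}, $\Gr_{X^2}^{\mu_\bullet}$ is integral, Cohen-Macaulay, and $F$-rational, so this constant sheaf is perverse (Theorem \ref{mainthm}) and simple (Theorem \ref{mainthm2}). On the other hand, $j_{I,!*}(\mathcal{F}_1^\bullet \boxtimes \restr{\mathcal{F}_2^\bullet}{U})$ is simple by Lemma \ref{simplelem}, being the intermediate extension of a simple constant sheaf on an open dense subset of $\Gr_{X^2}^{\mu_\bullet}$. Two simple perverse sheaves whose restrictions to $U$ agree and are nonzero must coincide by Lemma \ref{simplelem2}, settling the simple case.

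For the inductive step, given a short exact sequence $0 \to \mathcal{A} \to \mathcal{F}_1^\bullet \to \mathcal{B} \to 0$ with $\mathcal{A}, \mathcal{B}$ of shorter length (and similarly for $\mathcal{F}_2^\bullet$), the exactness of the twisted product in each variable (obtained by smooth descent from the exactness of the external product) together with Lemma \ref{convpervBD} yields a short exact sequence
\[
0 \to Rm_{I,*}(\mathcal{A} \overset{\sim}{\boxtimes} \mathcal{F}_2^\bullet) \to Rm_{I,*}(\mathcal{F}_1^\bullet \overset{\sim}{\boxtimes} \mathcal{F}_2^\bullet) \to Rm_{I,*}(\mathcal{B} \overset{\sim}{\boxtimes} \mathcal{F}_2^\bullet) \to 0
\]
in $P_{L^+G_{X^2}}(\Gr_{X^2}, \mathbb{F}_p)$. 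By the induction hypothesis, the outer terms are intermediate extensions from $U$, so by Lemma \ref{intext}(ii) they have no nonzero subobjects or quotients supported on $X^2 \setminus U$. If $\mathcal{S}$ were a nonzero perverse subobject of the middle term supported on $X^2 \setminus U$, then $\mathcal{S} \cap Rm_{I,*}(\mathcal{A} \overset{\sim}{\boxtimes} \mathcal{F}_2^\bullet) = 0$, so $\mathcal{S}$ would inject into $Rm_{I,*}(\mathcal{B} \overset{\sim}{\boxtimes} \mathcal{F}_2^\bullet)$, a contradiction; the argument for quotients is dual. Applying Lemma \ref{intext}(ii) one last time identifies the middle term with $j_{I,!*}(\mathcal{F}_1^\bullet \boxtimes \restr{\mathcal{F}_2^\bullet}{U})$. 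The crux of the argument is the base case: it is exactly there that we require the $F$-rationality of $\Gr_{X^2}^{\mu_\bullet}$ from Theorem \ref{BDprops2} in conjunction with Theorem \ref{mainthm2} to guarantee simplicity of the shifted constant sheaf, a phenomenon with no counterpart in the $\overline{\mathbb{Q}}_\ell$-theory and the essential use of our Frobenius-splitting results.
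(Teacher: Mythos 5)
Your proposal is correct and follows essentially the same route as the paper: the base case is reduced to Proposition \ref{globconvrat} together with the $F$-rationality from Theorem \ref{BDprops2} and the simplicity criterion of Theorem \ref{mainthm2}, and the general case follows by an exactness-plus-induction argument using Lemma \ref{intext}. The only difference is that you spell out the inductive step (ruling out subobjects and quotients supported on the complement of $U$ via Lemma \ref{intext}(ii)) in more detail than the paper, which simply asserts that the claim ``if $\mathcal{F}^\bullet$ and $\mathcal{H}^\bullet$ are intermediate extensions then so is $\mathcal{G}^\bullet$'' follows from the characterization in Lemma \ref{intext}; your argument is one correct way to fill in that elision.
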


\begin{proof}
First consider the case in which the $\mathcal{F}_i^\bullet$ are simple and correspond to dominant cocharacters $\mu_i$. Then by Proposition \ref{globconvrat}, $Rm_{I,*} ( \mathcal{F}^\bullet_1 \overset{\sim}{\boxtimes}\mathcal{F}^\bullet_2 )$ is the constant sheaf $\mathbb{F}_p$ on $\Gr_{X^2}^{\mu_\bullet}$ shifted by the dimension. Similarly $ \mathcal{F}^\bullet_1 \boxtimes \restr{\mathcal{F}^\bullet_2}{U}$ is the constant sheaf $\mathbb{F}_p$ on $\restr{\Gr_{X^2}^{\mu_\bullet}}{U}$ shifted by the dimension. As $\Gr_{X^2}^{\mu_\bullet}$ is integral and $F$-rational by Theorem \ref{BDprops2} then the isomorphism $Rm_{I,*} ( \mathcal{F}^\bullet_1 \overset{\sim}{\boxtimes}\mathcal{F}^\bullet_2 ) \cong j_{I, !*} ( \mathcal{F}^\bullet_1 \boxtimes \restr{\mathcal{F}^\bullet_2}{U} )$ follows from Theorem \ref{mainthm2}. 

The general case now follows by induction on the lengths of the perverse sheaves $\mathcal{F}_i^\bullet$. More precisely, if $\mathcal{F}_2^\bullet$ is not simple, by Lemma \ref{essentialimage} we may choose an exact sequence
$$0 \rightarrow \mathcal{G}_1^\bullet \rightarrow \mathcal{F}_2^\bullet \rightarrow \mathcal{G}_2^\bullet \rightarrow 0$$ where each $\mathcal{G}_i^\bullet  \in P_{L^+G_X}(\Gr_X, \mathbb{F}_p)^{\mathbb{G}_a}$ is nonzero. As in the proof of Corollary \ref{convperv}, we obtain an exact sequence
$$0 \rightarrow Rm_{I,*} ( \mathcal{F}^\bullet_1 \overset{\sim}{\boxtimes}\mathcal{G}^\bullet_1 ) \rightarrow Rm_{I,*} ( \mathcal{F}^\bullet_1 \overset{\sim}{\boxtimes}\mathcal{F}^\bullet_2 ) \rightarrow Rm_{I,*} ( \mathcal{F}^\bullet_1 \overset{\sim}{\boxtimes}\mathcal{G}^\bullet_2 ) \rightarrow 0.$$

Note that $Rm_{I,*} ( \mathcal{F}^\bullet_1 \overset{\sim}{\boxtimes}\mathcal{F}^\bullet_2 )$ is an extension of $\mathcal{F}^\bullet_1 \boxtimes \restr{\mathcal{F}^\bullet_2}{U}$ because $m_I$ is an isomorphism over $U$. Now we make a general observation. Suppose $$0 \rightarrow \mathcal{F}^\bullet \rightarrow \mathcal{G}^\bullet \rightarrow \mathcal{H}^\bullet \rightarrow 0$$ is an exact sequence in $P_c^b(\Gr_{X^2}^{\mu_\bullet}, \mathbb{F}_p)$. Then if $\mathcal{F}^\bullet \cong j_{I,!*} ({^p}j_I^*\mathcal{F}^\bullet) $ and $\mathcal{H}^\bullet \cong j_{I,!*} ({^p}j_I^*\mathcal{H}^\bullet)$, we also have $\mathcal{G}^\bullet \cong j_{I,!*} ({^p}j_I^*\mathcal{G}^\bullet)$. This follows from the characterization of intermediate extensions in Lemma \ref{intext}. Hence by the induction hypothesis and Lemma \ref{homintext} we obtain a canonical isomorphism $Rm_{I,*} ( \mathcal{F}^\bullet_1 \overset{\sim}{\boxtimes}\mathcal{F}^\bullet_2 ) \cong j_{I, !*} ( \mathcal{F}^\bullet_1 \boxtimes \restr{\mathcal{F}^\bullet_2}{U} )$. Now we conclude by a similar induction on the length of $\mathcal{F}_1^\bullet$. 
\end{proof}

\begin{prop} \label{constprop}
Let $\mathcal{F}^\bullet_1$, $\mathcal{F}^\bullet_2 \in P_{L^+G_X}(\Gr_X, \mathbb{F}_p)^{\mathbb{G}_a}$, and let $f_I \colon \Gr_{X^2} \rightarrow X^2$ be the structure map. Then $Rf_{I,*}(j_{I,!*}(\mathcal{F}^\bullet_1 \boxtimes \restr{\mathcal{F}^\bullet_2}{U})) \in D_c^b(X^2, \mathbb{F}_p)$ has constant cohomology sheaves.
\end{prop}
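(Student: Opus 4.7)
The strategy is to prove the proposition in three stages: handle simple $\mathcal{F}_i^\bullet$ by a direct coherent computation, extend to general $\mathcal{F}_i^\bullet$ by induction to obtain local constancy, and upgrade local constancy to constancy using a $\mathbb{G}_m$-equivariance argument. For the simple case, write $\mathcal{F}_i^\bullet = Rp^*[1]\IC_{\mu_i}$; then $\mathcal{F}_1^\bullet \boxtimes \restr{\mathcal{F}_2^\bullet}{U}$ is the shifted constant sheaf on a dense open of $\Gr_{X^2}^{\mu_\bullet}|_U$. Since $\Gr_{X^2}^{\mu_\bullet}$ is integral and $F$-rational by Theorem \ref{BDprops2}, Theorem \ref{mainthm2} identifies $j_{I,!*}(\mathcal{F}_1^\bullet \boxtimes \restr{\mathcal{F}_2^\bullet}{U})$ with $\mathbb{F}_p[\dim \Gr_{X^2}^{\mu_\bullet}]$ on $\Gr_{X^2}^{\mu_\bullet}$. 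The structure map $g \colon \Gr_{X^2}^{\mu_\bullet} \to X^2$ is projective and flat (by miracle flatness applied to the Cohen--Macaulay source, smooth base, and equidimensional fibers from Remark \ref{fiberrmrk}), and its fibers $\Gr_{\leq \mu_1} \times \Gr_{\leq \mu_2}$ or $\Gr_{\leq \mu_1 + \mu_2}$ are globally $F$-regular by Theorem \ref{thm1} and Lemma \ref{prodlem1}, so each satisfies $H^0(\mathcal{O}) = k$ and $H^{>0}(\mathcal{O}) = 0$. Grauert's theorem yields $R^0 g_* \mathcal{O} = \mathcal{O}_{X^2}$ and $R^{>0} g_* \mathcal{O} = 0$, and applying $Rg_*$ to the Artin--Schreier sequence on $\Gr_{X^2}^{\mu_\bullet}$ and comparing with the one on $X^2$ gives $Rg_* \mathbb{F}_p = \mathbb{F}_p$. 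So the pushforward equals $\mathbb{F}_p[\dim \Gr_{X^2}^{\mu_\bullet}]$, visibly a constant sheaf on $X^2$.

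For general $\mathcal{F}_i^\bullet$, Lemma \ref{mainlemconv} and the exactness of the twisted product imply that a short exact sequence in either variable yields an exact triangle upon applying $Rf_{I,*}(j_{I,!*}(\cdot \boxtimes \cdot|_U))$. Inducting on the lengths of $\mathcal{F}_1^\bullet$ and $\mathcal{F}_2^\bullet$, each cohomology sheaf $\mathcal{H}^i Rf_{I,*}(j_{I,!*}(\cdots))$ is locally constant on $X^2$: indeed, \'{e}tale-locally at any geometric point we work over a strictly henselian local ring on which $H^1_{\text{\'{e}t}}(-, \mathbb{F}_p) = 0$, so the connecting maps of the induced triangle split \'{e}tale-locally, and the class of complexes with locally constant cohomology sheaves is closed under extensions. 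This yields local constancy, but not yet constancy, since $H^1_{\text{\'{e}t}}(\mathbb{A}^2, \mathbb{F}_p) \neq 0$ due to Artin--Schreier covers.

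To upgrade from locally constant to constant, observe that each $\mathcal{F}_i^\bullet$ is $\mathbb{G}_m$-equivariant for the scaling action of $\mathbb{G}_m$ on $X = \mathbb{A}^1$ (being pulled back from $\Gr$ via $p$, on which $\mathbb{G}_m$ acts trivially). This equivariance propagates through $\boxtimes$, restriction to $U$, intermediate extension (via Theorem \ref{smoothint} and the equivariant analogue of Lemma \ref{equivint}), and proper pushforward, so each $\mathcal{H}^i Rf_{I,*}(j_{I,!*}(\cdots))$ is a $\mathbb{G}_m$-equivariant locally constant $\mathbb{F}_p$-sheaf on $X^2 = \mathbb{A}^2$ with $\mathbb{G}_m$ acting by diagonal scaling. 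The $\mathbb{G}_m$-action is free on $X^2 \setminus \{(0,0)\}$ with quotient $\mathbb{P}^1$, and $H^1_{\text{\'{e}t}}(\mathbb{P}^1, \mathbb{F}_p) = 0$ (by global $F$-regularity of $\mathbb{P}^1$, or directly via Artin--Schreier and $H^1(\mathbb{P}^1, \mathcal{O}) = 0$), so every such $\mathbb{G}_m$-equivariant local system descends to a constant sheaf on $\mathbb{P}^1$, making our sheaf constant on $X^2 \setminus \{(0,0)\}$. Finally, Zariski--Nagata purity gives $\pi_1^{\text{\'{e}t}}(\mathbb{A}^2 \setminus \{(0,0)\}) \cong \pi_1^{\text{\'{e}t}}(\mathbb{A}^2)$, so a locally constant sheaf on $\mathbb{A}^2$ whose restriction to $\mathbb{A}^2 \setminus \{(0,0)\}$ is constant is itself constant globally. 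The main obstacle is precisely this upgrade: because $H^1_{\text{\'{e}t}}(\mathbb{A}^2, \mathbb{F}_p) \neq 0$ in characteristic $p$, local constancy does not directly imply constancy, and bridging that gap requires both the $\mathbb{G}_m$-equivariance (available because our sheaves come from $\Gr$ via pullback along $p$) and purity of the branch locus.
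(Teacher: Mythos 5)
Your first two steps are essentially sound. The simple case is handled by a legitimate variant of the paper's argument: where the paper computes stalks via proper base change, Theorem \ref{cohvan} and the K\"unneth formula, you compute $Rg_*\mathcal{O}=\mathcal{O}_{X^2}$ relatively (miracle flatness plus Grauert, using global $F$-regularity of the fibers) and then apply Artin--Schreier; this is the same technique the paper uses for Proposition \ref{globconvrat} and it works. The induction giving \emph{locally} constant cohomology sheaves in general is also the paper's argument. The gap is in your upgrade from locally constant to constant.

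The $\mathbb{G}_m$-action you invoke does not exist on $\Gr_{X^2}$. You need diagonal scaling on $X^2$ to be covered by an action on $\Gr_{X^2}$ that is compatible with $j_I$ and $i_I$ and restricts, under $\Gr_X\cong X\times\Gr$, to (scaling, trivial). But the only $\mathbb{G}_m$-action on $\Gr_{X^2}$ covering diagonal scaling is the one induced by functoriality of the Beilinson--Drinfeld Grassmannian in automorphisms of the curve, and scaling by $\lambda$ rescales the uniformizer $t-x\mapsto \lambda(t-x)$ at each point; under $\Gr_X\cong X\times\Gr$ this action is (scaling, loop rotation), not (scaling, trivial). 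This is precisely why the paper only uses the $\mathbb{G}_a$-translation action: translations preserve the uniformizers $t-x$ and hence act trivially in the $\Gr$-directions. With the genuine action, equivariance of $Rp^*[1]\mathcal{G}_i^\bullet$ would require $\mathcal{G}_i^\bullet$ to be loop-rotation equivariant, which is neither assumed nor automatic for non-semisimple objects (compare the care taken in Proposition \ref{equivextprop} about which extensions are equivariant). So the descent to $\mathbb{P}^1$ never gets off the ground. The correct and cheaper way to close the gap is the paper's: over $U$ one has $\restr{Rf_{I,*}(j_{I,!*}(\mathcal{F}^\bullet_1\boxtimes \restr{\mathcal{F}^\bullet_2}{U}))}{U}\cong \restr{Rf_*(\mathcal{F}^\bullet_1)\boxtimes Rf_*(\mathcal{F}^\bullet_2)}{U}$ by the K\"unneth formula, each $Rf_*(\mathcal{F}^\bullet_i)$ has constant cohomology sheaves by proper base change applied to $\mathcal{F}^\bullet_i\cong Rp^*[1]\mathcal{G}^\bullet_i$, and since $X^2$ is integral and normal the map $\pi_{1,\text{\'{e}t}}(U)\to\pi_{1,\text{\'{e}t}}(X^2)$ is surjective, so a locally constant sheaf on $X^2$ whose restriction to the dense open $U$ is constant is itself constant.
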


\begin{proof}
We first show that the cohomology sheaves of $Rf_{I,*}(j_{I,!*}(\mathcal{F}^\bullet_1 \boxtimes \restr{\mathcal{F}^\bullet_2}{U}))$ are local systems. To begin, note that $Rf_{I,*}(j_{I,!*}(\mathcal{F}^\bullet_1 \boxtimes \restr{\mathcal{F}^\bullet_2}{U})) \in D_c^b(X^2, \mathbb{F}_p)$ because $f_I$ is proper. Next, note that when the $\mathcal{F}_i^\bullet$ are simple and correspond to dominant cocharacters $\mu_i$ the complex $j_{I,!*}(\mathcal{F}^\bullet_1 \boxtimes \restr{\mathcal{F}^\bullet_2}{U})$ is the constant sheaf $\mathbb{F}_p$ supported on $\Gr_{X^2}^{\mu_\bullet}$ shifted by the dimension. By Remark \ref{fiberrmrk} the fibers of $f_I \colon \Gr_{X^2}^{\mu_\bullet} \rightarrow X^2$ over closed points in $X^2$ are isomorphic to $\Gr_{\leq \mu_1} \times \Gr_{\leq \mu_2}$ or $\Gr_{\leq \mu_1 + \mu_2}$. Thus, by Theorem \ref{cohvan} and the K\"unneth formula we see that the natural morphism
$$\mathbb{F}_p[\dim \Gr_{X^2}^{\mu_\bullet}] \rightarrow Rf_{I,*} \mathbb{F}_p[\dim \Gr_{X^2}^{\mu_\bullet}]$$ is an isomorphism. Hence $Rf_{I,*}(j_{I,!*}(\mathcal{F}^\bullet_1 \boxtimes \restr{\mathcal{F}^\bullet_2}{U}))$ has constant cohomology sheaves when the $\mathcal{F}_i^\bullet$ are simple. Now the claim for general $\mathcal{F}_i^\bullet$ follows by induction on the lengths and the fact that the subcategory of $D_c^b(X^2, \mathbb{F}_p)$ consisting of objects whose cohomology sheaves are locally constant is a triangulated subcategory.

Now let $f^I \colon \Gr_X^2 \rightarrow X^2$ be the product of the projection maps $\Gr_X \rightarrow X$, and let $f \colon \Gr_X \rightarrow X$ be the case in which $I$ is a singleton. Observe that we have an isomorphism $$\restr{Rf_{I,*}(j_{I,!*}(\mathcal{F}^\bullet_1 \boxtimes \restr{\mathcal{F}^\bullet_2}{U}))}{U} \cong \restr{Rf^I_* (\mathcal{F}^\bullet_1 \boxtimes \mathcal{F}^\bullet_2)}{U}.$$ By the K\"unneth formula
$$\restr{Rf^I_* (\mathcal{F}^\bullet_1 \boxtimes \mathcal{F}^\bullet_2)}{U} \cong Rf_*(\mathcal{F}^\bullet_1) \boxtimes \restr{Rf_*(\mathcal{F}^\bullet_1)}{U}.$$ We claim that each $Rf_*(\mathcal{F}^\bullet_i)$ has constant cohomology sheaves. To prove this, consider the Cartesian diagram
$$\xymatrix{
\Gr_X \ar[r]^{p} \ar[d]^f & \Gr \ar[d] \\
X \ar[r] & \Spec(k)
}$$ By Lemma \ref{essentialimage} we can write $\mathcal{F}_i^\bullet \cong Rp^*[1] \mathcal{G}_i^\bullet$ for some $\mathcal{G}_i^\bullet \in P_{L^+G}(\Gr, \mathbb{F}_p)$. Thus by the proper base change theorem $Rf_*(\mathcal{F}^\bullet_i)$ has constant cohomology sheaves. Hence $Rf_* (\mathcal{F}^\bullet_1) \boxtimes Rf_* (\mathcal{F}^\bullet_2)$ also has constant cohomology sheaves, and by the above so does $\restr{Rf_{I,*}(j_{I,!*}(\mathcal{F}^\bullet_1 \boxtimes \restr{\mathcal{F}^\bullet_2}{U}))}{U}$. 

Let $\overline{x}$ be a geometric point of $U$. Each $H^m(Rf_{I,*}(j_{I,!*}(\mathcal{F}^\bullet_1 \boxtimes \restr{\mathcal{F}^\bullet_2}{U})))$ corresponds to a representation of $\pi_{1,\text{\'{e}t}}(X^2, \overline{x})$. As $X^2$ is integral and normal the map $\pi_{1,\text{\'{e}t}}(U, \overline{x}) \rightarrow \pi_{1,\text{\'{e}t}}(X^2, \overline{x})$ is surjective \cite[0BQI]{stacks-project}. Since we already know $\restr{H^m(Rf_{I,*}(j_{I,!*}(\mathcal{F}^\bullet_1 \boxtimes \restr{\mathcal{F}^\bullet_2}{U})))}{U}$ is constant, it follows that  $H^m(Rf_{I,*}(j_{I,!*}(\mathcal{F}^\bullet_1 \boxtimes \restr{\mathcal{F}^\bullet_2}{U})))$ is constant as well. 
\end{proof}

\begin{prop} \label{stalkprop}
Let $\mathcal{F}^\bullet_1$, $\mathcal{F}_2^\bullet \in P_{L^+G}(\Gr, \mathbb{F}_p)$, and let $f_I \colon \Gr_{X^2} \rightarrow X^2$ be the structure map. Then there are canonical identifications
\begin{enumerate}[{\normalfont (i)}]
\item $\restr{H^{m-2}(Rf_{I,*}(j_{I,!*}(\restr{Rp^*[1] \mathcal{F}_1^\bullet \boxtimes Rp^*[1] \mathcal{F}_2^\bullet}{U})))}{x} \cong \bigoplus_{i+j=m} R^i\Gamma(\mathcal{F}_1^\bullet) \otimes R^j\Gamma(\mathcal{F}_2^\bullet)$ for $x \in U(k)$.
\item $\restr{H^{m-2}(Rf_{I,*}(j_{I,!*}(\restr{Rp^*[1] \mathcal{F}_1^\bullet \boxtimes Rp^*[1] \mathcal{F}_2^\bullet}{U})))}{x} \cong R^m \Gamma(\mathcal{F}_1^\bullet * \mathcal{F}_2^\bullet)$ for $x \in X^2(k)$ in the image of the diagonal $\Delta \colon X(k) \rightarrow X^2(k)$.
\end{enumerate}
\end{prop}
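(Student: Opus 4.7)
The strategy for both parts is to apply proper base change to convert stalks on $X^2$ into cohomology of geometric fibers of the relevant structure map, and then to identify those fibers. For part (i), since $j_I$ is an open immersion, $\restr{j_{I,!*}(\restr{Rp^*[1]\mathcal{F}_1^\bullet \boxtimes Rp^*[1]\mathcal{F}_2^\bullet}{U})}{\restr{\Gr_X^2}{U}} = \restr{Rp^*[1]\mathcal{F}_1^\bullet \boxtimes Rp^*[1]\mathcal{F}_2^\bullet}{U}$. Over $U$ the restriction of $f_I$ is the projection $\restr{\Gr_X^2}{U} \to U$, and for $x = (x_1,x_2) \in U(k)$ its fiber is $\Gr \times \Gr$. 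Since $p \colon \Gr_X = X \times \Gr \to \Gr$ is the second projection, $Rp^*[1]\mathcal{F}_i^\bullet$ restricts to $\mathcal{F}_i^\bullet[1]$ on the fiber $\Gr$ over $x_i$. Proper base change, applied to each proper subscheme $\Gr_{X^2}^{\mu_\bullet} \to X^2$, identifies the stalk at $x$ with $R\Gamma(\Gr \times \Gr, \mathcal{F}_1^\bullet[1] \boxtimes \mathcal{F}_2^\bullet[1])$. Taking $H^{m-2}$ and applying the K\"unneth formula—valid since $\mathbb{F}_p$ is a field and the supports of $\mathcal{F}_i^\bullet$ are projective—produces $\bigoplus_{i+j=m} R^i\Gamma(\mathcal{F}_1^\bullet) \otimes R^j\Gamma(\mathcal{F}_2^\bullet)$.

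For part (ii), first invoke Lemma \ref{mainlemconv} to rewrite
$$j_{I,!*}\bigl(\restr{Rp^*[1]\mathcal{F}_1^\bullet \boxtimes Rp^*[1]\mathcal{F}_2^\bullet}{U}\bigr) \cong Rm_{I,*}\bigl((Rp^*[1]\mathcal{F}_1^\bullet) \overset{\sim}{\boxtimes} (Rp^*[1]\mathcal{F}_2^\bullet)\bigr),$$
so $Rf_{I,*}(j_{I,!*}(\cdots)) \cong R(f_I \circ m_I)_*\bigl((Rp^*[1]\mathcal{F}_1^\bullet) \overset{\sim}{\boxtimes} (Rp^*[1]\mathcal{F}_2^\bullet)\bigr)$, with $f_I \circ m_I$ ind-proper. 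By the Beauville-Laszlo theorem the fiber of $f_I \circ m_I$ over $x = (y,y)$ is the ordinary convolution Grassmannian $\tilde{\Gr} = LG \times^{L^+G} \Gr$ at $y$, and proper base change identifies the stalk at $(y,y)$ with $R\Gamma\bigl(\tilde{\Gr}, \restr{((Rp^*[1]\mathcal{F}_1^\bullet) \overset{\sim}{\boxtimes} (Rp^*[1]\mathcal{F}_2^\bullet))}{\tilde{\Gr}}\bigr)$.

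The remaining task is to identify this restriction as $(\mathcal{F}_1^\bullet \overset{\sim}{\boxtimes} \mathcal{F}_2^\bullet)[2]$. The global convolution diagram \eqref{globalconv} restricts over $(y,y)$ to the ordinary convolution diagram \eqref{convdiagram}, with $p_I$ and $q_I$ specializing to the $L^+G$-torsors $p$ and $q$. Restricting the characterizing isomorphism $q_I^*((Rp^*[1]\mathcal{F}_1^\bullet) \overset{\sim}{\boxtimes} (Rp^*[1]\mathcal{F}_2^\bullet)) \cong p_I^*(Rp^*[1]\mathcal{F}_1^\bullet \boxtimes Rp^*[1]\mathcal{F}_2^\bullet)$ from Lemma \ref{convlemma2} to the fiber over $(y,y)$, and noting that the external product restricts there to $(\mathcal{F}_1^\bullet \boxtimes \mathcal{F}_2^\bullet)[2]$, the uniqueness in Lemma \ref{twistedprod} forces the restriction of the global twisted product to equal $(\mathcal{F}_1^\bullet \overset{\sim}{\boxtimes} \mathcal{F}_2^\bullet)[2]$. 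Consequently the stalk becomes $R\Gamma(\tilde{\Gr}, \mathcal{F}_1^\bullet \overset{\sim}{\boxtimes} \mathcal{F}_2^\bullet)[2] = R\Gamma(\Gr, \mathcal{F}_1^\bullet * \mathcal{F}_2^\bullet)[2]$, and taking $H^{m-2}$ yields $R^m \Gamma(\mathcal{F}_1^\bullet * \mathcal{F}_2^\bullet)$. The main obstacle is this descent identification at the diagonal fiber, which requires carefully verifying that restriction along the closed immersion commutes with the descent construction used to define both the global and the ordinary twisted products.
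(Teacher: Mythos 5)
Your proposal is correct and follows essentially the same route as the paper: proper base change plus the K\"unneth formula for points of $U$, and for diagonal points Lemma \ref{mainlemconv} followed by identifying the restriction of the global twisted product to the fiber $LG \times^{L^+G} \Gr$ with $(\mathcal{F}_1^\bullet \overset{\sim}{\boxtimes} \mathcal{F}_2^\bullet)[2]$ via the defining descent isomorphism and its uniqueness. The only point you flag but do not carry out — that the shifted restriction is perverse, which is what licenses the uniqueness of descent — is handled in the paper by noting perversity is local in the smooth topology, and is a routine addition to your argument.
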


\begin{proof}
If $x \in U(k)$, note that by the proper base change theorem
$$\restr{Rf_{I,*}(j_{I,!*}(\restr{Rp^*[1] \mathcal{F}_1^\bullet \boxtimes Rp^*[1] \mathcal{F}_2^\bullet}{U}))}{x} \cong R\Gamma(\mathcal{F}_1^\bullet[1] \boxtimes \mathcal{F}_2^\bullet[1]).$$  By the K\"unneth formula,
$$R\Gamma(\mathcal{F}_1^\bullet[1] \boxtimes \mathcal{F}_2^\bullet[1]) \cong R\Gamma(\mathcal{F}_1^\bullet[1]) \otimes^L_{\mathbb{F}_p} R\Gamma(\mathcal{F}_2^\bullet[1]).$$ Now by taking cohomology we get the first isomorphism.

Now suppose $x$ is in the diagonal and note that by Lemma \ref{mainlemconv},
$$j_{I,!*}(Rp^*[1] \mathcal{F}_1^\bullet \boxtimes Rp^*[1] \mathcal{F}_2^\bullet \mid_{U}) \cong Rm_{I,*} (Rp^*[1] \mathcal{F}_1^\bullet \overset{\sim}{\boxtimes} Rp^*[1] \mathcal{F}_2^\bullet).$$ There is a diagram with Cartesian squares
$$\xymatrix{
LG \times^{L^+G} \Gr \ar[r]^(.65){m} \ar[d]^{i} & \Gr \ar[r] \ar[d] & \Spec(k) \ar[d]^x \\
\tilde{\Gr}_{X^2} \ar[r]^{m_I} & \Gr_{X^2} \ar[r]^{f_I} & X^2
}$$
Define $$\mathcal{F}^\bullet := Ri^*[-2] ( Rp^*[1] \mathcal{F}_1^\bullet \overset{\sim}{\boxtimes} Rp^*[1] \mathcal{F}_2^\bullet) \in D_c^b(LG \times^{L^+G} \Gr, \mathbb{F}_p).$$
The isomorphism in (ii) will follow from the proper base change theorem if we can produce a canonical isomorphism $$\mathcal{F}^\bullet \cong \mathcal{F}_1^\bullet \overset{\sim}{\boxtimes}  \mathcal{F}_2^\bullet.$$

From the definitions and the previous diagram it follows immediately that 
$$Rq^*(\mathcal{F}^\bullet) \cong Rp^*(\mathcal{F}_1^\bullet \boxtimes \mathcal{F}_2^\bullet).$$ Moreover, $\mathcal{F}^\bullet$ is perverse because this property is local in the smooth topology. Thus there is a natural isomorphism $\mathcal{F}^\bullet \cong  \mathcal{F}_1^\bullet \overset{\sim}{\boxtimes}  \mathcal{F}_2^\bullet.$
\end{proof}

\begin{thm} \label{symfun}
One can endow the category $(P_{L^+G}(\Gr, \mathbb{F}_p), *)$ with the structure of a symmetric monoidal category such that the global cohomology functor
$H \colon (P_{L^+G}(\Gr, \mathbb{F}_p), *) \rightarrow (\Vect_{\mathbb{F}_p}, \otimes) $ is a symmetric monoidal functor. 
\end{thm}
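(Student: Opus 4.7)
The plan is to import the standard fusion argument for geometric Satake (as in \cite{GeometricSatake} or \cite{RicharzGS}), using Lemma \ref{mainlemconv} and Propositions \ref{constprop}--\ref{stalkprop} as the crucial mod $p$ replacements for the usual constructible arguments. For $\mathcal{F}_1^\bullet,\mathcal{F}_2^\bullet\in P_{L^+G}(\Gr,\mathbb{F}_p)$, I introduce
$$\mathcal{G}^\bullet_{12}:=j_{I,!*}\bigl((Rp^*[1]\mathcal{F}_1^\bullet\boxtimes Rp^*[1]\mathcal{F}_2^\bullet)|_U\bigr)\in P_c^b(\Gr_{X^2},\mathbb{F}_p)$$
and consider the complex $Rf_{I,*}\mathcal{G}^\bullet_{12}$ on $X^2$, which by Proposition \ref{constprop} has constant cohomology sheaves. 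Comparing the stalk at a point of $U(k)$ with the stalk at a point of the diagonal via Proposition \ref{stalkprop} yields a canonical isomorphism $H(\mathcal{F}_1^\bullet)\otimes_{\mathbb{F}_p}H(\mathcal{F}_2^\bullet)\xrightarrow{\sim}H(\mathcal{F}_1^\bullet*\mathcal{F}_2^\bullet)$, and this is the natural transformation making $H$ monoidal.

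For the commutativity constraint, let $\sigma\colon X^2\to X^2$ be the coordinate swap and $\tilde\sigma\colon\Gr_{X^2}\to\Gr_{X^2}$ its lift interchanging the two marked points; $\sigma$ fixes the diagonal pointwise and restricts to the usual swap on $U$. Over $U$ one has the tautological commutativity $(Rp^*[1]\mathcal{F}_1^\bullet\boxtimes Rp^*[1]\mathcal{F}_2^\bullet)|_U\cong\tilde\sigma^*(Rp^*[1]\mathcal{F}_2^\bullet\boxtimes Rp^*[1]\mathcal{F}_1^\bullet)|_U$ which, by functoriality of $j_{I,!*}$ under the isomorphism $\tilde\sigma$ and Lemma \ref{mainlemconv}, extends to an isomorphism $\mathcal{G}^\bullet_{12}\cong\tilde\sigma^*\mathcal{G}^\bullet_{21}$. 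Restricting along $i_\Delta\colon\Gr_X\hookrightarrow\Gr_{X^2}$ (the fiber over the diagonal) and invoking the base-change identification $Ri_\Delta^*\mathcal{G}^\bullet_{12}\cong Rp^*[1](\mathcal{F}_1^\bullet*\mathcal{F}_2^\bullet)$ used already in Proposition \ref{stalkprop}(ii) together with Lemma \ref{essentialimage} produces the desired natural isomorphism
$$c_{\mathcal{F}_1^\bullet,\mathcal{F}_2^\bullet}\colon\mathcal{F}_1^\bullet*\mathcal{F}_2^\bullet\xrightarrow{\sim}\mathcal{F}_2^\bullet*\mathcal{F}_1^\bullet.$$
That $c\circ c=\id$ is immediate from $\sigma^2=\id$ and the symmetry of the external tensor product on $\Gr_X\times\Gr_X|_U$; compatibility of $c$ under $H$ with the usual swap of $\otimes$ on $\Vect_{\mathbb{F}_p}$ is built into the construction, since the monoidal isomorphism of the previous paragraph was defined by transport through the same $\sigma$.

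To verify the hexagon and pentagon axioms I would pass to a three-leg Beilinson--Drinfeld Grassmannian $\Gr_{X^3}$, set $\mathcal{G}^\bullet_{123}:=j_{I,!*}((Rp^*[1]\mathcal{F}_1^\bullet\boxtimes Rp^*[1]\mathcal{F}_2^\bullet\boxtimes Rp^*[1]\mathcal{F}_3^\bullet)|_{U_3})$, and argue that restriction of $\mathcal{G}^\bullet_{123}$ to each of the partial diagonals recovers the corresponding iterated convolution. The $S_3$-symmetry of the construction then forces the coherence diagrams. This step requires a three-point analog of Lemma \ref{mainlemconv}: one needs the associated convolution schemes $\Gr_{X^3}^{\mu_\bullet}$ to be integral and $F$-rational, so that $Rm_{I,*}$ of the constant sheaf on the convolution resolution coincides with the $j_{I,!*}$-extension of its generic restriction. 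These facts follow by the same pattern as Theorem \ref{BDprops2}, since the fibers over closed points of $X^3$ are products of affine Schubert varieties and therefore globally $F$-regular (Theorem \ref{thm1} and Lemma \ref{prodlem1}), with Cohen--Macaulayness inherited via the smooth-local structure of the convolution diagram as in Lemma \ref{BDprops}.

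The main obstacle is the bookkeeping in this last step: one must check that all the relevant BD ind-schemes remain normal, Cohen--Macaulay and $F$-rational, and that the constancy of $Rf_{I,*}$-cohomology and the Künneth identification of stalks persist through higher-legged BD Grassmannians and through arbitrary partial diagonals. These are not conceptually difficult but require carefully adapting Remark \ref{fiberrmrk}, Theorem \ref{BDprops2} and Propositions \ref{constprop}--\ref{stalkprop} to the $n$-leg setting, using that the mod $p$ cohomology of globally $F$-regular projective varieties is trivial in positive degrees.
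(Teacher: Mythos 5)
Your overall architecture matches the paper's: build the monoidal isomorphism $H(\mathcal{F}_1^\bullet * \mathcal{F}_2^\bullet) \cong H(\mathcal{F}_1^\bullet)\otimes H(\mathcal{F}_2^\bullet)$ from Propositions \ref{constprop} and \ref{stalkprop}, define the commutativity constraint via the swap $\tau$ on $X^2$ together with Lemma \ref{mainlemconv}, and use a three-leg Beilinson--Drinfeld Grassmannian for the remaining compatibilities. However, there is a genuine gap in the commutativity step that invalidates the claim that ``compatibility of $c$ under $H$ with the usual swap of $\otimes$ on $\Vect_{\mathbb{F}_p}$ is built into the construction.'' The comparison of stalks in Proposition \ref{stalkprop}(i) goes through the K\"unneth isomorphism $R\Gamma(\mathcal{F}_1^\bullet \boxtimes \mathcal{F}_2^\bullet) \cong R\Gamma(\mathcal{F}_1^\bullet) \otimes^L R\Gamma(\mathcal{F}_2^\bullet)$, and the symmetry of the derived tensor product carries the Koszul sign. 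Consequently the constraint you define is carried by $H$ to the \emph{graded} symmetry of $\otimes^L$, not the ungraded symmetry of $\Vect_{\mathbb{F}_p}$; for $p > 2$ these differ. The paper fixes this by modifying $c_{\mathcal{F}_1^\bullet, \mathcal{F}_2^\bullet}$ with a sign $(-1)^{p(1)p(2)}$, where $p(i)$ records the parity of the dimensions of the strata supporting $\mathcal{F}_i^\bullet$, which makes sense because Corollary \ref{evenodd} confines the cohomology of each $\mathcal{F}_i^\bullet$ to a single parity. Without this normalization your $H$ is not a symmetric monoidal functor.

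On the coherence axioms, your plan to pass to $\Gr_{X^3}$ and all partial diagonals is workable in principle, but it is more than is needed and would require you to establish a three-leg analog of Lemma \ref{mainlemconv} (identifying $Rm_{I,*}$ with a $j_{I,!*}$) and track all partial-diagonal restrictions. The paper does use $\Gr_{X^3}$, but only to verify that $H$ respects the associativity constraint already fixed in Theorem \ref{monoidal} --- and there it works with $Rm_{I,*}(\mathcal{F}^\bullet_{1,2,3})$ directly rather than a three-leg $j_{I,!*}$. The hexagon and other coherence diagrams are then dispatched in one line: by Theorem \ref{exactfaith} the functor $H$ is faithful, so it suffices to check the coherence identities after applying $H$, where they reduce to the known coherences of $(\Vect_{\mathbb{F}_p},\otimes)$ once one knows $H$ sends both constraints to the standard ones. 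You should adopt this shortcut and, more urgently, insert the sign correction; as written, your construction only exhibits a $\mathbb{Z}/2$-graded symmetry rather than the symmetric monoidal structure claimed.
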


\begin{proof}
Let $\mathcal{F}^\bullet_1$, $\mathcal{F}^\bullet_2$, $\mathcal{F}^\bullet_3 \in P_{L^+G}(\Gr, \mathbb{F}_p)$. By Propositions \ref{constprop} and \ref{stalkprop} there is a canonical isomorphism $$H(\mathcal{F}^\bullet_1 * \mathcal{F}^\bullet_2) \cong H(\mathcal{F}_1^\bullet) \otimes H(\mathcal{F}_2^\bullet).$$ There is also a canonical isomorphism $$H(\IC_0) \cong \mathbb{F}_p.$$ We constructed an associativity constraint on  $(P_{L^+G}(\Gr, \mathbb{F}_p), *)$ in Theorem \ref{monoidal}. We claim that $H$ is a monoidal functor, i.e., $H$ sends the associativity constraint on $(P_{L^+G}(\Gr, \mathbb{F}_p), *)$ to the usual associativity constraint on $(\Vect_{\mathbb{F}_p}, \otimes)$. 

To prove that $H$ is a monoidal functor, we first note that Lemma \ref{convlemma2} also applies in the case $I=\{1,2,3\}$, with essentially the same proof. We can thus form the perverse sheaf
$$\mathcal{F}_{1,2,3}^\bullet : = Rp^*[1]\mathcal{F}^\bullet_1 \overset{\sim}{\boxtimes} Rp^*[1]\mathcal{F}^\bullet_2 \overset{\sim}{\boxtimes} Rp^*[1]\mathcal{F}^\bullet_3 \in P_c^b(\tilde{\Gr}_{X^3}, \mathbb{F}_p).$$ We refer the reader to \cite{RicharzGS} for a detailed description of the functors $\tilde{\Gr}_{X^3}$, $\tilde{L}G_{X^3}$, etc.

We claim that $Rf_{I,*}(Rm_{I,*}(\mathcal{F}_{1,2,3}^\bullet)) \in D_c^b(X^3, \mathbb{F}_p)$ has constant cohomology sheaves. Indeed, if the $\mathcal{F}_i^\bullet$ are simple then one can show that $Rm_{I,*}(\mathcal{F}_{1,2,3}^\bullet)$ is a shifted constant sheaf by using the same methods as in the proof of Proposition \ref{globconvrat}. Now the fact that $Rf_{I,*}(Rm_{I,*}(\mathcal{F}_{1,2,3}^\bullet))$ has constant cohomology sheaves follows by the same arguments as in Proposition \ref{constprop} (where we use the functor $Rm_{I,*}$ instead of $j_{I,!*}$). Let $U' \subset X^3$ be the locus where the coordinates are pairwise distinct. Then by arguments analogous to those in Proposition \ref{stalkprop} we have
\begin{enumerate}[{\normalfont (i)}]
\item $\restr{H^{m-3}(Rf_{I,*}(Rm_{I,*}(\mathcal{F}_{1,2,3}^\bullet)))}{x} \cong \bigoplus_{i+j+k=m} R^i\Gamma(\mathcal{F}_1^\bullet) \otimes R^j\Gamma(\mathcal{F}_2^\bullet) \otimes R^k\Gamma(\mathcal{F}_3^\bullet)$ for $x \in U'(k)$.
\item $\restr{H^{m-3}(Rf_{I,*}(Rm_{I,*}(\mathcal{F}_{1,2,3}^\bullet)))}{x} \cong R^m \Gamma(\mathcal{F}_1^\bullet * \mathcal{F}_2^\bullet * \mathcal{F}_3^\bullet)$ for $x \in X^3(k)$ in the image of the diagonal  $X(k) \rightarrow X^3(k)$.
\end{enumerate}
Now we conclude that $H$ is monoidal by noting that the associativity constraint in Theorem \ref{monoidal} arises from the associativity isomorphism
$$(Rp^*[1]\mathcal{F}^\bullet_1 \overset{\sim}{\boxtimes} Rp^*[1]\mathcal{F}^\bullet_2) \overset{\sim}{\boxtimes} Rp^*[1]\mathcal{F}^\bullet_3 \cong Rp^*[1]\mathcal{F}^\bullet_1 \overset{\sim}{\boxtimes} (Rp^*[1]\mathcal{F}^\bullet_2 \overset{\sim}{\boxtimes} Rp^*[1]\mathcal{F}^\bullet_3)$$ by restriction to the diagonal in $X^3.$

For the rest of the proof we let $I = \{1,2\}$. To construct the commutativity constraint, let $\tau \colon I \rightarrow I$ be the bijection which swaps coordinates. Then $\tau$ induces morphisms of $X^2$ and $\Gr_{X^2}$ by permuting the coordinates. We have a commutative diagram
$$\xymatrix{
\Gr_X \ar[d]_{\id} \ar[r]^{i_I} & \Gr_{X^2} \ar[d]^\tau & \restr{\Gr_{X^2}}{U} \ar[l]_{j_I} \ar[d]^\tau \\
\Gr_X \ar[r]^{i_I} & \Gr_{X^2} & \restr{\Gr_{X^2}}{U} \ar[l]_{j_I}
}$$
Observe that 
$$R\tau^* (j_{I,!*}(Rp^*[1]\mathcal{F}^\bullet_1 \boxtimes Rp^*[1]\restr{\mathcal{F}^\bullet_2}{U}) \cong  j_{I,!*}(Rp^*[1]\mathcal{F}^\bullet_2 \boxtimes Rp^*[1]\restr{\mathcal{F}^\bullet_1}{U}).$$ Combining this with commutativity of the left side of the diagram we get
\begin{align*} Ri_I^* (j_{I,!*}(Rp^*[1]\mathcal{F}^\bullet_1 \boxtimes Rp^*[1]\restr{\mathcal{F}^\bullet_2}{U})) & \cong Ri_I^* R\tau^* (j_{I,!*}(Rp^*[1]\mathcal{F}^\bullet_1 \boxtimes Rp^*[1]\restr{\mathcal{F}^\bullet_2}{U})) \\ & \cong Ri_I^* (j_{I,!*}(Rp^*[1]\mathcal{F}^\bullet_2 \boxtimes Rp^*[1]\restr{\mathcal{F}^\bullet_1}{U})).
\end{align*} By combining this with Proposition \ref{stalkprop} (ii) we get an isomorphism
$$\mathcal{F}^\bullet_1 * \mathcal{F}^\bullet_2 \cong \mathcal{F}^\bullet_2 * \mathcal{F}^\bullet_1.$$

However, we need to modify this commutativity constraint. The problem is that the commutativity constraint on $(P_{L^+G}(\Gr, \mathbb{F}_p), *)$ is constructed from the commutativity of the derived tensor product $\otimes^L$. For two chain complexes $\mathcal{C}^\bullet$, $\mathcal{D}^\bullet$ the usual isomorphism $(\mathcal{C}^\bullet \otimes \mathcal{D}^\bullet)^\bullet \cong (\mathcal{D}^\bullet \otimes \mathcal{C}^\bullet)^\bullet$ sends
$$c_i \otimes d_j \mapsto (-1)^{ij} d_j \otimes c_i$$ where $c_i$ has degree $i$ and $d_j$ has degree $j$ (\cite[0BYI]{stacks-project}). Thus, as it stands now the functor $H$ sends the commutativity constraint on  $(P_{L^+G}(\Gr, \mathbb{F}_p), *)$ to the commutativity constraint on $(\Vect_{\mathbb{F}_p}, \otimes)$ up to some possible sign changes. 

We can remedy the situation by the same method as in \cite{GeometricSatake}, \cite{RicharzGS}. More specifically, suppose $\mathcal{F}_i^\bullet \in P_{L^+G}(\Gr, \mathbb{F}_p)$ for $i \in  \{1,2\}$ is supported on a component of $\Gr$ whose $L^+G$-orbits have dimensions of parity $p(i) \in \mathbb{Z}/2 = \{0, 1\}$. Then we modify the commutativity isomorphism $\mathcal{F}_1^\bullet * \mathcal{F}_2^\bullet \xrightarrow{\sim} \mathcal{F}_2^\bullet * \mathcal{F}_1^\bullet$ by multiplication by $(-1)^{p(1)p(2)}$. Making use of Corollary \ref{evenodd} one can check that $H$ sends this modified commutativity constraint to the usual constraint on $(\Vect_{\mathbb{F}_p}, \otimes)$.

To show that $(P_{L^+G}(\Gr, \mathbb{F}_p), *)$ is a symmetric monoidal category, we need to verify certain coherence conditions such as the hexagon axiom. By Theorem \ref{exactfaith}, $H$ is faithful, so these conditions may be checked after applying $H$. As $H$ sends our constraints to the usual constraints on $(\Vect_{\mathbb{F}_p}, \otimes)$, the coherence conditions are satisfied. Thus $(P_{L^+G}(\Gr, \mathbb{F}_p), *)$ is a symmetric monoidal category and $H$ is a symmetric monoidal functor. 
\end{proof}

We can now prove Theorem \ref{repthm}.

\begin{myproof2}[Proof of Theorem \ref{repthm}]
Combining Theorems \ref{exactfaith} and \ref{symfun} the functor $H$ is an exact, faithful, $\mathbb{F}_p$-linear symmetric monoidal functor. Moreover, the identity object $\mathds{1} \in P_{L^+G}(\Gr, \mathbb{F}_p)$ satisfies $ \End(\mathds{1})= \mathbb{F}_p $ because $\mathds{1}$ is the constant sheaf $\mathbb{F}_p[0]$ supported on a point. Now we conclude by noting that in their proof of the main result on neutral Tannakian categories (see the paragraph before Remark 2.17 in \cite{DeligneMilne}) Deligne and Milne observed that if one removes the rigidity hypothesis then one gets an equivalence with the category of representations of an affine monoid scheme. As in the rigid case, the monoid scheme $M_G$ represents the functor $\underline{\End}^{\otimes}(H)$ of endomorphisms of the fiber functor.
\end{myproof2}

\begin{rmrk} \label{rmrkh}
So far we have proved Theorems \ref{repthm}, \ref{convsimp}, and \ref{irreducibleobjects} under the assumption $p \nmid |\pi_1(G_{\der})|$. We now explain how to remove this hypothesis. 

By \cite[3.1]{MilneShih} we may let $$1 \rightarrow N \rightarrow G' \rightarrow G \rightarrow 1$$ be a central extension of split reductive groups over $k$ such that $G'_{\der}$ is simply connected and $N$ is a torus. Let $T' \subset B' \subset G'$ be the maximal torus and Borel subgroup given by the preimages of $T$ and $B$. Denote by $\Gr_{G'}$ and $\Gr_{G}$ the affine Grassmannians for $G'$ and $G$ respectively, and by $\mathcal{F}\ell_{\mathcal{B}'}$ and $\mathcal{F}\ell_{\mathcal{B}}$ the affine flag varieties. 

Since $N$ is a torus the map $\pi_1(G') \rightarrow \pi_1(G)$ is surjective. Thus by \cite[3.1]{UnivHomeo} the map $G' \rightarrow G$ induces a surjection $\phi \colon \Gr_{G'} \rightarrow \Gr_G$ such that each connected component of $\Gr_{G'}$ maps onto its image via a universal homeomorphism. This map also sends $L^+{G'}$ orbits onto $L^+G$ orbits. The analogous facts hold for $\mathcal{F}\ell_{\mathcal{B'}} \rightarrow \mathcal{F}\ell_{\mathcal{B}}$, and more generally for partial affine flag varieties.

By the topological invariance of the \'{e}tale site \cite[04DY]{stacks-project} we may therefore deduce Theorem \ref{irreducibleobjects} from the corresponding properties for $G'$. To prove Theorems \ref{repthm} and \ref{convsimp}, note that $G' \rightarrow G$ also induces a map $LG' \times^{L^+G'} \Gr_{G'} \rightarrow LG \times^{L^+G} \Gr_G$. There is a commutative diagram (see \cite[1.2.14]{ZhuGra})
$$\xymatrix{
LG' \times^{L^+G'} \Gr_{G'} \ar[r]^(.65){\sim} \ar[d] & \Gr_{G'}^2 \ar[d]^{\phi \times \phi}
\\ LG \times^{L^+G} \Gr_{G} \ar[r]^(.65){\sim} & \Gr_G^2
}$$
Thus the map $LG' \times^{L^+G'} \Gr_{G'} \rightarrow LG \times^{L^+G} \Gr_G$ is also surjective and restricts to a universal homeomorphism on connected components. 
The analogous facts hold for the $n$-fold convolution Grassmannians. 

Let $X=\mathbb{A}^1$, and let $\Gr^{G'}_{X^2}$ and $\Gr_{X^2}^{G}$ be the Beilinson-Drinfeld Grassmannians over $X^2$ for $G'$ and $G$. The map $G' \rightarrow G$ induces a map $\Gr^{G'}_{X^2} \rightarrow \Gr_{X^2}^{G}$. By Remark \ref{fiberrmrk} there is a diagram (\ref{diagramp}) with Cartesian squares from which we deduce that $\Gr^{G'}_{X^2} \rightarrow \Gr_{X^2}^{G}$ is surjective and restricts to a universal homeomorphism on connected components. The analogous facts are also true for the global convolution Grassmannians. 

\begin{equation} \label{diagramp} \xymatrix{
X \times \Gr_{G'} \ar[d]^{\id_X \times \phi} \ar[r] & \Gr^{G'}_{X^2} \ar[d] & \restr{(X \times \Gr_{G'})^2}{U} \ar[l] \ar[d]^{\restr{(\id_X \times \phi)^2}{U}}\\
X \times \Gr_{G} \ar[r] \ar[d] & \Gr^{G}_{X^2} \ar[d] & \restr{(X \times \Gr_{G})^2}{U} \ar[l] \ar[d] \\
X \ar[r]^{\Delta} & X^2 & U \ar[l]
}
\end{equation}

Now we may use the same arguments as in Sections \ref{FponGr} and \ref{AssocSection} to prove Theorems \ref{repthm} and \ref{convsimp} in the case $p \mid |\pi_1(G_{\der})|$. The idea is that any time we need to prove a complex is perverse, simple, an intermediate extension, etc., we use the topological invariance of the \'{e}tale site and appeal to the corresponding fact for $G'$.

\end{rmrk}

For the rest of this paper we make no assumption about $|\pi_1(G_{\der})|$. We conclude this section with a few results about $M_G$. 

\begin{prop} \label{whengroup}
The affine monoid scheme $M_G$ in Theorem \ref{repthm} is a group scheme if and only if $G$ is a torus. 
\end{prop}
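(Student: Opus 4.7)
The plan is to establish both implications using the classification of simple objects (Theorem~\ref{irreducibleobjects}) together with the multiplicativity formula $\IC_{\mu_1} * \IC_{\mu_2} = \IC_{\mu_1 + \mu_2}$ from Theorem~\ref{convsimp}, which identifies the monoid of isomorphism classes of simple objects under convolution with the additive monoid $X_*(T)^+$ of dominant coweights. Since $M_G$ is a group scheme if and only if $\Rep_{\mathbb{F}_p}(M_G)$ is rigid, the problem reduces to detecting when every object in $(P_{L^+G}(\Gr, \mathbb{F}_p), *)$ admits a dual.

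For the ``if'' direction, I would argue as follows. Suppose $G = T$ is a torus. Then the reduced affine Grassmannian is a disjoint union of points indexed by $X_*(T)$, so $P_{L^+T}(\Gr, \mathbb{F}_p)$ is equivalent to the semisimple category of finite-dimensional $X_*(T)$-graded $\mathbb{F}_p$-vector spaces. Under this equivalence, the convolution product $\IC_\mu * \IC_\lambda = \IC_{\mu + \lambda}$ of Theorem~\ref{convsimp} corresponds precisely to the tensor product of graded vector spaces, so $P_{L^+T}(\Gr, \mathbb{F}_p) \cong \Rep_{\mathbb{F}_p}(D(X_*(T)))$ as symmetric monoidal categories, where $D(X_*(T)) = \Spec(\mathbb{F}_p[X_*(T)])$ is the diagonalizable group scheme with character group $X_*(T)$. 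By the Tannakian reconstruction in Theorem~\ref{repthm}, $M_T \cong D(X_*(T))$ is a group scheme.

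For the converse, suppose $G$ is not a torus, so that some $\mu \in X_*(T)^+$ is nonzero. If $M_G$ were a group scheme, $\Rep_{\mathbb{F}_p}(M_G)$ would be rigid, and the simple object $\IC_\mu$ would admit a dual $\IC_\mu^\vee$. By the standard fact that duality in a rigid abelian tensor category preserves simplicity together with Theorem~\ref{irreducibleobjects}, we would have $\IC_\mu^\vee \cong \IC_\lambda$ for some $\lambda \in X_*(T)^+$. The coevaluation $\IC_0 \to \IC_\mu * \IC_\lambda$ is nonzero (as forced by the snake identities), hence injective since $\IC_0$ is simple. But by Theorem~\ref{convsimp}, $\IC_\mu * \IC_\lambda = \IC_{\mu + \lambda}$ is already simple, so this injection is an isomorphism, forcing $\mu + \lambda = 0$. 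Since $\mu$ and $\lambda$ are both dominant and $\mu \neq 0$, this is impossible.

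The main obstacle I anticipate is the ``if'' direction: one must verify carefully that the reduced affine Grassmannian of a split torus really is a disjoint union of points indexed by $X_*(T)$ (the non-reduced structure of $\Gr_T$ being irrelevant because all our Schubert varieties carry the reduced structure) and then identify the resulting convolution explicitly with the graded tensor product. The ``only if'' direction, by contrast, is essentially formal once one grants the general fact that duals of simple objects in a rigid tensor category remain simple.
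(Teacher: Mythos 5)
Your proof follows essentially the same route as the paper's, but the final step of the ``only if'' direction has an error. You choose a nonzero $\mu \in X_*(T)^+$ and conclude that $\mu + \lambda = 0$ is ``impossible'' because $\mu, \lambda$ are both dominant and $\mu \ne 0$. This is false whenever $G$ has positive-dimensional center: for $G = \GL_n$, the coweights $\mu = (1, \ldots, 1)$ and $\lambda = (-1, \ldots, -1)$ are both dominant and nonzero yet sum to zero. So for a general reductive (non-torus) $G$ your choice of $\mu$ does not yield a contradiction. The argument really shows that for \emph{every} $\mu \in X_*(T)^+$ there exists $\lambda \in X_*(T)^+$ with $\mu + \lambda = 0$; that is, $X_*(T)^+$ is closed under negation and hence is a group, which forces $X_*(T)^+ = X_*(T)$, hence no roots, hence $G = T$ (this is how the paper phrases it). Alternatively, if you insist on running the contrapositive with a single $\mu$, you must choose $\mu$ so that $-\mu$ is \emph{not} dominant, i.e.\ so that $\langle \alpha, \mu \rangle > 0$ for some positive root $\alpha$; such $\mu$ exists precisely when $G$ is not a torus, and with that choice your contradiction is genuine.

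Apart from this, the structure matches the paper's. The paper asserts without elaboration that the dual of $\IC_\mu$ is simple and that consequently $\mu + \lambda = 0$; your coevaluation argument (nonzero by the triangle identities, hence injective since $\IC_0$ is simple, hence an isomorphism onto the simple object $\IC_{\mu+\lambda}$) is a reasonable way to make that step explicit. The ``if'' direction is identical to the paper's: the reduced affine Grassmannian of a split torus is a disjoint union of points indexed by $X_*(T)$, there are no nontrivial extensions, and the resulting tensor category is $\Rep_{\mathbb{F}_p}$ of the diagonalizable group scheme with character group $X_*(T)$, which the paper denotes $\hat{T}$ and you denote $D(X_*(T))$.
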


\begin{proof}
If $M_G$ is a group scheme then for each $\mu \in X_*(T)^+$ there is some $\IC_\mu^* \in P_{L^+G}(\Gr, \mathbb{F}_p)$ which is dual to $\IC_{\mu}$. The perverse sheaf $\IC_\mu^*$ must also be simple, so $\IC_\mu^* = \IC_\lambda$ for some $\lambda \in X_*(T)^+$. We have seen that $\IC_{\mu} * \IC_{\lambda} \cong \IC_{\mu + \lambda}$, so we must have $\mu + \lambda = 0$. In particular, $X_*(T)^+$ is a group. This implies $G=T$.

Conversely, if $G=T$ then the corresponding (reduced) affine Grassmannian consists of a disjoint union of points indexed by $X_*(T)$. There are then no non-nontrivial extensions between objects of $P_{L^+T}(\Gr, \mathbb{F}_p)$, and it is straightforward to check that $$P_{L^+T}(\Gr, \mathbb{F}_p) \cong \Rep_{\mathbb{F}_p}(\hat{T}).$$ Here $\hat{T}$ is the torus defined over $\mathbb{F}_p$ with root datum dual to that of $T$. 
\end{proof}

Let $\Spec(\mathbb{F}_p[X_*(T)^+])$ be the affine monoid scheme with multiplication defined by the map
$$\mathbb{F}_p[X_*(T)^+] \rightarrow \mathbb{F}_p[X_*(T)^+] \otimes_{\mathbb{F}_p} \mathbb{F}_p[X_*(T)^+], \quad \quad \quad \quad \mu \mapsto \mu \otimes \mu, \quad \quad \mu \in X_*(T)^+.$$ 

\begin{prop}
There is a morphism of monoids $$M_G \rightarrow \Spec(\mathbb{F}_p[X_*(T)^+]).$$ This is an isomorphism if $G$ is a torus.
\end{prop}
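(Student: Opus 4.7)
The plan is to use the Tannakian description of $M_G$ from Theorem \ref{repthm} to trade the desired morphism of monoid schemes for a bialgebra map $\mathbb{F}_p[X_*(T)^+] \to \mathcal{O}(M_G)$. Since the coproduct on $\mathbb{F}_p[X_*(T)^+]$ is diagonal on the basis $\{e^\mu\}_{\mu \in X_*(T)^+}$ (so that each $e^\mu$ is group-like), giving such a bialgebra map amounts to producing a monoid homomorphism from $X_*(T)^+$ to the monoid of group-like elements of $\mathcal{O}(M_G)$, i.e., to the monoid of morphisms of monoid schemes $M_G \to \mathbb{A}^1_{\mathbb{F}_p}$, which in turn is the same as the monoid of isomorphism classes of one-dimensional representations of $M_G$ under tensor product.

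First I would observe that under the equivalence $P_{L^+G}(\Gr, \mathbb{F}_p) \xrightarrow{\sim} \Rep_{\mathbb{F}_p}(M_G)$, each $\IC_\mu$ corresponds to a one-dimensional representation: by Theorem \ref{convsimp}, $\dim_{\mathbb{F}_p} H(\IC_\mu) = 1$, and the fiber functor $H$ of Theorem \ref{repthm} computes the underlying vector space of the associated $M_G$-representation. Write $\chi_\mu \colon M_G \to \mathbb{A}^1_{\mathbb{F}_p}$ for the corresponding character. Because $H$ is symmetric monoidal (Theorem \ref{symfun}) and the equivalence in Theorem \ref{repthm} is an equivalence of symmetric monoidal categories, the identities $\IC_{\mu_1} * \IC_{\mu_2} \cong \IC_{\mu_1+\mu_2}$ and $\IC_0 \cong \mathbbm{1}$ (the unit object) translate into $\chi_{\mu_1} \cdot \chi_{\mu_2} = \chi_{\mu_1 + \mu_2}$ and $\chi_0 = 1$. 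Hence $\mu \mapsto \chi_\mu$ is a monoid homomorphism $X_*(T)^+ \to \Hom_{\mathrm{Mon}/\mathbb{F}_p}(M_G, \mathbb{A}^1)$, yielding the desired morphism of monoid schemes $M_G \to \Spec(\mathbb{F}_p[X_*(T)^+])$.

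For the second statement, assume $G = T$ is a torus. Since there are no roots, every coweight is dominant, so $X_*(T)^+ = X_*(T)$ and $\Spec(\mathbb{F}_p[X_*(T)^+]) = \Spec(\mathbb{F}_p[X_*(T)]) = \hat{T}$, the dual torus defined over $\mathbb{F}_p$. From the proof of Proposition \ref{whengroup} we already know $P_{L^+T}(\Gr, \mathbb{F}_p) \cong \Rep_{\mathbb{F}_p}(\hat{T})$ via the identification that sends $\IC_\mu$ to the character $\mu \in X^*(\hat{T}) = X_*(T)$. In particular $M_T \cong \hat{T}$ and our morphism $M_T \to \Spec(\mathbb{F}_p[X_*(T)])$ takes $\IC_\mu$ to the group-like element $e^\mu$, which is precisely the tautological identification of $\hat{T}$ with $\Spec(\mathbb{F}_p[X_*(T)])$. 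Hence it is an isomorphism.

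The only point that requires any care is making the translation between the Tannakian and Hopf-algebra pictures explicit enough to confirm that multiplication of the $\chi_\mu$ in the monoid of characters really corresponds to convolution of the $\IC_\mu$; but this is already built into the symmetric monoidal nature of the equivalence established in Theorems \ref{repthm} and \ref{symfun}, so no additional computation is needed.
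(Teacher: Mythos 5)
Your proof is correct, and it arrives at the same morphism as the paper, but the mechanism is packaged differently. The paper's proof observes (using Theorem \ref{convsimp}) that the full subcategory $P_{L^+G}(\Gr,\mathbb{F}_p)^{\sem}$ of semi-simple objects is a tensor subcategory, applies the Tannakian formalism to it to obtain $P_{L^+G}(\Gr,\mathbb{F}_p)^{\sem} \simeq \Rep_{\mathbb{F}_p}(\Spec(\mathbb{F}_p[X_*(T)^+]))$, and then gets the morphism $M_G \to \Spec(\mathbb{F}_p[X_*(T)^+])$ from the functoriality of $\underline{\End}^{\otimes}(H)$ with respect to the inclusion of tensor subcategories. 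You instead unwind this at the level of bialgebras: the simple objects $\IC_\mu$ are one-dimensional representations of $M_G$ (by $\dim_{\mathbb{F}_p} H(\IC_\mu)=1$), hence give group-like elements of $\mathcal{O}(M_G)$, and $\IC_{\mu_1}*\IC_{\mu_2}\cong\IC_{\mu_1+\mu_2}$ together with $\IC_0=\mathbbm{1}$ makes $\mu\mapsto\chi_\mu$ a monoid homomorphism, which is exactly the data of a bialgebra map $\mathbb{F}_p[X_*(T)^+]\to\mathcal{O}(M_G)$. Both arguments rest on the same input (Theorem \ref{convsimp} and the fiber functor of Theorem \ref{repthm}); yours is more explicit and avoids re-invoking the Tannakian machinery for the subcategory, while the paper's is shorter and makes transparent that the target monoid is precisely the Tannakian monoid of the semisimplification. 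Your treatment of the torus case matches the paper's, which likewise reduces to the identification $P_{L^+T}(\Gr,\mathbb{F}_p)\cong\Rep_{\mathbb{F}_p}(\hat{T})$ from the proof of Proposition \ref{whengroup}.
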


\begin{proof}
Let $P_{L^+G}(\Gr, \mathbb{F}_p)^{\sem} \subset P_{L^+G}(\Gr, \mathbb{F}_p)$ be the full subcategory consisting of semi-simple objects. By Theorem \ref{convsimp}, $P_{L^+G}(\Gr, \mathbb{F}_p)^{\sem}$ is a tensor subcategory. The same Tannakian formalism applies to $P_{L^+G}(\Gr, \mathbb{F}_p)^{\sem}$, and we get an equivalence $$P_{L^+G}(\Gr, \mathbb{F}_p)^{\sem} \xrightarrow{\sim} \Rep_{\mathbb{F}_p}(\Spec(\mathbb{F}_p[X_*(T)^+])).$$ The inclusion $P_{L^+G}(\Gr, \mathbb{F}_p)^{\sem} \rightarrow P_{L^+G}(\Gr, \mathbb{F}_p)$ then induces the desired morphism of monoid schemes. It is an isomorphism if $G$ is a torus by the proof of Proposition \ref{whengroup}. 
\end{proof}

\begin{prop} \label{invlim} There exists an isomorphism $$M_G = \varprojlim_{i} M_{i}$$ where each $M_{i}$ is a closed submonoid of a monoid of upper triangular matrices.
\end{prop}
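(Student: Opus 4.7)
The plan is to combine the standard Tannakian presentation of an affine monoid scheme as a filtered inverse limit of its ``algebraic quotients'' with the key fact, special to our setting, that every simple object of $P_{L^+G}(\Gr, \mathbb{F}_p)$ has a one-dimensional fiber (Theorem \ref{convsimp}). This will force every finite-dimensional representation of $M_G$ to be upper-triangularizable in a canonical way.

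First, for each object $\mathcal{F}^\bullet \in P_{L^+G}(\Gr, \mathbb{F}_p)$, set $V := H(\mathcal{F}^\bullet)$, a finite-dimensional $\mathbb{F}_p$-vector space via Theorem \ref{exactfaith}. Under the equivalence $P_{L^+G}(\Gr, \mathbb{F}_p) \xrightarrow{\sim} \Rep_{\mathbb{F}_p}(M_G)$ of Theorem \ref{repthm}, subobjects of $\mathcal{F}^\bullet$ correspond to $M_G$-stable subspaces of $V$. By Theorem \ref{artinian}, $\mathcal{F}^\bullet$ admits a composition series $0 = \mathcal{F}_0^\bullet \subset \mathcal{F}_1^\bullet \subset \cdots \subset \mathcal{F}_n^\bullet = \mathcal{F}^\bullet$ whose successive quotients are of the form $\IC_{\mu_i}$. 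Applying $H$ (which is exact by Theorem \ref{exactfaith}) and using that $\dim_{\mathbb{F}_p} H(\IC_{\mu_i}) = 1$ (Theorem \ref{convsimp}), we obtain a complete flag of $M_G$-stable subspaces of $V$. Fixing a basis of $V$ adapted to this flag, the induced morphism $\rho_{\mathcal{F}^\bullet} \colon M_G \rightarrow \End(V)$ factors through the closed submonoid $B \subset \End(V)$ of upper-triangular matrices. Let $M_{\mathcal{F}^\bullet}$ denote the scheme-theoretic image of $\rho_{\mathcal{F}^\bullet}$; it is then a closed submonoid of $B$.

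Next, index the family $\{M_{\mathcal{F}^\bullet}\}$ by isomorphism classes of objects of $P_{L^+G}(\Gr, \mathbb{F}_p)$, partially ordered by declaring $\mathcal{F}^\bullet \leq \mathcal{G}^\bullet$ when $\mathcal{F}^\bullet$ is a direct summand of $\mathcal{G}^\bullet$. Because $P_{L^+G}(\Gr, \mathbb{F}_p)$ admits finite direct sums, this poset is filtered. If $\mathcal{F}^\bullet \leq \mathcal{G}^\bullet$, then $H(\mathcal{F}^\bullet)$ is a direct summand of $H(\mathcal{G}^\bullet)$ as an $M_G$-representation, so projection onto this summand induces a surjection $M_{\mathcal{G}^\bullet} \twoheadrightarrow M_{\mathcal{F}^\bullet}$ of closed submonoids, giving the needed transition maps.

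Finally, one checks the isomorphism $M_G \xrightarrow{\sim} \varprojlim_{\mathcal{F}^\bullet} M_{\mathcal{F}^\bullet}$ by passing to coordinate rings: the Hopf-algebra-like algebra representing $M_G$ is the colimit of the coordinate algebras of the $M_{\mathcal{F}^\bullet}$, since every element of $\mathcal{O}(M_G) \cong \Gamma(\underline{\End}^\otimes(H))$ is a matrix coefficient of the representation of $M_G$ on $H(\mathcal{F}^\bullet)$ for some $\mathcal{F}^\bullet$. This is the standard argument recovering an affine group (or monoid) scheme from the images of its finite-dimensional representations; see \cite{DeligneMilne} for the group case, which adapts verbatim after dropping the rigidity/inverse hypothesis. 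I do not expect a serious obstacle here: the novel input is the reduction to upper-triangular monoids, and this is handed to us by the one-dimensionality of $H(\IC_\mu)$.
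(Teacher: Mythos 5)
Your proposal is correct and follows essentially the same route as the paper: the upper-triangularizability comes from the one-dimensionality of $H(\IC_\mu)$ (equivalently, of the simple objects of $\Rep_{\mathbb{F}_p}(M_G)$), and the inverse-limit presentation is the standard Tannakian reconstruction of an affine monoid scheme from its finite-type quotients, each of which admits a faithful finite-dimensional representation. Your indexing by objects of $P_{L^+G}(\Gr,\mathbb{F}_p)$ rather than by finite-type quotients of $M_G$ is only a cosmetic difference, since the two systems are cofinal in each other.
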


\begin{proof}
Let $V$ be a finite-dimensional representation of $M_G$. As every simple object of $\Rep_{\mathbb{F}_p}(M_G)$ is one-dimensional then one can pick a basis of $V$ for which the image of $M_G \rightarrow \End(V)$ consists of upper triangular matrices. Next, by a standard argument identical to that for affine group schemes, $M_G$ is the inverse limit of its finite-type quotients. Now we conclude by noting that, as for affine group schemes, every affine monoid scheme of finite type admits a faithful finite-dimensional representation. 
\end{proof}

\section{The mod $p$ Satake isomorphism} \label{SatakeSection}
In this section we prove Theorem \ref{Satthm}. Fix a local field $F$ of equal characteristic $p$ with ring of integers $\mathcal{O}$, and a uniformizing element $t \in \mathcal{O}$. Let $G$ be a split reductive group defined over the residue field $\mathbb{F}_q$ of $F$, and choose an embedding $\iota \colon \mathbb{F}_q \rightarrow \overline{\mathbb{F}}_q$ into an algebraic closure of $\mathbb{F}_q$. View $\Gr$ as an ind-scheme defined over $\overline{\mathbb{F}}_q$. For a different approach to the material in this section which uses the function-sheaf correspondence over the finite field $\mathbb{F}_q$, see \cite[\S 4]{CentralCass}.

Given $\mathcal{F}^\bullet \in P_{L^+G}(\Gr, \mathbb{F}_p)$, one can form an element $\mathcal{T}(\mathcal{F}^\bullet) \in \mathcal{H}_G$ as follows. Our choice of uniformizing element $t$ induces an isomorphism $F \cong \mathbb{F}_q(\!(t)\!)$, which then induces a map $G(F) \rightarrow LG(\mathbb{F}_q)$. As $\iota$ induces a map $LG(\mathbb{F}_q) \rightarrow LG(\overline{\mathbb{F}}_q)$, then by composing with the quotient $LG(\overline{\mathbb{F}}_q) \rightarrow \Gr( \overline{\mathbb{F}}_q)$ we get a map $$i_t \colon G(F) \rightarrow \Gr(\overline{\mathbb{F}}_q).$$

For $x \in G(F)$ we set
$$\mathcal{T}(\mathcal{F}^\bullet)(x) = \sum_{i \in \mathbb{Z}} \dim_{\mathbb{F}_p} H^i(\mathcal{F}^\bullet_{i_t(x)}).$$

As our choices also induce an embedding $G(\mathcal{O}) \rightarrow L^+G(\overline{\mathbb{F}}_q)$ and $\mathcal{F}^\bullet$ is $L^+G$-equivariant, then $\mathcal{T}(\mathcal{F}^\bullet)$ is $G(\mathcal{O})$ bi-invariant.

\begin{prop}
The function $\mathcal{T}(\mathcal{F}^\bullet) \in \mathcal{H}_G$ is independent of the choice of uniformizing element $t \in \mathcal{O}$ and the embedding $\iota \colon \mathbb{F}_q \rightarrow \overline{\mathbb{F}}_q$. The formation of $\mathcal{T}(\mathcal{F}^\bullet)$ also induces a well-defined map of $\mathbb{F}_p$-vector spaces $$\mathcal{T} \colon K_0(P_{L^+G}(\Gr, \mathbb{F}_p)) \otimes \mathbb{F}_p \rightarrow \mathcal{H}_G.$$ 
\end{prop}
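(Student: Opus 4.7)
The plan is to prove both assertions by reducing to a stalk-wise analysis of $L^+G$-equivariant perverse sheaves on $L^+G$-orbits, using the explicit description of simple objects from Theorem \ref{irreducibleobjects}.

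First I would tackle the independence of $t$ and $\iota$. Since $\mathcal{F}^\bullet$ is $L^+G$-equivariant, the equivariance isomorphism forces $\sum_i \dim_{\mathbb{F}_p} H^i(\mathcal{F}^\bullet_{q_1}) = \sum_i \dim_{\mathbb{F}_p} H^i(\mathcal{F}^\bullet_{q_2})$ whenever $q_1, q_2$ lie in the same $L^+G(\overline{\mathbb{F}}_q)$-orbit. By the Cartan decomposition $G(F) = \bigsqcup_{\mu \in X_*(T)^+} G(\mathcal{O})\, \mu(\varpi)\, G(\mathcal{O})$, which is intrinsic to $(G, F, \mathcal{O})$ and independent of the chosen uniformizer $\varpi$, each $x \in G(F)$ has a well-defined $\mu(x) \in X_*(T)^+$. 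For any choice of $t$, the image $i_t(x)$ lies in the orbit $\Gr_{\mu(x)}$, and a change of $\iota$ transports $i_t(x)$ by an element of $\Gal(\overline{\mathbb{F}}_q/\mathbb{F}_q)$, which preserves the $\mathbb{F}_q$-defined orbit structure on $\Gr$. Combining these remarks, $\mathcal{T}(\mathcal{F}^\bullet)(x)$ depends only on $\mu(x)$.

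Next I would verify the additivity $\mathcal{T}(\mathcal{F}_2^\bullet) = \mathcal{T}(\mathcal{F}_1^\bullet) + \mathcal{T}(\mathcal{F}_3^\bullet)$ on every short exact sequence $0 \to \mathcal{F}_1^\bullet \to \mathcal{F}_2^\bullet \to \mathcal{F}_3^\bullet \to 0$ in $P_{L^+G}(\Gr, \mathbb{F}_p)$. Proposition \ref{extlem}(iii) gives $\Ext^1(\IC_\mu, \IC_\lambda) = 0$ whenever $\mu, \lambda$ lie on different connected components of $\Gr$, so $P_{L^+G}(\Gr, \mathbb{F}_p)$ splits as a direct sum indexed by $\pi_1(G)$ and I may assume all three terms are supported on a single component. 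On such a component all strata have dimensions of a single parity $\varepsilon \in \mathbb{Z}/2$, since $\dim \Gr_\mu - \dim \Gr_\lambda = 2\langle \rho, \mu - \lambda \rangle$ is even when $\mu - \lambda$ lies in the coroot lattice. Hence every simple constituent $\IC_\mu = \mathbb{F}_p[\dim \Gr_{\leq \mu}]$ has its unique nonzero cohomology sheaf in a degree of parity $\varepsilon$, and an induction on length through the long exact sequence of cohomology sheaves shows the same concentration for every object on this component, and \emph{a fortiori} for every stalk.

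With this parity-concentration in hand, the long exact sequence of stalks at any $q \in \Gr(\overline{\mathbb{F}}_q)$ has all connecting maps zero — they would join groups in opposite parities — so it breaks into short exact sequences in each degree, and summing dimensions gives the desired additivity. The induced map descends to $K_0(P_{L^+G}(\Gr, \mathbb{F}_p)) \otimes \mathbb{F}_p \to \mathcal{H}_G$, with $\mathbb{F}_p$-linearity automatic since $\mathcal{H}_G$ is already an $\mathbb{F}_p$-vector space. The main obstacle is precisely this additivity step: $\sum_i \dim_{\mathbb{F}_p} H^i$ of stalks is not additive on arbitrary distinguished triangles, and works here only by virtue of the simple IC description of Theorem \ref{irreducibleobjects} together with the resulting parity-concentration, a phenomenon peculiar to the mod $p$ setting.
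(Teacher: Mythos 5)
Your proof is correct and takes essentially the same approach as the paper: both rest on the observation that $\mathcal{T}(\mathcal{F}^\bullet)$ is determined by the $L^+G$-orbit, which is intrinsic via the Cartan decomposition, and that the parity of stratum dimensions on each connected component (a consequence of Theorem \ref{irreducibleobjects}) forces additivity of stalk dimensions. The paper streamlines the additivity step by passing directly to the semi-simplification and treats independence of $\iota$ via the bijection on double cosets rather than a Galois argument, but these are presentational differences rather than a genuinely different route.
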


\begin{proof}
Let $\mathcal{F}^\bullet_{\text{ss}}$ be a semi-simplification of $\mathcal{F}^\bullet$, i.e., the direct sum of the simple subquotients appearing in a composition series for $\mathcal{F}^\bullet$. By Theorem \ref{irreducibleobjects}, on a fixed connected component of $\Gr$ all perverse sheaves in $P_{L^+G}(\Gr, \mathbb{F}_p)$ are concentrated in degrees having the same parity. Thus $$\mathcal{T}(\mathcal{F}^\bullet)  = \mathcal{T}(\mathcal{F}^\bullet_{\text{ss}}).$$
This shows that $\mathcal{T}$ induces a map on $K_0(P_{L^+G}(\Gr, \mathbb{F}_p))$.

Different choices  of $t$ alter the embedding $i_{t}$ by an automorphism of $\Gr$ induced by multiplication by an element in $\mathbb{G}_m(\mathbb{F}_q[\![t]\!])$. Such an automorphism preserves $L^+G$-orbits, and so it also preserves the simple subquotients occurring in a composition series for $\mathcal{F}^\bullet$. Thus $\mathcal{T}(\mathcal{F}^\bullet)$ is independent of the choice of uniformizing element $t \in \mathcal{O}$. 

The map $i_t \colon G(F) \rightarrow LG(\overline{\mathbb{F}}_q)$ also induces a well-defined map $$G(\mathcal{O}) \backslash G(F) / G(\mathcal{O}) \rightarrow L^+G(\overline{\mathbb{F}}_q)  \backslash LG(\overline{\mathbb{F}}_q) / L^+G(\overline{\mathbb{F}}_q).$$ By the Cartan decomposition this is a bijection and it is independent of the choice of embedding $\iota \colon \mathbb{F}_q \rightarrow \overline{\mathbb{F}}_q$. 
\end{proof}

Before giving the proof of Theorem \ref{Satthm}, we explain the mod $p$ Satake isomorphism. Explicitly, the convolution product on $\mathcal{H}_G$ is defined by the formula
$$f_1*f_2(g) = \sum_{h \in G(F)/G(\mathcal{O})} f_1(gh) f_2(h^{-1}).$$ Fix a maximal torus and a Borel subgroup $T \subset B \subset G$, and let $U$ be the unipotent radical of $B$. Following Herzig \cite{modpsatake}, we define the map 
$$\mathcal{S} \colon \mathcal{H}_G \rightarrow \mathcal{H}_T$$
by
$$\mathcal{S}(f)(g) = \sum_{u \in U(F)/U(\mathcal{O})} f(gu).$$ As $T$ is commutative, there is a natural isomorphism $\mathbb{F}_p[X_*(T)] \cong \mathcal{H}_T$, where the element $\mu \in X_*(T)$ corresponds to the function which is $1$ on the coset $\mu(t)T(\mathcal{O})$ and $0$ everywhere else. In the case where $F$ is a $p$-adic field and the coefficients are taken in an algebraically closed field of characteristic $p$ (instead of $\mathbb{F}_p$), Herzig \cite{modpsatake} showed that $\mathcal{S}$ is injective with image spanned by the anti-dominant cocharacters. Henniart and Vign\'{e}ras \cite{modpsatake2} extended this result to an arbitrary non-Archimedean local field $F$ and coefficients taken in any field of characteristic $p$. The resulting isomorphism $$\mathcal{S} \colon \mathcal{H}_G \xrightarrow{\sim} \mathbb{F}_p[X_*(T)_-]$$ is what we will call the mod $p$ Satake isomorphism. In fact, there is a similar mod $p$ Satake isomorphism for any weight (a representation of the finite group $G(\mathbb{F}_q)$), but here we are only considering the case where the weight is trivial.

\begin{myproof2}[Proof of Theorem \ref{Satthm}]
We first define a map $\mathbb{F}_p[X_*(T)^+] \rightarrow K_0(P_{L^+G}(\Gr, \mathbb{F}_p)) \otimes \mathbb{F}_p$ as in the statement of Theorem \ref{Satthm} by sending $\mu \in X_*(T)^+$ to the element $[\IC_\mu] \otimes 1$. This is an $\mathbb{F}_p$-algebra homomorphism by Theorem \ref{convsimp}. It is an isomorphism because the perverse sheaves $\IC_\mu$ represent the distinct isomorphism classes of simple objects in $P_{L^+G}(\Gr, \mathbb{F}_p)$.

By the Cartan decomposition $\mathcal{H}_G$ has a basis $\{\tau_\mu\}_{\mu \in X_*(T)^+}$, where
$\tau_{\mu}$ is $1$ on the double coset $G(\mathcal{O}) \mu(t) G(\mathcal{O})$ and $0$ everywhere else. In the case where the derived group of $G$ is simply connected, Herzig \cite[5.1]{modpclass} has computed the inverse of $\mathcal{S}$. When the weight is trivial this assumption on $G$ is unnecessary. Herzig's computation works in our setup exactly as written, and the result is the formula
$$\mathcal{S}^{-1}(\mu) = \sum_{\lambda \geq \mu} \tau_\lambda.$$ Here the sum is over anti-dominant cocharacters $\lambda \geq \mu$. Herzig derives this formula as a consequence of the Lusztig–Kato formula over the complex numbers. The key observation is that the Kazhdan–Lusztig polynomials all have constant term $1$. 

Because the $\IC$ sheaves in $P_{L^+G}(\Gr, \mathbb{F}_p)$ are constant, it is immediate that $$\mathcal{T}([\IC_\mu] \otimes 1) = \sum_{\lambda \leq \mu} \tau_\lambda.$$ Here we are now using dominant cocharacters. Note that since $w_0 \in N_G(T)/T$ has a representative in $G(\mathbb{F}_q) \subset G(\mathcal{O})$, we have $$\tau_\mu = \tau_{w_0(\mu)}$$ for all $\mu \in X_*(T)$. Now by comparing the previous two formulas for $\mathcal{T}$ and $\mathcal{S}^{-1}$ it follows that the composition of maps defined in Theorem \ref{Satthm} is $\mathcal{S}^{-1}$. 
\end{myproof2}

\end{document}